\begin{document}
\title[Canonical Landau-Ginzburg models for cominuscule homogeneous spaces]{Canonical Landau-Ginzburg models \\ for cominuscule homogeneous spaces}
\author{Peter Spacek}
\address{Technische Universit\"at Chemnitz, Germany}
\email{peter.spacek@math.tu-chemnitz.de}

\author{Charles Wang}
\address{University of Michigan, USA}
\email{cmwa@umich.edu}
\date{\today}

\begin{abstract}
  We present a type-independent Landau-Ginzburg (LG) model $(\mX_\can, \pot_\can)$ for any cominuscule homogeneous space $\X=\G/\P$. We give a fully combinatorial construction for our superpotential $\pot_\can$ as a sum of $n+1$ rational functions in the (generalized) Pl\"ucker coordinates on the ``Langlands dual'' minuscule homogeneous space $\cmX=\dP\backslash\dG$. {Explicitly, we define the denominators $\cD_\is$ of these rational functions using} the combinatorics of order ideals of the corresponding minuscule poset, which can be interpreted as (generalized) Young diagrams, by a process that can be described by ``moving boxes'' and hence is easily implemented. To construct the corresponding numerators, we define derivations $\deis$ on $\C[\cmX]$ that act by ``adding an appropriate box if possible'' and then we apply each $\deis$ to the corresponding $\cD_{\is}$. By studying certain Weyl orbits in the fundamental representations of $\udG$ and exploiting the existence of a certain dense algebraic torus in $\cmX$, we show that the polynomials $\cD_\is$ coincide with the generalized minors $\phiGLS[\is]$ appearing in the cluster structures for homogeneous spaces studied by Gei\ss-Leclerc-Schr\"oer in \cite{GLS_partial_flag_varieties_and_preprojective_algebras}. We then define the mirror variety $\mX_\can=\cmX\setminus D_\ac$ to be the complement of the anticanonical divisor $D_\ac = \sum_{\is}\{\cD_\is=0\}$ formed by the $\cD_{\is}$.  Moreover, we show that the LG models $(\mX_\can,\pot_\can)$ are isomorphic to the Lie-theoretic LG-models $(\mX_\Lie,\pot_\Lie)$ constructed by Rietsch in \cite{Rietsch_Mirror_Construction} and our models naturally generalize the type-dependent Pl\"ucker coordinate LG-models previously studied by various authors in \cite{Marsh_Rietsch_Grassmannians, Pech_Rietsch_Odd_Quadrics,Pech_Rietsch_Williams_Quadrics,Pech_Rietsch_Lagrangian_Grassmannians}.
\end{abstract}

\maketitle
\vspace{-.5cm}
\setcounter{tocdepth}{2}
\tableofcontents

\newpage\section{Introduction}\label{sec:intro}
Let $\X=\G/\P$ be a projective variety homogeneous under the action of a simple, semi-simple, complex algebraic group $\G$ and $\P$ the parabolic subgroup of $\G$ associated to $\X$. There are various descriptions of the small quantum cohomologies of these varieties; see e.g. \cite{Fulton_on_qH_of_hom_spaces} for a survey. In this article, we will describe the quantum cohomology $qH^*(\X)$ of $\X$ using a mirror symmetry isomorphism to the \emph{Jacobi ring} $\C[\mX\times(\C^*)^\rho]/\Jac{\bq}$ of a \emph{Landau-Ginzburg (LG) model} $(\mX,\pot_\bq)$ consisting of a mirror space $\mX$ and a superpotential $\pot_\bq:\mX\rightarrow\C$ depending on $\rho=\rk(\Pic(\X))$ formal parameters $\bq=(q_1,\ldots,q_\rho)$ corresponding to the quantum parameters of $qH^*(\X)$. Rietsch constructed LG models $(\X_\Lie,\pot_\Lie)$ for homogeneous spaces in \cite{Rietsch_Mirror_Construction} by combining Peterson's presentation \cite{Peterson} of the quantum cohomology of homogeneous spaces with Lie-theoretic constructions, leading to the name \emph{Lie-theoretic mirror models}.

In the case when $\X$ is \emph{cominuscule} (i.e.~when $\P=\P_k=P_{\{k\}}$ is a maximal parabolic subgroup of $\G$ which stabilizes the highest weight vector in the cominuscule fundamental weight representation $V(\fwt[k])$; in this case we have $\rho=1$ and hence $\bq=q$ a single parameter), the geometric Satake correspondence provides a one-to-one correspondence between generators of the cohomology of $\X$ and a natural set of projective coordinates on the space $\cmX=\dP\backslash\dG$ which we will refer to as \emph{(generalized) Pl\"ucker coordinates}. Here $\dG$ and $\dP$ are the Langlands dual groups to $\G$ and $\P$ respectively, and hence we will refer to $\cmX$ as the \emph{Langlands dual homogeneous space}. The geometric Satake correspondence is especially appealing due to the fact that when $\X$ is cominuscule, then the Langlands dual $\cmX$ is \emph{minuscule} which endows the Pl\"ucker coordinates on $\cmX$ with many useful combinatorial properties.

The geometric Satake correspondence motivated the construction of Landau-Ginzburg models $(\mX_\can,\pot_\can)$ with $\mX_\can\subset\cmX$ for many families of cominuscule spaces (e.g. \cite{Marsh_Rietsch_Grassmannians,Pech_Rietsch_Lagrangian_Grassmannians, Pech_Rietsch_Odd_Quadrics,Pech_Rietsch_Williams_Quadrics}). The mirror varieties $\mX_\can$ in these constructions all arise as the complement of a distinguished anticanonical divisor $D_\ac$ inside $\cmX$, and the mirror models have correspondingly been referred to as \emph{canonical} Landau-Ginzburg models. In each case, these models have been shown to be isomorphic to Rietsch's Lie-theoretic models $(\mX_\Lie,\pot_\Lie)$. Although these constructions of LG models for so many cominuscule spaces suggests the possibility of a uniform construction, the methods and objects in each case heavily relied on type-specific representation-theoretic methods. Our main result in this paper is the type-independent construction of LG models for these cominuscule cases which are isomorphic to Rietsch's Lie-theoretic models.

In the works \cite{Spacek_Wang_exceptional, Spacek_Wang_OG}, we have been working towards generalizing the previous approaches with the goal of obtaining a uniform, type-independent construction which applies equally well to any cominuscule homogeneous space. The main difficulty lies in identifying suitable candidates for the mirror space $\X_\can$ and the superpotential $\pot_\can$. The starting point for our construction comes from certain \emph{generalized minors} (up to sign) $\phiGLS[\is]:\dG\to\C$ for $\is\in[n]$ which can be defined in terms of the projection formulas
\[
\phiGLS[\is](g)=(-1)^h\minor_{\wo\cdot\dfwt[\is],\wop\cdot\dfwt[\is]}(g^{-1}) = (-1)^h\Bigl\lan g^{-1}\bwop\cdot\hwt[\is],\lwt[\is]\Bigr\ran_{\dfwt[\is]} \qfor \is\in[n],
\]
along with $\phiGLS[0](g) = \minor_{\dfwt[k],\dfwt[k]}(g^{-1})$.
These generalized minors will be introduced in further detail in Section \ref{ssec:Coordinate_ring_of_dunimP}. Our first step will be to identify regular functions, which we denote by $\cD_{\is}$, corresponding to the $\phiGLS[\is]$ on the minuscule Langlands dual space $\cmX$. For this, we consider $\cmX$ in its natural minimal embedding $\cmX\hookrightarrow \PPluckerDualRep$, which is often called the \emph{(generalized) Pl\"ucker embedding} of $\cmX$; see equation \eqref{eq:Plucker_embedding} and the discussion leading up to it. The resulting projective coordinates on $\cmX$ are referred to as \emph{(generalized) Pl\"ucker coordinates} and in order to construct the $\cD_{\is}$, we begin by computing in Section \ref{sec:toric-monomials} the restriction of each $\phiGLS[\is]$ to an algebraic torus $\opendunim\subset\dG$, which is defined in equation \eqref{eq:opendunim_decompostion_of_u-}. These computations use order ideals of the minuscule poset $\minposet$ associated to $X$ as introduced in Section \ref{sec:min_posets}; they can be depicted as (skew) Young diagrams with each box labeled by a simple reflection, see Table \ref{eq:young_diagrams} for examples of such diagrams. Because $\phiGLS[0],\phiGLS[k]$ are defined using the representation dual to the one used to define Pl\"ucker coordinates, we directly compute
\[
\phiGLS[k]|_{\opendunim}(u_-) = p_{\minposet}|_{\opendunim}(u_-) = a_{\minposet}\qand \phiGLS[0]|_{\opendunim}(u_-) = p_{\varnothing}|_{\opendunim}(u_-) = 1.
\]
We determine the restrictions of the remaining $\phiGLS[\is]$ for $\is\in[n]\setminus\{k\}$ to $\opendunim$ 
\[
\phiGLS[\is]|_{\opendunim}(u_-) = \prod_{j=1}^{\cis} a_{\ideal_{i_j}}
\]
in Theorem \ref{thm:toric_monomial_phi}, where $\cis$ is the coefficient of the simple root $\sr_{\is}$ in the highest root $\sro$ for $G$ and $a_{\ideal_{i_j}}$ is a monomial in the toric coordinates of $\opendunim$ corresponding to a certain order ideal $\idealij$ of $\minposet$ allowing an easy combinatorial description, namely $\idealij$ the maximal Young subdiagram obtained from $\idealij[j-1]$ by adding boxes with label unequal to $s_{i_j}$ and where $\idealij[0]=\varnothing$. (See also Corollary \ref{cor:combinatorial_description_wij}.) Here, the sequence of indices $(i_j)$ is given in equation \eqref{eq:index_seq} and starts with $i_1=\DSi[k](\is)$ and $i_2=\DSi[i_1](k)$ where $\DSi:[n]\setminus\{i\}\to[n]\setminus\{i\}$ denotes the (possibly trivial) Dynkin involution of the Dynkin diagram of $\dG$ after removing node $i$.)

The reason for working with these restrictions is that order ideals of the minuscule poset $\minposet$ also give a combinatorial description of the restrictions of Pl\"ucker coordinates to $\opendunim$ (considering the coordinates as $\udP$-invariant functions on $\udG$). In Section \ref{sec:denominators}, we use the structure of order ideals to analyze the cancellation of toric terms in Pl\"ucker monomials and describe this cancellation in terms of moving boxes between Young diagrams corresponding to different Pl\"ucker coordinates. To organize these cancellations, we construct a poset $\sP_{\is}$ whose elements are all $\cis$-tuples of order ideals of $\minposet$ corresponding to Pl\"ucker monomials of degree $\cis$ and whose cover relations correspond to moving a single box between Young diagrams. We will construct $\sP_{\is}$ using the initial element $(\idealij[1],\dots, \idealij[\cis])$ consisting of the order ideals $\idealij\subideal\minposet$ mentioned above and generating the remaining elements by moving boxes in all possible ways from $\idealij[l]$ to $\idealij[m]$ for $l<m$. Letting $L_d\subset\sP_{\is}$ denote the tuples $(\ideal_1,\dots, \ideal_{\cis})$ obtained using $d$ such moves starting from $(\idealij[1],\dots, \idealij[\cis])$, we establish in Theorem \ref{thm: phi_cancellation} that $\phiGLS[\is]$ is equal on the torus $\opendunim$ to
\begin{equation}
\cD_{\is} = \sum_{d\ge 0} (-1)^d \sum_{(\ideal_1,\dots, \ideal_{\cis})\in L_d} p_{\ideal_1}\dots p_{\ideal_{\cis}},
\label{eq:intro_phiGLS}
\end{equation}
and we obtain the full equality of this expression on $\cmX$ via Remark \ref{rem:equality_of_functions_on_dunimP}. It will follow that for all but four cases, i.e. when $\cis\le 2$, $\sP_\is$ either consists of a single element or else can be identified with the Bruhat order on weight spaces in a minuscule representation; see Remark \ref{rem:poset_is_rep}.

Using the $\cD_{\is}$, we define our mirror space $\X_\can$. For convenience, we set $\cD_0=p_{\varnothing}$ and $\cD_k=p_{\minposet}$. Then letting $D_{\is} = \{\cD_{\is}=0\}$ for each $0\le \is \le n$, we set $\X_\can$ to be the complement of the divisor $D_{\ac}=\sum_{\is=0}^n D_{\is}$ formed by the union of the $D_{\is}$ inside of $\cmX$ 
\[
\X_\can = \cmX\setminus D_{\ac}.
\]
As noted below in Remark \ref{rem:anticanonical}, it will follow from Theorem \ref{thm: phi_cancellation} that $D_{\ac}$ anticanonical.

We will construct our superpotential as a sum of $n+1$ rational functions in the Pl\"ucker coordinates of $\cmX$. The denominators of these rational functions will be given by the $\cD_{\is}$ for $0\le \is\le n$. In order to determine the corresponding numerators, we define for each $j\in[n]$ the derivation $\delta_j:\mathbb{C}[\cmX]\rightarrow\mathbb{C}[\cmX]$ which acts on Pl\"ucker coordinates labeled by Young diagrams as
\begin{equation}
    \label{eq:derivation_young_diagrams}
    \delta_j(p_\ideal) = \begin{cases} p_{\ideal^{+j}}, & \text{$\ideal^{+j}$ is a valid Young diagram for $\cmX$,} \\ 0, & \textrm{otherwise,}\end{cases}
\end{equation}
where $\ideal^{+j}$ is the result of adding a box labeled $s_j$ to the Young diagram $\ideal$ if the result is a valid Young diagram. (Recall from above that Young diagrams for $\cmX$ are naturally equipped with a labeling by simple reflections in the Weyl group of $\dG$.) We extend $\de_j$ to polynomials in the Pl\"ucker coordinates as a derivation via the product rule.

For each $\is\in[n]\setminus\{k\}$, the numerator of the term of our superpotential with the denominator $\cD_{\is}$ will be given by $\deis(\cD_{\is})$, where we recall that $i_1=\DSi[k](\is)$. For the term of our superpotential with denominator $\cD_0=p_{\varnothing}$, the corresponding numerator will be given by $\Yboxdim{3pt}\delta_k(\cD_0)=\p_{\yng(1)}$. In Section \ref{sec:quantum}, we address the numerator of the ``quantum'' term of our superpotential which has denominator $\cD_k$. Our results in Section \ref{sec:denominators} show that $\cD_k$ is equal to the Pl\"ucker coordinate $p_{\minposet}$ and we show in Corollary \ref{cor:quantum_term} that the corresponding numerator is the Pl\"ucker coordinate $p_{\idealPrime}$ where $\idealPrime\subideal\minposet$ is order ideal isomorphic to the \emph{complement} of the largest order ideal containing \emph{exactly} one element labeled $s_k$. Equivalently, $\idealPrime$ is the ideal corresponding to the minimal coset representative $\wPrime=\wP(\wPPrime)^{-1}\in\cosets$, where $\wPPrime\in\cosets$ is the minimal coset representative of $\wop s_k\weylp$. Using these descriptions with $\cD_i$ as in \eqref{eq:intro_phiGLS} above, we obtain the superpotential 
\[
\Yboxdim{3pt}
\pot_\can
= \frac{\p_{\yng(1)}}{\p_\varnothing} + q\frac{\p_{\idealPrime}}{\p_{\minposet}} + \sum_{\is\in[n]\setminus\{k\}} \frac{\de_{\DSi[k](\is)}(\cD_{\is})}{\cD_{\is}}.
\]

Before stating the full extent of our results, we will illustrate our models in three well-known examples: quadrics, Lagrangian and orthogonal Grassmannians. First, we consider the Lagrangian Grassmannian $\X=\LG(4,8)=\Sp_8/\P_4=\LGC_4^\SC/\P_4$, describing Lagrangian subspaces in the standard eight-dimensional, complex symplectic vector space $\C^8$. The Langlands dual homogeneous space is $\cmX=\OG(4,9)=\dP_4\backslash\PSO_9 = \dP_4\backslash\LGB_4^\AD$. This space yields the following minuscule poset $\minposet$, containing the maximal order ideal $\idealPPrime$ with exactly one element labeled $s_4$, and with complement $\idealPrime$:
\[\Yboxdim{7pt}
\minposet = \raisebox{-10pt}{\tiny$\young(4,34,234,1234)$},
\qquad
\idealPPrime = \raisebox{-10pt}{\tiny$\young(4,3,2,1)$}
\qand
\idealPrime = \raisebox{-3pt}{\tiny$\young(4,34,234)$},
\]
where we drew the minuscule poset with the \emph{indices} $i$ instead of the \emph{labels} $s_i$, cf.~Section \ref{ssec:Combinatorics_of_minuscule_posets}. This already determines $\cD_0$ and $\cD_4$; for the remaining terms $\is\in\{1,2,3\}$, we find that the corresponding simple roots have coefficient $\cis=\llan\dfwt,\sro\rran=2$ in the highest root, meaning that $\cD_\is$ are quadratic polynomials and we need to find the indices $i_1=\DSi[k](\is)$ and $i_2=\DSi[i_1](k)$. Recall that $\DSi$ denotes the (possibly trivial) Dynkin involution of the Dynkin diagram with node $i$ removed and furthermore that the initial order ideals $\idealij$ are then constructed by setting $\idealij[0]=\varnothing$ and letting $\idealij$ be the maximal order ideal obtained by adding boxes with index unequal to $i_j$ to $\idealij[j-1]$. The posets $\sP_\is$ are then obtained by starting with this initial tuple and moving boxes from $\idealij$ to $\idealij[{j'}]$ for $j<j'$ as allowed by the labeling. We find:
\[\Yboxdim{3pt}
\begin{array}{c||c|c|c|c|l}
    \is & i_1 & \idealij[1] & i_2 & \idealij[2] & \cD_\is \\\hline
    \vspace{-1.2em}&&&&\\\hline
     1 & 3 & \raisebox{6pt}{\young(~)} & 4 & \raisebox{-2pt}{\young(~,~,~,~)\vphantom{\young(~,~,~,~,~)}} & p_{\yng(1)}p_{\yng(1,1,1,1)} - p_{\varnothing}p_{\yng(1,2,1,1)} 
     \vphantom{p_{\young(~,~,~,~,~)}}\\\hline
     2 & 2 & \raisebox{3pt}{\young(~,~~)} & 4 & \raisebox{-2pt}{\young(~,~~,~~,~~)\vphantom{\young(~,~,~,~,~)}} & p_{\yng(1,2)}p_{\yng(1,2,2,2)} - p_{\yng(1,1)}p_{\yng(1,2,3,2)} + p_{\yng(1)}p_{\yng(1,2,3,3)} - p_{\varnothing}p_{\yng(1,2,3,4)} 
     \vphantom{p_{\young(~,~,~,~,~)}}\\\hline
     3 & 1 & \raisebox{.5pt}{\young(~,~~,~~~)} & 4 & \raisebox{-2pt}{\young(~,~~,~~~,~~~)\vphantom{\young(~,~,~,~,~)}} & p_{\yng(1,2,3)}p_{\yng(1,2,3,3)} - p_{\yng(1,2,2)}p_{\yng(1,2,3,4)} 
     \vphantom{p_{\young(~,~,~,~,~)}}
\end{array}
\]
This determines the mirror variety as $\mX_\can=\cmX\setminus D_\ac$ for $D_\ac=\sum_{\is=0}^4 D_\is$ the anticanonical divisor defined by $D_\is=\{\cD_\is=0\}$. We now act by the derivations $\deis$ on each of the $\cD_\is$ with $\is\neq 4$ (where $\is=4$ is the quantum term), which in terms of order ideals adds a box with index $i_1$ if allowed by $\minposet$ and maps the Pl\"ucker coordinate to $0$ otherwise; in conclusion, we find:
\begin{align*}
    \pot_\can &= \frac{\delta_4(\cD_0)}{\cD_0} + \frac{\delta_3(\cD_1)}{\cD_1} + \frac{\delta_2(\cD_2)}{\cD_2} + \frac{\delta_1(\cD_3)}{\cD_3} + q\frac{p_{\idealPrime}}{p_{\minposet}}\\
    &= \Yboxdim{3pt}\frac{p_{\yng(1)}}{p_{\varnothing}} 
    + \frac{p_{\yng(1,1)}p_{\yng(1,1,1,1)} - p_{\varnothing}p_{\yng(1,2,2,1)}}
        {p_{\yng(1)}p_{\yng(1,1,1,1)} - p_{\varnothing}p_{\yng(1,2,1,1)}} 
    + \frac{p_{\yng(1,2,1)}p_{\yng(1,2,2,2)} - p_{\yng(1,1,1)}p_{\yng(1,2,3,2)}}
        {p_{\yng(1,2)}p_{\yng(1,2,2,2)} - p_{\yng(1,1)}p_{\yng(1,2,3,2)} + p_{\yng(1)}p_{\yng(1,2,3,3)} - p_{\varnothing}p_{\yng(1,2,3,4)}} 
    + \frac{p_{\yng(1,2,3,1)}p_{\yng(1,2,3,3)} - p_{\yng(1,2,2,1)}p_{\yng(1,2,3,4)}}
        {p_{\yng(1,2,3)}p_{\yng(1,2,3,3)} - p_{\yng(1,2,2)}p_{\yng(1,2,3,4)}} 
    + q\frac{p_{\yng(1,2,3)}}{p_{\yng(1,2,3,4)}}.
\end{align*}

We will describe the model for the eight-dimensional quadric $\X=Q_{8}=\Spin_{10}/P_1=\LGD_4^\SC/\P_1$ more succinctly. We have $\cmX=Q_8=\dP_1\backslash\PSO_{10}=\dP_1\backslash\LGD_{4}^\AD$ with $\cis=1$ for $\is\in\{1,3,4\}$ and $\cis=2$ for $\is=2$, giving us
\[
\minposet = \Yboxdim{7pt}\raisebox{-3pt}{\tiny\young(123,:421)},
\quad
\idealPPrime=\raisebox{-3pt}{\tiny\young(123,:42)}
\qand
\idealPrime=\raisebox{3.5pt}{\tiny\young(1)}.
\] With this, we obtained the following data for the potential: \[
\Yboxdim{3pt}
\begin{array}{c||c|c|c|c|l}
    \is & i_1 & \idealij[1] & i_2 & \idealij[2] & \cD_\is \\\hline
    \vspace{-1.2em}&&&&\\\hline
     2 & 2 & \raisebox{2pt}{\young(~)} & 4 & \raisebox{0pt}{\young(~~~,:~~)\vphantom{\young(~,~,~)}} & p_{\yng(1)}p_{\young(~~~,:~~)} - p_{\varnothing}p_{\young(~~~,:~~~)} 
     \vphantom{p_{\young(~,~,~)}}\\\hline
     3 & 4 & \raisebox{2pt}{\young(~~~)}\vphantom{\young(~,~)} & \text{n/a} & \text{n/a} & p_{\young(~~~)}\vphantom{p_{\young(~,~)}}\\\hline
     4 & 3 & \raisebox{0pt}{\young(~~,:~)}\vphantom{\young(~,~,~)} & \text{n/a} & \text{n/a} & p_{\young(~~,:~)}
     \vphantom{p_{\young(~,~,~)}}
\end{array}
\]
This determines $\mX_\can$ and applying the respective $\deis$ to each $\cD_\is$ yields the potential
\begin{align*}
    \pot_\can &= \frac{\deis[1](\cD_0)}{\cD_0} + q\frac{p_{\idealPrime}}{p_{\minposet}} + \frac{\delta_2(\cD_2)}{\cD_2} + \frac{\delta_4(\cD_3)}{\cD_3} + \frac{\delta_3(\cD_4)}{\cD_4}\\
    &= \Yboxdim{3pt}\frac{p_{\yng(1)}}{p_{\varnothing}} + q\frac{p_{\yng(1)}}{p_{\young(~~~,:~~~)}} + \frac{p_{\yng(2)}p_{\young(~~~,:~~)}}{p_{\yng(1)}p_{\young(~~~,:~~)} - p_{\varnothing}p_{\young(~~~,:~~~)}} + \frac{p_{\young(~~~,:~)}}{p_{\yng(3)}} + \frac{p_{\young(~~~,:~)}}{p_{\young(~~,:~)}}.
\end{align*}
These two LG models can be identified with the ones presented in \cite{Pech_Rietsch_Lagrangian_Grassmannians, Pech_Rietsch_Williams_Quadrics} up to identification of our indexing set for Pl\"ucker coordinates with theirs, cf.~Appendix \ref{app:type_dependent_LGs}.

In the previous two examples, the posets $\sP_\is$ obtained from moving boxes from $(\idealij[1],\idealij[2])$ were both chains; to illustrate that this set can in fact contain higher dimensional structures---what we call ``cubes'' (cf.~Definition \ref{df:restricted_subposet_order})---as well as to show the convenience of the combinatorics in larger examples, we will finally illustrate our model with the orthogonal Grassmannian $\OG(8,16)=\Spin_{16}/\P_8=\LGD_8^\SC/\P_8$. The Langlands dual homogeneous space is in this case an orthogonal Grassmannian itself, namely $\cmX=\OG(8,16)=\dP_8\backslash\PSO_{16}=\dP_8\backslash\LGD_8^\AD$. Here we find
\[
\Yboxdim{7pt}
\minposet = \raisebox{-21pt}{\tiny\young(8,67,568,4567,34568,234567,1234568)},
\quad
\idealPPrime = \raisebox{-21pt}{\tiny\young(8,67,56,45,34,23,12)}
\qand
\idealPrime = \raisebox{-7pt}{\tiny\young(8,67,568,4567,34568)},
\]
and we have $\cis=1$ for $\is\in\{1,7,8\}$ while $\cis=2$ for $\is\in\{2,3,4,5,6\}$, yielding the following data for the potential:
\[
\Yboxdim{3pt}
\begin{array}{c||c|c|c|c|l}
    \is & i_1 & \idealij[1] & i_2 & \idealij[2] & \cD_\is \\\hline
    \vspace{-1.2em}&&&&\\\hline
     1 & 7 & \raisebox{-6pt}{\yng(1,1,1,1,1,1,1)\vphantom{\yng(1,1,1,1,1,1,1,1)}} & \text{n/a} & \text{n/a} & 
     \raisebox{8pt}{$p_{\yng(1,1,1,1,1,1,1)}
     \vphantom{p_{\yng(1,1,1,1,1,1,1,1)}}$}
     \\\hline
     2 & 6 & \raisebox{10pt}{\yng(1)} & 8 & \raisebox{-6pt}{\yng(1,2,2,2,2,2,2)\vphantom{\yng(1,1,1,1,1,1,1,1)}} & 
     \raisebox{8pt}{$p_{\yng(1)}p_{\yng(1,2,2,2,2,2,2)} - p_{\varnothing}p_{\yng(1,2,3,2,2,2,2)}
     \vphantom{p_{\yng(1,1,1,1,1,1,1,1)}}$}\\\hline
     3 & 5 & \raisebox{7pt}{\yng(1,2)} & 7 & \raisebox{-6pt}{\yng(1,2,3,3,3,3,3)\vphantom{\yng(1,1,1,1,1,1,1,1)}} & 
     \raisebox{8pt}{$p_{\yng(1,2)}p_{\yng(1,2,3,3,3,3,3)} - p_{\yng(1,1)}p_{\yng(1,2,3,4,3,3,3)} + p_{\yng(1)}p_{\yng(1,2,3,4,4,3,3)} - p_{\varnothing}p_{\yng(1,2,3,4,5,3,3)}
     \vphantom{p_{\yng(1,1,1,1,1,1,1,1)}}$}\\\hline
     4 & 4 & \raisebox{5pt}{\yng(1,2,3)} & 8 & \raisebox{-6pt}{\yng(1,2,3,4,4,4,4)\vphantom{\yng(1,1,1,1,1,1,1,1)}}\raisebox{-9pt}{\vphantom{\yng(1,1,1,1,1,1,1,1)}} & \text{see below} \\\hline
     5 & 3 & \raisebox{2pt}{\yng(1,2,3,4)} & 7 & \raisebox{-6pt}{\yng(1,2,3,4,5,5,5)\vphantom{\yng(1,1,1,1,1,1,1,1)}} & 
     \raisebox{8pt}{$p_{\yng(1,2,3,4)}p_{\yng(1,2,3,4,5,5,5)} - p_{\yng(1,2,3,3)}p_{\yng(1,2,3,4,5,6,5)} + p_{\yng(1,2,3,2)}p_{\yng(1,2,3,4,5,6,6)} - p_{\yng(1,2,2,2)}p_{\yng(1,2,3,4,5,6,7)}
     \vphantom{p_{\yng(1,1,1,1,1,1,1,1)}}$}\\\hline
     6 & 2 & \raisebox{-1pt}{\yng(1,2,3,4,5)} & 8 & \raisebox{-6pt}{\yng(1,2,3,4,5,6,6)\vphantom{\yng(1,1,1,1,1,1,1,1)}} & 
     \raisebox{8pt}{$p_{\yng(1,2,3,4,5)}p_{\yng(1,2,3,4,5,6,6)} - p_{\yng(1,2,3,4,4)}p_{\yng(1,2,3,4,5,6,7)}
     \vphantom{p_{\yng(1,1,1,1,1,1,1,1)}}$}\\\hline
     7 & 1 & \raisebox{-3pt}{\yng(1,2,3,4,5,6)\vphantom{\yng(1,1,1,1,1,1,1)}} & \text{n/a} & \text{n/a} & 
     \raisebox{8pt}{$p_{\yng(1,2,3,4,5,6)}
     \vphantom{p_{\yng(1,1,1,1,1,1,1)}}$}
\end{array}
\]
where we have
\[
\cD_4 = \Yboxdim{3pt}
p_{\yng(1,2,3)}p_{\yng(1,2,3,4,4,4,4)} 
- p_{\yng(1,2,2)}p_{\yng(1,2,3,4,5,4,4)} 
+ p_{\yng(1,2,1)}p_{\yng(1,2,3,4,5,5,4)} 
- \Bigl(p_{\yng(1,1,1)}p_{\yng(1,2,3,4,5,6,4)} + p_{\yng(1,2)}p_{\yng(1,2,3,4,5,5,5)} \Bigr) + p_{\yng(1,1)}p_{\yng(1,2,3,4,5,6,5)} - p_{\yng(1)}p_{\yng(1,2,3,4,5,6,6)} + p_{\varnothing}p_{\yng(1,2,3,4,5,6,7)}.
\]
The terms in parentheses indicate where the poset $\sP_4$ becomes two-dimensional: we can move two boxes in the preceding pair of order ideals resulting in two terms with equal sign. To finish up the LG model, we note that we now have all data for $\mX_\can=\cmX\setminus D_\ac$, and applying the respective $\deis$ to the $\cD_\is$ yields the superpotential:
\ali{
\pot_\can &= \Yboxdim{3pt}
\frac{p_{\yng(1,2,1,1,1,1,1)}}{p_{\yng(1,1,1,1,1,1,1)}}
+\frac{
    p_{\yng(1,1)}p_{\yng(1,2,2,2,2,2,2)} 
    - p_{\varnothing}p_{\yng(1,2,3,3,2,2,2)}
    }{
    p_{\yng(1)}p_{\yng(1,2,2,2,2,2,2)} 
    - p_{\varnothing}p_{\yng(1,2,3,2,2,2,2)}} 
+ \frac{
    p_{\yng(1,2,1)}p_{\yng(1,2,3,3,3,3,3)} 
    - p_{\yng(1,1,1)}p_{\yng(1,2,3,4,3,3,3)} 
    + p_{\yng(1)}p_{\yng(1,2,3,4,4,4,3)} 
    - p_{\varnothing}p_{\yng(1,2,3,4,5,4,3)}
    }{
    p_{\yng(1,2)}p_{\yng(1,2,3,3,3,3,3)} 
    - p_{\yng(1,1)}p_{\yng(1,2,3,4,3,3,3)} 
    + p_{\yng(1)}p_{\yng(1,2,3,4,4,3,3)} 
    - p_{\varnothing}p_{\yng(1,2,3,4,5,3,3)}}
\\&\quad\Yboxdim{3pt}
+ \frac{p_{\yng(1,2,3,1)}p_{\yng(1,2,3,4,4,4,4)} 
    - p_{\yng(1,2,2,1)}p_{\yng(1,2,3,4,5,4,4)} 
    + p_{\yng(1,2,1,1)}p_{\yng(1,2,3,4,5,5,4)} 
    - p_{\yng(1,1,1,1)}p_{\yng(1,2,3,4,5,6,4)}
    }{
    p_{\yng(1,2,3)}p_{\yng(1,2,3,4,4,4,4)} 
    - p_{\yng(1,2,2)}p_{\yng(1,2,3,4,5,4,4)} 
    + p_{\yng(1,2,1)}p_{\yng(1,2,3,4,5,5,4)} 
    - p_{\yng(1,1,1)}p_{\yng(1,2,3,4,5,6,4)} 
    - p_{\yng(1,2)}p_{\yng(1,2,3,4,5,5,5)} 
    + p_{\yng(1,1)}p_{\yng(1,2,3,4,5,6,5)} 
    - p_{\yng(1)}p_{\yng(1,2,3,4,5,6,6)} 
    + p_{\varnothing}p_{\yng(1,2,3,4,5,6,7)}}
\\&\quad\Yboxdim{3pt} 
+ \frac{
    p_{\yng(1,2,3,4,1)}p_{\yng(1,2,3,4,5,5,5)} - p_{\yng(1,2,3,3,1)}p_{\yng(1,2,3,4,5,6,5)} + p_{\yng(1,2,3,2,1)}p_{\yng(1,2,3,4,5,6,6)} - p_{\yng(1,2,2,2,1)}p_{\yng(1,2,3,4,5,6,7)}
    }{
    p_{\yng(1,2,3,4)}p_{\yng(1,2,3,4,5,5,5)} 
    - p_{\yng(1,2,3,3)}p_{\yng(1,2,3,4,5,6,5)} 
    + p_{\yng(1,2,3,2)}p_{\yng(1,2,3,4,5,6,6)} 
    - p_{\yng(1,2,2,2)}p_{\yng(1,2,3,4,5,6,7)}}
+ \frac{
    p_{\yng(1,2,3,4,5,1)}p_{\yng(1,2,3,4,5,6,6)} 
    - p_{\yng(1,2,3,4,4,1)}p_{\yng(1,2,3,4,5,6,7)}
}{
    p_{\yng(1,2,3,4,5)}p_{\yng(1,2,3,4,5,6,6)}
    - p_{\yng(1,2,3,4,4)}p_{\yng(1,2,3,4,5,6,7)}
}
+ q\frac{p_{\yng(1,2,3,4,5)}}{p_{\yng(1,2,3,4,5,6,7)}}
}

We note that although our expressions for $\pot_\can$ seem to distinguish the quantum and $\cD_0$ terms, it is possible to give a much more uniform expression for our $\pot_\can$ using a derivation defined via the quantum product instead. Unfortunately, the expressions obtained using the quantum derivation have many extraneous cancellations which make $\pot_\can$ much more difficult to compute compared to our current presentation. For more details on the quantum derivation, see Remark \ref{rem:quantum derivation}. Furthermore, our current expression makes clear which term of $\pot_\can$ corresponds to which component of the divisor $D_{\ac}$, which will be useful for our comparisons with Rietsch's Lie-theoretic LG models and also was a useful feature in the computations of Rietsch and Williams in \cite{Rietsch_Williams_NO_bodies_cluster_duality_and_mirror_symmetry_for_Grassmannians}. For more details on this connection, see Remark \ref{rem:cluster}. 

With our candidate LG-models in hand, we are now ready to state our main theorem. 
\begin{thm} 
\label{thm:main_theorem}
Let $\X=\G/\P_k$ be a cominuscule homogeneous space for $\G$ a simply-connected linear algebraic group of rank $n$, and let $\cmX=\dP_k\backslash\dG$ be the associated ``Langlands dual'' homogeneous space, with the set of projective coordinates $\{\p_\ideal~|~\ideal\subideal\minposet\}$ known as \emph{(generalized) Pl\"ucker coordinates} obtained from the natural embedding $\cmX\hookrightarrow\PPluckerDualRep$ with $\ideal\subideal\minposet$ order ideals of the corresponding minuscule poset. Then:
\begin{romanize}
    \item The $\phiGLS[\is]$, for $\is\in [n]\setminus \{k\}$, are equal on $\cmX$ to the $\cD_{\is}$ as given by equation \eqref{eq:intro_phiGLS} as well as $\phiGLS[0]=\cD_0=\p_\varnothing$ and $\phiGLS[k] = \cD_k=p_{\minposet}$ on $\cmX$ and these polynomials form the set of frozen variables for the cluster structure on $\C[\cmX]$ studied in \cite{GLS_Kac_Moody_groups_and_cluster_algebras}
    \item The divisor $D_{\ac} = \sum_{\is=0}^n D_{\is}$ is anti-canonical, where $D_{\is}=\{\cD_{\is}=0\}$.
    \item For $\is\neq0,k$, the derivations $\de_j$ defined combinatorially by \eqref{eq:derivation_young_diagrams} yield homogeneous polynomials $\de_{\DSi[k](\is)}(\cD_{\is})$ in Pl\"ucker coordinates, where $\DSi[k]$ is the Dynkin involution of the Dynkin diagram with the vertex $k$ removed, and the rational functions
    \[
    \pot_0 = \frac{\delta_k(\cD_0)}{\cD_0} = \frac{p_{\Yboxdim{3pt}\yng(1)}}{p_\varnothing}
    \qand
    \pot_{\is} = \frac{\de_{\DSi[k](\is)}(\cD_{\is})}{\cD_{\is}}
    \qfor \is\neq0,k,
    \]
    satisfy the following equations on the dense algebraic torus $\dP_k\backslash\opendunim\subset\cmX$
    \[
    \pot_0=\sum_{\pb\in\minposet:~\indx(\pb)=k} a_{\pb} \qand \pot_{\is} = \sum_{\pb\in\minposet:~\indx(\pb)=\DSi[k](\is)} a_\pb
    \qfor \is\neq k,
    \]
    where $a_{\pb}$ denote the toric coordinates $\{a_\pb~|~\pb\in\minposet\}$ of $\dP_k\backslash\opendunim$.
    \item For $\is=k$, we find that $\phiGLS[k]=\cD_k=\p_{\minposet}$ on $\cmX$, and define the rational function which we refer to as the \emph{quantum term} by
    \[
    \pot_k = q\frac{p_{\idealPrime}}{p_{\minposet}} \quad\text{for a formal parameter $q\in\C^*$,}
    \]
    where $\idealPrime$ is the order ideal corresponding to $\wPrime=\wP(\wPPrime)^{-1}\in\cosets$ defined by the minimal coset representatives $\wP$ and $\wPPrime$ of the cosets $\wo\weylp$ and $\wop s_k\weylp$, with $\wo\in\weyl$ and $\wop\in\weylp=\lan s_i~|~i\neq k\ran$ the respective longest Weyl group elements. Moreover, $\pot_k$ satisfies on the torus $\dP_k\backslash\opendunim$ the equation
    \[
    \pot_k = q\frac{\sum_{\iota:\idealPrime\hookrightarrow\minposet} a_{\iota(\idealPrime)}}{a_{\minposet}},
    \]
    where $a_{\sS}=\prod_{\pb\in\sS}a_\pb$ for a subset $\sS\subset\minposet$ and the sum is over labeled order embeddings of the poset $\idealPrime$ into the minuscule poset $\minposet$.
    \item The pair $(\mX_\can,\pot_\can)$ consisting of the mirror variety and superpotential
    \[
    \mX_\can = \cmX\setminus D_{\ac} 
    \qand
    \pot_\can = \sum_{\is=0}^n \pot_{\is}
    \]
    forms an LG-model for $\X$, which we refer to as the \emph{canonical mirror model}. That is, we have the isomorphism
    \[
    qH^*(\X)_{(q)} \cong \C[\mX_\can\times\C^*_q]/\Jac{\can}.
    \] 
    Furthermore, the canonical mirror model $(\mX_\can, \pot_\can)$ is isomorphic to the Lie-theoretic mirror model $(\mX_{\Lie},\pot_{\Lie})$ defined in \cite{Rietsch_Mirror_Construction}; moreover, on the torus $\dP_k\backslash\opendunim$ the LG-model restricts to a Laurent polynomial model isomorphic to the one defined in \cite{Spacek_LP_LG_models}.
\end{romanize}
\end{thm}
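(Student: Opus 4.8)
Statements (i)--(iv) are essentially a repackaging of the results proved in Sections \ref{sec:toric-monomials}--\ref{sec:quantum}. For (i), the equalities $\phiGLS[0]=\cD_0=p_\varnothing$ and $\phiGLS[k]=\cD_k=p_{\minposet}$ are the direct restriction computations recorded in the Introduction, while $\phiGLS[\is]=\cD_\is$ for $\is\in[n]\setminus\{k\}$ follows from Theorem \ref{thm: phi_cancellation}, which establishes this equality on the torus $\opendunim$, together with Remark \ref{rem:equality_of_functions_on_dunimP}, which promotes it to an equality of functions on $\cmX$. That the $\phiGLS[\is]$ form the set of frozen variables for the Gei\ss--Leclerc--Schr\"oer cluster structure on $\C[\cmX]$ is then obtained by matching the defining generalized minors $\minor_{\wo\cdot\dfwt[\is],\wop\cdot\dfwt[\is]}$ with the frozen cluster variables of \cite{GLS_Kac_Moody_groups_and_cluster_algebras, GLS_partial_flag_varieties_and_preprojective_algebras}. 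Statement (ii) is Remark \ref{rem:anticanonical}: once (i) gives $\cD_\is=\phiGLS[\is]$ and the degrees of the $\cD_\is$ are read off from \eqref{eq:intro_phiGLS}, the divisor $D_\ac=\sum_{\is=0}^n D_\is$ is seen to be an effective divisor of anticanonical degree on the minuscule variety $\cmX$, hence anticanonical.

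For (iii), the key inputs are Theorem \ref{thm:toric_monomial_phi}, which together with (i) evaluates $\cD_\is|_{\opendunim}$ as an explicit monomial in the toric coordinates $\{a_\pb\}$, and the analysis of the derivations $\de_j$ in Sections \ref{sec:toric-monomials}--\ref{sec:denominators}: these show that $\de_{\DSi[k](\is)}(\cD_\is)$ is a homogeneous polynomial in the Pl\"ucker coordinates (immediate from the combinatorial definition \eqref{eq:derivation_young_diagrams} and the product rule, since every monomial of $\cD_\is$ has the same total number of boxes and $\de_j$ adds exactly one) whose restriction to $\opendunim$ equals $\cD_\is|_{\opendunim}\cdot\sum_{\pb\in\minposet:\,\indx(\pb)=\DSi[k](\is)}a_\pb$; this yields the stated expression for $\pot_\is$, and the formula for $\pot_0$ is the same computation with $\cD_0=p_\varnothing$ and the derivation $\de_k$. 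Statement (iv) combines Corollary \ref{cor:quantum_term}, which identifies the quantum numerator as the Pl\"ucker coordinate $p_{\idealPrime}$ with $\idealPrime$ as described, with the toric evaluations $p_{\idealPrime}|_{\opendunim}=\sum_{\iota:\idealPrime\hookrightarrow\minposet}a_{\iota(\idealPrime)}$ and $\cD_k|_{\opendunim}=p_{\minposet}|_{\opendunim}=a_{\minposet}$.

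The substance of the theorem is (v), which we propose to establish through the dense torus $\dP_k\backslash\opendunim$. First, the mirror varieties coincide: in the cominuscule setting Rietsch's mirror variety $\mX_\Lie\subset\cmX$ is the locus on which all the frozen generalized minors $\phiGLS[\is]$, $0\le\is\le n$, are nonzero---this is the mechanism underlying the type-specific identifications of \cite{Marsh_Rietsch_Grassmannians, Pech_Rietsch_Lagrangian_Grassmannians, Pech_Rietsch_Odd_Quadrics, Pech_Rietsch_Williams_Quadrics}---so by (i) we obtain $\mX_\Lie=\cmX\setminus\bigcup_\is\{\cD_\is=0\}=\mX_\can$. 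Since each $\cD_\is$ restricts to a nonvanishing monomial on $\dP_k\backslash\opendunim$, this torus lies inside $\mX_\can=\mX_\Lie$ and is dense there. Next, the superpotentials agree. Both $\pot_\can$ and $\pot_\Lie$ are regular functions on the irreducible variety $\mX_\can=\mX_\Lie$ ($\pot_\can$ because $\mX_\can$ avoids every $\{\cD_\is=0\}$, $\pot_\Lie$ by Rietsch's construction), so it suffices to verify $\pot_\can=\pot_\Lie$ on $\dP_k\backslash\opendunim$. By (iii) and (iv), $\pot_\can|_{\dP_k\backslash\opendunim}$ is the explicit Laurent polynomial obtained by summing the right-hand sides in (iii) and (iv); after identifying our ``moving-boxes'' toric coordinates $\{a_\pb\}$ with the coordinates used in \cite{Spacek_LP_LG_models}, this is precisely the Laurent polynomial LG-model of \cite{Spacek_LP_LG_models}, which that reference realizes as the restriction of $\pot_\Lie$ to $\dP_k\backslash\opendunim$. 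Hence $\pot_\can=\pot_\Lie$ on the torus, and therefore on all of $\mX_\can=\mX_\Lie$, and the last two assertions of (v) follow at once. Finally, Rietsch's theorem \cite{Rietsch_Mirror_Construction} provides the mirror isomorphism for $(\mX_\Lie,\pot_\Lie)$, which transports along $(\mX_\can,\pot_\can)\cong(\mX_\Lie,\pot_\Lie)$ to the asserted isomorphism $qH^*(\X)_{(q)}\cong\C[\mX_\can\times\C^*_q]/\Jac{\can}$.

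The main obstacle is the comparison in (v). One must make Rietsch's Lie-theoretic datum explicit enough---organized, as in the Introduction, by the components of $D_\ac$---to see both that its frozen denominators are exactly the $\phiGLS[\is]$ and that its $\is$-th numerator restricts on $\dP_k\backslash\opendunim$ to the same sum of toric monomials as $\de_{\DSi[k](\is)}(\cD_\is)$, so that the term-by-term comparison goes through; and one must carry out faithfully the change of torus coordinates matching our $\{a_\pb\}$ with those of \cite{Spacek_LP_LG_models} so that the two Laurent polynomial models become literally equal. One could instead try to compare $\pot_\can$ and $\pot_\Lie$ directly as rational functions on $\cmX$---part (i) already matches the denominators---but this would require identifying $\de_{\DSi[k](\is)}(\cD_\is)$ with Rietsch's numerator minor on the nose, which the torus argument sidesteps. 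Granting Theorems \ref{thm:toric_monomial_phi} and \ref{thm: phi_cancellation} and Corollary \ref{cor:quantum_term}, the remainder is bookkeeping.
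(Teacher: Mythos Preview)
Your overall strategy matches the paper's: parts (i)--(iv) are repackagings of the results proved in Sections~\ref{sec:toric-monomials}--\ref{sec:quantum}, and part (v) is established by identifying the mirror spaces and then comparing the two superpotentials on the dense torus via the Laurent polynomial model of \cite{Spacek_LP_LG_models}. Two points deserve correction.

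First, a minor citation slip: the toric computation of $\de_{\DSi[k](\is)}(\cD_\is)$ that you invoke for (iii) is not in Sections~\ref{sec:toric-monomials}--\ref{sec:denominators} but in Section~\ref{sec:numerators}, specifically Theorem~\ref{thm:derivation_cancellation}. This is the result giving $\de_{i_1}(\cD_\is)|_{\opendunim}=\cD_\is|_{\opendunim}\cdot\sum_{\indx(\pb)=i_1}a_\pb$, and it is the substantive input for (iii).

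Second, and more importantly, your treatment of the identification $\mX_\Lie=\mX_\can$ is too quick. In the paper's setup $\mX_\Lie$ is the open Richardson variety $\dRichard_{\wop,\wo}\subset\dG/\dborelm$, not a subset of $\cmX=\dP_k\backslash\dG$; the assertion ``$\mX_\Lie\subset\cmX$ is the nonvanishing locus of the frozen minors'' is precisely what has to be proved. The paper does this in Theorem~\ref{thm: mirror_space} via the chain $\mX_\Lie\xrightarrow{\varphi}\dunimP\xrightarrow{\pi}\mX_\can$, where $\varphi$ comes from Lemma~\ref{lem:decomps_iso_dunimp_times_invdtorus} and the map $\RisoZ$, and the nontrivial step is that the quotient $\pi$ restricts to an isomorphism $\dunimP\cong\mX_\can$. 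That last step is argued at the level of coordinate rings using the GLS presentation of $\C[\dunimP]$ (Proposition~\ref{prop:coord_ring_dunimP_in_posets}) together with (i). Your appeal to ``the mechanism underlying the type-specific identifications'' gestures at this, but the point of the paper is to give the argument type-independently, and that argument routes through $\dunimP$ rather than landing directly in $\cmX$. Once Theorem~\ref{thm: mirror_space} is in place, your torus comparison of $\pot_\can$ and $\pot_\Lie$ goes through exactly as you describe and as the paper carries out.
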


In order to show that our canonical LG model agrees with Rietsch's Lie-theoretic LG model, we need to establish an isomorphism between $\mX_\can$ and $\mX_\Lie$ as well as an equality of the superpotentials $\pot_\can$ and $\pot_\Lie$ after pulling back along the isomorphism. Building off of our previous results in \cite{Spacek_Wang_exceptional}, we establish that $\mX_\Lie\cong\mX_\can$ in Theorem \ref{thm: mirror_space} by observing that both spaces are affine varieties and then comparing coordinate rings.

To compare the superpotentials, we consider the restrictions of both superpotentials to the torus $\dP_k\backslash\opendunim$. Although many of our previous results have been established on the torus $\opendunim$, the proof of Theorem \ref{thm: mirror_space} implies (see Remark \ref{rem:equality_of_functions_on_dunimP}) that the quotient map $\pi: \dG\xrightarrow{\sim}\cmX$ restricts to an isomorphism $\opendunim \cong \dP_k\backslash \opendunim$ so that restrictions to $\opendunim$ can be considered interchangeably with restrictions to $\dP_k\backslash\opendunim$. Thus, the first four parts of Theorem \ref{thm:main_theorem} follow immediately from our previous results.  

The final computation we need for this comparison is the restrictions of the numerators $\delta_{\sigma_k(\is)}(\cD_{\is})$ to the torus $\opendunim$. For these computations, we construct in Section \ref{sec:numerators} the poset $\sP_{\is}^+$ by translating the derivation $\delta_{\sigma_k(\is)}$ to elements of $\sP_{\is}$ and then using an analogous approach as in Section \ref{sec:denominators} to study the action of $\delta_{\sigma_k(\is)}$ on $\cD_{\is}$. We show in Theorem \ref{thm:derivation_cancellation} that the restriction to $\opendunim$ of $\delta_{\sigma_k(\is)}(\cD_{\is})$ is given by
\[
\delta_{\sigma_k(\is)}(\cD_{\is})|_{\opendunim} = \cD_{\is}|_{\opendunim}\cdot\left(\tsum[_{\pb\in\minposet:~\indx(\pb)=\sigma_k(\is)}] a_{\pb}\right),
\]
where the sum is over certain toric coordinates corresponding to the simple reflection $s_{\sigma_k(\is)}$. Thus, we obtain on the torus $\opendunim$ the expression
\[
\pot_{\is} = \frac{\delta_{\sigma_k(\is)}(\cD_{\is})}{\cD_{\is}} = \sum_{\pb\in\minposet:~\indx(\pb)=\sigma_k(\is)} a_{\pb}
\]
for each term of our superpotential except the quantum term, which we handle separately in Section \ref{sec:quantum}. Using this computation, we prove the final fifth part of our main Theorem \ref{thm:main_theorem} that for any value of the formal quantum parameter $q$ our canonical superpotential agrees with Rietsch's Lie-theoretic superpotential on the dense torus $\dP_k\backslash \opendunim$ and hence on the entire space $\mX_\can$. From this, we conclude that our canonical mirror models are isomorphic to Rietsch's Lie-theoretic models. Before concluding the introduction, we describe a few features of our LG models as well as interesting connections with other works.

\begin{rem}
\label{rem:cluster}
The $\cD_{\is}$ form the set of frozen variables for the cluster structure on $\cmX$ as given in \cite{GLS_partial_flag_varieties_and_preprojective_algebras} for simply-laced types and extended by \cite{demonet-skew-sym-cluster} to the general case. Note that we have introduced an extra sign and permuted the indices of $\phiGLS$ by $\DS$ compared with these references to simplify notation arising from switching from ``upper'' to ``lower'' unipotent cells. In particular, our superpotentials $\pot_\can$ are regular functions on the open cluster variety $\X_\can$. 

Thus, it would be feasible to use our LG models to attempt a type-independent generalization of the results of Rietsch and Williams relating Newton-Okounkov bodies for Grassmannians to so-called superpotential polytopes for Grassmannians in \cite{Rietsch_Williams_NO_bodies_cluster_duality_and_mirror_symmetry_for_Grassmannians}. Importantly, both the Newton-Okounkov bodies as well as the superpotential polytopes they study depend on the choice of a divisor supposed on $D_{\ac}$ and for the superpotential polytope in particular it was important in their computations to understand the correspondence between terms of the superpotential and components of $D_{\ac}$. 
\end{rem}

\begin{rem}
\label{rem:anticanonical}
For each $\is\in[n]$, each $\cD_{\is}$ is a homogeneous polynomial of degree $\cis=\llan \dfwt[i],\sro\rran$ in the Pl\"ucker coordinates and for $\is=0$, $\cD_0$ is a homogeneous polynomial of degree $1$ in the Pl\"ucker coordinates. Recall that $D_{\is} = \{\cD_{\is}=0\}$, so as a result, the degree of the divisor
\[
D_{\ac}=\sum_{\is=0}^n D_{\is}
\]
is equal to
\[
\sum_{0\le \is\le n} \deg(\phiGLS[\is]) = 1+ \sum_{\is\in[n]} \cis = 1 + \sum_{\is\in[n]} \langle \dfwt[\is],\sro\rangle.
\]
In order to compute the final sum on the right hand side, we list below the highest roots corresponding to each $\G$ with at least one cominuscule fundamental weight as well the value of the sum.
\[
\begin{array}{c|c|c}
    \G & \sro & 1 + \sum_{\is\in[n]} \langle \dfwt[i], \sro\rangle\\ \hline
    \LGA_{n-1} & \sr_1 + \dots + \sr_{n-1} & n\\ 
    \LGB_n & \sr_1 + 2\sr_2 + \dots + 2\sr_n & 2n\\
    \LGC_n & 2\sr_1 + \dots + 2\sr_{n-1} + \sr_n & 2n\\
    \LGD_n & \sr_1 + 2\sr_2 + \dots + 2\sr_{n-2} + \sr_{n-1} + \sr_n & 2n-2\\
    \LGE_6 & \sr_1 + 2\sr_2+ 2\sr_3 + 3\sr_4 + 2\sr_5 + \sr_6 & 12\\
    \LGE_7 & 2\sr_1 + 2\sr_2 + 3\sr_3 + 4\sr_4 + 3\sr_5 + 2\sr_6 + \sr_7 & 18
\end{array}
\]
We now compare the quantities computed above with the indices of the Langlands dual minuscule spaces that arise in our constructions. 
\[
\begin{array}{c|c|c|c|c|c}
\G & \P & \X=\G/\P & \cmX=\dP\backslash\dG & \mathrm{index}(\cmX) & 1 + \sum_{\is\in[n]} \langle \dfwt[i], \sro\rangle\\ \hline
\LGA_{n-1} & \textrm{any }\P_k & \Gr(k,n) & \Gr(n-k,n) & n & n\\
\LGB_n & \P_1 & Q_{2n-1} & \mathbb{P}^{2n-1} & 2n & 2n\\
\LGC_n & \P_n & \LG(n,2n) & \OG(n,2n+1) & 2n & 2n\\
\LGD_n & \P_1 & Q_{2n-2} & Q_{2n-2} & 2n-2 & 2n-2\\
\LGD_n & \P_{n-1}, \P_n & \OG(n,2n) & \OG(n,2n) & 2n-2 & 2n-2\\
\LGE_6 & \P_1, \P_6 & \mathbb{O}\mathbb{P}^2 & \mathbb{O}\mathbb{P}^2 & 12 & 12\\
\LGE_7 & \P_7 & \LGE^{\SC}_7/\P_7 & \LGE^{\SC}_7/\P_7 & 18 & 18
\end{array}
\]
Thus, in every case we obtain the equality 
\[
1 + \tsum[_{\is\in[n]}]\langle \dfwt[i], \sro\rangle = \mathrm{index}(\cmX)
\]
showing that the divisor $D_{\ac}$ is an anticanonical divisor in $\cmX$. As a result, $\mX_{\can}$ is the complement of an anticanonical divisor in $\cmX$ and for this reason, these Pl\"ucker coordinate Landau-Ginzburg models have often been referred to as canonical Landau-Ginzburg models.
\end{rem}
  
\begin{rem}\Yboxdim{3pt}
\label{rem:quantum derivation}
Since $X$ is cominuscule, the geometric Satake correspondence gives a vector space isomorphism between the cohomology ring $H^*(X)$ and the fundamental coweight representation $\dfwtrep[k]$ that we will use to define a uniform ``quantum'' derivation $\Delta$. The geometric Satake correspondence identifies the Pl\"ucker coordinate $\p_{\yng(1)}$ with the Schubert class $\scs_{\yng(1)}\in qH^*(\X)$ of a hyperplane. Now, for an arbitrary Pl\"ucker coordinate $\p$, we denote the corresponding Schubert class by $\scs\in qH^*(\X)$ and consider the quantum product $\scs_{\yng(1)}*_q\scs$ which can be expressed as a linear combination of Schubert classes. By geometric Satake, we interpret this result as a linear combination of Pl\"ucker coordinates and define $\Delta(\p)$ to be equal to this linear combination. In other words, we use geometric Satake to pull the quantum product back to the coordinate ring of $\cmX$; hence, we will simply write $\De(\p)=\p_{\yng(1)}*_q\p$ for this map. We extend $\De$ to polynomials in Pl\"ucker coordinates as a derivation.

In terms of Young diagrams, ignoring any quantum contributions, we can realize $\Delta$ as the ``full'' derivation adding any possible box, whereas $\delta_j$ acts as a ``restricted'' derivation only adding boxes with label $s_j$. Hence, at $q=0$, we can in fact obtain $\Delta$ by the equality $\Delta = \sum_j \delta_j$. For arbitrary $q$, our quantum term $q\frac{p_{\idealPrime}}{p_{\minposet}}$ and Corollary \ref{cor:quantum_term} suggest the following generalization of the rim-hook rule for Grassmannians studied by \cite{Fulton_Woodward}. The role of the rim-hook diagram in arbitrary cominuscule type is played by the (generalized) Young diagram $\idealPPrime\subideal\minposet$ corresponding to the minimal coset representative $w_Ps_kW_P$, which can equivalently be obtained as the unique subdiagram of $\minposet$ containing \emph{exactly} one element labeled $s_k$, see Example \ref{ex:list_of_minposets} for a table of these generalized rim-hooks.

\begin{conj}    
    For any cominuscule homogeneous space and (generalized) Young diagram $\ideal\subideal\minposet$ with corresponding Schubert class $\sigma_\ideal$, the quantum product of $\sigma_\ideal$ with the hyperplane class $\sigma_{\yng(1)}$ is given by
    \[\sigma_\ideal *_q \sigma_{\yng(1)} = q\sigma_{\ideal^-} + \sum_{\ideal^+} \sigma_{\ideal^+},\]
    where $\sigma_{\ideal^-}$ denotes the the Schubert class of the complement of $\idealPPrime$ in $\ideal$ if $\idealPPrime \subset\ideal$ and $0$ otherwise and the sum is over all possible ways $\ideal^+\subideal\minposet$ to add a single box to $\ideal$. 
\end{conj}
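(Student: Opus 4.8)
We expect this conjecture can be proved by identifying its left-hand side with the quantum Chevalley product in $qH^*(\X)$ and evaluating it in the geometric Satake basis $\{\sigma_\ideal\}$ --- the basis matched with the Pl\"ucker coordinates $\{p_\ideal\}$ under Theorem~\ref{thm:main_theorem}, in which $\sigma_{\yng(1)}=p_{\yng(1)}$ is the hyperplane class. The plan is to split $\sigma_\ideal*_q\sigma_{\yng(1)}$ into its $q^0$-part and its quantum correction and treat these separately. There are two natural sources for the needed quantum Chevalley input: the Lie-theoretic formula of Fulton--Woodward for $\G/\P$ \cite{Fulton_Woodward}, and the Jacobi-ring presentation $qH^*(\X)_{(q)}\cong\C[\mX_\can\times\C^*_q]/\Jac{\can}$ of Theorem~\ref{thm:main_theorem}(v); I would develop the first and use the second as a cross-check.

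For the $q^0$-part, the classical Chevalley--Monk formula writes $\sigma_\ideal\cdot\sigma_{\yng(1)}$ as a sum over the Bruhat covers of the minimal coset representative $w_\ideal\in\cosets$ corresponding to $\ideal$, weighted by pairings of $\fwt[k]$ with coroots. In a cominuscule quotient the Bruhat covers are exactly the one-box extensions of $\ideal$ (Proctor, Stembridge), so their targets are precisely the $\ideal^+$; it remains to see that each appears with coefficient $1$. When $\G$ is simply-laced this is immediate from minusculity of $\fwt[k]$. In the two non-simply-laced cominuscule cases ($\LGB_n/\P_1$ and $\LGC_n/\P_n$) the raw Chevalley coefficients can equal $2$, but this is absorbed by the geometric Satake normalization defining the $\sigma_\ideal$ (which simultaneously rescales the parameter $q$): the resulting identity is exactly $\Delta|_{q=0}=\sum_j\delta_j$ as recorded above, and compatibility of this with the ring structure forces the classical structure constants of $\sigma_{\yng(1)}$ into $\{0,1\}$.

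For the quantum correction I would apply the Fulton--Woodward formula, whose $q$-part runs over roots $\gamma$ of $\G$ outside the Levi of $\P$ obeying a codimension-drop condition between $\ell(w_\ideal s_\gamma\weylp)$ and $\ell(w_\ideal\weylp)$, contributing $\sigma_{w_\ideal s_\gamma\weylp}$ with coefficient $\langle\fwt[k],\gamma^\vee\rangle\,q^{\langle\fwt[k],\gamma^\vee\rangle}$. I would then check, uniformly in cominuscule type, that: (i) every $\gamma$ meeting the condition has $\langle\fwt[k],\gamma^\vee\rangle=1$, so all quantum corrections are linear in $q$ --- the cominuscule degree-one phenomenon (Chaput--Manivel--Perrin); in the non-simply-laced cases one must also verify that the roots with pairing $2$ never meet the condition; (ii) for fixed $\ideal$ at most one $\gamma$ contributes, since the codimension drop is maximized by a unique $\gamma$ and any smaller drop forces $\ell(w_\ideal s_\gamma\weylp)>\ell(w_\ideal\weylp)+1$; and (iii) this $\gamma$ contributes exactly when $\idealPPrime\subideal\ideal$, in which case $w_\ideal s_\gamma\weylp$ represents the order ideal $\ideal\setminus\idealPPrime$ with coefficient $1$. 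For (iii) I would build on Corollary~\ref{cor:quantum_term}, which already identifies $\idealPPrime$ with the coset $\wop s_k\weylp$ --- the unique subdiagram of $\minposet$ carrying a single box labelled $s_k$ --- and identifies the quantum numerator of $\pot_\can$ with $p_{\idealPrime}$, $\idealPrime=\minposet\setminus\idealPPrime$; translating this Weyl-group data into the reflection $s_\gamma$ should yield the rim-hook removal $\ideal\mapsto\ideal\setminus\idealPPrime$ of the statement. As an alternative to (ii)--(iii), one can invoke the quantum-to-classical principle for cominuscule spaces: the degree-one invariant $\langle\sigma_{\yng(1)},\sigma_\ideal,\sigma_\mu\rangle_1$ becomes a classical triple intersection on the variety of lines of $\X$, nonzero --- and then equal to $1$ --- exactly when $\mu$ is Poincar\'e dual to $\ideal\setminus\idealPPrime$.

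The main obstacle I anticipate is making (ii)--(iii) genuinely type-independent: matching the abstract reflection $s_\gamma$ produced by Fulton--Woodward with the combinatorial box-removal $\ideal\mapsto\ideal\setminus\idealPPrime$ on $\minposet$, and pinning the coefficient to exactly $1$ and the degree to exactly $1$, without falling back on the type-by-type rim-hook rules in the literature (Bertram--Ciocan-Fontanine--Fulton and Buch--Kresch--Tamvakis in the classical types, case analysis in $\LGE_6,\LGE_7$). Corollary~\ref{cor:quantum_term} already performs the Weyl-theoretic identification of $\idealPPrime$, so the remaining task is bookkeeping linking it to the codimension-drop condition; a secondary subtlety is tracking the non-simply-laced normalization of the $\sigma_\ideal$ (and of $q$) entering the classical part. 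The mirror-theoretic route would instead read the whole formula off $\Jac{\can}$ by differentiating $\pot_\can$ and isolating the single relation carrying the factor $q$ --- whose numerator is $p_{\idealPrime}$ by construction of the quantum term --- but turning a Jacobi relation into a quantum Chevalley coefficient is itself delicate, so I would use it only to confirm the power $q^1$ and the coefficient $1$.
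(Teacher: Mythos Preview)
The paper does not prove this statement: it is explicitly labeled a \emph{conjecture} (inside Remark~\ref{rem:quantum derivation}) and is left open. The paper's contribution is only to \emph{motivate} the formula---via the shape of the quantum term $q\,p_{\idealPrime}/p_{\minposet}$ of $\pot_\can$ and the identification of $\idealPPrime$ in Corollary~\ref{cor:quantum_term}---not to establish it. So there is no ``paper's proof'' for you to be compared against; you are proposing a proof of an open statement, not reproducing or varying an existing argument in the paper.

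That said, your strategy is the natural one and is well-aimed. The quantum Chevalley formula of Fulton--Woodward (or Peterson) is exactly the right engine, and your decomposition into the classical Chevalley--Monk part plus a single degree-one quantum correction is what one expects for cominuscule $\X$. The difficulties you flag are the real ones: making the ``unique contributing $\gamma$'' step and the identification $w_\ideal s_\gamma\weylp \leftrightarrow \ideal\setminus\idealPPrime$ genuinely type-independent, and handling the non-simply-laced normalization of the basis $\{\sigma_\ideal\}$ coming from geometric Satake (this is where coefficients of $2$ in the raw Chevalley formula must be absorbed). One caution: your proposed mirror-side cross-check via $\Jac{\can}$ is circular here, since the paper's isomorphism $qH^*(\X)_{(q)}\cong\C[\mX_\can\times\C^*_q]/\Jac{\can}$ goes through Rietsch's Lie-theoretic model, which itself rests on Peterson's presentation---i.e.\ on the same quantum Chevalley input you are trying to verify. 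Use it for sanity only, not as an independent argument.
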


It is possible to replace the derivations $\delta_j$ in our construction of $\pot_\can$ with the uniform ``quantum'' derivation $\Delta$ and to extend the derivation to the quantum term as well. Letting $\cD = \prod_{\is=0}^n \cD_{\is}$, we obtain the expression
\[\pot_\can = \sum_{\is=0}^n \frac{\Delta(\cD_{\is})}{\cD_{\is}} = \frac{\Delta(\cD)}{\cD}.\]
We note in particular that with this interpretation, the superpotential terms corresponding to $\cD_0$ and $\cD_k$ are no longer distinguished from the others by having different formulas. Since Remark \ref{rem:anticanonical} above shows that the anticanonical divisor $D_{\ac}$ is defined exactly by the vanishing of the regular function $\cD$, we may thus interpret the superpotential $\pot_\can$ in some sense as a ``logarithmic derivative of an anticanonical divisor.''

To give an example of this construction, we consider $\X=\LG(3,6)=\Sp_6/P_3$ the Lagrangian Grassmannian of type $\LGC_3$. In this case, our Theorem \ref{thm:main_theorem} yields the superpotential
\begin{align*}
    \pot_\can &= \frac{\delta_3(\cD_0)}{\cD_0} + \frac{\delta_2(\cD_1)}{\cD_1} + \frac{\delta_1(\cD_2)}{\cD_2} + q\frac{p_{\idealPrime}}{p_{\minposet}}\\
    &= \Yboxdim{3pt}\frac{p_{\yng(1)}}{p_{\varnothing}} + \frac{p_{\yng(1,1)}p_{\yng(1,1,1)} - p_{\varnothing}p_{\yng(1,2,2)}}{p_{\yng(1)}p_{\yng(1,1,1)} - p_{\varnothing}p_{\yng(1,2,1)}} + \frac{p_{\yng(1,2,1)}p_{\yng(1,2,2)} - p_{\yng(1,1,1)}p_{\yng(1,2,3)}}{p_{\yng(1,2)}p_{\yng(1,2,2)} - p_{\yng(1,1)}p_{\yng(1,2,3)}} + q\frac{p_{\yng(1,2)}}{p_{\yng(1,2,3)}}.
\end{align*}

If we instead use the quantum derivation of each denominator $\cD_\is$, we compute
\begin{align*}\Yboxdim{3pt}
    \Delta(\phiGLS[0]) &= p_{\yng(1)}*_q p_{\varnothing} = p_{\yng(1)}\\
    \Delta(\phiGLS[1]) &= \Delta(p_{\yng(1)}p_{\yng(1,1,1)}) - \Delta(p_{\varnothing}p_{\yng(1,2,1)}) = (p_{\yng(1)}*_q p_{\yng(1)})p_{\yng(1,1,1)} + p_{\yng(1)}(p_{\yng(1)}*_q p_{\yng(1,1,1)}) - (p_{\yng(1)}*_qp_{\varnothing})p_{\yng(1,2,1)} - p_{\varnothing}(p_{\yng(1)}*_q p_{\yng(1,2,1)})\\
    &= p_{\yng(1,1)}p_{\yng(1,1,1)} + p_{\yng(1)}(p_{\yng(1,2,1)} + qp_{\varnothing}) - p_{\yng(1)}p_{\yng(1,2,1)} - p_{\varnothing}(p_{\yng(1,2,2)}+qp_{\yng(1)}) = p_{\yng(1,1)}p_{\yng(1,1,1)} - p_{\varnothing}p_{\yng(1,2,2)}\\
    \Delta(\phiGLS[2]) &= \Delta(p_{\yng(1,2)}p_{\yng(1,2,2)}) - \Delta(p_{\yng(1,1)}p_{\yng(1,2,3)}) = (p_{\yng(1)}*_q p_{\yng(1,2)})p_{\yng(1,2,2)} + p_{\yng(1,2)}(p_{\yng(1)}*_qp_{\yng(1,2,2)}) - (p_{\yng(1)}*_q p_{\yng(1,1)})p_{\yng(1,2,3)} - p_{\yng(1,1)}(p_{\yng(1)}*_q p_{\yng(1,2,3)})\\
    &= p_{\yng(1,2,1)}p_{\yng(1,2,2)} + p_{\yng(1,2)}(p_{\yng(1,2,3)}+qp_{\yng(1,1)}) - (p_{\yng(1,1,1)}+p_{\yng(1,2)})p_{\yng(1,2,3)} - p_{\yng(1,1)}qp_{\yng(1,2)} = p_{\yng(1,2,1)}p_{\yng(1,2,2)} - p_{\yng(1,1,1)}p_{\yng(1,2,3)}\\
    \Delta(\phiGLS[3]) &= \Delta(p_{\yng(1,2,3)}) = qp_{\yng(1,2)}.
\end{align*}
Thus we obtain the potential
\[\Yboxdim{3pt} 
\pot_\can = \frac{\Delta(\Phi)}{\Phi} = \frac{p_{\yng(1)}}{p_{\varnothing}} + \frac{p_{\yng(1,1)}p_{\yng(1,1,1)} - p_{\varnothing}p_{\yng(1,2,2)}}{p_{\yng(1)}p_{\yng(1,1,1)} - p_{\varnothing}p_{\yng(1,2,1)}} + \frac{p_{\yng(1,2,1)}p_{\yng(1,2,2)} - p_{\yng(1,1,1)}p_{\yng(1,2,3)}}{p_{\yng(1,2)}p_{\yng(1,2,2)} - p_{\yng(1,1)}p_{\yng(1,2,3)}} + q\frac{p_{\yng(1,2)}}{p_{\yng(1,2,3)}}
\]
which agrees exactly with our construction. While the uniform presentation of the superpotential appears much more elegant than the previous presentation, it severely complicates the expressions by introducing a large amount of extraneous cancellation as seen even in the relatively small example above. For this reason among others, we prefer in our construction to work with the derivations $\delta_j$ which yield Pl\"ucker coordinate expressions without any cancellations, even though the two expressions turn out to be equivalent. In fact, we also have an equivalence at the level of individual terms terms via $\Delta(\cD_{\is}) = \delta_{\sigma_k(\is)}(\cD_{\is})$, although this equality is false for general polynomials in the Pl\"ucker coordinates. Thus, we view the derivations $\delta_{\sigma_k(\is)}$ as parts of the full derivation $\Delta$ which are optimized for simplicity of computation by eliminating extraneous cancellations.
\end{rem}

\begin{rem}\label{rem:which_anticanonical}\Yboxdim{3pt}
Let $p_1\dots p_d$ be a monomial in Pl\"ucker coordinates of degree $d$. Then for the action of the derivation $\Delta$ defined above in Remark \ref{rem:quantum derivation}, we obtain by commutivity and associativity of the quantum product that
\[\Yboxdim{3pt}\Delta(p_1\dots p_d) = \sum_{i=1}^d p_1\dots p_{i-1}(p_{\yng(1)}*_q p_i)p_{i+1}\dots p_d = \sum_{i=1}^d p_{\yng(1)}*_q(p_1\dots p_d) = d\,p_{\yng(1)}*_q(p_1\dots p_d),\]
where all products are now interpreted to be the quantum product inside of $qH^*(X)$ via the geometric Satake correspondence as in the previous Remark \ref{rem:quantum derivation}. 
In particular, we compute for the individual terms $\pot_{\is}$ of the potential $\pot_\can$ that
\[
\pot_{\is} = \frac{\Delta(\cD_{\is})}{\cD_{\is}} = \frac{\deg(\cD_{\is})~\p_{\yng(1)}*_q \cD_{\is}}{\cD_{\is}} = \cis\,\p_{\yng(1)}
\]
and thus that if we interpret $\pot_\can$ inside of $qH^*(X)$, we obtain
\[
\pot_\can=\Bigl(1 + \tsum[_{\is\in[n]}] \cis\Bigr)\p_{\yng(1)} = \mathrm{index}(\cmX)\p_{\yng(1)}.
\]
With this interpretation, we may interpret $\pot_\can$ in some sense as an incarnation of the anticanonical class of $\cmX$ inside of $qH^*(X)$. We note that slightly different observations were made in previous works by Pech-Rietsch (\cite[\S1]{Pech_Rietsch_Lagrangian_Grassmannians} and \cite[\S9]{Pech_Rietsch_Odd_Quadrics}). Furthermore, from our perspective, the reason the expression for $\pot_\can$ is particularly simple in the case of Grassmannians $\Gr(k,n)$ has to do with the fact that every fundamental weight in type $\LGA_{n-1}$ is minuscule, so that each term of the potential is simply a ratio of Pl\"ucker coordinates rather than the general situation which is instead a ratio of higher degree polynomials in Pl\"ucker coordinates. 
\end{rem}

\begin{rem}
We may consider $\mX=\Pi(R_{w_P}^{w_0})$ as the projection of the Richardson variety $R_{w_P}^{w_0}$, which is defined to be the closure of the intersection of opposite Schubert cells $\Sigma_{w_P}$ and $\Sigma^{w_0}$ in the full flag manifold $\dB\backslash \dG$. For more details on projections of Richardson varieties, see \cite{KLS-projections-richardson}. It is known (see \cite[Lemma 5.4]{Lam_Templier_The_mirror_conjecture_for_minuscule_flag_varieties}) that the sum of the classes of those projected Richardson varieties which are hypersurfaces inside $\Pi=\Pi(R_{w_P}^{w_0})$ forms an anticanonical divisor. 

In our context, the hypersurface projected Richardson varieties are given by
\[\Pi_0=\Pi(R_{s_k w_P}^{w_0}), \Pi_1=\Pi(R_{w_P}^{s_1w_0}),\dots, \Pi_n=\Pi(R_{w_P}^{s_nw_0}).\] We consider $\Pi_{\is}$ for $\is\in[n]$ as associated to the correspondingly labeled node of the Dynkin diagram of $\dG$. We do not, however, associate such an interpretation to $\Pi_0$. Although we will not use this, we believe that each hypersurface $\Pi_{\is}$ for $\is\in[n]$ is defined by $\{\cD_{\is}=0\}$ while $\Pi_0$ is defined by $\{p_\varnothing=0\}$. 
\end{rem}

We conclude with a brief outline of this article. We start by fixing notation and conventions as well as recalling relevant Lie-theoretic and order-theoretic background in Section \ref{sec:prelims}. We then prove several key components of our main theorem in Sections \ref{sec:toric-monomials}, \ref{sec:denominators}, and \ref{sec:numerators} before assembling all the pieces into our main result in Section \ref{sec:main_proof}. In Appendix \ref{app:exceptional} we apply our construction to four exceptional cases directly and in Appendix \ref{app:type_dependent_LGs} we show how to recover LG models previously constructed with type-dependent methods using our type-independent construction. 

\subsubsection*{Acknowledgements}
The first author was supported by DFG grant SE 1114/6-1. The second author would like to thank Lauren Williams for helpful conversations.

\section{Preliminaries}\label{sec:prelims}
In this section we start by summarizing all conventions and previous results that we will need for our work. We begin with some Lie-theoretic conventions in Section \ref{sec:conventions} followed by the results we will be building on in Section \ref{sec:preliminary_results}, including the Lie-theoretic and Laurent polynomial LG-models of \cite{Rietsch_Mirror_Construction} and \cite{Spacek_LP_LG_models} respectively. Afterwards, we will introduce minuscule posets, their order ideals, and how their combinatorics can be used in our context in Section \ref{sec:min_posets}.

\subsection{Conventions and notation}\label{sec:conventions}
In this section we fix our notations and conventions and recall some facts from Lie theory. Unless otherwise stated, we will always use the notation and conventions fixed below. We write $[N]=\{1,2,\ldots,N\}$ for any $N\in\N$.

Let $\G$ be a simple and simply-connected complex algebraic group of rank $n$ and fix a maximal torus $\torus$ and Borel subgroup $\borelp\supset\torus$. We can write $\borelp=\torus\unip$ for a unipotent subgroup $\unip$, and we write $\unim$ for the opposite unipotent subgroup (i.e.~such that $\borelm=\torus\unim$ satisfies $\borelp\cap\borelm=\torus$). Recall that the parabolic subgroups of $\G$ containing $\borelp$ correspond one-to-one to subsets $I\subset[n]$, and that the Levi subgroup of the parabolic subgroup $\P_I$ is semisimple of Dynkin type given by \emph{removing} the vertices in $I$ from the Dynkin diagram of $\G$. In particular, the Weyl group $\weylj[I]$ of (the Levi subgroup of) $\Pj[I]$ is realized as the subgroup of the Weyl group $\weyl$ of $\G$ generated by all simple reflections \emph{not} in $I$; we write $\woj[I]\in\weylj[I]$ and similarly $\wo\in\weyl$ for the respective longest elements. Now, we note that these longest elements define (possibly trivial) Dynkin diagram involutions $\DSi[I]$ by $\woj[I]\cdot\fwt[i] = -\fwt[{\DSi[I]}(i)]$ where $\fwt[i]$ is the fundamental weight of the Levi subgroup of $\Pj[I]$ and $i\in[n]\setminus I$; we extend $\DSi[I]$ to $[n]$ by $\DSi[I](i)=i$ for $i\in I$, and write $\DS=\DSi[\varnothing]$.

The pair $(\G,\torus)$ defines the \emph{Langlands dual pair} $(\dG,\dtorus)$ that  has character and cocharacter lattices interchanged. The parabolic subgroups of $\dG$ form exactly the Langlands dual pairs to $(\Pj[I],\torus)$, so we denote these simply by $\dPj[I]$. This holds in particular for $\borelp=\Pj[{[n]}]$ the Langlands dual of which we denote $\dborelp$; together with $\dtorus$ this defines the opposite Borel subgroup $\dborelm$ and the unipotent subgroups $\dunip$ and $\dunim$. Writing $\roots$ resp.~$\droots$ for the roots resp.~coroots of $\G$ with respect to $\torus$, we note that they form the \emph{coroots} resp.~\emph{roots} of $\dG$ with respect to $\dtorus$. Similarly, for $\sroots = \{ \sr_1, \ldots, \sr_n\} \subset\roots$ the base of simple roots for $\G$ fixed by $\borelp$, the simple coroots $\sdroots= \{\sdr_1, \ldots, \sdr_n\}\subset\droots$ form a base of simple roots for $\dG$; analogously fundamental weights $\fwt$ and coweights $\dfwt$ for $\G$ are fundamental coweights and weights for $\dG$. It is in particular clear from this that the Weyl groups of $\G$ and $\dG$ are identical, so we simply write $\weyl$ for both. Finally, we remark that assuming $\G$ is simply-connected implies that $\dG$ is \emph{adjoint}.

We will also need to refer to the Lie algebras of the algebraic groups introduced above: we will write $\dg$, $\dparab_I$, $\dbp$, $\dup$, and $\dcartan$ for the Lie algebras of $\dG$, $\dP_I$, $\dborelp$, $\dunip$ and $\dtorus$, respectively. The simple Lie algebra $\dg$ has a root space decomposition (or Cartan decomposition) of the form $\dg=\dcartan\op\bigoplus_{\drt\in\droots}\dg_\rt$, and we choose a set of Chevalley generators $(\dChe_i,\dChh_i,\dChf_i)_{i\in[n]}$ for $\dg$ satisfying the Serre relations, where $\dChe_i\in\dg_{\sdr_i}$, $\dChf_i\in\dg_{-\sdr_i}$ and $\dChh_i=[\dChe_i,\dChf_i]\in\dcartan$. Note that $\dbp$ is generated by $\{\dChe_i,\dChh_i~|~i\in[n]\}$ and that $\dparab_I$ is generated by $\{\dChe_i,\dChh_i,\dChf_j~|~i,j\in[n],j\notin I\}$. The Chevalley generators define one-parameter subgroups
\[
\dx_i(a) = \exp(a\,\dChe_i)\in\dunip \qand \dy_i(a) = \exp(a\dChf_i)\in\dunim \qfor a\in\C.
\]
These subgroups in turn define representatives of $\weyl$ in $\dG$ by assigning to a simple reflection $s_i\in\weyl$ the elements
\[
\ds_i = \dx_i(+1)\,\dy_i(-1)\,\dx_i(+1) \qand \bs_i = \dx_i(-1)\,\dy_i(+1)\,\dx_i(-1),
\]
and extending this to $\dw,\bw\in\dG$ for a general word $w\in\weyl$ by a choice of reduced expression for $w$. Note that the maps $\weyl\to\dG$ given by $w\mapsto\dw$ and $w\mapsto\bw$ are \emph{not} homomorphisms: $s_i^2=e\in\weyl$ but $\ds_i^2\in\dtorus$ is nontrivial. 

Since $\dG$ is adjoint, not all fundamental representations of $\dg$ give rise to representations of $\dG$; hence, we will have to consider the \emph{universal cover} $\udG$ of $\dG$. (Note that for simply-laced Lie types, $\udG\cong\G$.) We will analogously write $\udPj[I]$ etc.~for the corresponding universal covers. Since $\udunip\cong\dunip$ and $\udunim\cong\dunim$, we will identify $\dunip$ and $\dunim$ with the universal covers in $\udG$; this yields preferred lifts of the elements $\ds_i$ and $\bs_i$ to $\udG$ as the products of the uniquely lifted one-parameter subgroups. 

To finish up, we will need a small observation regarding the representatives $\ds_i$ and $\bs_i$ in $\udG$ of the simple reflection $s_i\in\weyl$: if $\wt$ is some weight of $\udG$ mapped to $\wt-d\sdr_i$ by $s_i$ with $d\in\Z$, then for any \emph{extremal} vector $\wtvmu$ of weight $\wt$ in an irreducible representation the actions of these representatives are related by 
\begin{equation}
\ds_i\cdot\wtvmu = (-1)^d\bs_i\cdot\wtvmu = \begin{cases}
    (-1)^d d! (\dChf_i)^d\cdot\wtvmu, & \text{if~} d\ge0; \\
    d! (\dChe_i)^d\cdot\wtvmu, & \text{if~} d\le0; \\
\end{cases}
\label{eq:ds_bs_dChe_dChf_relation}
\end{equation}
see e.g.~\cite[Section 7.1]{GLS_Kac_Moody_groups_and_cluster_algebras}. By extension (i.e.~taking a reduced expression and repeated application), we find for a general element $w\in\weyl$ and extremal weight vector $\wtvmu$ that
\begin{equation}
\dw\cdot\wtvmu = (-1)^h\bw\cdot\wtvmu \qwhere h=\height\bigl(w\cdot\wt-\wt\bigr)\in\Z.
\label{eq:dw_bw_relation}
\end{equation}

\subsection{Related Landau-Ginzburg models and results}\label{sec:preliminary_results}
We now summarize the Lie-theoretic construction of LG-models in  \cite{Rietsch_Mirror_Construction} that applies to general homogeneous spaces. Afterwards, we specialize to cominuscule homogeneous spaces $\X=\G/\P_k$ and introduce Pl\"ucker coordinates on their minuscule Langlands dual homogeneous spaces $\cmX=\dP_k\backslash\dG$ in Section \ref{ssec:min_hom_spaces_and_plucker_coords}, followed by a summary of the presentation of the coordinate ring of the related unipotent cell as given in \cite{GLS_Kac_Moody_groups_and_cluster_algebras} in Section \ref{ssec:Coordinate_ring_of_dunimP}. Finally, we present the Laurent polynomial model of \cite{Spacek_LP_LG_models} in Section \ref{ssec:LP_LG_model}.

\subsubsection{Lie-theoretic Landau-Ginzburg model}\label{ssec:Lie_model}
We will now introduce the Lie-theoretic models defined in \cite{Rietsch_Mirror_Construction}, using the presentation as given in \cite{Pech_Rietsch_Lagrangian_Grassmannians}. Fix an \emph{arbirtrary} (i.e.~not necessarily cominuscule) homogeneous space $\X=\G/\P$, and denote by $\weylp=\weylj[I]\subset\weyl$ the parabolic Weyl subgroup corresponding to $\P=\Pj[I]$. Consider the subvariety
\[
\decomps = \dborelm\bwo^{-1}\cap\dunip\invdtorus\bwop\dunim ~~\subset~~\dG,
\]
sometimes called the \emph{geometric crystal associated to $(\dG,\dP)$} cf.~\cite{Lam_Templier_The_mirror_conjecture_for_minuscule_flag_varieties}, where we denoted $\invdtorus=(\dtorus)^{\weylp}\subset\dtorus$ for the locus of $\dtorus$ invariant under the action of $\weylp$ by conjugation, and $\wo\in\weyl$ and $\wop\in\weylp$ for the respective longest elements.

\begin{rem}
We followed our convention in \cite{Spacek_Wang_exceptional} by multiplying $\decomps$ as defined in \cite{Rietsch_Mirror_Construction} on the right by $\bwo^{-1}$.
\end{rem}

Note that $\decomps$ can be considered as those elements $z\in\dborelm\bwo^{-1}$ that can be decomposed as $z=u_+t\bwop u_-$ for $u_+\in\dunip$, $t\in\invdtorus$ and $u_-\in\dunim$. Furthermore, the $u_-$ can be restricted to lie in a specific unipotent cell as follows. Let
\begin{equation}
\dunimP = \dunim\cap\dborelp(\dwP)^{-1}\dborelp~~\subset~~\dunim
\label{eq:dunimP}
\end{equation}
where $\wP\in\cosets$ is the minimal coset representative of $\wo\weylp$, i.e.~the Weyl group element satisfying $\wo=\wP\wop$ with $\ell(\wo)=\ell(\wP)+\ell(\wop)$.
\begin{lem}[\cite{Pech_Rietsch_Odd_Quadrics}, Section 4]\label{lem:decomps_iso_dunimp_times_invdtorus}
Every $z\in\decomps$ is \emph{uniquely} decomposed as $z=u_+t\bwop u_-$ with $u_+\in\dunip$, $t\in\invdtorus$ and $u_-\in\dunimP$. More precisely, given $(u_-,t)\in\dunimP\times\invdtorus$, there exists a \emph{unique} $u_+\in\dunip$ such that $u_+t\bwop u_-\in\decomps$, and the map $\dunimP\times\invdtorus\to\decomps$ sending $(u_-,t)\mapsto u_+t\bwop u_-$ is an isomorphism.
\end{lem}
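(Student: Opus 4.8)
The plan is to build the inverse of the asserted map by hand: given $z\in\decomps$, recover the pair $(u_-,t)$ from a factorization $z=u_+t\bwop u_-$ that is \emph{forced to be unique} once we demand $u_-\in\dunimP$, and then check that both this factorization and the passage back to $z$ are morphisms of varieties. Two standard facts will be used repeatedly. First, the Gauss factorization on the big open cell $\dunip\dtorus\dunim$ of $\dG$ is unique. Second, since $\wop$ is the longest element of $\weylp$, conjugation by $\bwop$ normalizes $\dtorus$, carries the unipotent radical $V:=\dunim\cap\dP_k$ of the Borel of the Levi of $\dP_k$ into $\dunip$, and satisfies $(\bwop)^{-1}\dborelp\bwop\cap\dunim=V$. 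A third, slightly less elementary ingredient is the \emph{slice property}: multiplication $V\times\dunimP\to\dunim$ is an isomorphism of varieties, i.e.\ $\dunimP$ — which by its definition is the piece $\dunim\cap\dborelp(\dwP)^{-1}\dborelp$ attached to the minimal coset representative $\wP$ — is a section for the left $V$-action on $\dunim$. This is part of the Bruhat/unipotent-cell decomposition relative to the parabolic and is where the minimal coset representative genuinely enters; I would cite it from the unipotent-cell formalism in the references.

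\emph{Uniqueness of the factorization.} The argument runs as follows. Suppose $u_+t\bwop u_-=u_+'t'\bwop u_-'$ with the unipotent factors in the appropriate subgroups, $t,t'\in\invdtorus$ and $u_-,u_-'\in\dunimP$. Put $v=(u_+')^{-1}u_+\in\dunip$ and $u=u_-(u_-')^{-1}\in\dunim$; the identity becomes $v\,t\,\bwop u(\bwop)^{-1}=t'$, hence $\bwop u(\bwop)^{-1}=t^{-1}v^{-1}t'\in\dborelp$ and therefore $u\in(\bwop)^{-1}\dborelp\bwop\cap\dunim=V$. Since $u=u_-(u_-')^{-1}$ with $u_-,u_-'\in\dunimP$, the slice property forces $u_-=u_-'$; then $u=e$, and $v\,t=t'$ gives $v=e$ and $t=t'$, i.e.\ $u_+=u_+'$ and $t=t'$.

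\emph{Existence.} Any $z\in\decomps$ factors as $z=u_+t\bwop u_-$ with $u_-\in\dunim$ by definition of $\decomps$; writing $u_-=l\tilde u_-$ with $l\in V$, $\tilde u_-\in\dunimP$ via the slice property and absorbing $\bwop l(\bwop)^{-1}\in\dunip$ (together with the induced torus conjugation) into $u_+$ produces a factorization with $\tilde u_-\in\dunimP$ and the same $t$. For the ``More precisely'' statement one must instead show that for \emph{every} $(u_-,t)\in\dunimP\times\invdtorus$ there is a $u_+\in\dunip$ with $u_+t\bwop u_-\in\dborelm\bwo^{-1}$; equivalently, that $t\bwop u_-\bwo\in\dunip\dborelm=\dunip\dtorus\dunim$, i.e.\ that $t\bwop u_-\bwo$ lies in the big cell, after which $u_+$ is the inverse of its $\dunip$-component and is unique by Gauss uniqueness. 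I expect this membership — and, more to the point, proving it for \emph{all} such pairs rather than merely the generic ones — to be the main obstacle. The approach is to rewrite $\bwo$ using $\wo=\wP\wop$ with $\ell(\wo)=\ell(\wP)+\ell(\wop)$ and to use that $u_-\in\dunim\cap\dborelp(\dwP)^{-1}\dborelp$ is exactly the condition that returns the resulting factor to the big cell; tracking which Bruhat cell $t\bwop u_-\bwo$ actually occupies is the technical heart. Alternatively one can verify the membership by a direct computation on a set of one-parameter subgroups generating $\dunimP$ together with $t\in\invdtorus$, along the lines of the cited reference.

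\emph{From bijection to isomorphism.} Combining the two previous points, $(u_-,t)\mapsto u_+t\bwop u_-$ is a well-defined bijection $\dunimP\times\invdtorus\to\decomps$, and it is a morphism. Its inverse reads off $u_-$ and $t$ from the unique factorization of $z$; since Gauss- and Bruhat-type factorizations are given by regular functions on the cells where they are defined, extracting $u_+$ (hence $u_-$ and $t$) from $z$ is regular. Therefore the bijection is an isomorphism of varieties, which is the assertion.
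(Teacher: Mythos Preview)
The paper does not supply its own proof here: the accompanying remark simply asserts that the argument of \cite{Pech_Rietsch_Odd_Quadrics} for odd quadrics carries over to the general cominuscule case, and omits the details. Your proposal therefore goes beyond what the paper provides, and its overall shape---Gauss-type uniqueness for the factorization, big-cell membership for existence, regularity of both directions---matches the structure of the cited argument.

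One point needs correction. Your ``slice property,'' that multiplication $V\times\dunimP\to\dunim$ is an \emph{isomorphism of varieties}, is stated too strongly: the map is injective (an open immersion, in fact), but its image is only a proper open subset of $\dunim$, not all of it. Concretely, for $\SL_3$ with $\P=\P_1$ one computes $V\cong\C$, $\dunimP=\{c=0,\ a\neq0,\ b\neq0\}\cong(\C^*)^2$ inside $\dunim=\{(a,b,c)\}\cong\C^3$, and the image of $V\times\dunimP$ is the open set $\{a\neq0,\ ab-c\neq0\}$. Your uniqueness argument only uses the \emph{injectivity}, and that part survives: it amounts to the injectivity of the quotient map $\dunimP\to\dP_k\backslash\dG$, which is standard for unipotent cells attached to minimal coset representatives. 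But your first existence argument---``write an arbitrary $u_-\in\dunim$ as $l\tilde u_-$ with $l\in V$, $\tilde u_-\in\dunimP$''---is invalid as stated, since not every $u_-\in\dunim$ admits such a factorization. You therefore cannot shortcut surjectivity this way; you must establish it via your second route (verifying that $t\bwop u_-\bwo$ lies in the big cell for every $(u_-,t)\in\dunimP\times\invdtorus$, and then arguing that the resulting map onto $\decomps$ is onto), which you correctly flag as the substantive step and which is where the cited reference does the real work.
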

\begin{rem}
Although proven in a specific case in \cite{Pech_Rietsch_Odd_Quadrics}, the methods carry over to the general case so the proof will be omitted here.
\end{rem}

The \emph{superpotential} $\pot_{\decomps}:\decomps\to\C$ is now defined as
\[
\pot_{\decomps}(z) = \sum_{i=1}^n \dChedual(u_+^{-1}) + \dChfdual(u_-), \qwhere z=u_+t\bwop u_-,
\]
and where $\dChedual$ resp.~$\dChfdual$ are the unique characters mapping $\dx_i(a)$ resp.~$\dy_i(a)$ to $a$ and the other one-parameter subgroups to $0$. (Equivalently, these can be defined as follows: the Chevalley generators $\{\dChe_i~|~i\in[n]\}$ generate a basis for $\dup$, which in turns gives rise to a PBW basis for the completed universal enveloping algebra $\dCUEAp\supset\dup$; let $\dChedual$ be the element dual to $\dChe_i$ in the dual basis; the maps $\dChedual:\dunip\to\C$ are finally obtained using the embedding $\dunip\hookrightarrow\dCUEAp$.) 

The results in \cite{Rietsch_Mirror_Construction} now state that
\begin{thm}[\cite{Rietsch_Mirror_Construction}, Theorem 4.1]
There exists an isomorphism
\[
qH^*(\X)_\loc \cong \C[\decomps]/\Jac{\decomps},
\]
where the left-hand side is the small quantum cohomology of $\X$ localized at the quantum parameters, and the right-hand side is the quotient of the coordinate ring of $\decomps$ by the ideal generated by the derivatives of $\pot_{\decomps}$ along $\dunimP$.
\end{thm}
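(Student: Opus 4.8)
The statement to prove is Rietsch's Theorem 4.1, asserting the mirror isomorphism $qH^*(\X)_\loc \cong \C[\decomps]/\Jac{\decomps}$. Since this is an attribution to \cite{Rietsch_Mirror_Construction}, the plan would be to reconstruct the argument following Rietsch's strategy, which proceeds through Peterson's presentation of quantum cohomology. First I would recall Peterson's variety: the quantum cohomology ring $qH^*(\G/\P)_\loc$ is presented as (a localization of) the coordinate ring of a certain stratum $\mathcal{P}_\P$ in a Peterson variety inside $\dG/\dB$, where the quantum parameters arise from the torus directions. The key translation is to identify the critical points of $\pot_{\decomps}$ (along the $\dunimP$-directions, with the torus coordinates playing the role of quantum parameters) with the points of the relevant Peterson stratum. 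Concretely, one writes $z = u_+ t \bwop u_-$ and computes $d\pot_{\decomps}$ restricted to the $\dunimP$-factor; the critical point equations should match the defining equations of the Peterson stratum after the change of variables dictated by the decomposition in Lemma \ref{lem:decomps_iso_dunimp_times_invdtorus}.

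The main technical steps, in order, would be: (1) establish that $\decomps \cong \dunimP \times \invdtorus$ as varieties (this is exactly Lemma \ref{lem:decomps_iso_dunimp_times_invdtorus}, so it may be cited directly), so that $\C[\decomps] \cong \C[\dunimP] \otimes \C[\invdtorus]$ and the $\invdtorus$-coordinates are the quantum parameters $q_1,\dots,q_\rho$; (2) express the characters $\dChedual(u_+^{-1})$ in terms of the coordinates on $\dunimP$ and $\invdtorus$ using the constraint $u_+ t \bwop u_- \in \dborelm \bwo^{-1}$, so that $\pot_{\decomps}$ becomes an explicit (Laurent-type) function; (3) compute the Jacobian ideal $\Jac{\decomps}$ generated by the $\dunimP$-derivatives of $\pot_{\decomps}$, and identify $\C[\decomps]/\Jac{\decomps}$ with the coordinate ring of the critical locus; (4) match this critical locus with the Peterson stratum $\mathcal{P}_\P$ via the Bruhat-type decomposition, using that the defining condition $z \in \dborelm\bwo^{-1}$ encodes membership in the appropriate $\dB$-orbit closure; (5) invoke Peterson's theorem (as proved in \cite{Peterson}, with a published account available) that $qH^*(\X)_\loc \cong \C[\mathcal{P}_\P]$, thereby closing the chain of isomorphisms.

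The hard part will be step (2)–(3): making the character functions $\dChedual(u_+^{-1})$ explicit requires solving for $u_+$ in terms of $(u_-, t)$ from the equation $u_+ t \bwop u_- \in \dborelm \bwo^{-1}$, which is a factorization problem in $\dG$, and then differentiating the resulting expression along $\dunimP$. One must verify that the critical-point ideal is radical (or at least that the quotient ring has the right dimension and is reduced where it matters), so that $\C[\decomps]/\Jac{\decomps}$ genuinely computes functions on the critical locus rather than a thickened scheme; this is where Rietsch's original argument uses the geometry of the total positivity / Peterson stratum carefully. A secondary subtlety is keeping track of the precise representatives $\bwo, \bwop$ versus $\dwo, \dwop$ (the maps $w \mapsto \dw$ and $w \mapsto \bw$ are not homomorphisms, as noted around \eqref{eq:dw_bw_relation}), since sign and torus-element bookkeeping in the factorization must be done consistently; the convention fixed in the remark after the definition of $\decomps$ (multiplying on the right by $\bwo^{-1}$) should be used throughout. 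Once the critical locus is correctly identified with $\mathcal{P}_\P$, the isomorphism of rings follows formally from Peterson's presentation together with the Jacobi ring description.
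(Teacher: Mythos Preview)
The paper does not prove this theorem; it is quoted from \cite{Rietsch_Mirror_Construction} as a black-box input and no argument is given in the text. So there is nothing in the paper to compare your proposal against.

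That said, your outline is a reasonable high-level summary of Rietsch's actual strategy: the proof in \cite{Rietsch_Mirror_Construction} does pass through Peterson's presentation of $qH^*(\G/\P)$ via the Peterson variety, and the geometric crystal $\decomps$ is engineered precisely so that the critical-point equations for $\pot_{\decomps}$ along the unipotent directions cut out the relevant Peterson stratum. Your flagged difficulties in steps (2)--(3) are real --- solving for $u_+$ and matching the Jacobian ideal with the Peterson-stratum equations is the technical heart of Rietsch's paper --- but be aware that the argument there is not a direct computation of $\Jac{\decomps}$ followed by a ring-isomorphism check; rather, Rietsch constructs a birational map from $\decomps$ to the Peterson stratum and shows it intertwines the superpotential with the appropriate structure. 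If you intend to write out a full proof you should consult \cite{Rietsch_Mirror_Construction} directly rather than reconstruct it, since the present paper treats the result as established.
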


The Lie-theoretic mirror model is now obtained using an isomorphism $\RisoZ:\mX_\Lie\times\invdtorus\to\decomps$ where
\[
\mX_\Lie = \dRichard_{\wop,\wo} = \bigl(\dborelp\wop\dborelm\cap\dborelm\wo\dborelm\bigr)/\dborelm ~~\subset~~\dG/\dborelm
\]
is the open Richardson variety associated to $(\wop,\wo)$. (For a discussion of $\RisoZ$ in the current notation, see Remark 3.1 of \cite{Spacek_LP_LG_models}.) Writing $\pot_\Lie=\RisoZ^*(\pot_{\decomps})$, we obtain
\begin{equation}
qH^*(\X)_\loc \cong \C[\mX_\Lie\times\invdtorus]/\Jac{\Lie},
\label{eq:Lie_LG_model_iso}
\end{equation}
where $\Jac{\Lie}$ is the ideal generated by the derivatives along $\mX_\Lie$. In other words, $(\mX_\Lie,\pot_\Lie)$ forms a \emph{Landau-Ginzburg model} for $\X$. 

We now specialize to the case that $\X=\G/\P$ for $\P=\P_k=\P_{\{k\}}$ a maximal parabolic subgroup (i.e.~when $\X$ has Picard rank $\rho=1$). In this case, $\dG$ being an adjoint linear algebraic group implies that the simple coroot $\sdr_k$ gives an isomorphism $\invdtorus\overset{\sim}{\longrightarrow}\C^*$. This isomorphism allows us to work more directly with a formal parameter $q\in\C^*$ instead of invariant torus elements. This approach is indeed justified as the mirror symmetry isomorphism in \eqref{eq:Lie_LG_model_iso} identifies $\invdtorus$ with the quantum parameter space of the quantum cohomology, and $\sdr_k$ effectively splits this identification in this specialized case.

This allows us to move the focus away from $\decomps$ to $\dunimP$. First, Lemma \ref{lem:decomps_iso_dunimp_times_invdtorus} can be restated as
\begin{cor}\label{cor:unique_u+_for_u-_for_fixed_q}
    For any fixed $q\in\C^*$ let $t\in\invdtorus$ be such that $\sdr_k(t)=q$. Then, every $u_-\in\dunimP$ allows a unique $u_+\in\dunip$ such that $u_+t\bwop u_-\in\decomps$.
\end{cor}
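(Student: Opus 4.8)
The plan is to read the corollary off of Lemma~\ref{lem:decomps_iso_dunimp_times_invdtorus} after using the isomorphism $\sdr_k\colon\invdtorus\overset{\sim}{\longrightarrow}\C^*$ recalled immediately above to convert the formal parameter $q$ into a torus element. Concretely: since $\X=\G/\P_k$ has Picard rank one and $\dG$ is adjoint, $\sdr_k$ restricts to a group isomorphism $\invdtorus\overset{\sim}{\longrightarrow}\C^*$, so for the fixed $q\in\C^*$ there is a \emph{unique} $t\in\invdtorus$ with $\sdr_k(t)=q$; this is exactly the $t$ named in the statement.

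With $t$ in hand, I would invoke the ``more precisely'' clause of Lemma~\ref{lem:decomps_iso_dunimp_times_invdtorus}: for every pair $(u_-,t)\in\dunimP\times\invdtorus$ there is a unique $u_+\in\dunip$ with $u_+t\bwop u_-\in\decomps$. Specializing this to our chosen $t$ and letting $u_-$ range over $\dunimP$ yields precisely the claimed existence and uniqueness of $u_+$ for each $u_-\in\dunimP$, and nothing further is needed.

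There is no substantive obstacle here: the corollary is just the Picard-rank-one specialization of the lemma, the only extra ingredient being that $\sdr_k$ lets us trade the datum $t\in\invdtorus$ for the datum $q\in\C^*$. The single point one might wish to spell out is that $\sdr_k$ really is a \emph{bijection} onto $\C^*$ and not merely a surjection, so that $q$ determines $t$ uniquely; this is the isomorphism recalled before the corollary, which holds because $\invdtorus$ is one-dimensional (only the node $k$ is removed, as $\P_k$ is maximal) and $\sdr_k$ restricts nontrivially to it.
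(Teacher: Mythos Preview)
Your proposal is correct and matches the paper's own treatment: the paper simply introduces Corollary~\ref{cor:unique_u+_for_u-_for_fixed_q} as a restatement of Lemma~\ref{lem:decomps_iso_dunimp_times_invdtorus} in the Picard-rank-one setting, using the isomorphism $\sdr_k\colon\invdtorus\overset{\sim}{\longrightarrow}\C^*$ to trade $t$ for $q$, with no further argument given. Your additional remark about the bijectivity of $\sdr_k$ is fine but not strictly needed, since the statement only hypothesizes the existence of such a $t$ rather than asserting its uniqueness.
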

Similarly, the superpotential $\pot_{\decomps}$ can be expressed as:
\begin{df}\label{df:Lie_potential_on_dunimP}
    For any fixed $q\in\C^*$, let $\pot_{\dunimP}:\dunimP\to\C$ be the function given by
    \[
    \pot_{\dunimP}(u_-) = \sum_{i=1}^n \dChedual(u_+^{-1}) + \dChfdual(u_-),
    \]
    where $u_+\in\dunip$ is the unique element determined by $q\in\C^*$ and $u_-\in\dunimP$ of Corollary \ref{cor:unique_u+_for_u-_for_fixed_q}.
\end{df}
Note the shift from working over $\decomps$ to working over $\dunimP$ for fixed $q\in\C^*$ makes explicit that we do not think of $\pot_{\dunimP}$ as a morphism $\dunimP\times\C^*\to\C$ but rather as a $\C^*$-family of morphisms $\dunimP\to\C$; this is in line with the fact that the Jacobi ideals $\Jac{\decomps}$ and $\Jac{\Lie}$ are only generated by derivatives along $\dunimP$ and $\mX_\Lie$ respectively.

\subsubsection{Minuscule homogeneous spaces and their Pl\"ucker coordinates}\label{ssec:min_hom_spaces_and_plucker_coords}
The Lie-theoretic mirror models introduced in the previous subsection hold for any projective homogeneous space. However, in the case of \emph{cominuscule} homogeneous spaces, these models can be reformulated using natural coordinates on the mirror varieties. From now on, we will restrict to this case; that is, we assume that $\X$ can be written as $\X=\G/\P$ for $\P=\P_k=\Pj[\{k\}]$ a maximal parabolic subgroup such that the fundamental weight $\fwt[k]$ of $\G$ is \emph{cominuscule}. Recall that a fundamental weight $\fwt[k]$ is \emph{cominuscule} if $\llan\dfwt[k],\rt\rran\in\{-1,0,+1\}$ for any root $\rt\in\roots$, where $\llan\cdot,\cdot\rran$ denotes the dual coupling between the character and cocharacter lattices; in this case, $\dfwt[k]$ is called a \emph{minuscule} coweight and $\X$ is called a \emph{cominuscule homogeneous space} or a \emph{cominuscule (generalized) flag variety}. Note that cominusculity of $\fwt[k]$ is equivalent to $\sdr_k$ having coefficient $1$ in the highest coroot $\sdro$, i.e.~$\llan\sdro,\fwt[k]\rran=1$. 

Cominusculity of $\fwt[k]$ can also be considered as minusculity of the weight $\dfwt[k]$ of $\dG$. Hence, the homogeneous space $\cmX=\dP_k\backslash\dG$ is often called \emph{minuscule}. However, since $\dG$ is adjoint, the representation with highest weight $\dfwt[k]$ of $\dg$ does not necessarily induce a representation of $\dG$. Hence, we will often consider $\cmX=\udP_k\backslash\udG$ instead (note that $\dP_k\backslash\dG\cong\udP_k\backslash\udG$, allowing us to identify these descriptions of $\cmX$). Finally, for simplicity, we often call an index (i.e.~vertex of the Dynkin diagram) $i\in[n]$  \emph{(co-) minuscule} if the corresponding fundamental weight is (co-) minuscule; the context will clarify whether we consider a fundamental weight of $\G$ or $\dG$. In Table \ref{tab:CominusculeSpaces} we have listed all minuscule and cominuscule weights, together with the associated homogeneous spaces.

\begin{table}[b!h]%
\[
\begin{array}{ccc|c|lcc}
\multicolumn{3}{c|}{\text{type of $\G$ and weight $\fwt$}} & \text{(co-) minuscule} & \text{variety $\G/\Pj[i]$} & \dim & \indx \vphantom{\dfrac MM}\\\hline
A_{n-1} & 
\begin{tikzpicture}[baseline = -0.75ex, scale = 0.625]
\coordinate (1) at (0,0);
\coordinate (2) at (0.625,0);
\coordinate (3) at (1.25,0);
\coordinate (3+) at (1.75,0);
\coordinate (n-) at (2.25,0);
\coordinate (n) at (2.75,0);
\draw[thick] (1)--(3+);
\draw[thick, dotted] (3+)--(n-);
\draw[thick] (n-)--(n);
\draw[black,fill=black] (1) circle (3pt); 
\draw[black,fill=black] (2) circle (3pt);
\draw[black,fill=black] (3) circle (3pt);
\draw[black,fill=black] (n) circle (3pt);
\end{tikzpicture}
&\text{any }k &\text{both}& \Gr(k,n) & k(n-k) & n  \vphantom{\dfrac MM}\\
B_n & 
\begin{tikzpicture}[baseline = -0.75ex, scale = 0.625]
\coordinate (1) at (0,0);
\coordinate (2) at (0.625,0);
\coordinate (2+) at (1.125,0);
\coordinate (n-1-) at (1.625,0);
\coordinate (n-1) at (2.125,0);
\coordinate (n) at (3.125,0);
\draw[thick] (1)--(2+);
\draw[thick, dotted] (2+)--(n-1-);
\draw[thick] (n-1-)--(n-);
\draw[thick,double,double distance = 1pt] (n-1)--(n);
\draw[black,fill=black] (1) circle (3pt); 
\draw[black,fill=white] (2) circle (3pt);
\draw[black,fill=white] (n-1) circle (3pt);
\draw[black,fill=white] (n) circle (3pt);
\node at (2.625,0) {$\boldsymbol{>}$};
\end{tikzpicture}
& 1 & \text{cominuscule} & Q_{2n-1} & 2n-1 & 2n-1    \vphantom{\dfrac MM}\\
B_n & 
\begin{tikzpicture}[baseline = -0.75ex, scale = 0.625]
\coordinate (1) at (0,0);
\coordinate (2) at (0.625,0);
\coordinate (2+) at (1.125,0);
\coordinate (n-1-) at (1.625,0);
\coordinate (n-1) at (2.125,0);
\coordinate (n) at (3.125,0);
\draw[thick] (1)--(2+);
\draw[thick, dotted] (2+)--(n-1-);
\draw[thick] (n-1-)--(n-);
\draw[thick,double,double distance = 1pt] (n-1)--(n);
\draw[black,fill=white] (1) circle (3pt); 
\draw[black,fill=white] (2) circle (3pt);
\draw[black,fill=white] (n-1) circle (3pt);
\draw[black,fill=black] (n) circle (3pt);
\node at (2.625,0) {$\boldsymbol{>}$};
\end{tikzpicture}
& n &\text{minuscule} & \OG(n,2n+1) &\frac12n(n+1) & 2n   \vphantom{\dfrac MM}\\
C_n & 
\begin{tikzpicture}[baseline = -0.75ex, scale = 0.625]
\coordinate (1) at (0,0);
\coordinate (2) at (0.625,0);
\coordinate (2+) at (1.125,0);
\coordinate (n-1-) at (1.625,0);
\coordinate (n-1) at (2.125,0);
\coordinate (n) at (3.125,0);
\draw[thick] (1)--(2+);
\draw[thick, dotted] (2+)--(n-1-);
\draw[thick] (n-1-)--(n-);
\draw[thick,double,double distance = 1pt] (n-1)--(n);
\draw[black,fill=black] (1) circle (3pt); 
\draw[black,fill=white] (2) circle (3pt);
\draw[black,fill=white] (n-1) circle (3pt);
\draw[black,fill=white] (n) circle (3pt);
\node at (2.625,0) {$\boldsymbol{<}$};
\end{tikzpicture}
& 1 & \text{minuscule} & \CP^{2n-1} & 2n-1 & 2n    \vphantom{\dfrac MM}\\
C_n & 
\begin{tikzpicture}[baseline = -0.75ex, scale = 0.625]
\coordinate (1) at (0,0);
\coordinate (2) at (0.625,0);
\coordinate (2+) at (1.125,0);
\coordinate (n-1-) at (1.625,0);
\coordinate (n-1) at (2.125,0);
\coordinate (n) at (3.125,0);
\draw[thick] (1)--(2+);
\draw[thick, dotted] (2+)--(n-1-);
\draw[thick] (n-1-)--(n-);
\draw[thick,double,double distance = 1pt] (n-1)--(n);
\draw[black,fill=white] (1) circle (3pt); 
\draw[black,fill=white] (2) circle (3pt);
\draw[black,fill=white] (n-1) circle (3pt);
\draw[black,fill=black] (n) circle (3pt);
\node at (2.625,0) {$\boldsymbol{<}$};
\end{tikzpicture}
& n & \text{cominuscule} & \LG(n,2n) &\frac12n(n+1) & n+1    \vphantom{\dfrac MM}\\
D_n & 
\begin{tikzpicture}[baseline = -0.75ex, scale = 0.625]
\coordinate (1) at (0,0);
\coordinate (2) at (0.625,0);
\coordinate (2+) at (1.125,0);
\coordinate (n-2-) at (1.625,0);
\coordinate (n-2) at (2.125,0);
\coordinate (n-1) at (2.75,0.2);
\coordinate (n) at (2.75,-0.2);
\draw[thick] (1)--(2)--(2+);
\draw[thick, dotted] (2+)--(n-2-);
\draw[thick] (n-2-)--(n-2)--(n-1);
\draw[thick] (n-2)--(n);
\draw[black,fill=black] (1) circle (3pt); 
\draw[black,fill=white] (2) circle (3pt);
\draw[black,fill=white] (n-2) circle (3pt);
\draw[black,fill=white] (n-1) circle (3pt);
\draw[black,fill=white] (n) circle (3pt);
\end{tikzpicture}
& 1 & \text{both} & Q_{2n-2} &2n-2 & 2n-2    \vphantom{\dfrac MM}\\
D_n & 
\begin{tikzpicture}[baseline = -0.75ex, scale = 0.625]
\coordinate (1) at (0,0);
\coordinate (2) at (0.625,0);
\coordinate (2+) at (1.125,0);
\coordinate (n-2-) at (1.625,0);
\coordinate (n-2) at (2.125,0);
\coordinate (n-1) at (2.75,0.2);
\coordinate (n) at (2.75,-0.2);
\draw[thick] (1)--(2)--(2+);
\draw[thick, dotted] (2+)--(n-2-);
\draw[thick] (n-2-)--(n-2)--(n-1);
\draw[thick] (n-2)--(n);
\draw[black,fill=white] (1) circle (3pt); 
\draw[black,fill=white] (2) circle (3pt);
\draw[black,fill=white] (n-2) circle (3pt);
\draw[black,fill=black] (n-1) circle (3pt);
\draw[black,fill=black] (n) circle (3pt);
\end{tikzpicture}
& n-1\text{ or }n & \text{both} & \OG(n,2n) &\frac12n(n-1) & 2n-2     \vphantom{\dfrac MM}\\
E_6 & 
\begin{tikzpicture}[baseline = 0.25ex, scale = 0.625]
\coordinate (1) at (0,0);
\coordinate (3) at (0.625,0);
\coordinate (4) at (1.25,0);
\coordinate (5) at (1.875,0);
\coordinate (6) at (2.5,0);
\coordinate (2) at (1.25,0.625);
\draw[thick] (1)--(3)--(4)--(5)--(6);
\draw[thick] (4)--(2);
\draw[black,fill=black] (1) circle (3pt); 
\draw[black,fill=white] (2) circle (3pt);
\draw[black,fill=white] (3) circle (3pt);
\draw[black,fill=white] (4) circle (3pt);
\draw[black,fill=white] (5) circle (3pt);
\draw[black,fill=black] (6) circle (3pt);
\end{tikzpicture}
& 1\text{ or }6 & \text{both} & \OP^2 = \LGE^\SC_6/P_6 & 16 & 12    \vphantom{\dfrac MM}\\
E_7 & 
\begin{tikzpicture}[baseline = 0.25ex, scale = 0.625]
\coordinate (1) at (0,0);
\coordinate (3) at (0.625,0);
\coordinate (4) at (1.25,0);
\coordinate (5) at (1.875,0);
\coordinate (6) at (2.5,0);
\coordinate (7) at (3.125,0);
\coordinate (2) at (1.25,0.625);
\draw[thick] (1)--(3)--(4)--(5)--(6)--(7);
\draw[thick] (4)--(2);
\draw[black,fill=white] (1) circle (3pt); 
\draw[black,fill=white] (2) circle (3pt);
\draw[black,fill=white] (3) circle (3pt);
\draw[black,fill=white] (4) circle (3pt);
\draw[black,fill=white] (5) circle (3pt);
\draw[black,fill=white] (6) circle (3pt);
\draw[black,fill=black] (7) circle (3pt);
\end{tikzpicture}
& 7 & \text{both} & \LGE^\SC_7/P_7 & 27 & 18   \vphantom{\dfrac M{\frac MM}} \\\hline
\end{array}
\]
\caption{Table listing for each type of $\G$ the fundamental weights $\fwt$ that are minuscule, cominuscule or both, the associated homogeneous spaces and their dimensions and indexes. The varieties appearing are: \\
    $Q_n$, the quadric of dimension $n$; \\
    $\Gr(k,n)$, the Grassmannian of $k$-dimensional subspaces in $\C^n$; \\
    $\LG(n,2n)$, \raisebox{-.65em}{\parbox{.71\columnwidth}{the Lagrangian Grassmannian of maximal isotropic subspaces with respect to the standard symplectic form;\vspace{1pt}}} \\
    $\OG(n,2n)$ \raisebox{-1.3em}{\parbox{.715\columnwidth}{and $\OG(n,2n+1)$, one of the two isomorphic connected components of the orthogonal Grassmannians of maximal isotropic subspaces with respect to the standard quadratic form;}} \\
    $\OP^2=\LGE^\SC_6/P_6$, the Cayley plane; \\
    $\LGE^\SC_7/\P_7$, the Freudenthal variety. \\
Note that the two varieties that are only minuscule are redundant: \\
The type-$B_n$ minuscule variety $\OG(n,2n+1)$ is isomorphic to the variety $\OG(n+1,2n+2)$ which is both minuscule and cominuscule as a type-$D_{n+1}$ homogeneous space; \\
The type-$C_n$ minuscule variety $\CP^{2n-1}$ is the same as $\Gr(1,2n)$, which is both minuscule and cominuscule as a type-$A_{2n-1}$ homogeneous space. \\
Adapted from \cite{CMP_Quantum_cohomology_of_minuscule_homogeneous_spaces} and \cite{Spacek_LP_LG_models}.}
\label{tab:CominusculeSpaces}
\end{table}

Minuscule homogeneous spaces allow for the definition of a natural set of projective coordinates called \emph{(generalized) Pl\"ucker coordinates}. The fact that $\dfwt[k]$ is minuscule implies that the fundamental representations $V(\dfwt[k])$ and $\PluckerRep$ of $\udG$ are \emph{minuscule}. (Note that these are dual representations; depending on the type, they can also be isomorphic.) A representation is minuscule if the Weyl group acts transitively on its weights.

We will consider the lowest weight of $\PluckerRep$, namely $-\dfwt[k]$. Clearly, the stabilizer of $\weyl$ acting on $-\dfwt[k]$ is $\weylp=\weylj[k]$; hence, writing $\cosets$ for the set of minimal representatives of left cosets of $\weylp$ in $\weyl$, we find that $\cosets\cdot(-\dfwt[k])$ forms the set of weights of the representation $\PluckerRep$. In particular, choosing a lowest weight vector $\lwtv\in\PluckerRep$, the set of vectors $\{\wtv{w}=\dw\cdot\lwtv~|~w\in\cosets\}$ forms a weight basis for the representation. The actions of $\dw$ and $\bw$ for $w\in\weyl$ on this weight basis can be described explicitly; see e.g.~\cite[Section 3]{Green_Reps_from_polytopes} or \cite[Section 3]{Geck_Minuscule_weights_and_Chevalley_groups}, or for the summarized results phrased in the current notation see \cite[Section 4]{Spacek_LP_LG_models}. We will in particular need the following result:
\begin{lem}[\cite{Spacek_LP_LG_models}, Corollary 4.3 and Lemma 4.4(ii) and (iii)] 
\label{lem: Spacek_LPLG} 
Let $\hwtv\in\dfwtrep[k']$ be the highest vector in a minuscule representation of a linear algebraic group $H$, and let $W'\subset W$ be the set of minimal coset representatives of the Weyl subgroup $W_{k'}=\lan s_i~|~i\neq k'\ran\subset W$, then:

\begin{romanize}
    \item If the minimal coset representative $w\in W'$ has reduced expression $w=s_{j_1}\cdots s_{j_{\ell(w)}}$, then its representatives act on the highest weight vector by $\bw\cdot\hwtv = \dChf_{j_1}\cdots\dChf_{j_{\ell(s)}}\cdot\hwtv$ and $\dw\cdot\hwtv=(-1)^{\ell(w)}\dChf_{j_1}\cdots\dChf_{j_{\ell(w)}}\cdot\hwtv$.

    \item Conversely, if $\dChf_{j_1}\cdots\dChf_{j_\ell}\cdot\hwtv$ is nonzero of weight $\wt$, then $s_{j_1}\cdots s_{j_\ell}\in W'$ is a reduced expression for the (unique) element mapping the highest weight $\dfwt[k']$ to $\wt$.
\end{romanize}\end{lem}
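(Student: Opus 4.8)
The plan is to deduce the statement from two standard structural facts about the minuscule representation $\dfwtrep[k']$ and then run a single induction on $\ell$. The first fact describes the action of one lowering operator: since $\dfwtrep[k']$ is minuscule, each weight occurs with multiplicity one and every $\dChe_j,\dChf_j$-string has length at most two, and the first step is to isolate the resulting dichotomy — for a weight vector $v_\mu$ of weight $\mu$, either $s_j$ strictly lowers $\mu$ (equivalently, the $\dChh_j$-eigenvalue of $v_\mu$ is $+1$), in which case $\dChf_j\cdot v_\mu$ is nonzero and spans the one-dimensional $s_j(\mu)$-weight space, or else $\dChf_j\cdot v_\mu=0$. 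The second fact is that $w\mapsto w(\dfwt[k'])$ is a bijection from $W'$ onto the set of weights of $\dfwtrep[k']$; this is the usual identification of the orbit of a fundamental weight with $W/W_{k'}$, together with minusculity to ensure every weight is hit. I would also record the elementary observation that $W'$ is closed under deleting letters from the \emph{left} of a reduced word: if $s_{j_1}\cdots s_{j_\ell}$ is a reduced word for an element of $W'$, then $s_{j_2}\cdots s_{j_\ell}$ is a reduced word for an element of $W'$. Note that in a minuscule representation every weight vector is extremal, so the identities \eqref{eq:ds_bs_dChe_dChf_relation} and \eqref{eq:dw_bw_relation} relating $\ds_j,\bs_j,\dw,\bw$ to the Chevalley generators apply to all of them.

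With these inputs I would prove, by induction on $\ell$, the combined statement containing both (i) and (ii): $s_{j_1}\cdots s_{j_\ell}$ is a reduced word representing some $w\in W'$ if and only if $\dChf_{j_1}\cdots\dChf_{j_\ell}\cdot\hwtv\neq0$, and when this holds that vector spans the $w(\dfwt[k'])$-weight space and equals $\bw\cdot\hwtv$, so that $\dw\cdot\hwtv=(-1)^\ell\dChf_{j_1}\cdots\dChf_{j_\ell}\cdot\hwtv$. The case $\ell=0$ is immediate. For the inductive step I peel off the first letter: set $v'=\dChf_{j_2}\cdots\dChf_{j_\ell}\cdot\hwtv$ and $w''=s_{j_2}\cdots s_{j_\ell}$, so that the inductive hypothesis applies to $v'$ and $w''$ (using left-deletion closure of $W'$ in the ``only if'' direction); thus $v'$ spans the $w''(\dfwt[k'])$-weight space and $\overline{w''}\cdot\hwtv=v'$. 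One then checks that the $\dChh_{j_1}$-eigenvalue of $v'$ is forced to equal $+1$: it is nonnegative because $(w'')^{-1}(\sdr_{j_1})$ is a positive root (using reducedness of $s_{j_1}\cdots s_{j_\ell}$, or in the other direction nonvanishing of $\dChf_{j_1}v'$, together with dominance of $\dfwt[k']$), and it is nonzero because otherwise $s_{j_1}$ would fix $w''(\dfwt[k'])$, making $s_{j_1}w''$ and $w''$ two distinct elements of $W'$ with the same image, contrary to the bijection. The dichotomy then yields $\dChf_{j_1}v'=\bs_{j_1}\cdot v'\neq 0$, spanning the $(s_{j_1}w'')(\dfwt[k'])$-weight space, and a short root-system argument (using that $\sdr_{j_1}$ is the only positive root inverted by $s_{j_1}$ and that $\dfwt[k']$ is $s_i$-invariant for $i\neq k'$) shows $s_{j_1}w''\in W'$; since $s_{j_1}\cdots s_{j_\ell}$ is reduced of the right length it represents $s_{j_1}w''$. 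The identity $\dChf_{j_1}\cdots\dChf_{j_\ell}\cdot\hwtv=\bs_{j_1}\cdot v'=\bw\cdot\hwtv$ follows from well-definedness of the representatives, and the sign for $\dw$ comes from \eqref{eq:dw_bw_relation}, since the weight has dropped by exactly one simple root at each step and hence $\height\bigl(w(\dfwt[k'])-\dfwt[k']\bigr)=-\ell$. Uniqueness of the element realizing a prescribed weight is precisely the injectivity in the second fact.

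The hard part will be keeping the three notions — membership in $W'$, reducedness of the word, and the weight dropping by exactly one simple root at each stage — synchronized through the induction; concretely, the crux is excluding the degenerate possibility that $s_{j_1}$ fixes $w''(\dfwt[k'])$, which is exactly where minusculity of $\dfwt[k']$ enters, via the bijection between $W'$ and the weights. Everything else is routine bookkeeping with reduced words and the root system; in the final text this is simply \cite[Corollary 4.3 and Lemma 4.4]{Spacek_LP_LG_models}.
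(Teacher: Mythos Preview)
The paper does not give its own proof of this lemma; it is imported verbatim from \cite[Corollary~4.3 and Lemma~4.4(ii),(iii)]{Spacek_LP_LG_models}, so there is nothing in the present paper to compare against beyond the citation.

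Your argument is correct and is essentially the standard one. Two small remarks on presentation. First, in the ``nonzero eigenvalue'' step you write that otherwise $s_{j_1}w''$ and $w''$ would be two distinct elements of $W'$ with the same image; this is only available in direction~(i), where $s_{j_1}w''=w\in W'$ is part of the hypothesis. In direction~(ii) you do not yet know $s_{j_1}w''\in W'$, but you do not need this step there either: nonvanishing of $\dChf_{j_1}v'$ already forces the eigenvalue to be positive, hence $+1$ by minusculity. Second, the ``short root-system argument'' you allude to for $s_{j_1}w''\in W'$ is the standard parabolic lifting property (for $w\in W'$ and simple $s$, either $sw\in W'$ or $swW_{k'}=wW_{k'}$); once you have excluded the fixed-coset case via the eigenvalue, this gives $s_{j_1}w''\in W'$, and positivity of $(w'')^{-1}(\sdr_{j_1})$ (equivalent to the eigenvalue being $+1$) gives $\ell(s_{j_1}w'')=\ell(w'')+1$, so the word is reduced. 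With those two clarifications your induction closes cleanly.
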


We now take the basis $\{\wtv{w}^*~|~w\in\cosets\}$ for $\PluckerDualRep$ dual to the weight basis just constructed; this forms a weight basis as well with $\lwtv^*$ of highest weight $\dfwt[k]$ under \emph{right}-action of $\udG$. In particular, $\udP_k$ stabilizes the line $\C\lwtv^*$, yielding an embedding 
\begin{equation}
\cmX\hookrightarrow\PPluckerDualRep:\quad\dP_kg\mapsto\C\lwtv^*\cdot\dP_kg,
\label{eq:Plucker_embedding}    
\end{equation}
where we used that $\cmX=\udP_k\backslash\udG$; this embedding is called the \emph{(generalized) Pl\"ucker embedding}.
\begin{df}\label{df:Plucker_coords}
The projective coordinates $(\p_w~|~w\in\cosets)$ on $\cmX$ defined by
\[
\p_w(g) = \lwtv^*(g\cdot\wtv{w}) \qfor g\in\udG
\]
are called \emph{(generalized) Pl\"ucker coordinates}. We will refer to the maps $\p_w:\udG\to\C$ as Pl\"ucker coordinates as well. We will frequently suppress the arguments of these coordinates if it is clear on which domain we consider them.
\end{df}

\subsubsection{The coordinate ring of \texorpdfstring{$\dunimP$}{the unipotent cell} for cominuscule \texorpdfstring{$\G/\P$}{G/P}}\label{ssec:Coordinate_ring_of_dunimP}
The isomorphism $\decomps\cong\dunimP\times\invdtorus$ discussed in Lemma \ref{lem:decomps_iso_dunimp_times_invdtorus} will turn up again to relate the Lie-theoretic mirror model to our type-independent canonical mirror models. In particular, it will allow us to use a description of the coordinate ring of $\dunimP$. In \cite{GLS_Kac_Moody_groups_and_cluster_algebras}, the coordinate rings of unipotent cells are described as being a polynomial ring generated by a ``dual PBW basis'' localized at certain \emph{generalized minors} in the sense of \cite{Fomin_Zelevinsky_Double_Bruhat_Cells_and_Total_Positivity}.  However, \cite{GLS_Kac_Moody_groups_and_cluster_algebras} works with ``upper'' unipotent groups, i.e.~those that have Lie algebras contained in the positive root spaces, whereas $\dunimP$ is ``lower'' unipotent; we will use the transposition anti-isomorphism to translate between the two conventions. Recall from \eqref{eq:dunimP} that $\dunimP = \dunim\cap\dborelp(\dwP)^{-1}\dborelp$, which is mapped under the transposition map to $\dunipP = \dunip\cap\dborelm\dwP\dborelm$ (recall that $\dw^T=\dw^{-1}$). Proposition 8.5 of \cite{GLS_Kac_Moody_groups_and_cluster_algebras} now applies to $\dunipP$ directly.

Instead of presenting the construction of the dual PBW basis for $\dunipP$, we will use the type-independent characterization in terms of Pl\"ucker coordinates given in Proposition 3.8 of  \cite{Spacek_Wang_exceptional}: Consider the set $\wPpdroots\subset\pdroots$ of positive roots mapped to negative roots by $\wP$ (equivalently, this is the set of positive roots with coefficient $1$ in front of $\sdr_k$ in their decomposition into simple roots; see \cite[Lemma 3.5]{Spacek_Wang_exceptional}), then for every $\drt\in\wPpdroots$ there exists a unique $w_{\drt}\in\cosets$ such that $w_{\drt}(-\dfwt[k])=-\dfwt[k]+\drt$; each of these minimal coset representatives $w_{\drt}$ determines a Pl\"ucker coordinate $\p_{\drt}=\p_{w_{\drt}}$ on $\dunim$, so writing $\p_{\drt}^T:u_+\mapsto\p_{\drt}(u_+^T)$, we obtain a set of ``transposed'' Pl\"ucker coordinates $\{\p_{\drt}^T ~|~ \drt\in\wPpdroots\}$ on $\dunipP$; by Proposition 8.5 of \cite{GLS_Kac_Moody_groups_and_cluster_algebras} and Proposition 3.8 of \cite{Spacek_Wang_exceptional}, this set forms a generating set for the coordinate ring of $\dunipP$.

In contrast to the dual PBW basis, the generalized minors appearing in the coordinate ring of $\dunipP$ can be stated concisely as:
\[
\minor_{\dfwt,\wPinv(\dfwt)}(u_+) = \Bigl\lan u_+\dwPinv\cdot\hwt,~\hwt\Bigr\ran_{\dfwt} \qfor i\in[n],
\]
where $\hwt$ is a choice of highest weight vector in the fundamental representation $V(\dfwt)$, and $\lan v,\hwt\ran_{\dfwt}$ denotes the coefficient in front of $\hwt$ in the decomposition of $v$ with respect to a weight basis of the representation. (Note that this is independent of the choice of basis.)

Now, to translate these results to $\dunimP$, we pull these functions back along the transposition anti-isomorphism, yielding the generating set of Pl\"ucker coordinates $\{\p_{\drt}~|~\drt\in\wPpdroots\}$ and the transposed generalized minors $u_-\mapsto\minor_{\dfwt,\wPinv(\dfwt)}(u_-^T)$ for $i\in[n]$. We will translate each of these generalized minors calculated in $V(\dfwt)$ into one calculated in $V(\dfwt[\DS(i)])$ to remove the transpose maps.

For this, we first note that that evaluating an element $f\in V^*$ in some dual representation at an element $g\cdot v\in V$ is the same as evaluating the element $g^T\cdot f\in V^*$ at $v\in V$. Subsequently, we observe that the dual representation $V(\dfwt)^*$ is a \emph{right}-representation, which is mapped under the inversion anti-isomorphism to the \emph{left}-representation $V(\dfwt[\DS(i)])$. Note, however, that projection to the highest weight vector $\hwt$ is a \emph{lowest} weight element of the dual representation (of weight $-\dfwt$). In conclusion, we obtain the relations:
\begin{equation}
\Bigl\lan g\cdot\hwt,~\hwt\Bigr\ran_{\dfwt} = \Bigl\lan \hwt,~g^T\cdot\hwt\Bigr\ran_{\dfwt} = \Bigl\lan g^{-T}\cdot\lwt[\DS(i)],~\lwt[\DS(i)]\Bigr\ran_{\dfwt[\DS(i)]}
\label{eq:flipping_generalized_minors}
\end{equation}

Applying this to the generalized minor in question (and recalling that $\dw^T=\dw^{-1}$ for any Weyl group element $w\in\weyl$), we find:
\[
\minor_{\dfwt,\wPinv(\dfwt)}(u_-^T) = \Bigl\lan u_-^T\dwPinv\cdot\hwt,~\hwt\Bigr\ran_{\dfwt} = \Bigl\lan u_-^{-1}\dwPinv\cdot\lwt[\DS(i)],~\lwt[\DS(i)]\Bigr\ran_{\dfwt[\DS(i)]}.
\]
Next, we note that $\dwPinv=\bw$ for $w=\wPinv$, and hence by \eqref{eq:dw_bw_relation} we find that $\dwPinv$ acts as $(-1)^h\dw=(-1)^h\bwPinv$ on the lowest weight vector (of weight $-\dfwt$), where 
\begin{equation}
h=\height\bigl(-\wPinv\cdot\dfwt+\dfwt\bigr).
\label{eq:gen_minor_minus_sign}
\end{equation}
Combining this with the fact that $\bwo=\bwP\bwop$ maps the highest weight vector to the lowest weight vector, we find that
\ali{
\minor_{\dfwt,\wPinv(\dfwt)}(u_-^T) 
&= (-1)^h\Bigl\lan u_-^{-1}\bwPinv\bwP\bwop\cdot\lwt[\DS(i)],~ \bwo\cdot\hwt[\DS(i)]\Bigr\ran_{\dfwt[\DS(i)]} \\
&= (-1)^h\Bigl\lan u_-^{-1}\bwop\cdot\lwt[\DS(i)],~ \bwo\cdot\hwt[\DS(i)]\Bigr\ran_{\dfwt[\DS(i)]}.
}
We recognize the right-hand side as the desired generalized minor to conclude
\[
\minor_{\dfwt,\wPinv(\dfwt)}(u_-^T) = (-1)^h\minor_{\wo\cdot\dfwt[j],\wop\cdot\dfwt[j]}(u_-^{-1}) \qfor j=\DS(i),
\]
and where $h$ is given in \eqref{eq:gen_minor_minus_sign}. As $\cdot^{-1}:\dunim\to\dunim$ is an anti-automorphism, the generalized minor is now truly taken on lower unipotent elements; however, in general $\cdot^{-1}$ does \emph{not} map $\dunimP$ to itself (this only holds if $\wPinv=\wP$, i.e.~when the Dynkin involution $\DS$ is trivial).

In conclusion, we obtain
\begin{prop}[\cite{GLS_Kac_Moody_groups_and_cluster_algebras}, Prop.~8.5 \& \cite{Spacek_Wang_exceptional}, Lem.~3.5]\label{prop:coord_ring_dunimP}
The coordinate ring of $\dunimP$ is given by
\[
\C[\dunimP] = \C\bigl[\p_{\drt}~\big|~\drt\in\wPpdroots\bigr]\bigl[\phiGLS[j]^{-1}~\big|~j\in[n]\bigr]
\]
where we recall that $\p_{\drt}=\p_{w_{\drt}}$ for $w_{\drt}\in\cosets$ the unique element sending $-\dfwt[k]$ to $-\dfwt[k]+\drt$, with $\wPpdroots$ the set positive roots with coefficient $+1$ in front of $\sdr_k$ when written as a linear combination of simple roots, and where
\[
\phiGLS[j](u_-)=(-1)^h\minor_{\wo\cdot\dfwt[j],\wop\cdot\dfwt[j]}(u_-^{-1}) = (-1)^h\Bigl\lan u_-^{-1}\bwop\cdot\hwt[j],~\lwt[j]\Bigr\ran_{\dfwt[j]}
\]
is a generalized minor as defined in \cite{Fomin_Zelevinsky_Double_Bruhat_Cells_and_Total_Positivity}.
\end{prop}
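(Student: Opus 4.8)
The plan is to transport the description of the coordinate ring of the ``upper'' unipotent cell $\dunipP=\dunip\cap\dborelm\dwP\dborelm$ across the transposition anti-isomorphism $\cdot^T:\dunimP\xrightarrow{\sim}\dunipP$, and then to re-express the two families of generators intrinsically in terms of Pl\"ucker coordinates on $\cmX$; almost all of this is the discussion preceding the statement, so the ``proof'' is mainly a matter of organizing those steps. First I would invoke \cite[Prop.~8.5]{GLS_Kac_Moody_groups_and_cluster_algebras}, which applies to $\dunipP$ directly and presents $\C[\dunipP]$ as a polynomial ring on a dual PBW basis indexed by the roots in $\wPpdroots$, localized at the generalized minors $\minor_{\dfwt,\wPinv(\dfwt)}$ for $i\in[n]$; that $\wPpdroots$ is exactly the set of positive roots with coefficient $+1$ in front of $\sdr_k$ is \cite[Lem.~3.5]{Spacek_Wang_exceptional}. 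Next I would replace the dual PBW generators by transposed Pl\"ucker coordinates: by \cite[Prop.~3.8]{Spacek_Wang_exceptional} they coincide with the functions $\p_\drt^T$ on $\dunipP$, where $\drt\mapsto w_\drt$ is the bijection between $\wPpdroots$ and minimal coset representatives carrying $-\dfwt[k]$ to $-\dfwt[k]+\drt$ --- a bijection that exists precisely because $\dfwt[k]$ is minuscule, so each such weight occurs with multiplicity one.

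Pulling everything back along $\cdot^T$ then replaces each $\p_\drt^T$ by the honest Pl\"ucker coordinate $\p_\drt$ on $\dunimP$ and each generalized minor by the transposed minor $u_-\mapsto\minor_{\dfwt,\wPinv(\dfwt)}(u_-^T)$. Hence $\C[\dunimP]$ is the polynomial ring $\C[\p_\drt\mid\drt\in\wPpdroots]$ localized at these transposed minors for $i\in[n]$, and it only remains to identify each of them with some $\phiGLS[j]$.

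This last identification is the computation already carried out just before the statement. I would apply the duality identity \eqref{eq:flipping_generalized_minors} to pass from $V(\dfwt)$ to the dual representation, realized as $V(\dfwt[\DS(i)])$, which converts the transpose into an inverse and the highest-weight projection into a lowest-weight one; then apply \eqref{eq:dw_bw_relation} relating the lifts $\dw$ and $\bw$ of a Weyl-group element to extract the sign $(-1)^h$ with $h=\height(-\wPinv\cdot\dfwt+\dfwt)$ as in \eqref{eq:gen_minor_minus_sign}; and finally use the factorization $\bwo=\bwP\bwop$ to cancel $\bwPinv\bwP=e$ and recognize $(-1)^h\langle u_-^{-1}\bwop\cdot\hwt[\DS(i)],\lwt[\DS(i)]\rangle_{\dfwt[\DS(i)]}=\phiGLS[\DS(i)](u_-)$. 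Since $i\mapsto\DS(i)$ is an involution of $[n]$, the collection of localizing functions is unchanged by this reindexing, which gives exactly the presentation in the statement.

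The main obstacle is bookkeeping rather than any real difficulty: one must keep the sign $(-1)^h$, the passage from ``upper'' to ``lower'' unipotent, and the Dynkin-involution reindexing mutually consistent. In particular it is important to verify that the anti-automorphism appearing after the duality flip is $\cdot^{-1}$ and not $\cdot^T$ --- so that the resulting $\phiGLS[j]$ are genuinely functions of $u_-\in\dunim$ --- and to note that $\cdot^{-1}$ does not preserve $\dunimP$ in general (only when $\DS$ is trivial, i.e.~$\wPinv=\wP$), which is harmless since we only need equality of functions on $\dunimP$ itself.
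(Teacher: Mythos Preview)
Your proposal is correct and follows essentially the same approach as the paper: the argument in the paper is precisely the discussion in Section~\ref{ssec:Coordinate_ring_of_dunimP} preceding the proposition, which transports the GLS presentation of $\C[\dunipP]$ across the transposition anti-isomorphism, identifies the dual PBW generators with the $\p_\drt^T$ via \cite[Prop.~3.8]{Spacek_Wang_exceptional}, and then rewrites the transposed minors as the $\phiGLS[\DS(i)]$ using \eqref{eq:flipping_generalized_minors}, \eqref{eq:dw_bw_relation}, and the factorization $\bwo=\bwP\bwop$. Your treatment of the sign, the $\DS$-reindexing, and the caveat that $\cdot^{-1}$ need not preserve $\dunimP$ all match the paper's remarks.
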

We have kept the coefficient $(-1)^h$ in $\phiGLS[j]$ to cancel out the additional minus signs arising from the inversion $u_-^{-1}$. This will be discussed further in Remarks \ref{rem:generalized_minors_sign_prelims} and \ref{rem:generalized_minors_sign}, but is immaterial for the presentation of the coordinate ring. One of the intermediate results in our construction of type-independent LG-models is the derivation of Pl\"ucker coordinate expressions for these generalized minors.

\subsubsection{Laurent polynomial expressions}\label{ssec:LP_LG_model}
We will show that the type-independent canonical mirror models are Landau-Ginzburg models by showing that they are isomorphic to these Lie-theoretic mirror models, taking advantage of the fact that the latter have type-independent descriptions when restricted to an open, dense algebraic torus defined as
\[
\opendecomps = \dborelm\bwo^{-1}\cap\dunip\invdtorus\bwop\opendunim ~~\subset~~\decomps,
\]
where $\opendunim\subset\dunimP$ (cf.~\cite{Pech_Rietsch_Odd_Quadrics}, Section 5.2, or \cite{Spacek_LP_LG_models}, Lemma 5.2) is the algebraic torus of elements $u_-\in\dunimP$ that allow a decomposition in the form
\begin{equation}
u_- = \dy_{r_1}(a_1)\cdots \dy_{r_\ellwP}(a_\ellwP) \qwith a_i\in\C^*
\label{eq:opendunim_decompostion_of_u-}
\end{equation}
and with the indices $(r_j)_{j\in[\ellwP]}$ forming a reduced expression $s_{r_\ellwP}\cdots s_{r_1}=\wP$. (Note that the sequence is reversed in the two expressions. Moreover, we have reversed the labeling of the reduced expression for $\wP$ compared with \cite{Spacek_LP_LG_models} and \cite{Spacek_Wang_exceptional}.) The torus $\opendunim$ does \emph{not} depend on which reduced expression of $\wP$ the sequence $(r_j)$ generates: due to the full commutativity of $\wP$ by \cite{Stembridge_Fully_commutative_elements_of_Coxeter_groups}, it suffices to consider two reduced expressions differing by the reversal of a pair of simple reflections, but it is easy to see (cf.~\cite{Rietsch_Mirror_Construction}, proof of Prop.~7.2) that $\dy_i(a)\dy_j(a')=\dy_j(a')\dy_i(a)$ if $s_is_j=s_js_i$. Hence, we will temporarily consider some \emph{fixed} reduced expression $\wP=s_{r_\ell}\cdots s_{r_1}$ to avoid confusion, even though the choice is immaterial. However, in Section \ref{sec:min_posets}, we will introduce a more convenient and combinatorial notation for these toric coordinates $\{a_i\}_{i\in{\ellwP}}$, which allows us to refer to specific coordinates without the need of a fixed reduced expression for $\wP$.

In addition to the minimal coset representative $\wP\in\cosets$ of $\wo\weylp$ for $\wo\in\weyl$ the longest element, we will also need the minimal coset representative $\wPPrime\in\cosets$ of $\wop s_k\weylp$ for $\wop\in\weylp$ the longest element. Together, these two elements define $\wPrime=\wP(\wPPrime)^{-1}\in\weyl$ of length $\ellwPrime$; denote by $\wPrimeSubExp=\{(i_j)_{j\in[\ellwPrime]}~|~s_{r_{i_{\ellwPrime}}}\cdots s_{r_{i_1}}=\wPrime\}$ the set of subexpressions for $\wPrime$ inside the \emph{fixed} reduced expression for $\wP$. (Note that we do \emph{not} fix a reduced expression for $\wPrime$.) Then we have the following result:
\begin{thm}[\cite{Spacek_LP_LG_models}, Thm 5.7]
\label{thm:LP_LG_model_original}
For cominuscule $\X=\G/\P_k$ and with the notation above, the restriction $\pot_{\opendecomps}$ of $\pot_{\decomps}$ to $\opendecomps$ is given in terms of toric coordinates as
\[
\pot_{\opendecomps}(z) = \Bigl(\tsum[_{i\in[\ellwP]}]a_i\Bigr) + q\frac{\sum_{(i_j)\in\wPrimeSubExp} \prod_{j\in[\ellwPrime]} a_{i_j}}{\prod_{i\in[\ellwP]} a_i}, \qwhere z=u_+t\bwop u_-
\]
with $u_-=\dy_{r_1}(a_1)\cdots\dy_{r_\ell}(a_\ell)\in\opendunim$ and $q=\sdr_k(t)\in\C^*$.
\end{thm}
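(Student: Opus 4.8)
The plan is to treat the two summands of $\pot_{\decomps}(z)=\sum_{i\in[n]}\dChedual(u_+^{-1})+\sum_{i\in[n]}\dChfdual(u_-)$ separately on the torus $\opendecomps$ — abbreviate them $E(z)$ and $F(z)$ — where $z=u_+t\bwop u_-$ with $u_-=\dy_{r_1}(a_1)\cdots\dy_{r_{\ellwP}}(a_{\ellwP})$ as in \eqref{eq:opendunim_decompostion_of_u-}, $q=\sdr_k(t)$, and $u_+\in\dunip$ the element determined by Corollary~\ref{cor:unique_u+_for_u-_for_fixed_q}. The term $F$ is the elementary one: writing $u_-=\exp(a_1\dChf_{r_1})\cdots\exp(a_{\ellwP}\dChf_{r_{\ellwP}})$ and expanding this product in a PBW basis, the only degree-one contributions are $a_j\dChf_{r_j}$ (any product of two or more exponential factors contributes monomials of degree at least two), and since $\dChfdual$ returns the coefficient of $\dChf_i$ and annihilates every other PBW monomial, summing over $i\in[n]$ and over the factors gives $F(z)=\sum_{j\in[\ellwP]}a_j$ with no further work.

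The real content is the term $E$, which I would first rewrite representation-theoretically. Working in the fundamental representation $\dfwtrep[i]$ of $\udG$ with highest vector $\hwt[i]$, and using $\llan\dfwt[i],\sdr_i\rran=1$ together with the fact that raising operators of total degree $\ge 2$ cannot carry the weight $\dfwt[i]-\sdr_i$ back to $\dfwt[i]$, a short highest-weight computation gives $\dChedual(v)=\lan v\cdot(\dChf_i\hwt[i]),\,\hwt[i]\ran_{\dfwt[i]}$ for $v\in\dunip$, hence $\dChedual(u_+^{-1})=-\lan u_+\cdot(\dChf_i\hwt[i]),\,\hwt[i]\ran_{\dfwt[i]}$. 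Substituting $u_+=z\,u_-^{-1}\bwop^{-1}t^{-1}$ (which follows from $z=u_+t\bwop u_-$) and using $z\in\dborelm\bwo^{-1}$ together with the fact that the highest-weight component of $u\cdot v$ equals that of $v$ for $u\in\dunim$, this expression collapses to a ratio of two generalized minors of $z$ in $\dfwtrep[i]$ having the same extremal left index, in which the unknown unipotent factor of $z$ no longer appears (the remaining signs being tracked by \eqref{eq:dw_bw_relation}); after this step $E$ is a function of $t$ and $u_-$ alone.

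Next I would localize $E$ at the cominuscule node. Since $\invdtorus=(\dtorus)^{\weylp}$ one has $\sdr_i(t)=1$ for every $i\neq k$ while $\sdr_k(t)=q$, and combining this with the fact that $\wop$ is the longest element of $\weylp$ I would show that the $i$-th summand of $E$ vanishes whenever $i\neq k$ — heuristically, for such $i$ the relevant weight transition stays inside the ``$\weylp$-block'' of the representation, where $t$ acts trivially and $\bwop$ contracts it, a contribution already captured by $F$. It then remains to evaluate the single surviving summand, which by the previous paragraph is a matrix coefficient of $u_-=\dy_{r_1}(a_1)\cdots\dy_{r_{\ellwP}}(a_{\ellwP})$ in the minuscule representation $\dfwtrep[k]$, rescaled by a power of $q=\sdr_k(t)$. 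Here Lemma~\ref{lem: Spacek_LPLG} and the combinatorics of the minuscule poset finish the job: a matrix coefficient $\lan u_-\cdot v_\mu,\,v_\nu^*\ran$ between weight vectors with $\nu=w\cdot\mu$ and $w\in\cosets$ equals $\sum_S\prod_{j\in S}a_j$, the sum over those subwords $S$ of $(r_1,\dots,r_{\ellwP})$ whose ordered product equals $w$. The denominator realizes all of $\wP$, giving $\prod_{i\in[\ellwP]}a_i$; the numerator realizes $\wPrime=\wP(\wPPrime)^{-1}$ with $\wPPrime$ the minimal coset representative of $\wop s_k\weylp$, giving $\sum_{(i_j)\in\wPrimeSubExp}\prod_j a_{i_j}$; and cominusculity of $\dfwt[k]$ pins the power of $q$ to exactly one. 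Assembling, $E(z)=q\cdot\frac{\sum_{(i_j)\in\wPrimeSubExp}\prod_j a_{i_j}}{\prod_{i\in[\ellwP]}a_i}$, and adding $F$ gives the asserted formula.

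The hard part will be the middle paragraph: pinning down exactly which two generalized minors of $z$ compute $\dChedual(u_+^{-1})$ and verifying that the unknown unipotent factor of $z$ genuinely drops out, together with the vanishing of the $i\neq k$ terms. This is delicate Bruhat bookkeeping involving the position $z\in\dborelm\bwo^{-1}$, the covariance of generalized minors under one-sided unipotent and torus multiplication, and the interaction of $\wop$ with the $\weylp$-fixed torus $\invdtorus$. Once that is in place, everything downstream — the appearance of $\wPrime$, the subword sum in the numerator, and the single power of $q$ — follows mechanically from Lemma~\ref{lem: Spacek_LPLG} and the weight combinatorics of the minuscule poset.
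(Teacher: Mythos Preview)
The paper does not prove this theorem: it is quoted from \cite{Spacek_LP_LG_models}, Theorem~5.7, with no argument given. The only in-paper commentary on its proof is the sentence preceding \eqref{eq:toric_expression_pot_Lie_3}, which reports that the proof in \cite{Spacek_LP_LG_models} yields the term-by-term identifications $\dChfdual(u_-)|_{\opendunim}=\sum_{\pb:\,\indx(\pb)=i}a_\pb$, $\dChedual(u_+^{-1})|_{\opendunim}=0$ for $i\neq k$, and the quantum term for $i=k$. So there is nothing here to compare against directly.

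That said, your outline has exactly the architecture the paper attributes to the cited proof: handle $F$ by the degree-one expansion (your argument for this is correct), and for $E$ eliminate $u_+$ using $z\in\dborelm\bwo^{-1}$, show the $i\neq k$ summands vanish, and read off the $i=k$ summand as a ratio of minuscule matrix coefficients of $u_-$ scaled by $q$ via subword combinatorics (Lemma~\ref{lem: Spacek_LPLG}). You also correctly identify the hard step. The one place your sketch is genuinely thin is the vanishing for $i\neq k$: the heuristic you offer (``the relevant weight transition stays inside the $\weylp$-block, where $t$ acts trivially and $\bwop$ contracts it, a contribution already captured by $F$'') is not a proof and is not obviously the right mechanism --- the $F$-terms come from $u_-$, not $u_+$, so there is no a priori reason the $\dChedual$-contributions for $i\neq k$ should be ``already captured''. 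Making this precise is where the real work in \cite{Spacek_LP_LG_models} lies, and your proposal would need to replace the heuristic with an actual computation of (at least the degree-one part of) $u_+$ on the torus, or an equivalent generalized-minor identity.
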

\begin{rem}\label{rem:wPrime_is_min_coset_rep}
Although not necessary for this theorem, we prove in Lemma \ref{lem:wJprime_min_coset_rep} that $\wPrime=\wP(\wPPrime)^{-1}\in\weyl$ is a minimal coset representative; see Section 9 of the arXiv version of \cite{Spacek_LP_LG_models} for expressions for $\wPrime$ in all cases.
\end{rem}

Analogously to Section \ref{ssec:Lie_model}, we will simplify considerations by pulling back the superpotential to $\dunimP$, where the restriction to the torus $\opendunim\subset\dunimP$ gives us a Laurent polynomial potential, namely:
\begin{cor}
For $\X=\G/\P_k$ cominuscule, and any fixed $q\in\C^*$, the superpotential $\pot_{\dunimP}$ restricts on the dense algebraic torus $\opendunim$ to
\[
\pot_{\opendunim}(u_-) = \Bigl(\tsum[_{i\in[\ellwP]}]a_i\Bigr) + q\frac{\sum_{(i_j)\in\wPrimeSubExp} \prod_{j\in[\ellwPrime]} a_{i_j}}{\prod_{i\in[\ellwP]} a_i}, 
\]
where $u_-=\dy_{r_1}(a_1)\cdots\dy_{r_\ell}(a_\ell)\in\opendunim$ and $\wPrimeSubExp$ is defined above.
\end{cor}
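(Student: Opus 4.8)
The plan is to deduce this immediately from Theorem~\ref{thm:LP_LG_model_original} by unwinding how $\pot_{\decomps}$ descends to $\dunimP$ at a fixed value of $q$. First I would recall from Definition~\ref{df:Lie_potential_on_dunimP} that, having fixed $q\in\C^*$ and chosen $t\in\invdtorus$ with $\sdr_k(t)=q$, the function $\pot_{\dunimP}$ is by construction the pullback of $\pot_{\decomps}$ along the map $u_-\mapsto z:=u_+t\bwop u_-$, where $u_+\in\dunip$ is the unique element produced by Corollary~\ref{cor:unique_u+_for_u-_for_fixed_q}; thus $\pot_{\dunimP}(u_-)=\pot_{\decomps}(z)$ tautologically.

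Next I would check that the isomorphism $\dunimP\times\invdtorus\xrightarrow{\sim}\decomps$ of Lemma~\ref{lem:decomps_iso_dunimp_times_invdtorus} carries $\opendunim\times\invdtorus$ onto $\opendecomps$. This is immediate from the definition $\opendecomps=\dborelm\bwo^{-1}\cap\dunip\invdtorus\bwop\opendunim$ together with the uniqueness of the decomposition $z=u_+t\bwop u_-$ asserted in Lemma~\ref{lem:decomps_iso_dunimp_times_invdtorus}: an element $z\in\decomps$ lies in $\opendecomps$ precisely when its (unique) $\dunimP$-factor $u_-$ lies in $\opendunim$. Consequently, for $u_-\in\opendunim$ the element $z=u_+t\bwop u_-$ belongs to $\opendecomps$ and $\pot_{\dunimP}(u_-)=\pot_{\decomps}(z)=\pot_{\opendecomps}(z)$.

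Finally, writing $u_-=\dy_{r_1}(a_1)\cdots\dy_{r_\ell}(a_\ell)\in\opendunim$ as in \eqref{eq:opendunim_decompostion_of_u-} and using $q=\sdr_k(t)$, Theorem~\ref{thm:LP_LG_model_original} evaluates $\pot_{\opendecomps}(z)$ to exactly the right-hand side of the claimed formula, so $\pot_{\opendunim}(u_-)$ agrees with it. The only step demanding any attention is the compatibility of the torus $\opendunim$ with the image locus $\opendecomps$, and that follows directly from the definitions and the uniqueness in Lemma~\ref{lem:decomps_iso_dunimp_times_invdtorus}; the remainder is pure substitution, so I do not anticipate any genuine obstacle here.
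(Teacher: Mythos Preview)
Your argument is correct and matches the paper's approach: the corollary is stated without an explicit proof, being presented as an immediate pullback of Theorem~\ref{thm:LP_LG_model_original} along the identification of Lemma~\ref{lem:decomps_iso_dunimp_times_invdtorus}/Corollary~\ref{cor:unique_u+_for_u-_for_fixed_q} and Definition~\ref{df:Lie_potential_on_dunimP}, exactly as you describe. Your verification that $\opendunim\times\invdtorus$ corresponds to $\opendecomps$ under this isomorphism makes explicit the one point the paper leaves tacit.
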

Just like in Section \ref{ssec:Lie_model}, note that we do not think of $\pot_{\opendunim}$ as a morphism $\opendunim\times\C^*\to\C$ but rather as a $\C^*$-family of morphisms $\opendunim\to\C$.

In Corollary 8.12 of \cite{Spacek_LP_LG_models}, a combinatorial method is described to obtain the elements of $\wPrimeSubExp$ using a specific quiver defined in \cite{CMP_Quantum_cohomology_of_minuscule_homogeneous_spaces}; it turns out that these combinatorics are equivalent to the well-known combinatorics of \emph{minuscule posets}, so we will take some time to introduce these and present the above results in this language.

\subsection{Minuscule posets and local toric expressions}\label{sec:min_posets}
As we have alluded to previously, minuscule posets will provide a very effective combinatorial tool for our models, both for describing toric expressions, as well as for formulating the Pl\"ucker coordinate superpotential. Hence, we will spend some time introducing them in this section. We start with the basic order-theoretic definitions and results in Section \ref{ssec:Combinatorics_of_minuscule_posets}, and then use them in Section \ref{ssec:min_posets_and_LP_LG_model} to rephrase the Laurent polynomial LG-model in terms of embeddings of order ideals in minuscule posets. Finally, we use these embeddings to describe toric expressions of Pl\"ucker coordinates and generalized minors in \ref{ssec:min_posets_and_Pluckers}

\subsubsection{The combinatorics of minuscule posets}\label{ssec:Combinatorics_of_minuscule_posets}
It might seem that fixing a specific reduced expression for the minimal coset representative $\wP\in\cosets$ of the longest element $\wo\in\weyl$ is unnatural; indeed, by \cite{Stembridge_Fully_commutative_elements_of_Coxeter_groups} all reduced expressions can be obtained from a given one by commuting simple reflections, and the algebraic torus $\opendunim$ does not change under such modifications. Using the combinatorics of minuscule posets will allow us to let go of this fixed expression. Hence, we will introduce the relevant order-theoretic concepts here.

Firstly, recall that an \emph{order ideal} (or \emph{lower set}) is a subset $\sI$ of a poset (i.e.~partially ordered set) $\sP$ such that any element $x\in\sP$ such that $x\le y$ for some $y\in\sI$ satisfies $x\in\sI$. The order ideal generated by a subset $S\subset \sP$ is all $x\in\sP$ such that $x\le s$ for some $s\in S$. We will write this as $\sI(S)$. The \emph{join} $x\vee y\in\sP$ of $x,y\in\sP$ (if it exists) is the least upper bound of $x$ and $y$ in $\sP$; equivalently, it is the minimal element $x\vee y\in\sP$ such that the order ideal generated by it contains the union of the ideals generated by $x$ and by $y$. An element of $\sP$ is called \emph{join-irreducible} if it \emph{cannot} be written as the join of two other elements of $\sP$. Dually, the \emph{meet} $x\wedge y\in\sP$ of $x,y\in\sP$ (if it exists) is the greatest lower bound of $x$ and $y$ in $\sP$; equivalently, it is the largest element such that the ideal generated by it is contained in the intersection of the ideals generated by $x$ and by $y$. If every pair of elements of a (finite) poset $\sP$ has both a meet and join, then $\sP$ is called a \emph{lattice}. Moreover, if in a lattice the meet and join \emph{distribute} over each other (i.e.~$x\wedge(y\vee z) = (x\wedge y)\vee(x\wedge z)$), then $\sP$ is called a \emph{distributive lattice}. 

Distributive lattices appear naturally in the context of minuscule representations: Equipping the weights of a minuscule representation with the partial order given by $\wtu[1]>\wtu[2]$ if and only if $\wtu[1]-\wtu[2]$ is a positive linear combination of simple roots, we have the following result:
\begin{thm}[{\cite{proctor_bruhat_lattices}, Proposition 3 and Theorem 4}]
The poset of weights of the minuscule representation $\PluckerRep$ is isomorphic to the set $\cosets$ with the (left) Bruhat order under the bijection $w\mapsto w(-\dfwt[k])$. Moreover, this poset forms a finite distributive lattice.
\end{thm}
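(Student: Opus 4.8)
The plan is to prove the statement in three stages: first establish the underlying bijection, then upgrade it to an order isomorphism by passing through a combinatorial model of $\cosets$, and finally read off the distributive lattice property from that model. For the bijection, note that since $\PluckerRep$ is minuscule, $\weyl$ acts transitively on its set of weights, and since $\dfwt[k]$ is the $k$-th fundamental weight the simple reflection $s_i$ fixes the lowest weight $-\dfwt[k]$ precisely when $i\neq k$, so the stabilizer of $-\dfwt[k]$ in $\weyl$ is $\weylp$. Hence $w\mapsto w(-\dfwt[k])$ induces a bijection from $\cosets$ onto the set of weights of $\PluckerRep$, and what remains is to match the Bruhat order on $\cosets$ with the dominance order $\wt_1\geq\wt_2\iff\wt_1-\wt_2\in\Z_{\geq0}\{\sdr_i\}$ on the weights, and to check the resulting poset is a finite distributive lattice.

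\emph{The heap picture.} The longest element $\wP\in\cosets$ is fully commutative by \cite{Stembridge_Fully_commutative_elements_of_Coxeter_groups}, so all of its reduced words are related by commutations and determine the same \emph{heap}, which is exactly the minuscule poset $\minposet$; its boxes carry a labeling by the occurring simple reflections, and since a heap relates any two equally labeled elements, the boxes of each fixed label form a chain in $\minposet$. By the standard correspondence for minuscule posets (\cite{proctor_bruhat_lattices}), sending $w\in\cosets$ to its heap $\ideal_w\subideal\minposet$ gives a bijection of $\cosets$ onto the order ideals of $\minposet$ that carries the Bruhat order to inclusion; in particular $(\cosets,\leq)$ is a finite distributive lattice by Birkhoff's theorem. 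Moreover a linear extension of $\ideal_w$ is a reduced word for $w$, and since $w$ is a minimal coset representative, applying these simple reflections in order to $-\dfwt[k]$ raises the weight at each step by exactly the corresponding simple root (minusculity forbids longer root strings); this is the content of Lemma~\ref{lem: Spacek_LPLG}, and it gives the formula
\[
w(-\dfwt[k]) = -\dfwt[k] + \sum_{\pb\in\ideal_w}\sdr_{\indx(\pb)},
\]
where $\indx(\pb)$ denotes the label of the box $\pb$.

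\emph{Matching the orders.} Transport the dominance order along the bijection of the first stage to a relation $\preceq$ on $\cosets$. Since the $\sdr_i$ are linearly independent, the content formula shows for $w,w'\in\cosets$ with heaps $\ideal_w,\ideal_{w'}$ that $w\preceq w'$ if and only if $n_c(\ideal_w)\leq n_c(\ideal_{w'})$ for every label $c$, where $n_c(\ideal)$ counts the boxes of $\ideal$ with label $c$. By the heap picture, $w\leq w'$ in the Bruhat order if and only if $\ideal_w\subseteq\ideal_{w'}$, so it suffices to see these two conditions on the ideals coincide. One implication is clear; for the other, since each label class is a chain and $\ideal_w,\ideal_{w'}$ are lower sets, each meets the label-$c$ chain in an initial segment of length $n_c(\ideal_w)$, resp.\ $n_c(\ideal_{w'})$, so the componentwise inequality forces every box of $\ideal_w$ into $\ideal_{w'}$, i.e.\ $\ideal_w\subseteq\ideal_{w'}$. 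Thus $w\mapsto w(-\dfwt[k])$ is an isomorphism of posets, and by the heap picture this common poset is a finite distributive lattice.

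\emph{Main obstacle.} The only genuinely non-formal ingredient is the heap picture: that $\wP$ is fully commutative and that $\cosets$ under the Bruhat order is order-isomorphic to the lattice of order ideals of its heap $\minposet$. This is classical for minuscule posets but is not mere bookkeeping; granting it, the remainder is driven by the content formula of Lemma~\ref{lem: Spacek_LPLG} together with the elementary observation that same-labeled boxes of a heap form a chain. One could instead bypass the heap and match cover relations directly, using the minuscule bound $\llan\wt,\rt\rran\in\{-1,0,1\}$ for all $\rt\in\roots$ to show that every Bruhat cover in $\cosets$ corresponds to raising $w(-\dfwt[k])$ by a single simple root and conversely, and then obtain distributivity from Birkhoff's theorem applied to the subposet of join-irreducible weights; but the heap route has the advantage of producing the combinatorial model $\minposet$ that the rest of the paper relies on.
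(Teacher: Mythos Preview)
The paper gives no proof of this statement; it is quoted from \cite{proctor_bruhat_lattices} without argument, so there is nothing in the paper to compare your approach against.

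Your outline is sound and the content-formula argument in stage~3 is the right idea, but the ``heap picture'' stage is circular as written: you cite \cite{proctor_bruhat_lattices} for the claim that Bruhat order on $\cosets$ is carried to inclusion of order ideals of $\minposet$, yet this is the very reference (and essentially the very assertion) of the theorem being proved. The repair is already latent in the paper's conventions. Immediately after stating the theorem, the paper \emph{defines} the ``(left) Bruhat order'' on $\cosets$ via simple-reflection covers $w\gtrdot w'$ with $w=s_iw'$ and $\ell(w)=\ell(w')+1$; this is the left \emph{weak} order. For a fully commutative $\wP$ (correctly attributed to \cite{Stembridge_Fully_commutative_elements_of_Coxeter_groups}) it is a standard heap-theoretic fact, independent of Proctor, that the weak-order interval $[e,\wP]$ is order-isomorphic to the lattice of order ideals of the heap $\minposet$; one still has to check that this interval coincides with $\cosets$ (i.e.\ that every minimal coset representative is a left prefix of $\wP$), but this follows from the minuscule root-string bound underlying Lemma~\ref{lem: Spacek_LPLG}. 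With those substitutions your three stages assemble into: dominance on weights $\Leftrightarrow$ labelwise inequality of heap contents $\Leftrightarrow$ inclusion of heaps $\Leftrightarrow$ weak order, and Birkhoff supplies the distributive-lattice structure. The alternative you sketch at the end---matching cover relations directly via $\llan\wt,\rt\rran\in\{-1,0,1\}$---avoids the citation issue altogether and is closer in spirit to Proctor's original argument.
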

Recall that the (left) Bruhat order on $\cosets$ (or on $\weyl$) is the transitive closure of the covering relation $w\gtrdot w'$ if $w=s_iw'$ with $\ell(w)=\ell(w')+1$. Because of the previous result, we will also refer to the order on the weights of the minuscule representation as the Bruhat order. The structure of finite distributive lattices is in turn determined by their join-irreducible elements:
\begin{thm}[{\cite{birkhoff_representation}, Section 9}]
Let $\sL$ be a finite distributive lattice and $\sP$ the induced subposet of join-irreducible elements, then $\sL$ is isomorphic to the poset of order ideals of $\sP$ ordered by inclusion.
\end{thm}

It is easy to see when a weight for a finite-dimensional representation is join-irreducible: when there is exactly one simple reflection lowering the weight. Hence, each element of the poset of join-irreducible weights (i.e.~each box of the skew Young diagram) corresponds to a simple reflection---the one that lowers that weight. Because of this, we will label the boxes of the minuscule poset by the simple reflection $s_i$ lowering that join-irreducible weight; we write $\labl(\pb)=s_i$ if the box $\pb$ has label $s_i$. Note that with this labeling, the minuscule poset in fact coincides with the \emph{heap} associated to $\wP\in\cosets$ as defined in \cite{Stembridge_Fully_commutative_elements_of_Coxeter_groups}, and hence every order ideal coincides with the heap of the corresponding minimal coset representative. As both combinatorial objects are equivalent, we will favor order ideals of the minuscule poset over heaps of minimal coset representatives in this paper.

Sometimes we will need the index of the simple reflection labeling a given element of the minuscule poset instead of the reflection itself; in these cases we will write $\indx(\pb)=i$ for the index corresponding to the simple reflection $\labl(\pb)=s_i$.
 
The two theorems above tell us that we can describe all weights of a minuscule representation by order ideals of the poset of join-irreducible weights; such a poset is called a \emph{minuscule poset}. Minuscule posets simplify notation greatly as they can be drawn two-dimensionally: every element covers at most two elements and is covered by at most two elements. These posets are therefore usually drawn as skew Young diagrams with the top-left box representing the minimal element and a box drawn to the right or below a given box indicating that the given box is covered by the other. (Note that there exists a unique minimal element as there exists a unique weight covering the lowest weight in a fundamental weight representation, hence in particular in a minuscule representation.)

We will write $\minposet$ for the minuscule poset obtained from the fundamental representation $\PluckerRep$ of $\udG$, where $\cmX=\udP_k\backslash\udG$ is minuscule. (We suppress the ``$^\vee$'' since this data is ultimately fixed by $\P=\P_k\subset\G$ defining $\X$.) We write $\ideal\subideal\minposet$ if a subposet $\ideal\subset\minposet$ is an order ideal, and write $\ell(\ideal)$ for the number of boxes (i.e.~elements) in the poset $\ideal$. We hence obtain two ways to refer to the weights of the minuscule representation: by order ideals $\ideal\subideal\minposet$ of the associated minuscule poset $\minposet$, and by minimal coset representatives $w\in\cosets$. Note that these are compatible in the sense that adding a box labeled $s_i$ to a given order ideal $\ideal\subideal\minposet$ corresponds to multiplying the corresponding word $w\in\cosets$ by the simple reflection $s_i$ on the left. We see from this directly that a \emph{linear extension} of $\ideal\subideal\minposet$ (i.e.~completing the partial order to a total order) gives a \emph{reduced expression} of $w\in\cosets$. In particular, the number of boxes in the order ideal equals the length of the corresponding word, i.e.~$\ell(\ideal)=\ell(w)$. 

\begin{rem}\label{rem:order_ideals_to_min_coset_reps}
In fact, the correspondence between ideals of $\minposet$ and minimal coset representatives in $\cosets$ giving the same weight vector can be made direct: Given an ideal $\ideal\subideal\minposet$, choose a linear extension of the partial order and let $w\in\cosets$ be formed by multiplying the labels $s_i$ of the elements of $\ideal$ with the greatest element on the left and decreasing to the right; which we will denote as
\[
\labelprod{\ideal} = \prod_{\pb\in\ideal}\labl(\pb)\in\cosets.
\]
Conversely, given a minimal coset representative $w\in\cosets$, take a reduced expression $w=s_{i_1}\cdots s_{i_r}$, and let $\ideal\subideal\minposet$ be the poset with elements labeled by $s_{i_j}$ and with partial order the transitive closure of the covering relations $s_{i_j}\gtrdot s_{i_{j'}}$ if $j<j'$, $(s_{i_j}s_{i_{j'}})^2\neq e$ and $\forall j''$ with $j<j''<j'$, $s_{i_{j''}}$ is distinct from and commutes with $s_{i_j}$ and $s_{i_{j'}}$. (Compare the quiver defined in Definition 2.1 of \cite{CMP_Quantum_cohomology_of_minuscule_homogeneous_spaces}.) Due to the full-commutativity of elements of $\cosets$, these maps are independent of the choice of linear extension of the partial order and the reduced expression.

In particular, we defined the minimal coset representative $\wPPrime$ for the coset $w_Ps_kW_P$. The corresponding order ideal $\idealPPrime$ is the maximal order ideal of $\minposet$ containing exactly one occurrence of $s_k$. We will show in Corollary \ref{cor:quantum_term} that $\wPrime = \wP(\wPPrime)^{-1}$ is also a minimal coset representative. Its corresponding order ideal $\idealPrime$ will be isomorphic (respecting the labeling) to the complement of $\idealPPrime$ in $\minposet$.
\end{rem}

\begin{ex}
We will illustrate the correspondence between weights of the minuscule representation and order ideals of the minuscule poset by considering the example of the maximal orthogonal Grassmannian $\OG(5,10)=\Spin(10)/\P_5=\LGD_5^{\SC}/\P_5$, as discussed in Section 4 of \cite{Spacek_Wang_OG} (note the shift in $n$ used there).

First, consider the weights of the representation $\PluckerRep[4]$ of the universal cover $\udG=\Spin(10)$ of $\dG=\PSO(10)$, which is the \emph{spin representation}: We have drawn the Hasse diagram of the poset of weights of $\PluckerRep[4]$ with respect to the Bruhat order below. Here the highest weight, $\dfwt[4]$, is drawn on the left whereas the lowest weight, $-\dfwt[5]$, is drawn on the right; the Bruhat covering relations $s_iw(-\dfwt[5])\gtrdot w(-\dfwt[5])$ are denoted by edges labeled $i$.
\[\Yboxdim{3pt}
\begin{tikzpicture}[rotate=90,scale=.5,style=very thick,baseline=0.25em]
    \draw 
	(0,-7)--(0,-5)--(-2,-3)--(0,-1)--(-1,0)--(0,1)
        (0,-5)--(1,-4)--(0,-3)--(2,-1)
	(-1,-4)--(0,-3)--(-1,-2) 
	(0,1)--(2,-1) (1,-2)--(0,-1)--(1,0)
        (0,1)--(0,3)
    ;
    \draw[black, fill=black] 
	(0,-6) circle (.15)
	(0,-5) circle (.15)
	(1,-4) circle (.15)
	(-1,-4) circle (.15)
	(-2,-3) circle (.15)
	(1,-2) circle (.15)
	(2,-1) circle (.15)
	(-1,0) circle (.15)
        (0,2) circle (.15)
        (0,3) circle (.15)
    ;
    \draw[black, fill=white] 
	(0,-7) circle (.15)
 	(0,-3) circle (.15)
	(-1,-2) circle (.15)
 	(0,-1) circle (.15)
	(1,0) circle (.15)
	(0,1) circle (.15)
    ;
	\node at (0,-6.5)[below=-2pt]{\scriptsize 5};
	\node at (0,-5.5)[below=-2pt]{\scriptsize 3};
	\node at (0.5,-4.5)[below left=-3pt]{\scriptsize 4};
	\node at (-0.5,-4.5)[above left=-3pt]{\scriptsize 2};
	\node at (0.5,-3.5)[below right=-3pt]{\scriptsize 2};
	\node at (-0.5,-3.5)[below left=-3pt]{\scriptsize 4};
	\node at (-1.5,-3.5)[above left=-3pt]{\scriptsize 1};
	\node at (0.5,-2.5)[below left=-3pt]{\scriptsize 3};
	\node at (-0.5,-2.5)[below right=-3pt]{\scriptsize 1};
	\node at (-1.5,-2.5)[above right=-3pt]{\scriptsize 4};
	\node at (1.5,-1.5)[below left=-3pt]{\scriptsize 5};
	\node at (0.5,-1.5)[below right=-3pt]{\scriptsize 1};
	\node at (-0.5,-1.5)[above right=-3pt]{\scriptsize 3};
	\node at (1.5,-0.5)[below right=-3pt]{\scriptsize 1};
	\node at (0.5,-0.5)[below left=-3pt]{\scriptsize 5};
	\node at (-0.5,0.5)[above right=-3pt]{\scriptsize 5};
    \node at (-0.5,-0.5)[above left=-3pt]{\scriptsize 2};
    \node at (0.5,0.5)[below right=-3pt]{\scriptsize 2};
    \node at (0,1.5)[below=-2pt]{\scriptsize 3};
    \node at (0,2.5)[below=-2pt]{\scriptsize 4};
	\node at (0,-7)[below=2pt]{\scriptsize$\lwtv$};
	\node at (0,-7)[above=2pt]{\tiny$\varnothing$};
	\node at (0,-6)[above=2pt]{\tiny$\yng(1)$};
	\node at (0,-5)[above=2pt]{\tiny$\yng(1,1)$}; 
 \node at (1,-4)[above=2pt]{\tiny$\yng(1,2)$};
	\node at (-1,-4)[below=2pt]{\tiny$\yng(1,1,1)$};
	\node at (0,-3)[above=2pt]{\tiny$\yng(1,2,1)$};
	\node at (-2,-3)[below=2pt]{\tiny$\yng(1,1,1,1)$};
	\node at (1,-2)[above=2pt]{\tiny$\yng(1,2,2)$};
	\node at (-1,-2)[below=2pt]{\tiny$\yng(1,2,1,1)$};
	\node at (2,-1)[above=2pt]{\tiny$\yng(1,2,3)$};
	\node at (0,-1)[below=2pt]{\tiny$\yng(1,2,2,1)$};
	\node at (1,0)[above=2pt]{\tiny$\yng(1,2,3,1)$};
\node at (-1,0)[below=2pt]{\tiny$\yng(1,2,2,2)$};
\node at (0,1)[above=2pt]{\tiny$\yng(1,2,3,2)$};
\node at (0,2)[above=2pt]{\tiny$\yng(1,2,3,3)$};
\node at (0,3)[above=2pt]{\tiny$\yng(1,2,3,4)$};
\end{tikzpicture}
\qquad\qquad \Yboxdim{7pt}
\minposet=\raisebox{-1.1em}{\scriptsize\young(5,34,235,1234)}
\]
The weights that are join-irreducible with respect to the Bruhat order are denoted by black vertices; each of these corresponds to a box in the minuscule poset $\minposet$ drawn to the right of the Hasse diagram and is labeled with the $s_i$ that decreases the weight (we have written $i$ instead of $s_i$ in the poset). Note that the minuscule poset is drawn with the smallest element top-left. The correspondence between order ideals of $\minposet$ and weights of the representation is indicated in the figure as well (note that the minimal element is drawn top-left).

Finally, we illustrate the direct correspondence between ideals and minimal coset representatives: Consider the ideal $\Yboxdim{3pt}\raisebox{.5pt}{\yng(1,2,1)}\subideal\raisebox{-2.25pt}{\yng(1,2,3,4)}$ which has partial order $\pb_{s_2},\pb_{s_4}>\pb_{s_3}>\pb_{s_5}$. (Here we write $\pb_{s_i}$ for a box satisfying $\labl(\pb_{s_i})=s_i$.) It has two linear extensions $\pb_{s_2}>\pb_{s_4}>\pb_{s_3}>\pb_{s_5}$ and $\pb_{s_4}>\pb_{s_2}>\pb_{s_3}>\pb_{s_5}$ both giving the same Weyl element $s_2s_4s_3s_5=s_4s_2s_3s_5\in\cosets$ since the simple reflections $s_2$ and $s_4$ commute in type $\LGD_5$. Conversely, considering the element $s_2s_4s_3s_5=s_4s_2s_3s_5\in\cosets$, we get the partial order $\pb_{s_2},\pb_{s_4}>\pb_{s_3}>\pb_{s_5}$ from either reduced expression, i.e.~the poset $\Yboxdim{3pt}\raisebox{.5pt}{\yng(1,2,1)}\subideal\raisebox{-2.25pt}{\yng(1,2,3,4)}$.
\end{ex}

The correspondence between elements of $\cosets$ and order ideals of $\minposet$ gives us a second way of denoting the natural weight basis of $\PluckerRep$: $\{v_\ideal~|~\ideal\subideal\minposet\}$ instead of $\{v_w~|~w\in\cosets\}$. We will favor the notation using minimal coset representatives except in explicit examples where the order ideal notation is more convenient. As for the lowest weight vector of $\PluckerRep$, we will favor the notation $\lwtv$ over either $\wtv{\varnothing}$ (where $\varnothing\subideal\minposet$) and $\wtv{e}$ (where $e\in\cosets$). This alternative notation also gives rise to a similar alternative for Pl\"ucker coordinates:
\begin{df}[Cf.~Definition \ref{df:Plucker_coords}]\label{df:Plucker_coords_posets}
The \emph{Pl\"ucker coordinates} $(\p_\ideal~|~\ideal\subideal\minposet)$ on $\cmX$ are defined by the maps 
\[
\p_\ideal(g) = \lwtv^*(g\cdot\wtv{\ideal}) \qfor g\in\udG
\]
(which are also called \emph{Pl\"ucker coordinates} by abuse of notation). Note that $\p_\ideal=\p_w$ where $\ideal\subideal\minposet$ and $w\in\cosets$ correspond to each other under Remark \ref{rem:order_ideals_to_min_coset_reps}. We will frequently suppress the arguments of these coordinates if it is clear on which domain we consider them.
\end{df}

We will end the introduction to minuscule posets with an overview of all the possible posets:
\begin{ex}\label{ex:list_of_minposets}
Comparing with Table \ref{tab:CominusculeSpaces}, we see that there are five infinite families of minuscule homogeneous spaces and two sporadic ones: one family each in types $\LGA_n$ (Grassmannians), $\LGB_n$ (maximal orthogonal Grassmannians in odd dimensions) and $\LGC_n$ (odd dimensional projective spaces); and two families in type $\LGD_n$ (even dimensional quadrics and maximal orthogonal Grassmannians in even dimensions; $k=n-1$ and $k=n$ give isomorphic families); as well as the minuscule varieties $\LGE_6/\P_6\cong\LGE_6/\P_1$ and $\LGE_7/\P_7$. Below we have given characteristic examples of each of the corresponding minuscule posets along with the corresponding ``rim-hook'' order ideals $\idealPPrime$ corresponding to the minimal coset representative $w''$ for $w_Ps_kW_P$ (red labels). \begin{equation*}
\label{eq:young_diagrams}
\Yboxdim{7pt}
\begin{array}{c}
\begin{array}{c||c|c|c|c|c|c|c}
\raisebox{40pt}{$\minposet$}&
\raisebox{40.5pt}{\scriptsize\young(\rth\rfo\rfi\rsi,\rtw345,\ro234)} &
\raisebox{27pt}{\scriptsize\young(\rfi,\rfo5,\rth45,\rtw345,\ro2345)}& 
\raisebox{27pt}{\scriptsize\young(\ro\rtw\rth\rfo\rfi,::::\rfo,::::\rth,::::\rtw,::::1)}& 
\raisebox{27pt}{\scriptsize\young(\ro\rtw\rth\rfo\rfi,:::\rsi\rfo,::::\rth,::::\rtw,::::1)}& 
\raisebox{27pt}{\scriptsize\young(\rsi,\rfo\rfi,\rth\rfo6,\rtw\rth45,\ro\rtw346)}& 
\raisebox{33.5pt}{\scriptsize\young(\rsi\rfi\rfo\rth\ro,::\rtw\rfo\rth,:::\rfi\rfo\rtw,:::65431)}& 
\raisebox{0pt}{\scriptsize\young(\rse\rsi\rfi\rfo\rth\ro,:::\rtw\rfo\rth,::::\rfi\rfo\rtw,::::\rsi\rfi\rfo\rth\ro,::::76543,:::::::24,::::::::5,::::::::6,::::::::7)}
\vspace{-2.8em}\\
\text{type} & A_6 & B_5 & C_5 & D_6 & D_6 & E_6 & E_7 \\
k           & 3   & 5   & 1   & 1   & 6   & 6   & 7
\end{array}
\end{array}
\end{equation*}
Extending these to the general case should be straightforward. Note that choosing $k=n-1$ in type $\LGD_n$ simply switches the labels $n-1$ and $n$ along the main diagonal of the staircase partition, and that choosing $k=1$ in type $\LGE_6$ yields the poset obtained by reversing the order (or diagramatically by rotating the poset $180^\circ$).
\end{ex}

\subsubsection{Minuscule posets and the Laurent polynomial Landau-Ginzburg model}\label{ssec:min_posets_and_LP_LG_model}
As Remark \ref{rem:order_ideals_to_min_coset_reps} indicated, the minuscule poset $\minposet$ lets us deal with the simple reflections in the reduced expressions of $\wP$ without fixing a specific one. In Section \ref{ssec:LP_LG_model} we introduced the algebraic torus $\opendunim\subset\dunimP$ and discussed how the toric coordinates do not depend on the fixed reduced expression of $\wP$, despite being associated to a specific simple reflection in one of the expressions. Combining these two observations, we can uniquely associate a toric coordinate to each box of $\minposet$; the easiest way to realize this is to (temporarily) fix a reduced expression for $\wP$ to fix the decomposition of elements in the torus $\opendunim$ as in equation \eqref{eq:opendunim_decompostion_of_u-} as well as a linear extension of the partial order on $\minposet$, and then associating the $i$th toric coordinate to the $i$th box of $\minposet$ starting from the smallest element and going up. It should be clear from our earlier discussions that this association does not depend on the fixed reduced expression of $\wP$, and that the toric coordinate associated to a box labeled $s_i$ is the argument of the one-parameter subgroup $\dy_i$ for the \emph{same} index $i\in[n]$, in other words we can decompose $u_-\in\opendunim$ as
\begin{equation}
    u_- = \prod_{\pb\in\minposet} \dy_{\indx(\pb)}(a_{\pb}),
    \label{eq:toric_coords_labeled_by_boxes}
\end{equation}
with the factors ordered by increasing elements in $\minposet$ from left to right, and where factors corresponding to incomparable elements can be ordered arbitrarily.

Associating a toric coordinate $a_\pb$ to each of the boxes $\pb$ of the minuscule poset $\minposet$ will simplify notation greatly. First of all, given a subset $\sS\subset\minposet$, we will write
\begin{equation}
a_\sS = \tprod[_{\pb\in\sS}]a_\pb.
\label{eq:toric_term_from_poset}
\end{equation}
This will be particularly useful in combination with \emph{embeddings} of subposets into $\minposet$; here, an embedding is a \emph{label-preserving}, order-preserving morphism from one poset into another. In other words, an embedding $\iota:\ideal\hookrightarrow\minposet$ maps every box $\pb\in\ideal$ with a given label to a box $\iota(\pb)\in\minposet$ with $\labl(\pb)=\labl(\iota(\pb))$ such that $\iota(\pb)<\iota(\pb')$ whenever $\pb<\pb'$. Now, if $\ideal\subideal\minposet$ corresponds to some minimal coset representative $w\in\cosets$, then an embedding $\iota:\ideal\to\minposet$ corresponds to a realization of $w$ as a subword of $\wP$ for any fixed reduced expression of $\wP$, i.e.~a subset of simple reflections in the expression for $\wP$ forming a reduced expression for $w$. Hence, we find the following reformulation of the toric expression of $\pot_{\decomps}$ restricted to $\opendecomps$: 

\begin{thm}[\cite{Spacek_LP_LG_models}, Thm 5.7; pulled back to $\opendunim$ and rephrased using minuscule posets]\label{thm:LP_LG_model_posets}
For cominuscule $\X=\G/\P_k$, the restriction $\pot_{\opendunim}$ of $\pot_{\dunimP}$ to $\opendunim$ is given in terms of toric coordinates for any fixed $q\in\C^*$ as
\[
\pot_{\opendunim}(u_-) = \Bigl(\tsum[_{\pb\in\minposet}]a_\pb\Bigr) + q\frac{\sum_{\iota:\idealPrime\hookrightarrow\minposet} a_{\iota(\idealPrime)}}{a_{\minposet}},
\]
with $u_-\in\opendunim$ decomposed as in \eqref{eq:toric_coords_labeled_by_boxes}. In this equation, $\idealPrime\subideal\minposet$ (cf.~Remark \ref{rem:wPrime_is_min_coset_rep} and Corollary \ref{cor:quantum_term}) is the order ideal corresponding to $\wPrime=\wP(\wPPrime)^{-1}$ for $\wPPrime\in\cosets$ the minimal coset representative of $\wop s_k\weylp$, and the sum is over all embeddings $\idealPrime\hookrightarrow\minposet$.
\end{thm}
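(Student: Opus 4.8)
The plan is to deduce this formula directly from its incarnation over $\opendunim$ already recorded in Section~\ref{ssec:LP_LG_model} (the Corollary following Theorem~\ref{thm:LP_LG_model_original}), simply translating each term into the poset language of Section~\ref{ssec:min_posets_and_LP_LG_model}. First I would temporarily fix a reduced expression for $\wP$ as in \eqref{eq:opendunim_decompostion_of_u-} together with a linear extension of $\minposet$; since any linear extension of $\minposet$ is a reduced word for $\wP$ (cf.~Section~\ref{ssec:Combinatorics_of_minuscule_posets}), the poset $\minposet$ has exactly $\ellwP$ boxes, and \eqref{eq:toric_coords_labeled_by_boxes} identifies the toric coordinate $a_i$ of \eqref{eq:opendunim_decompostion_of_u-} with the coordinate $a_\pb$ attached to the $i$-th box $\pb$ of $\minposet$. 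Under this identification the linear term translates instantly, $\sum_{i\in[\ellwP]}a_i=\sum_{\pb\in\minposet}a_\pb$, and the denominator of the quantum term becomes $\prod_{i\in[\ellwP]}a_i=a_{\minposet}$ by the notation \eqref{eq:toric_term_from_poset}. Thus the whole content of the statement reduces to the identification of numerators, $\sum_{(i_j)\in\wPrimeSubExp}\prod_{j\in[\ellwPrime]}a_{i_j}=\sum_{\iota:\idealPrime\hookrightarrow\minposet}a_{\iota(\idealPrime)}$.

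To handle the numerator I would first invoke Remark~\ref{rem:wPrime_is_min_coset_rep} (i.e.~Lemma~\ref{lem:wJprime_min_coset_rep}) so that $\wPrime=\wP(\wPPrime)^{-1}\in\cosets$ is a minimal coset representative; by Remark~\ref{rem:order_ideals_to_min_coset_reps} it then corresponds to a bona fide order ideal $\idealPrime\subideal\minposet$ with $\ell(\idealPrime)=\ellwPrime$. The required bijection is now precisely the heap dictionary already used in the paper: an element of $\wPrimeSubExp$ is exactly a choice of $\ellwPrime$ positions in the fixed reduced word for $\wP$ whose subword, read in word order, is a reduced word for $\wPrime$, and---by the correspondence of Section~\ref{ssec:min_posets_and_LP_LG_model} between subwords of $\wP$ and embeddings into its heap $\minposet$, cf.~\cite{Stembridge_Fully_commutative_elements_of_Coxeter_groups}---such choices are in bijection with label- and order-preserving embeddings $\iota:\idealPrime\hookrightarrow\minposet$, the embedding $\iota$ sending the boxes of $\idealPrime$ onto the chosen positions. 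Since that set of positions is exactly $\iota(\idealPrime)$, the corresponding monomials agree, $\prod_{j\in[\ellwPrime]}a_{i_j}=\prod_{\pb\in\idealPrime}a_{\iota(\pb)}=a_{\iota(\idealPrime)}$, again by \eqref{eq:toric_term_from_poset}; summing over the two sides of the bijection then yields the claimed equality and completes the proof.

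I do not expect a serious obstacle: the argument is entirely bookkeeping, and the only two points that deserve a sentence of care are (i) that $\wPrime$ is fully commutative, so that $\idealPrime$ and the notion of embedding $\idealPrime\hookrightarrow\minposet$ are well defined independently of the chosen reduced word for $\wPrime$---this is automatic once $\wPrime\in\cosets$, since every element of $\cosets$ is fully commutative and its heap is an order ideal of $\minposet$---and (ii) that $\wPrimeSubExp$ records position-subsets rather than sequences allowing reorderings, so that the bijection with embeddings is exact with no over- or undercounting. Both are immediate from the full-commutativity machinery recalled in Section~\ref{ssec:Combinatorics_of_minuscule_posets}, so the real work of this theorem is, as intended, just a change of notation.
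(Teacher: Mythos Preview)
Your proposal is correct and matches the paper's approach exactly: the paper does not give a separate proof of this theorem but presents it as a direct reformulation of the preceding Corollary, with the dictionary between subexpressions $(i_j)\in\wPrimeSubExp$ and labeled order embeddings $\iota:\idealPrime\hookrightarrow\minposet$ spelled out in the paragraph immediately before the theorem statement. Your write-up simply makes explicit the translation (linear term, denominator, numerator via the subword--embedding bijection) that the paper leaves as a one-sentence remark, and your two points of care---full commutativity of $\wPrime\in\cosets$ and the position-subset interpretation of $\wPrimeSubExp$---are exactly the ingredients the paper invokes implicitly.
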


The first sum over all toric coordinates can be split into $n$ parts corresponding to the labels of the boxes the coordinates are associated to, as follows:
\begin{equation}
\pot_{\opendunim}(u_-) = \tsum[_{i\in[n]}]\Bigl(\tsum[_{\pb\in\minposet:\,
\indx(\pb)=i
}]a_{\pb}\Bigr) + q\frac{\sum_{\iota:\idealPrime\hookrightarrow\minposet} a_{\iota(\idealPrime)}}{a_{\minposet}}.
\label{eq:toric_expression_pot_Lie_2}
\end{equation}
Looking at the proof of \cite[Theorem 5.7]{Spacek_LP_LG_models} in detail, we notice that each of the terms in \eqref{eq:toric_expression_pot_Lie_2} corresponds to one of the terms in the Lie-theoretic mirror model as follows (as phrased in Definition \ref{df:Lie_potential_on_dunimP}):
\begin{equation}
    \begin{split}
        \dChfdual(u_-)|_{\opendunim} &= \tsum[_{\pb\in\minposet:~\indx(\pb)=i
}]a_{\pb} \qfor i\in[n], \\
\dChedual[_k](u_+)|_{\opendunim} &= q\frac{\sum_{\iota:\idealPrime\hookrightarrow\minposet} a_{\iota(\idealPrime)}}{a_{\minposet}}, \\
\dChedual(u_+)|_{\opendunim} &= 0 \qfor i\neq k,
    \end{split}
    \label{eq:toric_expression_pot_Lie_3}
\end{equation}
for any fixed $q\in\C^*$ and with $u_+\in\dunip$ fixed by $u_-\in\dunimP$ and $q\in\C^*$ as in Corollary \ref{cor:unique_u+_for_u-_for_fixed_q}.

\subsubsection{Local toric expressions for Pl\"ucker coordinates and generalized minors}\label{ssec:min_posets_and_Pluckers}
We will show that the type-independent canonical mirror models are isomorphic to the Lie-theoretic ones by showing that the superpotentials $\pot_\can$ restrict to the same toric expression as in \eqref{eq:toric_expression_pot_Lie_2} on the torus $\opendunim$. For this, we need convenient descriptions of the Pl\"ucker coordinates and generalized minors on this torus as well. These will be provided by Algorithms 3.11 and 3.12 of \cite{Spacek_Wang_exceptional}, which we briefly summarize and rephrase using order ideals of minuscule posets below. For the full details, see the corresponding statements in \cite{Spacek_Wang_exceptional}.

We start with Algorithm 3.12 of  \cite{Spacek_Wang_exceptional}, as it is the simplest case, giving a procedure to obtain the toric expressions of $\p_w(u_-)$ for $u_-=\dy_{r_1}(a_1)\cdots\dy_{r_\ellwP}(a_{\ellwP})\in\opendunim$. It can be summarized as finding all subwords of the fixed reduced expression for $\wP=s_{r_{\ellwP}}\cdots s_{r_1}$ forming reduced expressions for $w\in\cosets$ (not necessarily the same expression), and taking the corresponding toric coordinates. With the current notation in terms of ideals of $\minposet$ and with \eqref{eq:toric_term_from_poset} in mind, the result of the procedure can be given in closed form as
\begin{equation}
\p_\ideal(u_-) = \tsum[_{\iota:\ideal\hookrightarrow\minposet}] a_{\iota(\ideal)} \qfor u_-=\dy_{r_1}(a_1)\cdots\dy_{r_\ellwP}(a_{\ellwP})\in\opendunim.
\label{eq:toric_expression_for_Plucker_coord_in_posets}
\end{equation}
Note that $\p_\ideal$ is a homogeneous polynomial of degree $\ell(\ideal)$, i.e.~the number of boxes in the ideal $\ideal\subideal\minposet$. The reason for this simplification is that in a minuscule fundamental representation, all powers larger than $2$ of Chevalley generators act trivially so in particular all nontrivial actions of Chevalley generators are equivalent to actions of simple reflections. 

On the other hand, in non-minuscule fundamental representations, powers up to $4$ of the Chevalley generators may act nontrivially, and consequently the actions of Chevalley generators are no longer necessarily equivalent to the actions of simple reflections. Furthermore, since $\wP\cdot\dfwt[\is] \neq -\dfwt[\DS(\is)]$ for $\is\neq k$, it may occur that nontrivial generalized minors have trivial restrictions to $\opendunim$. Due to these difficulties, Algorithm 3.11 of \cite{Spacek_Wang_exceptional} must be slightly more complicated than Algorithm 3.12. However, since we consider restrictions to $u_-\in\opendunim$, we can still use minuscule posets after enhancing them with positive integer weights. Enhancing order ideals with weights is straightforward:
\begin{df}
A \emph{weighted subset} $(\sS,\weight)\subset\minposet$ is a subset $\sS\subset\minposet$ together with a map $\weight:\sS\to\N$ which we call its \emph{weight}.
\end{df}
It is somewhat more complicated to generalize embeddings of order ideals in the minuscule poset $\minposet$ to ``weighted embeddings''; the rough idea is to replace each element $\pb\in\ideal$ with $\weight(\pb)$ incomparable elements which behave exactly the same with respect to the ordering as $\pb$:
\begin{df}
Given a weighted order ideal $(\ideal,\weight)$, define the poset $\weight\cdot\ideal$ by taking $\weight(\pb)$ copies $\pb_1,\ldots,\pb_{\weight(\pb)}$ of every element $\pb\in\ideal$ with order relations $\pb_i<\pb_j'$ when $\pb<\pb'$. (In particular, all copies $\pb_i$ are incomparable.) A \emph{weighted embedding} $\iota:(\ideal,\weight)\hookrightarrow\minposet$ is a labeled, order morphism $\iota:\weight\cdot\ideal\to\minposet$. The image is a subset $\iota(\ideal)\subset\minposet$ which we give the weight $\weight(\pb)=|\iota^{-1}(\pb)|$ for every $\pb\in\iota(\ideal)$.
\end{df}

For a generalized minor $\minor_{w_1\cdot\dfwt[\is],w_2\cdot\dfwt[\is]}(u_-)$ with $u_-\in\opendunim$, Algorithm 3.11 can be summarized as taking the toric polynomial corresponding to all \emph{weighted} subwords $(\underline{s},\weight)$, i.e.~non-reduced subwords $\underline{s}=(s_{r_{j_1}},\ldots,s_{r_{j_m}})$ of the fixed reduced expression for  $\wP$ with a positive integer $c_{j_i}=\weight(s_{r_{j_i}})$, such that  $(\dChf_{r_{j_1}})^{c_{j_1}}\cdots(\dChf_{r_{j_m}})^{c_{j_m}}\bw_2\cdot\hwt[\is]=\bw_1\cdot\hwt[\is]$. For our generalized minors $\phiGLS[\is]$ in particular, since $\wP(\wop\cdot\dfwt[\is]) = \wo\cdot\dfwt[\is]$, we are guaranteed to have nontrivial restrictions to $\opendunim$, and we are able to use weighted order ideals and embeddings to express these restrictions. It is clear how to associate a weighted subset $(\sS,\weight)\subset\minposet$ to a given weighted subword of $\wP$: let $\sS$ be the set of elements that correspond to simple reflections with non-zero weights, and let $\weight:\sS\to\N$ be the corresponding weight.

The result of Algorithm 3.11 can be given in closed form for the generalized minors $\phiGLS[\is]$ with $\is\in[n]$ as:
\[
\phiGLS[\is](u_-) = \sum_{(\ideal,\weight)} \left(\sum_{\iota:(\ideal,\weight)\hookrightarrow \minposet} a_{\iota(\ideal)}^{\weight}\right),
\]
where we generalized \eqref{eq:toric_term_from_poset} to weighted subsets $(\sS,\weight)\subset\minposet$ as
\begin{equation}
a_\sS^\weight = \prod_{\pb\in\sS} a_{\pb}^{\weight(\pb)}
\label{eq:toric_exp_from_weighted_poset}
\end{equation}
and where the first sum over $(\ideal,\weight)$ is over all weighted order ideals $(\ideal,\weight)$ corresponding to weighted subwords $(w,\weight)$ of $\wP$ satisfying the condition 
\[
(\dChf_{r_{j_1}})^{c_{j_1}}\cdots(\dChf_{r_{j_m}})^{c_{j_m}}\bwop\cdot\hwt[\is] = \lwt[\is].
\]
Finding such weighted order ideal is the most complicated part of the algorithm, and in Section \ref{sec:toric-monomials} we will determine the \emph{unique} weighted order ideal satisfying this condition for each $\phiGLS[\is]$ type-independently.
\begin{rem}\label{rem:generalized_minors_sign_prelims}
In the above discussion we ignored the sign from the definition of $\phiGLS[\is](u_-)$ for $u_-\in\opendunim$ as $(-1)^h\minor_{\wo\cdot\dfwt[\is],\wop\cdot\dfwt[\is]}(u_-^{-1})$ since taking the inverse of $u_-$ introduces minus signs to each of the arguments of the one-parameter subgroups $\dy_i$: in fact, the sign $(-1)^h$ will exactly cancel out the minus sign arising from this inversion. In Section \ref{sec:toric-monomials}, we will show this explicitly; see Remark \ref{rem:generalized_minors_sign}.
\end{rem}
Finally, since we have rephrased all other results in terms of (weighted) order ideals of the minuscule poset, we will rephrase Proposition \ref{prop:coord_ring_dunimP} using these order ideals as well. For this, we only need the observation that a minimal coset representative $w\in\cosets$ sends $-\dfwt[k]$ to $-\dfwt[k]+\sdr$ for $\sdr\in\wPpdroots$ if and only if the simple reflection $s_k$ appears \emph{exactly once} in its reduced expression, so that the corresponding order ideal $\ideal\subideal\minposet$ will have \emph{exactly one} box with label $s_k$. The largest such minimal coset representative is $\wPPrime\in\cosets$ the representative of $\wop s_k\weylp$ and every other such representative is a subword of $\wPPrime$. In other words, all order ideals corresponding to such subwords are order ideals of $\idealPPrime$, and we conclude that:

\begin{prop}[\cite{GLS_Kac_Moody_groups_and_cluster_algebras}, Prop.~8.5 \& \cite{Spacek_Wang_exceptional}, Lem.~3.5]\label{prop:coord_ring_dunimP_in_posets}
The coordinate ring of the affine variety $\dunimP$ is given by
\[
\C[\dunimP] = \C\bigl[\p_{\ideal}~\big|~\ideal\subideal\idealPPrime\bigr] \bigl[\phiGLS[j]^{-1}~\big|~j\in[n]\bigr]
\]
where we recall that $\idealPPrime\subideal\minposet$ corresponds to the minimal coset representative $\wPPrime\in\cosets$ of $\wop s_k\weylp$ and therefore is the maximal order ideal containing \emph{exactly} one box labeled $s_k$, and where
\[
\phiGLS[j](u_-)=(-1)^h\minor_{\wo\cdot\dfwt[j],\wop\cdot\dfwt[j]}(u_-^{-1}) = (-1)^h\Bigl\lan u_-^{-1}\bwop\cdot\hwt[j],~\lwt[j]\Bigr\ran_{\dfwt[j]}
\]
is a generalized minor as defined in \cite{Fomin_Zelevinsky_Double_Bruhat_Cells_and_Total_Positivity}.
\end{prop}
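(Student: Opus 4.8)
The plan is to treat this as a purely combinatorial rewriting of Proposition~\ref{prop:coord_ring_dunimP}: the localizing functions $\phiGLS[j]^{-1}$ for $j\in[n]$ are identical in both statements, so everything reduces to showing that the two generating sets of Pl\"ucker coordinates generate the same subring of $\C[\dunimP]$. Writing $\p_{\drt}=\p_{w_{\drt}}=\p_{\ideal_{w_{\drt}}}$, where $\ideal_w\subideal\minposet$ denotes the order ideal corresponding to $w\in\cosets$ under Definition~\ref{df:Plucker_coords_posets} and the dictionary of Remark~\ref{rem:order_ideals_to_min_coset_reps}, I would first reduce the claim to the set equality
\[
\{\,w_{\drt} : \drt\in\wPpdroots\,\}=\{\,w\in\cosets : \varnothing\neq\ideal_w\subideal\idealPPrime\,\}.
\]
Granting this, the second generating set $\{\p_{\ideal} : \ideal\subideal\idealPPrime\}$ is obtained from the first, $\{\p_{\drt} : \drt\in\wPpdroots\}$, merely by adjoining the order ideal $\varnothing$, whose coordinate $\p_{\varnothing}$ restricts to the constant $1$ on $\dunim\supseteq\dunimP$ (since $\lwtv$ is a lowest weight vector); as every Pl\"ucker coordinate restricts to a regular function on $\dunimP\subset\udG$, the equality of coordinate rings then follows from Proposition~\ref{prop:coord_ring_dunimP}, the larger generating set lying between $\C[\p_{\drt}][\phiGLS[j]^{-1}]=\C[\dunimP]$ and $\C[\dunimP]$.

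For the displayed set equality I would argue in two steps. First, I would recall from \cite[Lemma~3.5]{Spacek_Wang_exceptional} that $\wPpdroots$ is exactly the set of positive roots whose expansion in simple roots has coefficient $1$ in front of $\sdr_k$, with $w_{\drt}\in\cosets$ the unique element sending $-\dfwt[k]$ to $-\dfwt[k]+\drt$. Using that adding a box labelled $s_i$ to an order ideal corresponds to left multiplication of the associated minimal coset representative by $s_i$ and shifts the weight $w(-\dfwt[k])$ by $+\sdr_i$ (Remark~\ref{rem:order_ideals_to_min_coset_reps}), one sees by induction, starting from $\varnothing$ with weight $-\dfwt[k]$, that the coefficient of $\sdr_k$ in $w(-\dfwt[k])+\dfwt[k]$ equals the number of boxes labelled $s_k$ in $\ideal_w$. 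Hence $\drt\in\wPpdroots$ if and only if $\ideal_{w_{\drt}}$ contains exactly one box labelled $s_k$.

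Second, I would identify the order ideals of $\minposet$ with exactly one box labelled $s_k$ with the nonempty order ideals of $\idealPPrime$. The two ingredients are: (i) $s_i\in\cosets$ holds precisely when $i=k$ (since $\weylp=\lan s_j~|~j\neq k\ran$), so the minimal box of $\minposet$ carries the label $s_k$ and every nonempty order ideal contains at least one $s_k$-labelled box; and (ii) the $s_k$-labelled boxes of a minuscule poset form a chain, so having at most one such box is equivalent to avoiding the second-smallest one, and the order ideals with this property have a unique maximum, namely $\idealPPrime$, which by Remark~\ref{rem:order_ideals_to_min_coset_reps} is the order ideal of the minimal coset representative $\wPPrime$ of $\wop s_k\weylp$. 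Combining (i) and (ii) gives that an order ideal of $\minposet$ contains exactly one $s_k$-labelled box if and only if it is a nonempty sub-order-ideal of $\idealPPrime$, which finishes the proof. Since the statement is essentially bookkeeping I do not anticipate a genuine obstacle; the point needing the most care is ingredient (ii) — that equally labelled boxes of a minuscule poset form a chain and that $\idealPPrime$ is the unique maximal order ideal with one $s_k$-box — which I would either cite from the standard theory of fully commutative elements and minuscule lattices (\cite{Stembridge_Fully_commutative_elements_of_Coxeter_groups}, \cite{proctor_bruhat_lattices}) or deduce directly from the characterisation of $\wPPrime$ together with Remark~\ref{rem:order_ideals_to_min_coset_reps}.
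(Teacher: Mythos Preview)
Your proposal is correct and follows essentially the same route as the paper's argument, which appears in the paragraph immediately preceding the proposition: reduce to Proposition~\ref{prop:coord_ring_dunimP}, then identify $\{w_{\drt}:\drt\in\wPpdroots\}$ with the nonempty order ideals of $\idealPPrime$ via the observation that the number of $s_k$-boxes in $\ideal_w$ equals the $\sdr_k$-coefficient of $w(-\dfwt[k])+\dfwt[k]$. You are slightly more careful than the paper in two places---explicitly disposing of $\p_{\varnothing}\equiv 1$ and invoking the chain property of same-labelled boxes to pin down $\idealPPrime$ as the unique maximum---but these are refinements of the same argument rather than a different approach.
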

\section{Calculating toric expressions for the generalized minors} \label{sec:toric-monomials}
Recall from Section \ref{ssec:Coordinate_ring_of_dunimP} that 
\[
\phiGLS[\is](u_-)=(-1)^h\minor_{\wo(\dfwt[\is]),\wop(\dfwt[\is])}(u_-^{-1}) = (-1)^h\Bigl\lan u_-^{-1}\bwop\cdot\hwt[\is],\lwt[\is]\Bigr\ran_{\dfwt[\is]}.
\]
Let us first consider the case $\is=k$, so that $\bwop$ acts trivially on the highest weight vector while $\bwP$ maps the highest to the lowest weight vector, so that we find using \eqref{eq:flipping_generalized_minors} that
\[
(-1)^h\phiGLS[k](u_-) 
= \Bigl\lan u_-^{-1}\cdot\hwt[k],\lwt[k]\Bigr\ran_{\dfwt[k]} 
= \Bigl\lan (\bwP)^{T}u_-^{-1}\cdot\hwt[k],\hwt[k]\Bigr\ran_{\dfwt[k]} 
= \bigl\lan (\bwP)^{T}u_-^{T}\cdot \lwtv,\lwtv\bigr\ran,
\]
with the last minor taken in the Pl\"ucker representation $\PluckerRep$. Now, using the dual action, we find that
\[
(-1)^h\phiGLS[k](u_-) 
= \bigl\lan \lwtv,u_-\bwP\cdot\lwtv\bigr\ran = (-1)^\ellwP \bigl\lan \lwtv,u_- \cdot\wtv{\minposet}\bigr\ran = (-1)^\ellwP\bigl\lan u_- \cdot\wtv{\minposet}, \lwtv\bigr\ran = (-1)^\ellwP\p_{\minposet}(u_-),
\]
where we used \eqref{eq:dw_bw_relation} in the second equation. However, since the Pl\"ucker representation is minuscule, we know that the height $h$ as defined in \eqref{eq:gen_minor_minus_sign} equals $\ellwP=\ell(\wP)$ since every simple reflection of $\wP\in\cosets$ acts as subtraction of a single simple root (cf.~Lemma \ref{lem: Spacek_LPLG}), so we conclude that
\begin{equation}
    \phiGLS[k](u_-) = \p_{\minposet}(u_-)
\label{eq:phi_k_and_p_minposet}
\end{equation}
on all of $\dunimP$. Next, in the case $\is=0$, we have
\[
\phiGLS[0](u_-) = \minor_{\dfwt[k],\dfwt[k]}(u_-^{-1}) = \llan u_-^{-1}\hwt[k],\hwt[k]\rran_{\dfwt[k]} = 1,
\]
where the final equality follows by noting that $u_-^{-1}=\dy_{r_\ell}(-a_{\ell})\cdots\dy_{r_1}(-a_1)$ acts on $\hwt[k]$ by strictly lowering the weight (with a nontrivial polynomial in $a_i$ as coefficient) or leaving it unchanged (with coefficient $1$). Since $p_{\varnothing}(u_-)=v_0^*(u_-v_0)=1$ holds similarly (recall that $v_\varnothing=v_0$), we obtain the equality 
\begin{equation}
    \phiGLS[0](u_-)=p_\varnothing(u_-)
    \label{eq:phi_0}
\end{equation}
on all of $\dunimP$.

For the remainder of this section, we fix $\is\in[n]\setminus\{k\}$ and let $u_-\in\opendunim$. Our goal is to express $\phiGLS[\is](u_-)$ as a polynomial in the toric coordinates of $\opendunim$ as defined in equation \eqref{eq:opendunim_decompostion_of_u-}. To find these expressions, we will need to consider the various fundamental representations of $\udG$. However, we will manage to avoid most of the type-dependent representation theory by focusing on the actions of the Weyl groups on the weight spaces. 

Considering $\opendunim\subset\dunim$ inside the completed universal enveloping algebra $\CUEAm$ of $\dunim$ and thus the series expansions
\[
\dy_{r_j}(a_j)=\exp(a\dChf_{r_j}) = 1+a_j\dChf_{r_j} + a_j^2\dChf_{r_j} + \dots
\]
for each factor $\dy_{r_j}(a_j)$ of $u_-$, we will need to find the coefficients in $u_-^{-1}$ of all terms $\dChf_{i_1}\cdots\dChf_{i_j}$ that map $\bwop\cdot\hwt[\is]$ to $\bwo\cdot\hwt[\is]$. In other words, we can compute the $\phiGLS[\is]$ combinatorially using strictly decreasing paths in the Hasse diagram of weights of the representation $\dfwtrep[\is]$ with the Bruhat order. Fortunately, we have the following result:
\begin{prop}[{Corollary of \cite[Thm 1.10]{Fomin_Zelevinsky_Double_Bruhat_Cells_and_Total_Positivity}}]\label{prop:gen_minors_are_toric_monomials}
The restrictions $\phiGLS[\is]|_{\opendunim}$ of the generalized minors $\phiGLS(u_-)=(-1)^h\minor_{\wop(\dfwt[\is]),\wo(\dfwt[\is])}(u_-^{-1})$ to $\opendunim$ are monomials in the toric coordinates.
\end{prop}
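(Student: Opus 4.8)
The plan is to obtain the statement as a formal consequence of Proposition~\ref{prop:coord_ring_dunimP_in_posets} together with the torus structure of $\opendunim$, deferring the actual identification of the monomial to Theorem~\ref{thm:toric_monomial_phi}. Two ingredients are needed: first, that $\phiGLS[\is]$ is a \emph{unit} of the coordinate ring $\C[\dunimP]$; second, that $\C[\opendunim]$ is the Laurent polynomial ring in the toric coordinates $\{a_\pb\mid\pb\in\minposet\}$.

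For the first ingredient I would note that $\phiGLS[\is]$ is regular on all of $\dunim$ --- it is a matrix coefficient of $u_-^{-1}$ and inversion is a morphism of $\dunim$ --- hence in particular regular on $\dunimP$, while its inverse lies in $\C[\dunimP]$ by Proposition~\ref{prop:coord_ring_dunimP_in_posets}, since that proposition exhibits $\C[\dunimP]=\C[\p_\ideal\mid\ideal\subideal\idealPPrime]\bigl[\phiGLS[j]^{-1}\mid j\in[n]\bigr]$. So $\phiGLS[\is]\in\C[\dunimP]^{\times}$, and restriction along the open inclusion $\opendunim\hookrightarrow\dunimP$, being a ring homomorphism, sends it to a unit of $\C[\opendunim]$. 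For the second ingredient I would invoke the defining description of $\opendunim$ via the parametrization \eqref{eq:opendunim_decompostion_of_u-} by $(a_i)\in(\C^*)^{\ellwP}$, recalled from \cite{Pech_Rietsch_Odd_Quadrics} and \cite{Spacek_LP_LG_models}, which identifies $\C[\opendunim]$ with the Laurent polynomial ring in the $a_\pb$. Since the units of a Laurent polynomial ring over a field are precisely the nonzero scalar multiples of Laurent monomials (pass to a monomial term order: the product of a unit with its inverse being a single term forces each factor to be a single term), it follows that $\phiGLS[\is]|_{\opendunim}=c\cdot a^{\mathbf m}$ for some $c\in\C^{\times}$ and some monomial $a^{\mathbf m}$ in the $a_\pb$. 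This is the assertion. (Alternatively, and as attributed in the statement, this is a form of \cite[Thm.~1.10]{Fomin_Zelevinsky_Double_Bruhat_Cells_and_Total_Positivity}, read off for the chamber minors of a reduced word of $\wP$ and using $\wP(\wop\cdot\dfwt[\is])=\wo\cdot\dfwt[\is]$.)

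I do not expect a genuine obstacle internal to this proposition: it is essentially bookkeeping once Proposition~\ref{prop:coord_ring_dunimP_in_posets} is available. The one point deserving care is that \emph{global} invertibility on $\dunimP$ --- rather than mere non-vanishing on $\opendunim$ --- is what makes the argument go through, and this is exactly what Proposition~\ref{prop:coord_ring_dunimP_in_posets} supplies; it is also why the normalizing sign $(-1)^h$ is built into $\phiGLS[\is]$, so that the minor itself (and not only its absolute value) is a unit, which will additionally force $c=1$ after the computation in Theorem~\ref{thm:toric_monomial_phi}. The substantive work --- determining which monomial $a^{\mathbf m}$ appears --- is what the remainder of Section~\ref{sec:toric-monomials} is devoted to, via strictly decreasing paths in the Bruhat order on the weights of $\dfwtrep[\is]$.
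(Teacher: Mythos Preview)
Your argument is correct and takes a genuinely different route from the paper. The paper does not supply its own proof: it simply records the statement as a corollary of \cite[Thm.~1.10]{Fomin_Zelevinsky_Double_Bruhat_Cells_and_Total_Positivity}, which gives explicit monomial formulas for chamber minors in the factorization coordinates of a reduced word, and then immediately proceeds to use the proposition to isolate the single contributing term in the expansion of $u_-^{-1}$. Your argument instead bypasses Fomin--Zelevinsky entirely by combining two facts already in the paper's toolkit: the GLS presentation of $\C[\dunimP]$ (Proposition~\ref{prop:coord_ring_dunimP_in_posets}), which forces $\phiGLS[\is]$ to be a unit there, and the Laurent-polynomial structure of $\C[\opendunim]$, whose unit group is $\C^\times$ times the monomial lattice. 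This is more self-contained relative to the paper's internal development and makes clear that only the \emph{existence} of the monomial is being asserted at this stage --- the identification is genuinely the content of Theorem~\ref{thm:toric_monomial_phi}. The trade-off is that the Fomin--Zelevinsky citation in principle already pins down the monomial (and its coefficient) explicitly, whereas your route yields only $c\cdot a^{\mathbf m}$ with $c\in\C^\times$ and non-negative exponents (the latter since $\phiGLS[\is]$ is visibly polynomial in the $a_\pb$), leaving $c=1$ to the subsequent computation. Both are adequate for how the proposition is used in the paper.
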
 
In particular, exactly one term in the expansion of $u_-^{-1}$ will contribute a nonzero term to the toric expression of $\phiGLS[\is]$, as we now explain. By definition, if $\llan\wt,\sr_i\rran=c$ with $\wt$ extremal, then we have that $\dy_i(a)$ acts on a weight vector $\wtvmu$ of weight $\wt$ as
\begin{align}
\dy_i(a)\cdot\wtvmu &= \wtvmu + a\dChf_i\cdot\wtvmu + \ldots + \tfrac1{(c-1)!}a^{c-1}(\dChf_i)^{c-1}\cdot\wtvmu + \tfrac1{c!}a^c(\dChf_i)^c\cdot\wtvmu \notag\\
&= \wtvmu + a\dChf_i\cdot\wtvmu + \ldots + \tfrac1{(c-1)!}a^{c-1}(\dChf_i)^{c-1}\cdot\wtvmu + a^c\,\bs_i\cdot\wtvmu,
\label{eq:dy_action_on_extramal_wt_vector}
\end{align}
cf.~equation \eqref{eq:ds_bs_dChe_dChf_relation}. Applying this to $u_-^{-1}=\dy_{r_{\ellwP}}(-a_{\ellwP})\cdots\dy_{r_1}(-a_1)$ acting on the extremal weight vector $\bwop\cdot\hwt[\is]$ and noting that $\wP=s_{r_{\ellwP}}\cdots s_{r_1}$, we expand the action as
\begin{equation}
u_-^{-1}(\bwop\cdot\hwt[\is]) = \bwop\cdot\hwt[\is] + \ldots + (\text{constant})\,\bwP\bwop\cdot\hwt[\is],
\label{eq:u_minus_inverse_action_intermediate}
\end{equation}
for some constant depending on the toric coordinates. On the other hand, since $\wo=\wP\wop$, we know that $\bwP$ sends $\bwop\cdot\hwt[\is]$ to 
\[
\bwP\bwop\cdot\hwt[\is]=\bwo\cdot\hwt[\is]=\lwt[\is],
\]
the lowest weight vector. Thus, Proposition \ref{prop:gen_minors_are_toric_monomials} tells us that only the final term of \eqref{eq:u_minus_inverse_action_intermediate} can contribute to the torus expansion of $\phiGLS[\is]$. In particular, we only need to determine the constant in front of it, but by equation \eqref{eq:dy_action_on_extramal_wt_vector} applied to $u_-^{-1}=\dy_{r_{\ellwP}}(-a_{\ellwP})\cdots\dy_{r_1}(-a_1)$, this constant and hence $\phiGLS|_{\opendunim}(u_-)=(-1)^h\minor_{\wop(\dfwt[\is]),\wo(\dfwt[\is])}(u_-^{-1})$ must be given by
\[
\phiGLS[\is]|_{\opendunim}(u_-)=(-1)^h\prod_{j=1}^\ell (-a_j)^{\llan s_{r_j}\cdots s_{r_1}\wop\cdot\dfwt[\is],\sr_{r_j}\rran},
\]
where we fixed a reduced expression $\wP = s_{r_\ell}\dots s_{r_1}$ to label the toric coordinates. 

Note however that the exponents of the toric coordinates above should not depend on this reduced expression but only on the position of their associated element in the minuscule poset $\minposet$. To find such a simplification, we recall that two commuting simple reflections $s_i$ and $s_j$ have corresponding Cartan integer $\llan\sdr_j,\sr_i\rran=0$ so in particular
\[
\llan s_is_j\cdot\wt,\sr_i\rran = \llan s_i\cdot\wt,\sr_i\rran - \llan\wt,\sr_j\rran\llan\sdr_j,\sr_i\rran = \llan s_i\cdot\wt,\sr_i\rran.
\]
Applying this repeatedly to the product $s_{r_j}\cdots s_{r_1}$, we see that only those simple reflections (from the right to the left) that do not commute with their preceding simple reflections (i.e.~those to the left) will influence the result of the coefficient $\llan s_{r_j}\cdots s_{r_1}\wop\cdot\dfwt[\is],\sr_{r_j}\rran$. However, these simple reflections are exactly the ones corresponding to the elements of the order ideal $\sI(\pb)\subideal\minposet$ with $\pb\in\minposet$ corresponding to $s_{r_j}$. (In particular, $\labl(\pb_{r_j})=s_{r_j}$ and $\indx(\pb)=r_j$.)

Now, recalling from Remark \ref{rem:order_ideals_to_min_coset_reps} that $\labelprod{\ideal} = \prod_{\pb\in\ideal} \labl(\pb)\in\cosets$ as the increasing (from right to left) ordered product of the labels (i.e.~simple reflections) of elements in the order ideal $\ideal\subideal\minposet$ equals the minimal coset representative corresponding to the ideal, the above discussion allows us to simplify the coefficient
\[
\llan s_{r_j}\cdots s_{r_1}\wop\cdot\dfwt[\is],\sr_{r_j}\rran = \llan \labelprod{\sI(\pb)}\,\wop\cdot\dfwt[\is],\sr_{\indx(\pb)}\rran,
\]
removing the need to take a reduced expression for $\wP$ or a total order on $\minposet$. As a result, we can simplify the exponents of the toric coordinates and label the toric coordinates according to elements of $\minposet$ in the expression for $\phiGLS[\is]$ as
\[
\phiGLS[\is]|_{\opendunim}(u_-)=(-1)^h\prod_{\pb\in\minposet} (-a_{\pb})^{\llan \labelprod{\sI(\pb)}\,\wop\cdot\dfwt[\is],\sr_{\indx(\pb)}\rran}.
\]
Now, we observe that this expression is written as the product of toric coordinates taken to a certain power, which we recall was how we defined the toric monomial associated to a \emph{weighted} order ideal in \eqref{eq:toric_exp_from_weighted_poset}. So, we can summarize this expression even further by setting
\[
\weight:\begin{array}{c}
     \minposet\longrightarrow\{0\}\cup\N \\
     \pb\mapsto\llan \labelprod{\sI(\pb)}\,\wop\cdot\dfwt[\is],\sr_{\indx(\pb)}\rran
\end{array}
\]
and considering the weighted subset of $\minposet$ given by the subset $\weight^{-1}(\N)\subset\minposet$ with weight function $\weight$, yielding:
\begin{equation}
\phiGLS[\is]|_{\opendunim}(u_-)=(-1)^h(-a)_{\weight^{-1}(\N)}^{\weight},
\label{eq:phiGLS_intermediate_expression}
\end{equation}
using the notation defined in \eqref{eq:toric_exp_from_weighted_poset}, applied to the toric coordinates $(-a_\pb~|~\pb\in\minposet)$.

Hence, the calculation of the restriction of $\phiGLS$ to $\opendunim$ reduces to determining the extremal path in the fundamental representation $\dfwtrep[\is]$ taken by $\bwP$ from the extremal weight vector $\bwop\cdot\hwt[\is]$ to the lowest weight vector, and determining how many times each simple reflection $\bs_{r_i}$ in $\bwP$ acts as $\dChf_{r_i}$. In order to do this, we will first study the actions of the Weyl group elements $\woj$, which were defined in Section \ref{sec:conventions}. Recall that for $j\in [n]$, $\woj$ is the longest element of the parabolic Weyl subgroup $\weylj=\lan s_i~|~i\neq j\ran\subset\weyl$ and $\DSi[j]:[n]\setminus\{j\}\to[n]\setminus\{j\}$ is the corresponding (possibly trivial) Dynkin subdiagram involution defined by $\woj(\dfwt[i])=-\dfwt[{\DSi[j](i)}]$ and extended by $\DSi[j](j)=j$ to the full Dynkin diagram of $\weyl$. Note that for $k\in[n]$ such that $\P=\P_k$, we have $\weylj[k]=\weylp$, $\woj[k]=\wop$ and $\DSi[k]=\DSp$. We begin with a lemma regarding the action of these elements on the fundamental co-weights.
\begin{lem}\label{lem:weyl_action}
Let $i\neq j\in[n]$. Then there is some positive constant $c$ such that
\[
\woj(\dfwt) = - \dfwt[{\DSi[j](i)}] + c\dfwt[j].
\]
\end{lem}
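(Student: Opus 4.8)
The plan is to analyze $\woj(\dfwt)$ by expanding it in the basis of fundamental coweights of $\dG$ together with the ``missing'' generator $\dfwt[j]$. Write $\woj(\dfwt) = \sum_{\ell} c_\ell\,\dfwt[\ell]$. The key observation is that $\woj$ is the longest element of $\weylj = \lan s_\ell : \ell \neq j\ran$, so it stabilizes the span of $\{\sdr_\ell : \ell \neq j\}$ and acts as $-1$ on the weight lattice of the Levi, after correcting by the fundamental weight of the ambient group in the direction of node $j$. More precisely, $\woj$ fixes $\dfwt[j]$ only modulo the root lattice of the Levi; what is actually fixed is the restriction of $\dfwt[j]$ to the center-direction. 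To make this concrete, I would first record that $\woj$ permutes the simple roots $\{\sr_\ell : \ell \neq j\}$ by $\woj(\sr_\ell) = -\sr_{\DSi[j](\ell)}$ for $\ell \neq j$, which is the definition of the Dynkin involution $\DSi[j]$ of the subdiagram, and that $\woj$ sends positive roots of $\weylj$ to negative roots of $\weylj$.

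First I would pair $\woj(\dfwt)$ against the simple coroots $\sdr_m$ for $m \neq j$. Using that $\woj$ is an isometry and $\woj(\sdr_m) = -\sdr_{\DSi[j](m)}$ (the coweight version of the previous sentence, valid since $\weyl$ acts the same way on $\dG$ as on $\G$), we get
\[
\llan \woj(\dfwt), \sdr_m \rran = \llan \dfwt, \woj(\sdr_m) \rran = \llan \dfwt, -\sdr_{\DSi[j](m)} \rran = -\delta_{i,\DSi[j](m)} = -\delta_{\DSi[j](i),m},
\]
where the last step uses that $\DSi[j]$ is an involution. Hence for all $m \neq j$ the coefficient of $\dfwt[m]$ in $\woj(\dfwt)$, read off via the pairing with $\sdr_m$, equals $-\delta_{\DSi[j](i),m}$; equivalently $\woj(\dfwt) + \dfwt[{\DSi[j](i)}]$ pairs to zero with every $\sdr_m$, $m \neq j$. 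This forces $\woj(\dfwt) + \dfwt[{\DSi[j](i)}] = c\,\dfwt[j]$ for some scalar $c \in \Z$ (it is an integral multiple of $\dfwt[j]$ since it lies in the weight lattice and is orthogonal to all coroots except $\sdr_j$).

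It remains to show $c > 0$, which I expect to be the only real content of the argument. Pairing with $\sdr_j$ gives $c = \llan \dfwt + \woj(\dfwt), \sdr_j\rran = 1 + \llan \woj(\dfwt), \sdr_j\rran$, so I must show $\llan \woj(\dfwt), \sdr_j \rran \ge 0$, i.e. $\llan \woj(\dfwt), \sdr_j\rran > -1$. Write $\woj(\dfwt) = \dfwt - \beta$ where $\beta$ is a sum of simple roots $\sr_\ell$ with $\ell \neq j$ (since $\woj \in \weylj$, it changes any weight by an element of the root lattice of the Levi, and because $\woj$ sends the dominant-for-$\weylj$ weight $\dfwt$ to its $\weylj$-antidominant conjugate, $\beta$ is a nonnegative combination). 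Then $\llan \woj(\dfwt), \sdr_j\rran = \llan \dfwt, \sdr_j\rran - \llan \beta, \sdr_j\rran = -\llan\beta,\sdr_j\rran$, and since $\beta = \sum_{\ell \neq j} m_\ell \sr_\ell$ with all $m_\ell \ge 0$ and each $\llan \sr_\ell, \sdr_j\rran \le 0$ for $\ell \neq j$ (off-diagonal Cartan entries are nonpositive), we get $\llan\beta,\sdr_j\rran \le 0$, hence $\llan \woj(\dfwt),\sdr_j\rran \ge 0$ and $c \ge 1 > 0$. The main obstacle is making precise that $\beta$ has nonnegative coefficients and that $\beta \neq 0$ when needed; the former follows from $\woj$ being a product of reflections $s_\ell$, $\ell\neq j$, applied to a weight $\dfwt$ that is $\weylj$-dominant (so each partial application only subtracts nonnegative multiples of simple roots), and positivity of $c$ then drops out as above. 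I would close by noting this lemma is exactly what is needed to locate the extremal path of $\bwP$ and feeds into the weight computations of the following sections.
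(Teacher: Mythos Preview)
Your derivation of the shape $\woj(\dfwt_i)=-\dfwt_{\DSi[j](i)}+c\,\dfwt_j$ via pairing with the simple coroots of $\dG$ is fine and a bit more direct than the paper's (which projects to the Levi weight lattice and pulls back). Do note, though, that in this paper's conventions the simple coroots of $\dG$ are the $\sr_m$, not the $\sdr_m$, and the Weyl group moves weights of $\dG$ by the $\sdr_\ell$; you have these swapped throughout. With that correction your Step~1 goes through.

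The positivity step has a real gap. Pairing $\woj(\dfwt_i)+\dfwt_{\DSi[j](i)}=c\,\dfwt_j$ with $\sr_j$ gives
\[
c=\llan\woj(\dfwt_i),\sr_j\rran+\llan\dfwt_{\DSi[j](i)},\sr_j\rran=\llan\woj(\dfwt_i),\sr_j\rran,
\]
since $\DSi[j](i)\neq j$; there is no ``$1+$'', and indeed you yourself use $\llan\dfwt_i,\sr_j\rran=0$ two lines later. Your Cartan-matrix estimate then only yields $c\ge 0$, not $c\ge 1$. The paper rules out $c=0$ by a different argument: if $c=0$ then $\woj(\dfwt_i)=-\dfwt_{\DSi[j](i)}$ is antidominant, hence the lowest weight of $V(\dfwt_i)$, so the minimal coset representative of $w_0$ in $W/W_i$ lies in $W_j$; then $s_j$ acts trivially on $V(\dfwt_i)$, contradicting simplicity of $\dg$. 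Your line of attack can also be salvaged: $c=0$ forces $m_\ell=0$ for every $\ell$ adjacent to $j$, but $\beta=\dfwt_i-\woj(\dfwt_i)$ is the highest-minus-lowest weight of the fundamental representation of the (simple) Levi component containing $i$, so every simple root of that component appears in $\beta$ with positive coefficient, and by connectedness of the Dynkin diagram that component contains at least one neighbor of $j$.
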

\begin{proof}
Consider the subgroup $\dGj\subset\dG$ corresponding to the Dynkin diagram of $\dG$ with the $j$th vertex removed. (More precisely, let $\dGj\subset\dP_j$ be the Levi subgroup of the parabolic subgroup corresponding to the $j$th vertex.) Write $\sdfwt$ for the fundamental weights of $\dGj$ considered as a group in its own right, then the inclusion $\dGj\subset\dG$ gives a natural projection $\pij$ of weight lattices satisfying $\pij(\dfwt)=\sdfwt$ and $\pij(\dfwt[j])=0$. As $\woj\in\weylj$ is the longest element, it maps $\sdfwt$ to $-\sdfwt[{\DSi[j](i)}]$; pulling this back along  $\pij$, we find that $\woj(\dfwt)=-\dfwt[{\DSi[j](i)}]+c\dfwt[j]$ for some $c$. 

The coefficient of $\dfwt[j]$ is only affected by the actions of $s_j$ or $s_m$ which does not commute with $s_j$. Furthermore, the action of $s_j$ decreases the coefficient of $\dfwt[j]$ while the action of $s_m$ not commuting with $s_j$ increases the coefficient of $\dfwt[j]$. Since $\woj$ doesn't allow the action of $s_j$, then $c\ge 0$. If $c=0$, then $\woj(\dfwt[i])$ is the lowest weight of $\dfwtrep[i]$, meaning that $\woj \weylj[i]=w^i\weylj[i]$, where $w^i=\wo\weylj[i]$ is the minimal coset representative of $\wo$ in $\weylj[i]$. In particular, this implies that $w^i$ has no occurrence of $s_j$, meaning that $s_j$ acts trivially on $\dfwtrep[i]$. However, this would imply that the lie subalgebra of $\dg$ generated by $\dChf_i, \dChe_i$ lies in the kernel of the lie algebra morphism $\dg \rightarrow \mathrm{End}(\dfwtrep[i])$, contradicting the fact that $\dg$ is a simple lie algebra.
\end{proof}
The constant $c$ of Lemma \ref{lem:weyl_action} is particularly easy to find when either $i$ or $j$ corresponds to a minuscule weight of $\dG$.
\begin{lem}\label{lem:minuscule_action}
For $k'$ minuscule and $i\in[n]\setminus\{k'\}$, we have:
\begin{align}
\woj[i](\dfwt[k']) &= -\dfwt[\DSi(k')]+\dfwt,
\label{eq:action_on_minuscule_wt} \\
\woj[k'](\dfwt[i]) &= -\dfwt[{\DSi[k'](i)}]+c_i\dfwt[k'],  \qwhere c_i=\llan\dfwt,\sro\rran.
\label{eq:minuscule_action_on_wt}
\end{align}
Note that $c_i$ is also the coefficient of the simple root $\sr_i$ in the highest root $\sro$.
\end{lem}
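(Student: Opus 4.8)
The plan is to bootstrap from Lemma~\ref{lem:weyl_action}. That lemma already places both identities in the claimed form, with an unknown positive integer $c$ (resp.\ $c'$) in place of the claimed coefficient of $\dfwt[i]$ (resp.\ of $\dfwt[k']$); so it remains only to evaluate these two integers. In both cases I would extract the integer by a single pairing: the fundamental weights of $\dG$ form the dual basis to its simple coroots, so pairing each identity with the $i$-th (resp.\ $k'$-th) simple coroot of $\dG$ annihilates the first summand on the right-hand side (since $\DSi[i](k')\neq i$, resp.\ $\DSi[k'](i)\neq k'$) and reads off $c$ (resp.\ $c'$) directly.

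For \eqref{eq:action_on_minuscule_wt} this closes things immediately: $\woj[i](\dfwt[k'])$ is itself a weight of $\dfwtrep[k']$, which is minuscule because $k'$ is minuscule, so each of its pairings against a coroot lies in $\{-1,0,1\}$; hence $c\le 1$, and together with $c\ge 1$ from Lemma~\ref{lem:weyl_action} we get $c=1$.

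For \eqref{eq:minuscule_action_on_wt} the extraction exhibits $c'$ as the pairing of $\woj[k'](\dfwt[i])$ against the $k'$-th simple coroot of $\dG$. Moving the involution $\woj[k']$ onto the other argument, and recalling that the $k'$-th simple coroot of $\dG$ is the $k'$-th simple root $\sr_{k'}$ of $\G$, this equals $\llan\dfwt[i],\woj[k'](\sr_{k'})\rran$, i.e.\ the coefficient of $\sr_i$ in the root $\woj[k'](\sr_{k'})$ of $\G$. So \eqref{eq:minuscule_action_on_wt} reduces to the identity $\woj[k'](\sr_{k'})=\sro$; granting it, $c'=\llan\dfwt[i],\sro\rran=c_i$, which is also the coefficient of $\sr_i$ in $\sro$, exactly as stated.

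I expect $\woj[k'](\sr_{k'})=\sro$ to be the one real obstacle; here is the argument I would use. Put $\beta=\woj[k'](\sr_{k'})$. The $\sr_{k'}$-coefficient of a weight is invariant under $\weylj[k']$, since each generator $s_m$ with $m\neq k'$ changes a weight only by a multiple of $\sr_m$; hence $\beta$ has $\sr_{k'}$-coefficient $1$ and is in particular a positive root. For $m\neq k'$ we have $\langle\beta,\sdr_m\rangle=\langle\sr_{k'},\woj[k'](\sdr_m)\rangle=-\langle\sr_{k'},\sdr_{\DSi[k'](m)}\rangle\ge 0$, a negated off-diagonal Cartan integer, using that $\woj[k']$ sends $\sdr_m$ to $-\sdr_{\DSi[k'](m)}$. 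And $\langle\beta,\sdr_{k'}\rangle\ge 0$ as well: otherwise $s_{k'}(\beta)$ would be a root with $\sr_{k'}$-coefficient $\ge 2$, contradicting that $k'$ is cominuscule, i.e.\ that every root of $\G$ has $\sr_{k'}$-coefficient in $\{-1,0,1\}$. Thus $\beta$ is dominant, and since the highest root is the only dominant root, $\beta=\sro$. The care required throughout is the Langlands-dual bookkeeping — keeping straight which of the objects in play are roots, coroots, or weights of $\G$ versus $\dG$ — and invoking the (co)minuscule hypothesis exactly where the $\{-1,0,1\}$ bound is needed.
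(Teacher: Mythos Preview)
Your argument is correct and follows the same overall strategy as the paper: bootstrap from Lemma~\ref{lem:weyl_action}, extract the unknown coefficient by pairing against the appropriate simple (co)root, dispose of \eqref{eq:action_on_minuscule_wt} using the $\{-1,0,1\}$ bound for weights in a minuscule representation, and reduce \eqref{eq:minuscule_action_on_wt} to the identity $\woj[k'](\sr_{k'})=\sro$.

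The one genuine difference is in how you establish $\woj[k'](\sr_{k'})=\sro$. The paper argues by contradiction: if $\beta=\woj[k'](\sr_{k'})\neq\sro$ then $\beta+\sr_j$ is a root for some $j\neq k'$, and applying $\woj[k']$ to this root produces $\sr_{k'}-\rtb$ with $\rtb$ a positive root having zero $\sr_{k'}$-coefficient, which is impossible since a root cannot have mixed-sign simple-root coefficients. Your route is instead to show $\beta$ is dominant directly by computing $\langle\beta,\sdr_m\rangle$ for each $m$---for $m\neq k'$ via $\woj[k'](\sdr_m)=-\sdr_{\DSi[k'](m)}$, and for $m=k'$ via the cominuscule bound on the $\sr_{k'}$-coefficient---and then invoking uniqueness of the dominant root. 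Both are short; yours is slightly more transparent in that it avoids the auxiliary root $\beta+\sr_j$ and appeals only to standard facts about the longest element and the dominant root.
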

\begin{proof}
Both equations are relatively easy consequences of Lemma \ref{lem:weyl_action} together with the observation that
\[
\llan-\dfwt[j]+c\dfwt,\sr_i\rran = c
\]
for any $i\neq j$ and the fact that $\DSi(j)\neq i$. Equation \eqref{eq:action_on_minuscule_wt} follows from the well-known fact that in a minuscule fundamental weight representation all weights have $\llan\wt,\sr_i\rran\in\{-1,0,+1\}$. For equation \eqref{eq:minuscule_action_on_wt}, the observation above becomes:
\[
c = \llan-\dfwt[{\DSi[k'](i)}] + c\dfwt[k'],\sr_{k'}\rran = \llan\woj[k'](\dfwt),\sr_{k'}\rran = \llan\dfwt,\woj[k'](\sr_{k'})\rran.
\]
(Recall that $\woj[k']^T=\woj[k']^{-1}=\woj[k']$ since it is the longest element of $\weylj[k']$.) Hence, the claim reduces to showing that $\woj[k'](\sr_{k'})=\sro$ for $k'$ minuscule. 

For this, we note that $k'$ minuscule means that the set $\proots$ of positive roots can be split into two disjoint parts: the set $\prootsj[k']=\{\sum_{i=0}^nd_i\sr_i\in\pdroots~|~d_{k'}=0\}$ and the set $\wojproots[k']=\{\sum_{i=0}^nd_i\sr_i\in\proots~|~d_i=1\}$. Note that these sets are equivalently characterized by $\woj[k'](\prootsj[k']) = -\prootsj[k']$ and $\woj[k'](\wojproots[k']) = \wojproots[k']$. In particular, $\woj[k'](\sr_{k'})\in\wojproots[k']$.

Now, suppose that $\woj[k'](\sr_{k'})\neq\sro$, then there must be a $j\neq k'$ such that $\rt=\woj[k'](\sr_{k'})+\sr_j$ is a root. Clearly, $\rt\in\wojproots[k']$ and hence $\woj[k'](\rt)$ as well. On the other hand, $\sr_j\in\prootsj[k']$, so that $\woj[k'](\sr_j)=-\rtb\in-\prootsj[k']$ is a negative root with coefficient $0$ in front of $\sr_{k'}$. Hence, we find that $\proots\ni\woj[k'](\rt)=\woj[k']\bigl(\woj[k'](\sr_{k'})+\sr_j)=\sr_{k'}-\rtb$, but this is a linear combination of simple roots with \emph{both} positive and negative coefficients, which is impossible for a root. Hence, $\woj[k'](\sr_{k'})=\sro$.
\end{proof}

The following indices will be of particular interest for the calculation of $\phiGLS[\is]|_{\opendunim}$:
\begin{equation}
i_{-1}=\is,\quad i_0=k \qand i_j=\sigma_{i_{j-1}}(i_{j-2}) \qfor j\in\{1,\ldots,\cis+2\},
\label{eq:index_seq}
\end{equation}
where $\cis=c_\is=\llan\dfwt[\is],\sro\rran$ denotes the coefficient of $\sr_\is$ in $\sro$. 
\begin{rem}
Since we are assuming the existence of a minuscule coweight $\dfwt[k]$, $\cis$ is at most $4$: namely $\llan\dfwt[4],\sro\rran=4$ for $\dG$ of type $\LGE_7$. For the classical Lie groups, $\cis$ is at most $2$.
\end{rem}
The Weyl group elements $\woj[i_j]$ satisfy the following relations:
\begin{lem}\label{lem:special_weyl_elt_action}
With the conventions above, the elements $\woj[i_j]$ satisfy the following relations:
\begin{align}
\woj[i_0](\dfwt[i_{-1}]) &= -\dfwt[i_1]+\cis\dfwt[i_0], \qwhere \cis=\llan\dfwt[\is],\sro\rran 
\label{eq:wi0}\\
\woj[i_1](\dfwt[i_{0}]) &= -\dfwt[i_2]+\dfwt[i_1], 
\label{eq:wi1}\\
\woj[i_j](\dfwt[i_{j-1}]) &= -\dfwt[i_{j+1}]+2\dfwt[i_j], \qfor j\in\{2,\ldots,\cis\}
\label{eq:wij}\\
\woj[i_{\cis+1}](\dfwt[i_{\cis}]) &= -\dfwt[i_{c_i+2}]+\dfwt[i_{\cis+1}].
\label{eq:wic+1}
\end{align}
\end{lem}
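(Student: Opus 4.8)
## Proof Proposal for Lemma \ref{lem:special_weyl_elt_action}

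The plan is to treat the four equations \eqref{eq:wi0}--\eqref{eq:wic+1} as instances of a single mechanism: each is an application of Lemma \ref{lem:weyl_action} or Lemma \ref{lem:minuscule_action}, where one of the two indices involved is minuscule. The key observation organizing the argument is that the sequence $(i_j)$ defined in \eqref{eq:index_seq} is built precisely so that consecutive pairs $(i_{j-1}, i_j)$ always straddle the minuscule node $i_0 = k$ in a controlled way, so that at each step we can invoke the minuscule case of the lemmas above. Concretely, I would first record that $i_0 = k$ is minuscule by hypothesis and that $i_{-1} = \is \neq k$, so \eqref{eq:minuscule_action_on_wt} of Lemma \ref{lem:minuscule_action} applies directly with $k' = i_0$ and $i = i_{-1}$, giving $\woj[i_0](\dfwt[i_{-1}]) = -\dfwt[\DSi[i_0](i_{-1})] + \llan \dfwt[i_{-1}], \sro \rran \dfwt[i_0] = -\dfwt[i_1] + \cis \dfwt[i_0]$, using $i_1 = \sigma_{i_0}(i_{-1})$ and $\cis = c_\is = \llan \dfwt[\is], \sro \rran$ by definition. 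This establishes \eqref{eq:wi0}.

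For \eqref{eq:wi1}, the roles reverse: now the \emph{lower} index $i_0 = k$ is minuscule, so I would apply \eqref{eq:action_on_minuscule_wt} of Lemma \ref{lem:minuscule_action} with $k' = i_0$ and $i = i_1$ (noting $i_1 \neq i_0$, which follows since $\sigma_{i_0}$ is an involution of the diagram with node $i_0$ removed, so its values avoid $i_0$). This yields $\woj[i_1](\dfwt[i_0]) = -\dfwt[\DSi[i_1](i_0)] + \dfwt[i_1] = -\dfwt[i_2] + \dfwt[i_1]$ since $i_2 = \sigma_{i_1}(i_0)$, giving \eqref{eq:wi1}. For \eqref{eq:wic+1}, one checks that $i_{\cis}$ and $i_{\cis+1}$ are again related through the minuscule node: the intermediate steps will show (see below) that $i_{\cis+1} = \DSi[k](\is)$-type data, and more to the point, the recursion forces $i_{\cis} = k = i_0$ after $\cis$ steps by a periodicity argument, so that \eqref{eq:wic+1} is literally \eqref{eq:wi1} again with indices shifted — I would make this precise by proving the periodicity claim $i_{\cis} = i_0$, $i_{\cis+1} = i_1$, etc., as a separate bookkeeping step.

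The genuinely new content is \eqref{eq:wij} for $2 \le j \le \cis$, where neither $i_{j-1}$ nor $i_j$ need be minuscule and we must pin down the constant $c = 2$ in Lemma \ref{lem:weyl_action}. Here I would argue as follows. By Lemma \ref{lem:weyl_action}, $\woj[i_j](\dfwt[i_{j-1}]) = -\dfwt[\DSi[i_j](i_{j-1})] + c\,\dfwt[i_j] = -\dfwt[i_{j+1}] + c\,\dfwt[i_j]$ for some $c > 0$, so it remains to show $c = 2$. Following the computation in the proof of Lemma \ref{lem:minuscule_action}, $c = \llan \dfwt[i_{j-1}], \woj[i_j](\sr_{i_j}) \rran$, so I must identify the root $\woj[i_j](\sr_{i_j})$. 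The strategy is to track coefficients using the known earlier equations: applying $\woj[i_{j-1}]$ to the identity for $\woj[i_{j-2}]$ and comparing the $\dfwt[i_{j-1}]$-coefficients, one obtains a recursion among the constants $c_j$ appearing at each stage, and combined with the fact that the total ``weight'' must be consistent with $\woj[k] = \wop$ eventually mapping $\dfwt[\is]$ to its negative (so the telescoping sum of contributions equals $\cis$), the only consistent solution has the middle constants equal to $2$ and the two endpoint constants (for $j=1$ and $j = \cis+1$) equal to $1$, matching the claimed pattern $1 + 2 + 2 + \dots + 2 + 1$ summing correctly. The cleanest route is probably to observe that $\sr_{i_0} + \sr_{i_1} + \dots$ along this chain must assemble into the highest root $\sro$ (paralleling the proof that $\woj[k'](\sr_{k'}) = \sro$ in Lemma \ref{lem:minuscule_action}), and read off the coefficients $c_j = \llan \dfwt[i_{j-1}], \sro\rran / (\text{something})$ directly from the table of highest roots in Remark \ref{rem:anticanonical}.

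The main obstacle I anticipate is the constant $c = 2$ in \eqref{eq:wij}: establishing it type-independently requires either a clean structural argument identifying $\woj[i_j](\sr_{i_j})$ with an explicit positive root whose pairing with $\dfwt[i_{j-1}]$ is visibly $2$, or else a careful induction on $j$ propagating the coefficient through the chain \eqref{eq:wi0}--\eqref{eq:wij}. The subtlety is that, unlike the minuscule steps, there is no a priori bound forcing the coefficient, so one must genuinely use the structure of the index sequence $(i_j)$ — in particular that it arises from iterating Dynkin involutions of successively truncated diagrams — to rule out $c = 1$ or $c \ge 3$. Everything else (the minuscule steps \eqref{eq:wi0}, \eqref{eq:wi1}, \eqref{eq:wic+1} and the periodicity bookkeeping) should be routine given Lemmas \ref{lem:weyl_action} and \ref{lem:minuscule_action}.
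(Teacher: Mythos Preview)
Your treatment of \eqref{eq:wi0} and \eqref{eq:wi1} is correct and matches the paper exactly: both are immediate from Lemma~\ref{lem:minuscule_action} since $i_0 = k$ is minuscule. The proposal for \eqref{eq:wij} and \eqref{eq:wic+1}, however, contains a genuine error and an unresolved gap.

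The periodicity claim $i_{\cis} = i_0 = k$ is false. In type $\LGC_n$ with $k = 1$ and any $\is \ge 3$, the sequence reads $i_0 = 1$, $i_1 = \is$, $i_2 = \is - 1$, $i_3 = \is$, $i_4 = 1$; here $\cis = 2$ but $i_{\cis} = i_2 = \is - 1 \neq 1$. In type $\LGE_6$ with $k = 6$ and $\is = 4$ (where $\cis = 3$) one has $(i_0,\ldots,i_5) = (6,4,5,3,4,1)$, so $i_{\cis} = i_3 = 3 \neq 6$. Hence \eqref{eq:wic+1} is \emph{not} a shifted copy of \eqref{eq:wi1}, and your reduction of that equation to the minuscule case collapses. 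More generally, for $j \ge 2$ neither $i_{j-1}$ nor $i_j$ need be minuscule (e.g.\ $(i_2,i_3) = (5,3)$ in the $\LGE_6$ example above), so Lemma~\ref{lem:minuscule_action} does not apply uniformly.

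For \eqref{eq:wij} you correctly isolate the difficulty but do not resolve it. The telescoping heuristic does not pin down $c$ without further input, and the suggestion to read $c$ off from a pairing with $\sro$ fails because $\woj[i_j](\sr_{i_j})$ is the highest root of a proper Levi subsystem, not $\sro$ itself. The paper makes no attempt at a uniform argument here: it proves both \eqref{eq:wij} and \eqref{eq:wic+1} by an explicit type-by-type verification across $\LGA_n,\LGB_n,\LGC_n,\LGD_n,\LGE_6,\LGE_7$, computing the sequence $(i_j)$ in each case and then either invoking Lemma~\ref{lem:minuscule_action} when one of $i_{j-1},i_j$ happens to be minuscule, or else writing down a reduced word for $\woj[i_j]$ acting on $\dfwt[i_{j-1}]$ and computing directly.
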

\begin{proof}
Since $i_0=k$ is minuscule, equation \eqref{eq:wi0} is simply a restatement of equation \eqref{eq:minuscule_action_on_wt}, while \eqref{eq:wi1} is a restatement of \eqref{eq:action_on_minuscule_wt}.

The remaining equations will be checked by type. For the reader's convenience we recall the highest roots for the types allowing a minuscule fundamental weight:
\[
\begin{array}{r|l}
\LGA_n & \sro = \sr_1 + \ldots +\sr_n\\ 
\LGB_n & \sro = \sr_1 + 2\sr_2 + \ldots + 2\sr_n\\
\LGC_n & \sro = 2\sr_1 + \ldots + 2\sr_{n-1} + \sr_n\\
\LGD_n & \sro = \sr_1 + 2\sr_2 + \ldots + 2\sr_{n-2} + \sr_{n-1} + \sr_n\\
\LGE_6 & \sro = \sr_1 + 2\sr_2 + 2\sr_3 + 3\sr_4 + 2\sr_5 + \sr_6\\
\LGE_7 & \sro = 2\sr_1 + 2\sr_2 + 3\sr_3 + 4\sr_4 + 3\sr_5 + 2\sr_6 + \sr_7
\end{array}
\]
In type $\LGA_n$, $\cis=1$ for all $\is$, so every fundamental weight is minuscule. Hence, \eqref{eq:wij} is vacuous, and \eqref{eq:wic+1} follows from either of the equations of Lemma \ref{lem:minuscule_action}.

In type $\LGB_n$, only $\dfwt[n]$ is minuscule, whereas $\cis=2$ for every $\is\neq n$. Hence, taking any $\is\neq n$, we find that the sequence starting on $i_{-1}=\is$ and $i_0=n$ proceeds as:
\[
i_1=\DSi[n](\is)=n-\is,\quad i_2= \DSi[n-\is](n)=n, \quad i_3=\DSi[n](n-\is) = \is, \qand i_4=\DSi[\is](n)=n.
\]
Thus, we need to show that $\woj[n](\dfwt[n-\is])=-\dfwt[\is]+2\dfwt[n]$ and $\woj[\is](\dfwt[n])=-\dfwt[n]+\dfwt[\is]$, but these follow from Lemma \ref{lem:minuscule_action} since $n$ is minuscule.
  
In type $\LGC_n$, $\dfwt[1]$ is minuscule, while $\cis=2$ for all $i\neq1$. Thus, let $\is\neq1$, then the sequence $i_{-1}=\is$, $i_0=1$ proceeds as
\[
i_1=\DSi[1](\is)=\is,\quad i_2=\DSi[\is](1)=\is-1,\quad i_3=\DSi[\is-1](\is)=\is, \qand i_4=\DSi[\is](\is-1)=1.
\]
Hence, we need to show that $\woj[\is-1](\dfwt[\is])=-\dfwt[\is]+2\dfwt[\is-1]$ and $\woj[\is](\dfwt[\is-1])=-\dfwt[1]+\dfwt[\is]$. Now, $\woj[\is-1]\in\weylj[\is-1]$ is the longest element, and $\weylj[\is-1]$ is of type $\LGA_{\is-2}\times\LGC_{n-\is+1}$, so that $\woj[\is-1]$ can be written as the product of the the longest words of these two subgroups. The longest word of the $\LGA_{\is-2}$-subgroup clearly acts trivially on $\dfwt[\is]$ (since any $s_i$ in this subgroup has $i<\is$ so $s_i(\dfwt[\is])=\dfwt[\is]$) and we find $\woj[\is-1](\dfwt[\is])=s_\is s_{\is+1}\cdots s_n\cdots s_{\is+1}s_\is(\dfwt[\is])$ which equals $-\dfwt[\is]+2\dfwt[\is-1]$. To show that $\woj[\is](\dfwt[\is-1])=-\dfwt[1]+\dfwt[\is]$, we note that $\weylj[\is]$ is of type $\LGA_{\is-1}\times\LGC_{n-\is}$, with the $\LGC_{n-\is}$-subgroup acting trivially on $\dfwt[\is-1]$, hence $\woj[\is](\dfwt[\is-1])=s_1(s_2s_1)\cdots (s_{\is-1}\cdots s_1)(\dfwt[\is-1]) = s_1\cdots s_{\is-1}(\dfwt[\is-1])= -\dfwt[1]+\dfwt[\is]$.

In type $D_n$, $\dfwt[1]$, $\dfwt[n-1]$ and $\dfwt[n]$ are minuscule, while $\cis=2$ for $\is\neq\{1,n-1,n\}$. Hence, we will need to consider three cases depending on $k\in\{1,n-1,n\}$. First, we consider $k=1$. Let $\is\notin\{1,n-1,n\}$, then the sequence $i_{-1}=\is$, $i_0=1$ proceeds as
\[
i_1=\DSi[1](\is) =\is, \quad i_2=\DSi[\is](1)=\is-1, \quad i_3=\DSi[\is-1](\is)=\is, \qand i_4=\DSi[\is](\is-1)=1.
\]
For these, one of the two subsequent indices is always minuscule, so the statements follow from Lemma \ref{lem:minuscule_action}. Now, let $\is\in\{n-1,n\}$, then the sequence $i_{-1}=\is$, $i_0=1$ continues as $i_1\in\{n-1,n\}$ (for $n-1$ even, $i_1=\is$; for $n-1$ odd, $i_1$ is the other element of $\{n-1,n\}$) and $i_2=1$. Equation \eqref{eq:wij} is vacuous and \eqref{eq:wic+1} follows from Lemma \ref{lem:minuscule_action} since all indices are minuscule.

Secondly, we should consider $k=n-1$, but this case is the same as $k=n$ with all occurrences of $n-1$ and $n$ interchanged, so we will disregard it here.

Finally, we have the case $k=n$. Let $\is\notin\{1,n-1,n\}$, then the sequence $i_{-1}=\is$, $i_0=n$ proceeds as $i_1=\DSi[n](\is)=n-\is$, $i_2=\DSi[n-\is](n)\in\{n-1,n\}$ (depending on $n-\is$ odd or even, analogously to above), $i_3=\DSi[i_2](n-\is)=\is$ and $i_4=\DSi[\is](i_2)=n$. Again, every second index is minuscule, so the equations follow from Lemma \ref{eq:minuscule_action_on_wt}. Now, let $\is=1$, then $i_{-1}=1$, $i_0=n$ give $i_1=\DSi[n](1)=n-1$, $i_2=\DSi[n-1](n)=1$, $i_3=\DSi[1](n-1)\in\{n-1,n\}$ (depending on $n-1$ even or odd); all indices are minuscule, so the equations follow from Lemma \ref{lem:minuscule_action} (equation \eqref{eq:wij} being vacuous). Lastly, let $\is=n-1$, then $i_{-1}=n-1$ and $i_0=n$ give $i_1=\DSi[n](n-1)=1$, $i_2=\DSi[1](n)\in\{n-1,n\}$ (depending on $n-1$ odd or even) and $i_3=\DSi[i_2](1)\in\{n-1,n\}$ (depending on $n-1$ even or odd). This case also has all indices minuscule, so the equations follow from Lemma \ref{lem:minuscule_action}. This completes all the cases for type $\LGD_n$.

Types $\LGE_6$ and $\LGE_7$ can be considered more concretely. In type $\LGE_6$, both $1$ and $6$ are minuscule, but we can disregard $1$ due to the isomorphism $\LGE_6/\P_1\cong\LGE_6/\P_6$. Thus, for $k=6$, then we have following sequences:
\begin{equation}
\begin{array}{c|c||c|c|c|c|c|c|c}
\is & \cis & \hspace{-.7ex}i_{-1}\hspace{-.8ex} & i_0 & i_1 & i_2 & i_3 & i_4 & i_5 \\\hline
\vspace{-1.2em}&&&&&&&\\\hline
1 & 1 & 1 & 6 & 1 & 6 & 1 &    &  \\\hline
2 & 2 & 2 & 6 & 5 & 6 & 2 & 1 &  \\
3 & 2 & 3 & 6 & 3 & 2 & 5 & 1 & \\
5 & 2 & 5 & 6 & 2 & 1 & 3 & 1 & \\\hline
4 & 3 & 4 & 6 & 4 & 5 & 3 & 4 & 1 \\
\end{array}    
\label{eq:table_of_ij_for_Cayley}
\end{equation}
For $\is=1$, all indices in the sequence are minuscule, while for $\is=2$ and $\is=5$ every second index is minuscule, so the statement follows from Lemma \ref{lem:minuscule_action} in these cases. For $\is=3$, we need to show that $\woj[2](\dfwt[3])=-\dfwt[5]+2\dfwt[2]$ and $\woj[5](\dfwt[2])=-\dfwt[1]+\dfwt[5]$. We note that $\weylj[2]$ is of type $\LGA_5$, and hence $\woj[2]$ acts on $\dfwt[3]$ as $(s_5s_4s_3s_1)(s_6s_5s_4s_3)$; on the other hand $\weylj[5]$ is of type $\LGA_4\times\LGA_1$, so $\woj[5]$ acts on $\dfwt[2]$ as $s_1s_3s_4s_2$; in both cases it is easy to verify that the resulting weights are the ones listed above. For $\is=4$, we need to show $\woj[5](\dfwt[4])=-\dfwt[3]+2\dfwt[5]$, $\woj[3](\dfwt[5])=-\dfwt[4]+2\dfwt[3]$ and finally $\woj[4](\dfwt[3])=-\dfwt[1]+\dfwt[4]$. We have $\weylj[5]$ of type $\LGA_4\times\LGA_1$, so $\woj[5](\dfwt[4])=(s_3s_4s_2)(s_1s_3s_4)(\dfwt[4])$; $\weylj[3]$ is of type $\LGA_1\times\LGA_4$, so $\woj[3](\dfwt[5])=(s_4s_5s_6)(s_2s_4s_5)(\dfwt[5])$; and $\weylj[4]$ is of type $\LGA_2\times\LGA_1\times\LGA_2$, so $\woj[4](\dfwt[3]) = s_1s_3(\dfwt[3])$. In all cases, the calculations are straightforward.

In type $\LGE_7$ only $7$ is minuscule, so we set $k=7$ and find the following sequences:
\begin{equation}
\begin{array}{c|c||c|c|c|c|c|c|c|c}
\is & \cis & \hspace{-.7ex}i_{-1}\hspace{-.8ex} & i_0 & i_1 & i_2 & i_3 & i_4 & i_5 & i_6 \\\hline
\vspace{-1.2em}&&&&&&&\\\hline
1 & 2 & 1 & 7 & 6 & 7 & 1 & 7 &&\\
2 & 2 & 2 & 7 & 2 & 1 & 2 & 7 &&\\
6 & 2 & 6 & 7 & 1 & 7 & 6 & 7 &&\\\hline
3 & 3 & 3 & 7 & 5 & 6 & 2 & 3 & 7 &\\
5 & 3 & 5 & 7 & 3 & 2 & 6 & 5 & 7 &\\\hline
4 & 4 & 4 & 7 & 4 & 5 & 3 & 5 & 4 & 7
\end{array}
\label{eq:table_of_ij_for_Freudenthal}
\end{equation}
The sequences for $\is=1$ and $\is=6$ are covered by Lemma \ref{lem:minuscule_action}. For the others, we note that the Weyl subgroups are of the following types:
\[
\begin{array}{c|c|c|c|c|c|c}
\weylj[1]	& \weylj[2]	& \weylj[3]	& \weylj[4]	& \weylj[5]	& \weylj[6]	& \weylj[7]	\\\hline
\LGD_6 & \LGA_6 & \LGA_1\times\LGA_5 & \LGA_2\times\LGA_1\times\LGA_3 & \LGA_4\times\LGA_2 & \LGD_5\times\LGA_1 & \LGE_6
\end{array}
\]
Hence, for $\is=2$, we have that $\woj[1]$ acts on $\dfwt[2]$ as $s_2(s_7s_6s_5s_4)s_3(s_6s_5s_4)s_2(s_5s_4)s_3(s_4)s_2$, yielding $-\dfwt[3]+2\dfwt[1]$; and we have that $\woj[2]$ on $\dfwt[1]$ acts as $s_7s_6s_5s_4s_3s_1$, yielding $-\dfwt[7]+\dfwt[2]$. For $\is=3$, we find that $\woj[6]$ acts on $\dfwt[5]$ as $s_2(s_1s_3s_4)s_5(s_3s_4)s_2(s_4)s_5$, yielding $-\dfwt[2]+2\dfwt[6]$; $\woj[2]$ acts on $\dfwt[6]$ as $(s_3s_4s_5s_6s_7)(s_1s_3s_4s_5s_6)$, yielding $-\dfwt[3]+2\dfwt[2]$; finally, $\woj[3]$ acts on $\dfwt[2]$ as $s_7s_6s_5s_4s_2$, yielding $-\dfwt[7]+\dfwt[3]$. For $\is=5$, we find that $\woj[2]$ acts on $\dfwt[3]$ as $(s_6s_5s_4s_3s_1)(s_7s_6s_5s_4s_3)$, yielding $-\dfwt[6]+2\dfwt[2]$; $\woj[6]$ acts on $\dfwt[2]$ as $s_5(s_1s_3s_4)s_2(s_3s_4)s_5(s_4)s_2$, yielding $-\dfwt[5]+2\dfwt[6]$; finally, $\woj[5]$ acts on $\dfwt[6]$ as $s_7s_6$, yielding $-\dfwt[7]+\dfwt[6]$. For the last sequence, starting with $\is=4$, we find that $\woj[5]$ acts on $\dfwt[4]$ as $(s_3s_4s_2)(s_1s_3s_4)$, yielding $-\dfwt[3]+2\dfwt[5]$; $\woj[3]$ acts on $\dfwt[5]$ as $(s_5s_6s_7)(s_4s_5s_6)(s_2s_4s_5)$, yielding $-\dfwt[5]+2\dfwt[3]$; $\woj[5]$ acts on $\dfwt[3]$ as $(s_4s_3s_1)(s_2s_4s_3)$, yielding $-\dfwt[4]+2\dfwt[5]$; finally, $\woj[4]$ acts on $\dfwt[5]$ as $s_7s_6s_5$, yielding $-\dfwt[7]+\dfwt[4]$.
\end{proof}

\begin{cor}\label{cor:i2_minuscule}
    Suppose $\cis=2$. After deleting the nodes $i_1$ and $i_3$ from the Dynkin diagram associated to $\G$, the component containing $i_2$ corresponds to a Lie group $\tilde{\G}$ for which $i_2$ is a minuscule index. 
\end{cor}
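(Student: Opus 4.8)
The plan is to extract the statement from the index sequences $(i_{-1},i_0,i_1,i_2,i_3,\dots)$ of \eqref{eq:index_seq} that were computed type-by-type in the proof of Lemma \ref{lem:special_weyl_elt_action}, since the hypothesis $\cis=2$ already fixes the Lie type of $\dG$ and the minuscule index $k$. The first step is to record which cases occur: as $\cis$ is the coefficient of $\sr_\is$ in the highest root of $\dG$, the list of highest roots in that proof shows that $\cis=2$ forces $\dG$ to be of type $\LGB_n$ (with $k=n$), $\LGC_n$ (with $k=1$), $\LGD_n$ (with $k\in\{1,n-1,n\}$), $\LGE_6$ (with $k=6$), or $\LGE_7$ (with $k=7$), with $\is$ in the corresponding ranges; in type $\LGA_n$ every such coefficient is $1$, so the hypothesis is vacuous. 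I also note that $i_2=\sigma_{i_1}(i_0)\neq i_1$ and $i_3=\sigma_{i_2}(i_1)\neq i_2$, so $i_2\notin\{i_1,i_3\}$ and $i_2$ indeed survives the deletion, and that the component is read as a sub-Dynkin diagram of $\dG$ (equivalently of $\G$, which has the same underlying graph).

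The second step is to feed in the index sequences from the proof of Lemma \ref{lem:special_weyl_elt_action} and identify the component. When $\dG$ has type $\LGC_n$ with $k=1$, or type $\LGD_n$ with $k=1$, the sequence begins $i_1=\is$, $i_2=\is-1$, $i_3=\is$, so deleting $i_1=i_3=\is$ leaves the chain on $\{1,\dots,\is-1\}$, of type $\LGA_{\is-1}$, as the component of $i_2=\is-1$; every node of a type-$\LGA$ diagram is a minuscule index. When $\dG$ has type $\LGB_n$ with $k=n$, or type $\LGD_n$ with $k\in\{n-1,n\}$ (the latter being $k=n$ up to the swap $n-1\leftrightarrow n$), the sequence gives $i_1=n-\is$, $i_3=\is$, and $i_2$ equal to the short-root endpoint $n$ (type $\LGB$), resp.\ to one of the two fork nodes $\{n-1,n\}$ (type $\LGD$); deleting $i_1$ and $i_3$, the component of $i_2$ is the subdiagram on $\{\max(\is,n-\is)+1,\dots,n\}$, which is of type $\LGB_a$, resp.\ $\LGD_a$, with $a=\min(\is,n-\is)$, and in it $i_2$ is again the short-root endpoint, resp.\ a fork node --- a minuscule index. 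Finally, for type $\LGE_6$ and $\LGE_7$ I would read $(i_1,i_2,i_3)$ off tables \eqref{eq:table_of_ij_for_Cayley} and \eqref{eq:table_of_ij_for_Freudenthal} and check the six remaining cases by hand; in each, deleting $i_1$ and $i_3$ leaves $i_2$ in a component that is a single node or a short type-$\LGA$ chain with $i_2$ an endpoint, hence a minuscule index.

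The step I expect to be the main obstacle is the type-$\LGD_n$ bookkeeping. There $k$ can be any of $1,n-1,n$, several of the occurring Dynkin involutions $\sigma$ are nontrivial, and for $k\in\{n-1,n\}$ one must check that deleting both $i_1=n-\is$ and $i_3=\is$ does not separate the fork of $\LGD_n$ from $i_2$ --- only then is the component of $i_2$ genuinely of type $\LGD_a$ with $i_2$ a fork node rather than an interior node. This reduces to the inequality $\min(\is,n-\is)\ge2$, which is exactly the range $\is\in\{2,\dots,n-2\}$ forced by $\cis=2$ in type $\LGD_n$; one then also checks that the small-rank degenerations $\LGD_2\cong\LGA_1\times\LGA_1$ and $\LGD_3\cong\LGA_3$ still place $i_2$ at a minuscule node, as does the degeneration $\LGB_1\cong\LGA_1$ that can occur in the type-$\LGB_n$ case. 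A uniform argument reading the component structure directly off the coroot identities \eqref{eq:wij} and \eqref{eq:wic+1} of Lemma \ref{lem:special_weyl_elt_action} seems feasible, but the type check above is shorter and clearer.
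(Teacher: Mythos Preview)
Your proposal is correct and follows essentially the same approach as the paper: both argue that the statement is a corollary of the type-by-type computations of the index sequences $(i_1,i_2,i_3)$ in the proof of Lemma~\ref{lem:special_weyl_elt_action}, and both verify the claim case-by-case. The paper simply presents the verification as a single table listing $(\mathrm{type}(\G),k,\is,i_1,i_2,i_3,\mathrm{type}(\tilde\G),\tilde i_2)$ for every $\cis=2$ case, whereas you spell out the same reasoning in prose; your extra discussion of the small-rank degenerations in types $\LGB$ and $\LGD$ is sound but not needed once the table is recorded.
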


\begin{proof}
    This is a corollary of the proof of the previous lemma. We tabulate below all situations in which $\cis=2$.
    \[
        \begin{array}{c||c|c|c|c|c|c|c}
            \mathrm{type}(\G) & k & \is & i_1 & i_2 & i_3 & \mathrm{type}(\tilde{\G}) & \tilde{i_2}\\ \hline
            \vspace{-1.2em}&&&&&&&\\\hline
            \LGB_n & n & \is\neq n & n-\is & n & \is & \LGB_{\min(\is,n-\is)} & \min(\is,n-\is)\\ \hline
            \LGC_n & 1 & \is\neq 1 & \is & \is-1 & \is & \LGA_{\is-1} & \is-1\\ \hline
            \LGD_n & 1 & \is\notin\{1,n-1,n\} & \is & \is-1 & \is & \LGA_{\is-1} & \is-1 \\ 
             & n & \is\notin\{1,n-1,n\} & n-\is & n-1 & \is & \LGD_{\min(\is,n-\is)} & n-1\\ \hline
            \LGE_6 & 6 & 2 & 5 & 6 & 2 & \LGA_1 & 1\\
            & & 3 & 3 & 2 & 5 & \LGA_2 & 2\\
            & & 5 & 2 & 1 & 3 & \LGA_1 & 1 \\ \hline
            \LGE_7 & 7 & 1 & 6 & 7 & 1 & \LGA_1 & 1\\
            & & 2 & 2 & 1 & 2 & \LGA_6 & 1\\
            & & 6 & 1 & 7 & 6 & \LGA_1 & 1
        \end{array}
    \]  
    Note that type $\LGA$ has no such simple roots.
\end{proof}

Now, although we have described the actions of $\woj[i_j]$ on $\dfwt[i_{j-1}]$, we have seen that only a certain subword of its reduced expression acts non-trivially. Of course, this subword is exactly a minimal coset representative of $\woj[i_j]$ with respect to $\weylj[i_{j-1}]$; we will denote these subwords with $\uij$. Hence, we obtain the characterization
\begin{equation}
\uij\in\weylj\text{ is an element of minimal length such that }\woj[i_j](\dfwt[i_{j-1}])=\uij(\dfwt[i_{j-1}]).
\label{eq:df_uij}
\end{equation} 
We further define $\wij = \uij\dots\uij[1]$ for $j\in [c_*]$, and we will now show that these $\wij$ are minimal coset representatives for $W/W_P$ by investigating the actions of $\uij$ on $\dfwt[k]$.
\begin{prop}\label{prop:uij_repeated_action}
For $j\in\{1,\ldots,\cis\}$, we have
\begin{align}
\wij(\dfwt[k]) = \uij\uij[j-1]\cdots\uij[1](\dfwt[k]) &= -\dfwt[i_{j+1}]+\dfwt[i_j],
\label{eq:uij_repeated_action_min_wt} \\
\intertext{and that each $\wij=\uij[j]\cdots\uij[1]$ is a minimal coset representative for $\weyl/\weylp$. Furthermore, for each $j\in [\cis+1]$, we have}
\uij\uij[j-1]\cdots\uij[1](\uij[0]\dfwt[\is]) &= -(\cis-j+1)\dfwt[i_{j+1}]+(\cis-j)\dfwt[i_j]
\label{eq:uij_repeated_action} 
\end{align}
with each simple reflection $s_i$ of $\uij$ acting by subtracting $(\cis-j+1)\sdr_i$ in the action of $\uij$ on $\uij[j-1]\cdots\uij[0](\dfwt[\is])$.
\end{prop}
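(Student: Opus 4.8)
The plan is to prove Proposition \ref{prop:uij_repeated_action} by induction on $j$, using Lemma \ref{lem:special_weyl_elt_action} together with the structural facts about minuscule weights and the fully-commutative nature of $\wP$. The two families of equations \eqref{eq:uij_repeated_action_min_wt} and \eqref{eq:uij_repeated_action} will be established in parallel, since the inductive steps feed into each other: knowing how $\wij$ acts on $\dfwt[k]$ lets us bootstrap the minimal-coset-representative claim, and that claim in turn controls the weights appearing when we apply $\uij[j+1]$.

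First I would set up the base case. For $j=1$, equation \eqref{eq:uij_repeated_action_min_wt} reads $\uij[1](\dfwt[k]) = -\dfwt[i_2]+\dfwt[i_1]$, which is exactly \eqref{eq:wi1} of Lemma \ref{lem:special_weyl_elt_action} combined with the definition \eqref{eq:df_uij} of $\uij[1]$ as the minimal-length element realizing the action of $\woj[i_1]$ on $\dfwt[i_0]=\dfwt[k]$. That $\uij[1]$ is then automatically a minimal coset representative for $\weyl/\weylp$ follows because it is already minimal for $\weylj[i_0]=\weylp$ by construction, and $i_0=k$. For \eqref{eq:uij_repeated_action} at $j=0$, I would unwind the definitions: $\uij[0]$ is the minimal-length realization of $\woj[i_0](\dfwt[i_{-1}]) = \woj[k](\dfwt[\is])$, and by \eqref{eq:wi0} this equals $-\dfwt[i_1]+\cis\dfwt[i_0]$, matching the claimed $-(\cis+1)\dfwt[i_1]+\cis\dfwt[i_0]$ once one checks the indexing convention—this is where I would be careful, since the proposition writes the $j$-th term as $-(\cis-j+1)\dfwt[i_{j+1}]+(\cis-j)\dfwt[i_j]$ and one must verify the edge behaviour at $j=0$ against \eqref{eq:wi0} (which has coefficient $\cis$, not $\cis+1$, on the already-acted weight; the discrepancy is resolved by the fact that $\uij[0]$ acts on $\dfwt[\is]$ whose $\sdr_{i_1}$-pairing contributes). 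I would make this bookkeeping explicit rather than glossing it.

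For the inductive step, assume \eqref{eq:uij_repeated_action_min_wt} and \eqref{eq:uij_repeated_action} hold up through index $j-1$. To get \eqref{eq:uij_repeated_action_min_wt} at $j$, I would write $\wij(\dfwt[k]) = \uij(\wij[j-1](\dfwt[k])) = \uij(-\dfwt[i_j]+\dfwt[i_{j-1}])$ by the inductive hypothesis. Since $\uij$ lies in $\weylj[i_{j-1}]$, it fixes $\dfwt[i_{j-1}]$; and since $\uij$ is the minimal realization of $\woj[i_j]$ acting on $\dfwt[i_{j-1}]$, one has $\uij(\dfwt[i_{j-1}]) = \woj[i_j](\dfwt[i_{j-1}])$ which by \eqref{eq:wij} (or \eqref{eq:wic+1} at the endpoint) equals $-\dfwt[i_{j+1}]+2\dfwt[i_j]$. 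Combining, $\uij(-\dfwt[i_j]+\dfwt[i_{j-1}]) = -(-\dfwt[i_{j+1}]+2\dfwt[i_j]) + \dfwt[i_{j-1}]$—but here I must also track what $\uij$ does to $\dfwt[i_j]$; since $s_{i_j}\notin\weylj[i_j]$ but $\uij\in\weylj[i_j]$, the element $\uij$ fixes $\dfwt[i_j]$, so the term $-\dfwt[i_j]$ is carried through unchanged, giving $-\dfwt[i_{j+1}]+2\dfwt[i_j]-\dfwt[i_j] = -\dfwt[i_{j+1}]+\dfwt[i_j]$ as desired. (I would double-check the $j=1$-to-$j=2$ transition separately since \eqref{eq:wi1} has coefficient $1$ rather than $2$.) For the minimal-coset-representative claim, I would argue by lengths: $\ell(\wij) = \ell(\uij) + \ell(\wij[j-1])$ must be verified, which follows from the standard fact that if $w'\in W_J$ is $J$-minimal and $u$ is $J'$-minimal for the stabilizer $J'$ of the weight $w'(\dfwt[k])$, and the product acts without cancellation on $\dfwt[k]$ (witnessed by \eqref{eq:uij_repeated_action_min_wt} giving a weight on the boundary of the appropriate face), then the product is reduced and minimal. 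Then \eqref{eq:uij_repeated_action} at $j$ follows by the same pattern: apply $\uij$ to $-(\cis-j)\dfwt[i_j]+(\cis-j-1)\dfwt[i_{j-1}]+\text{(inductive value)}$, use that $\uij$ fixes $\dfwt[i_{j-1}]$ and $\dfwt[i_j]$ and sends $\dfwt[i_{j-1}]\mapsto \woj[i_j](\dfwt[i_{j-1}])$—wait, more carefully, I would apply $\uij$ to the inductive expression $-(\cis-j+2)\dfwt[i_j]+(\cis-j+1)\dfwt[i_{j-1}]$ and track coefficients through \eqref{eq:wij}, obtaining $-(\cis-j+1)\dfwt[i_{j+1}]+(\cis-j)\dfwt[i_j]$; the claim about each simple reflection of $\uij$ subtracting exactly $(\cis-j+1)\sdr_i$ then follows from the linearity of the $\uij$-action and the fact that the total weight change is $(\cis-j+1)$ times the weight change in the $j=1$ (minuscule) case, where each reflection subtracts a single coroot.

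The main obstacle I anticipate is the careful coefficient bookkeeping at the two "boundary" indices $j=1$ and $j=\cis+1$, where Lemma \ref{lem:special_weyl_elt_action} gives equations with coefficient $1$ rather than $2$ (equations \eqref{eq:wi1} and \eqref{eq:wic+1}), so the uniform statement "subtracting $(\cis-j+1)\sdr_i$" has to be reconciled with these. The cleanest route is probably to prove the "subtract $(\cis-j+1)\sdr_i$ per reflection" claim first, directly from \eqref{eq:uij_repeated_action} and the observation that $\uij$ has the same underlying reduced word as $\uij[1]$ up to the relevant Dynkin-diagram symmetry (this is essentially Corollary \ref{cor:i2_minuscule} and its analogues feeding in), and then derive the weight formulas as consequences. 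The secondary difficulty is verifying that no length cancellation occurs when concatenating the $\uij$, i.e.\ that $\wij$ is genuinely reduced; this I would handle uniformly by noting that $\wij(\dfwt[k])$ is, by \eqref{eq:uij_repeated_action_min_wt}, a weight of the minuscule representation $\PluckerRep$ at "distance" $\ell(\wij[1])+\dots+\ell(\uij)$ from the lowest weight (since each step lands on a genuinely new weight vector, never returning), and in a minuscule representation Bruhat distance equals length of the minimal coset representative (Lemma \ref{lem: Spacek_LPLG}).
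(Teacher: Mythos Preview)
Your approach is essentially the paper's: induction on $j$ using Lemma \ref{lem:special_weyl_elt_action} together with the fact that $\uij$ fixes $\dfwt[i_j]$. One correction: you write that $\uij\in\weylj[i_{j-1}]$ and hence fixes $\dfwt[i_{j-1}]$, which is backwards (and you then contradict yourself two lines later). By definition \eqref{eq:df_uij}, $\uij$ is a subword of $\woj[i_j]$, so it lies in $\weylj[i_j]$, fixes $\dfwt[i_j]$, and acts on $\dfwt[i_{j-1}]$ via \eqref{eq:wij}. Your worry about a $j=0$ case for \eqref{eq:uij_repeated_action} is also moot: the stated range is $j\in[\cis+1]=\{1,\ldots,\cis+1\}$, and the base case $j=1$ follows from applying $\uij[1]$ to $\uij[0](\dfwt[\is])=-\dfwt[i_1]+\cis\dfwt[k]$.

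The paper sidesteps all of your anticipated boundary and coefficient difficulties with a single factorization: writing the inductive expression as
\[
-(\cis-j+2)\dfwt[i_j]+(\cis-j+1)\dfwt[i_{j-1}] \;=\; -\dfwt[i_j]\;+\;(\cis-j+1)\bigl(-\dfwt[i_j]+\dfwt[i_{j-1}]\bigr) \;=\; -\dfwt[i_j]\;+\;(\cis-j+1)\,\wij[j-1](\dfwt[k]),
\]
one sees that $\uij$ acts nontrivially only on a scalar multiple of a weight in the \emph{minuscule} $\dfwt[k]$-orbit. This simultaneously yields \eqref{eq:uij_repeated_action} (via \eqref{eq:uij_repeated_action_min_wt} at index $j$), the per-reflection claim (in the minuscule orbit each reflection subtracts exactly one simple root, hence $(\cis-j+1)$ copies after scaling), and the minimal-coset-representative claim (each reflection in $\uij$ strictly lowers a minuscule-orbit weight, so $\wij$ is reduced in $\cosets$ by Lemma \ref{lem: Spacek_LPLG}). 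At $j=\cis+1$ the scalar $(\cis-j+1)$ vanishes, so no separate endpoint analysis is needed.
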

\begin{proof}
We prove \eqref{eq:uij_repeated_action_min_wt} as well as the fact that $\uij[j]\cdots\uij[1]$ is a minimal coset representative of $\weyl/\weylp$ by induction on $j$, combining Lemma \ref{lem:special_weyl_elt_action} with the observation that $\uij(\dfwt[i_j])=\dfwt[i_j]$. For $j=1$, both statements follow from \eqref{eq:wi1} combined with $i_0=k$ and the definition of $\uij$ in \eqref{eq:df_uij}. 

So now suppose that \eqref{eq:uij_repeated_action_min_wt} holds for some $j-1\in\{1,\ldots,\cis-1\}$, then \eqref{eq:wij} holds for $j$ and we find that
\ali{
\uij[j]\bigl(\uij[j-1]\cdots\uij[0](\dfwt[k])\bigr)
 &= \uij\bigl(-\dfwt[i_{j}]+\dfwt[i_{j-1}]\bigr) = -\dfwt[i_j]+\uij(\dfwt[i_j{-1}]) \\
 &= -\dfwt[i_{j}]+(-\dfwt[i_{j+1}]+2\dfwt[i_j]) =-\dfwt[i_{j+1}]+\dfwt[i_j].
}
Furthermore, $\uij$ acts only on $-\dfwt[i_{j+1}]$ and not on $\dfwt[i_j]$. Since $\uij$ represents the minimal such action by definition, each action is nontrivial, so $\uij\cdots\uij[1]$ is a minimal coset representative of $\weyl/\weylp$. Hence, \eqref{eq:uij_repeated_action_min_wt} holds and $\uij\cdots\uij[1]$ is a minimal coset representative of $\weyl/\weylp$ for all $j\in\{1,\ldots,\cis\}$.

To conclude equation \eqref{eq:uij_repeated_action} from this, we note that $\is=i_{-1}$ so that \eqref{eq:wi0} implies $\uij[0](\dfwt[\is])=-\dfwt[i_1]+\cis\dfwt[k]$. Hence \eqref{eq:uij_repeated_action_min_wt} yields
\[
\uij[1]\uij[0](\dfwt[\is])=\uij[1](-\dfwt[i_1]+\cis\dfwt[k]) = -\dfwt[i_1]+\cis\,\uij[1](\dfwt[k]) \overset{(\ast)}{=\!=} -\cis\dfwt[i_2]+(\cis-1)\dfwt[i_1],
\]
which is \eqref{eq:uij_repeated_action} for $j=1$. Note that $\uij[1]$ only acted on the minuscule weight $\dfwt[k]$ in equality $(\ast)$, and hence each simple reflection $s_i$ in a reduced expression of $\uij[1]$ acts by subtracting $\sdr_i$ since $\uij[1]$ is the minimal coset representative; the scalar factor $\cis$ now gives the final statement for $j=1$.

For $j\in\{2,\ldots,\cis+1\}$, \eqref{eq:uij_repeated_action} and the final statement follow inductively:
\ali{
\uij\uij[j-1]\cdots\uij[0](\dfwt[\is]) &= \uij\bigl(-(\cis-j+2)\dfwt[i_{j}]+(\cis-j+1)\dfwt[i_{j-1}]\bigr) \\
&= -\dfwt[i_j]+(\cis-j+1)\,\uij(-\dfwt[i_j]+\dfwt[i_{j-1}]) \\
&=-\dfwt[i_j]+(\cis-j+1)\,\uij\uij[j-1]\cdots\uij[1](\dfwt[k]) \tag{\dag} \\
&=-\dfwt[i_j]+(\cis-j+1)\,(-\dfwt[i_{j+1}]+\dfwt[i_j])\\
&= -(\cis-j+1)\dfwt[i_{j+1}]+(\cis-j)\dfwt[i_j],
}
which is equation \eqref{eq:uij_repeated_action}. Again, note that $\uij$ only acted on an element in the Weyl orbit of the minuscule weight $\dfwt[k]$ in ($\dag$), and hence we obtain the final statement.
\end{proof}

\begin{cor}\label{cor:full_action_special_weyl_elts}
We have $\uij[\cis]\cdots\uij[0](\dfwt[\is])=-\dfwt[\DS(\is)]$ and $\uij[\cis+1]\cdots\uij[1](\dfwt[k])=-\dfwt[\DS(k)]$. In particular, we have $\wP=\uij[\cis+1]\cdots\uij[1]$.
\end{cor}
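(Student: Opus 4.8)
The corollary has two parts. The first claims $\uij[\cis]\cdots\uij[0](\dfwt[\is])=-\dfwt[\DS(\is)]$, and this follows immediately from equation \eqref{eq:uij_repeated_action} of Proposition \ref{prop:uij_repeated_action} by setting $j=\cis$: the right-hand side becomes $-(\cis-\cis+1)\dfwt[i_{\cis+1}]+(\cis-\cis)\dfwt[i_\cis] = -\dfwt[i_{\cis+1}]$, so I just need to identify $i_{\cis+1}$ with $\DS(\is)$. For this I would argue that since $\uij[\cis]\cdots\uij[0]$ sends the dominant weight $\dfwt[\is]$ to the anti-dominant weight $-\dfwt[i_{\cis+1}]$, and since $\wo$ is the unique element sending $\dfwt[\is]$ to $\wo(\dfwt[\is]) = -\dfwt[\DS(\is)]$ (up to $\weylj[\is]$ on the right, which fixes $\dfwt[\is]$), the product $\uij[\cis]\cdots\uij[0]$ must equal the minimal coset representative of $\wo$ modulo $\weylj[\is]$ and hence $i_{\cis+1}=\DS(\is)$.

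**Second part.** For $\uij[\cis+1]\cdots\uij[1](\dfwt[k])=-\dfwt[\DS(k)]$, I would apply equation \eqref{eq:uij_repeated_action_min_wt} — which Proposition \ref{prop:uij_repeated_action} established for $j\in\{1,\dots,\cis\}$ — and then handle the extra factor $\uij[\cis+1]$ separately using \eqref{eq:wic+1} of Lemma \ref{lem:special_weyl_elt_action}. Concretely, $\wij[\cis](\dfwt[k]) = -\dfwt[i_{\cis+1}]+\dfwt[i_\cis]$ by \eqref{eq:uij_repeated_action_min_wt} with $j=\cis$, and then
\[
\uij[\cis+1]\bigl(-\dfwt[i_{\cis+1}]+\dfwt[i_\cis]\bigr) = -\dfwt[i_\cis] + \uij[\cis+1](-\dfwt[i_{\cis+1}]+\dfwt[i_\cis]);
\]
using $\woj[i_{\cis+1}](\dfwt[i_\cis]) = -\dfwt[i_{\cis+2}]+\dfwt[i_{\cis+1}]$ from \eqref{eq:wic+1} together with $\uij[\cis+1]$ being the minimal coset representative realizing this action, one computes this equals $-\dfwt[i_{\cis+2}]$. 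Since $i_0 = k$ and $\DS(k)=\DSp(k)$ would need $i_{\cis+2} = \DS(k)$, I would again invoke uniqueness: $\uij[\cis+1]\cdots\uij[1]$ sends $\dfwt[k]$ to an anti-dominant weight and each factor acts nontrivially (minimal length), so the product is the minimal coset representative of some Weyl element sending $\dfwt[k]$ to $-\dfwt[i_{\cis+2}]$; but the \emph{only} element sending $\dfwt[k]$ to an anti-dominant weight modulo $\weylj[k]$ is $\wP$ (the minimal coset representative of $\wo$), forcing $i_{\cis+2} = \DS(k)$ and $\wP = \uij[\cis+1]\cdots\uij[1]$.

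**The last claim and the main obstacle.** The identity $\wP = \uij[\cis+1]\cdots\uij[1]$ then follows because both sides are the minimal coset representative of $\wo W_P$: the right-hand side is a product of minimal coset representatives $\uij[j]\in\weylj[i_{j-1}]$ each acting nontrivially, so it is reduced (its length equals the number of boxes, no cancellation occurs) and it sends $\dfwt[k]$ to the anti-dominant weight $-\dfwt[\DS(k)]=\wo(\dfwt[k])$, which characterizes $\wP$ uniquely. I expect the main obstacle to be the bookkeeping in the second-to-last display: verifying that $\uij[\cis+1]$ applied to $-\dfwt[i_{\cis+1}]+\dfwt[i_\cis]$ really produces $-\dfwt[i_{\cis+2}]$ rather than something with a leftover $\dfwt[i_{\cis+1}]$-component, which requires knowing precisely how $\uij[\cis+1]$ (as opposed to the full $\woj[i_{\cis+1}]$) acts — this is where I would need to carefully use that $\uij[\cis+1]$ is the \emph{minimal} coset representative in $\weylj[i_{\cis+1}]$ realizing the action \eqref{eq:wic+1}, so it acts trivially on $\dfwt[i_{\cis+1}]$ and therefore the $+\dfwt[i_{\cis+1}]$ produced by applying it to $\dfwt[i_\cis]$ exactly cancels against $\uij[\cis+1](-\dfwt[i_{\cis+1}]) = -\dfwt[i_{\cis+1}]$. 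An alternative, perhaps cleaner route that avoids this cancellation entirely is to run the same induction as in Proposition \ref{prop:uij_repeated_action} but tracking $\dfwt[k]$ rather than $\dfwt[\is]$ all the way through step $\cis+1$, directly using \eqref{eq:wic+1} for the final step.
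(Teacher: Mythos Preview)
Your approach matches the paper's: substitute $j=\cis$ into \eqref{eq:uij_repeated_action} and \eqref{eq:uij_repeated_action_min_wt}, apply \eqref{eq:wic+1} for the extra step, and conclude $\wP=\uij[\cis+1]\cdots\uij[1]$ by uniqueness of the minimal coset representative sending $\dfwt[k]$ to $\wo(\dfwt[k])$. The one genuine difference is your identification $i_{\cis+1}=\DS(\is)$ (and $i_{\cis+2}=\DS(k)$): the paper simply points to the explicit tables in the proof of Lemma~\ref{lem:special_weyl_elt_action}, whereas your anti-dominance argument---any $-\dfwt[j]$ is anti-dominant, and the Weyl orbit of $\dfwt[\is]$ contains a unique anti-dominant element $-\dfwt[\DS(\is)]$---is cleaner and type-independent.

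Two slips to correct. First, your displayed equation in the second part is garbled (as written, the two sides differ by $-\dfwt[i_\cis]$); the intended computation is the one you spell out afterward, namely $\uij[\cis+1](-\dfwt[i_{\cis+1}]+\dfwt[i_\cis]) = -\dfwt[i_{\cis+1}] + (-\dfwt[i_{\cis+2}]+\dfwt[i_{\cis+1}]) = -\dfwt[i_{\cis+2}]$, using that $\uij[\cis+1]$ fixes $\dfwt[i_{\cis+1}]$. Second, $\uij[j]$ lies in $\weylj[i_j]$, not in $\weylj[i_{j-1}]$ as you wrote; what matters for reducedness is that $\uij[j]$ fixes $\dfwt[i_j]$ and acts minimally on $\dfwt[i_{j-1}]$, so each simple reflection in it strictly lowers $\wij[j-1](\dfwt[k])$---which is exactly the argument in Proposition~\ref{prop:uij_repeated_action}, extended one step further.
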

\begin{proof}
For the first equation, we note that substituting $j=\cis$ into equation \eqref{eq:uij_repeated_action} gives $\uij[\cis]\cdots\uij[0](\dfwt[k]) = -\dfwt[i_{\cis+1}]$. The statement now follows from the fact that $i_{\cis+1}=\DS(\is)$, which one can verify directly from the proof of Lemma \ref{lem:special_weyl_elt_action}. The second equation follows directly from combining \eqref{eq:uij_repeated_action_min_wt} for $j=\cis$ with \eqref{eq:wic+1}. We conclude from this directly that $\wP=\uij[\cis+1]\cdots\uij[1]$, since both sides are minimal coset representatives that map $\dfwt[k]$ to $-\dfwt[\DS(k)]=\wo(\dfwt[k])$.
\end{proof}

Since each $\wij=\uij\dots\uij[1]$ is a minimal coset representative, we may define $\idealij\subideal\minposet$ to be the order ideal corresponding to $\wij$. Note in particular that $\idealij[j-1]\subideal\idealij$ for $j\in[\cis+1]$, where we define $\idealij[0]=\varnothing$. With this construction, we associate to each $\uij$ for $j\ge 1$ the subset $\su_{i_j}=\idealij\setminus \idealij[j-1]\subset \minposet$. From the previous proposition, we see that the order ideals $\idealij$ allow an easy combinatorial description:
\begin{cor}\label{cor:combinatorial_description_wij}
    The order ideal $\idealij\subideal\minposet$ for $j\in[\cis+1]$ is the maximal ideal obtained by adding all possible $\pb\in\minposet$ with $\labl(\pb)\neq s_{i_j}$ to $\idealij[j-1]$, where $\idealij[0]=\varnothing$.
\end{cor}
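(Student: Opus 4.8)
The plan is to prove the two inclusions $\idealij\subseteq\ideal^{\mathrm{m}}$ and $\ideal^{\mathrm{m}}\subseteq\idealij$, where $\ideal^{\mathrm{m}}$ denotes the maximal order ideal — well defined, being the union of all such — that is obtained from $\idealij[j-1]$ by adjoining only boxes whose label is different from $s_{i_j}$. The forward inclusion is essentially formal, while the reverse inclusion (i.e.~the \emph{maximality} of $\idealij$) carries the real content.

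For $\idealij\subseteq\ideal^{\mathrm{m}}$ it is enough to show that the skew shape $\su_{i_j}=\idealij\setminus\idealij[j-1]$ contains no box labeled $s_{i_j}$. By Proposition~\ref{prop:uij_repeated_action} and Corollary~\ref{cor:full_action_special_weyl_elts}, $\wij=\uij\,\wij[j-1]$ is a length-additive product of minimal coset representatives, so concatenating reduced words for $\uij$ and for $\wij[j-1]$ produces a reduced word for $\wij$; comparing with the description of order ideals by linear extensions (Remark~\ref{rem:order_ideals_to_min_coset_reps}), the multiset of labels of $\su_{i_j}$ equals the multiset of letters of the chosen reduced word for $\uij$. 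Since $\uij\in\weylj[i_j]$ by \eqref{eq:df_uij}, that word uses only simple reflections $s_l$ with $l\neq i_j$, so $\su_{i_j}$ has no box labeled $s_{i_j}$; hence $\idealij$ is one of the ideals over which $\ideal^{\mathrm{m}}$ is the maximum, and $\idealij\subseteq\ideal^{\mathrm{m}}$.

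For the reverse inclusion I will show that no box with label $\neq s_{i_j}$ can be adjoined to $\idealij$, so the greedy procedure cannot proceed past $\idealij$; together with the previous paragraph this forces $\ideal^{\mathrm{m}}=\idealij$. Under the identification of the lattice of order ideals of $\minposet$ with the weight poset of the minuscule representation $\PluckerRep$ (the distributive-lattice picture recalled in Section~\ref{ssec:Combinatorics_of_minuscule_posets}), the boxes adjoinable to the ideal corresponding to a weight $\mu=w(-\dfwt[k])$ are exactly the covers of $\mu$; minusculity makes these the weights $s_l\mu=\mu+\sdr_l$ with $\llan\mu,\sr_l\rran=-1$, the adjoined box then carrying label $s_l$. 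For $j\le\cis$, equation \eqref{eq:uij_repeated_action_min_wt} gives $\wij(-\dfwt[k])=\dfwt[i_{j+1}]-\dfwt[i_j]$, whence $\llan\wij(-\dfwt[k]),\sr_l\rran=\delta_{l,i_{j+1}}-\delta_{l,i_j}$, which equals $-1$ exactly when $l=i_j$ (using $i_{j+1}\neq i_j$, since $i_{j+1}$ is obtained from $i_{j-1}\neq i_j$ by the Dynkin involution of the diagram with node $i_j$ deleted). So the unique addable box of $\idealij$ is labeled $s_{i_j}$, and $\ideal^{\mathrm{m}}=\idealij$. The case $j=\cis+1$ is immediate: $\idealij[\cis+1]=\minposet$ by Corollary~\ref{cor:full_action_special_weyl_elts}, which admits no addable box at all, while its difference from $\idealij[\cis]$ avoids the label $s_{i_{\cis+1}}$ by the argument above.

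The only delicate point is the dictionary invoked in the reverse inclusion: one must apply the minuscule distributive-lattice correspondence in the direction ``covers of the weight $\leftrightarrow$ addable boxes'' and be careful with the pairing between weights of $\udG$ (spanned by the $\dfwt[i]$) and its simple coroots $\sr_l$, so that the covering reflections are precisely those with $\llan\mu,\sr_l\rran=-1$. Granting that, everything collapses to the one-line evaluation $\llan\dfwt[i_{j+1}]-\dfwt[i_j],\sr_l\rran=\delta_{l,i_{j+1}}-\delta_{l,i_j}$, so no type-by-type computation is needed beyond what Proposition~\ref{prop:uij_repeated_action} already supplies.
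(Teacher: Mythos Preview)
Your proof is correct and is precisely the natural elaboration of what the paper leaves implicit: the paper states this corollary as following directly from Proposition~\ref{prop:uij_repeated_action} without writing out a proof, and your argument unpacks exactly how---using the weight formula $\wij(-\dfwt[k])=\dfwt[i_{j+1}]-\dfwt[i_j]$ from \eqref{eq:uij_repeated_action_min_wt} together with the minuscule addable-box dictionary to see that the unique addable box has label $s_{i_j}$. One very minor point: the claim $i_{j+1}\neq i_j$ relies on $i_{j-1}\neq i_j$, which holds for all $j$ by an easy induction from $i_{-1}=\is\neq k=i_0$ and the fact that $\sigma_{i_{j-1}}$ permutes $[n]\setminus\{i_{j-1}\}$; you assert this but do not quite spell out the induction.
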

Recall that we have illustrated this description already in Section \ref{sec:intro}. We now have all the information we need to calculate $\phiGLS[\is]|_{\opendunim}$ as in equation \eqref{eq:phiGLS_intermediate_expression}. 
\begin{thm}\label{thm:toric_monomial_phi}
Let the minimal coset representative $\wij=\uij\dots\uij[1]$ be defined as above with corresponding order ideals $\ideal_{i_j}\subideal\minposet$. Then we have
\[
\phiGLS[\is]|_{\opendunim} = \prod_{j=1}^{\cis} a_{\idealij}
\]
where $a_{\idealij} = \prod_{\pb\in\idealij} a_\pb$, cf.~\eqref{eq:toric_term_from_poset}.
\end{thm}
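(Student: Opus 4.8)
The plan is to combine the general formula \eqref{eq:phiGLS_intermediate_expression} for $\phiGLS[\is]|_{\opendunim}$ with the detailed description of the Weyl orbit in $\dfwtrep[\is]$ provided by Proposition \ref{prop:uij_repeated_action} and Corollary \ref{cor:full_action_special_weyl_elts}. Recall that \eqref{eq:phiGLS_intermediate_expression} expresses $\phiGLS[\is]|_{\opendunim}(u_-) = (-1)^h (-a)_{\weight^{-1}(\N)}^\weight$, where the weight of a box $\pb\in\minposet$ is $\weight(\pb) = \llan \labelprod{\sI(\pb)}\,\wop\cdot\dfwt[\is],\sr_{\indx(\pb)}\rran$; equivalently, this records how many times the simple reflection labeling $\pb$ acts as a Chevalley generator $\dChf$ (rather than as a higher power) along the extremal path $\bwP$ takes from $\bwop\cdot\hwt[\is]$ down to $\lwt[\is]$. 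So the entire computation reduces to reading off, from the factorization $\wP = \uij[\cis+1]\cdots\uij[1]$ of Corollary \ref{cor:full_action_special_weyl_elts}, exactly which simple roots get subtracted and with what multiplicity at each stage.

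First I would use Corollary \ref{cor:full_action_special_weyl_elts} to write $\wop\cdot\dfwt[\is] = \uij[0]\cdot\dfwt[\is]$ (since $\uij[0]$ is by definition the minimal coset representative realizing $\woj[i_0]=\wop$ on $\dfwt[\is]=\dfwt[i_{-1}]$), and then apply $\wP = \uij[\cis+1]\cdots\uij[1]$ to it, tracking the action stage by stage via equation \eqref{eq:uij_repeated_action}: after applying $\uij\cdots\uij[1]$ we land on $-(\cis-j+1)\dfwt[i_{j+1}]+(\cis-j)\dfwt[i_j]$, and the crucial sentence in Proposition \ref{prop:uij_repeated_action} says each simple reflection $s_i$ appearing in $\uij$ acts by subtracting exactly $(\cis-j+1)\sdr_i$ at that stage. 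Thus the box $\pb$ of $\minposet$ sitting in the subset $\su_{i_j}=\idealij\setminus\idealij[j-1]$ receives weight $\weight(\pb)=\cis-j+1$. Meanwhile the stages $j=\cis+1$ (and beyond, if any — there are none for $j>\cis+1$ since the orbit has terminated at $-\dfwt[\DS(k)]$) contribute weight $\cis-(\cis+1)+1 = 0$, so those boxes do not appear. In other words, $\weight^{-1}(\N) = \su_{i_1}\sqcup\cdots\sqcup\su_{i_{\cis}} = \idealij[\cis]$, with $\weight\equiv \cis-j+1$ on $\su_{i_j}$.

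Next I would assemble the monomial. Since $\weight$ is constant equal to $\cis-j+1$ on each layer $\su_{i_j}$, and the layers partition $\idealij[\cis]$ as $\su_{i_j} = \idealij\setminus\idealij[j-1]$, a telescoping rearrangement gives
\[
(-a)_{\weight^{-1}(\N)}^\weight = \prod_{j=1}^{\cis} (-a)_{\su_{i_j}}^{\cis-j+1} = \prod_{j=1}^{\cis} \prod_{m=1}^{j} (-a)_{\su_{i_m}} = \prod_{j=1}^{\cis} (-a)_{\idealij},
\]
using $\idealij = \su_{i_1}\sqcup\cdots\sqcup\su_{i_j}$ in the last step. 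The final point to handle is the sign: this product carries a factor $(-1)^{\sum_{j=1}^{\cis}\ell(\idealij)}$, and $(-1)^h$ out front, where $h = \height(\wop\cdot\dfwt[\is] - \wP\wop\cdot\dfwt[\is])$ from \eqref{eq:gen_minor_minus_sign}. I would verify these cancel: the total number of $\dChf$-applications, counted with multiplicity, along the path is exactly $\sum_{j=1}^{\cis}\ell(\idealij)$ (each layer $\su_{i_j}$ contributing $\ell(\su_{i_j})$ reflections each applied $\cis-j+1$ times), and this equals $h$ since $h$ is precisely the height drop, i.e.\ the total number of simple roots subtracted counted with multiplicity. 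Hence $(-1)^h \cdot (-1)^{\sum \ell(\idealij)} = (+1)$ and we obtain $\phiGLS[\is]|_{\opendunim} = \prod_{j=1}^{\cis} a_{\idealij}$ as claimed; this is also where Remark \ref{rem:generalized_minors_sign} (the promised sign bookkeeping) gets discharged. The main obstacle is purely bookkeeping: correctly matching the layer-by-layer multiplicity $\cis-j+1$ coming from \eqref{eq:uij_repeated_action} to the weight function $\weight$ of \eqref{eq:phiGLS_intermediate_expression}, and confirming that the heap/order-ideal decomposition $\idealij[\cis] = \bigsqcup_j \su_{i_j}$ is compatible with the order in which $\wP$'s factors act — but this compatibility is exactly the content of the minimal-coset-representative statements already established in Proposition \ref{prop:uij_repeated_action}, so no new input is needed.
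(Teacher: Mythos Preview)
Your proposal is correct and follows essentially the same approach as the paper: both start from \eqref{eq:phiGLS_intermediate_expression}, use Corollary \ref{cor:full_action_special_weyl_elts} and Proposition \ref{prop:uij_repeated_action} to identify $\weight(\pb)=\cis-j+1$ on each layer $\su_{i_j}$, perform the same telescoping rearrangement $\prod_j (-a)_{\su_{i_j}}^{\cis-j+1}=\prod_j(-a)_{\idealij}$, and then cancel the sign via $h=\sum_j\ell(\idealij)$ (which is exactly the content of Remark \ref{rem:generalized_minors_sign}). One tiny slip: after $\uij[\cis]$ the orbit in $\dfwtrep[\is]$ terminates at $-\dfwt[\DS(\is)]$, not $-\dfwt[\DS(k)]$, but this does not affect your argument.
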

\begin{proof}
Recall from equation \eqref{eq:phiGLS_intermediate_expression} that the toric expression for $\phiGLS[\is]$ on $\opendunim$ is given by
\[
\phiGLS[\is]|_{\opendunim}(u_-)=(-1)^h(-a)_{\weight^{-1}(\N)}^{\weight},
\tag{$*$}\label{eq: phiGLS_intermediate_expression_repeated}
\]
where $\weight:\minposet\to\{0\}\cup\N$ is defined as
\[
\weight(\pb)=\llan \labelprod{\sI(\pb)}\,\wop\cdot\dfwt[\is],\sr_{\indx(\pb)}\rran,
\]
and where $\sI(\pb)$ is the order ideal generated by $\pb$, $\labelprod{\sI(\pb)}\in\cosets$ is the minimal coset representative corresponding to $\sI(\pb)$ (cf.~Remark \ref{rem:order_ideals_to_min_coset_reps}), and $\indx(\pb)$ is the index of the simple reflection corresponding to $\pb$ (i.e.~$\labl(\pb)=s_{\indx(\pb)}$). Note that $\weight(\pb)$ in fact equals the number of copies of the simple root $\sdr_{\indx(\pb)}$ the simple reflection $\labl(\pb)=s_{\indx(\pb)}$ corresponding to $\pb$ subtracts from $\labelprod{\sI(\pb)\setminus\pb}\,\wop\cdot\dfwt[\is]$.

We have shown in Corollary \ref{cor:full_action_special_weyl_elts} that $\wP=\uij[\cis+1]\cdots\uij[1]$, so that the extremal path following $\bwP$ from the weight vector of weight 
\[
\wop(\dfwt[\is])=\uij[0](\dfwt[\is])
\]
to the lowest weight vector follows the successive actions of the $\uij$ for $j\in[\cis]$. Moreover, in Proposition \ref{prop:uij_repeated_action} we showed that for each $j\in [\cis+1]$, each simple reflection $\labl(\pb)=s_{\indx(\pb)}$ corresponding to $\pb\in\su_{i_j}$ acts by subtracting $\cis-j+1$ copies of the simple root $\sdr_{\indx(\pb)}$ in the action of $\uij$ on the weight $\uij[j-1]\cdots\uij[0](\dfwt[\is])$. Hence, we find that 
\[
\weight(\pb)=\cis-j+1   \qfor   \pb\in\su_{i_j}=\ideal_{i_j}\setminus\ideal_{i_{j-1}},
\]
so in particular we have that $\weight^{-1}(\N)=\ideal_{i_{\cis}}$ and the toric monomial in equation \eqref{eq: phiGLS_intermediate_expression_repeated} can therefore be expanded as
\[
\phiGLS[\is]|_{\opendunim}(u_-)=(-1)^h\prod_{j=1}^{\cis}\prod_{\pb\in\su_{i_j}}(-a_{\pb})^{\cis-j+1}.
\]

Since we are in the special case where the weight on $\ideal_{i_{\cis}}$ itself is defined by membership in its subideals $\ideal_{i_j}$, we can rewrite this using ordinary order ideals as
\ali{
    \phiGLS[\is]|_{\opendunim}(u_-)
    &=(-1)^h\prod_{j=1}^{\cis}\left(\prod_{\pb\in \idealij\setminus\idealij[j-1]} (-a_{\pb})^{\cis-j+1}\right)
    = (-1)^h\prod_{j=1}^{\cis}\left( \prod_{\pb\in \idealij\setminus \idealij[j-1]} (-a_\pb)\right)^{\cis-j+1}\\
    &= (-1)^h\prod_{j=1}^{\cis} \prod_{m=1}^j \left(\prod_{\pb\in \idealij[m]\setminus \idealij[m-1]} (-a_\pb)\right)
    = (-1)^{h+\sum_{j=1}^{\cis} \ell(\idealij)}\prod_{j=1}^{\cis}a_{\idealij}.
}
Now, it turns out that the minus signs cancel, see Remark \ref{rem:generalized_minors_sign}, so we conclude
\[
\phiGLS[\is]|_{\opendunim} = \prod_{j=1}^{\cis}a_{\idealij},
\]
as claimed.
\end{proof}

\begin{rem}
    \label{rem:generalized_minors_sign}
    We recall that $\weight(\pb)$ is exactly the number of simple roots the simple reflection $\labl(\pb)$ removed from the weight $\labelprod{\sI(\pb)\setminus\pb}$ for \emph{all} $\pb\in\minposet$, including $\pb\in\su_{i_{\cis+1}}$ since these act trivially. We conclude that we must have removed exactly the height $h$ simple roots as defined in \eqref{eq:gen_minor_minus_sign} in total, since we have acted with all simple reflections in $\idealij[\cis+1]=\minposet$ (cf.~Corollary \ref{cor:full_action_special_weyl_elts}). Hence, we conclude that 
    \[
    h = \sum_{j=1}^{\cis} \ell(\idealij)
    \]
    This is exactly what we claimed in Section \ref{ssec:Coordinate_ring_of_dunimP} and Remark \ref{rem:generalized_minors_sign_prelims}.
\end{rem}

For notational convenience, we will define $p_{i_j}=p_{\wij}=p_{\idealij}$ to be the Pl\"ucker coordinate with weight $\wij(-\dfwt[k])=\Pi(\idealij)(-\dfwt[k])$ corresponding to the minimal coset representative $\wij$ or equivalently to the order ideal $\idealij$. The following Pl\"ucker monomials will play an important role in the next section where we compute the polynomials $\cD_{\is}$ corresponding to the $\phiGLS[\is]$.

\begin{cor}
    \label{cor:leading_plucker_for_phi_i}
    The toric monomial for $\phiGLS[\is]|_{\opendunim}$ found in Theorem \ref{thm:toric_monomial_phi} is contained in the toric expression for $(p_{i_1}p_{i_2}\dots p_{i_{\cis}})|_{\opendunim}$.  
\end{cor}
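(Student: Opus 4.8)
The plan is to compare two toric monomials directly, using the closed-form descriptions already established. On the one hand, Theorem \ref{thm:toric_monomial_phi} gives
\[
\phiGLS[\is]|_{\opendunim} = \prod_{j=1}^{\cis} a_{\idealij},
\]
where $a_{\idealij} = \prod_{\pb\in\idealij} a_\pb$ is the product of toric coordinates over \emph{all} boxes of the order ideal $\idealij\subideal\minposet$. On the other hand, equation \eqref{eq:toric_expression_for_Plucker_coord_in_posets} tells us that for each $j$,
\[
p_{i_j}|_{\opendunim} = p_{\idealij}|_{\opendunim} = \sum_{\iota:\idealij\hookrightarrow\minposet} a_{\iota(\idealij)},
\]
a sum of toric monomials indexed by labeled order embeddings of $\idealij$ into $\minposet$. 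The product $(p_{i_1}\cdots p_{i_{\cis}})|_{\opendunim}$ is therefore a sum, over $\cis$-tuples of such embeddings $(\iota_1,\dots,\iota_{\cis})$, of the monomials $\prod_{j=1}^{\cis} a_{\iota_j(\idealij)}$.

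First I would observe that for each $j$, the identity embedding $\mathrm{id}:\idealij\hookrightarrow\minposet$ is a valid labeled order embedding, since $\idealij$ is literally a sub-order-ideal of $\minposet$ and the inclusion preserves both the partial order and the labeling by construction. Taking $\iota_j = \mathrm{id}$ for every $j$ produces the term $\prod_{j=1}^{\cis} a_{\idealij}$ in the expansion of $(p_{i_1}\cdots p_{i_{\cis}})|_{\opendunim}$, which is exactly the toric monomial for $\phiGLS[\is]|_{\opendunim}$. Hence that monomial appears (with coefficient at least one) in the toric expression for $(p_{i_1}\cdots p_{i_{\cis}})|_{\opendunim}$, which is the claim.

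The only genuine point requiring care — and the main obstacle, though a mild one — is checking that the identity inclusion $\idealij\hookrightarrow\minposet$ really qualifies as an ``embedding'' in the precise sense used in equation \eqref{eq:toric_expression_for_Plucker_coord_in_posets}, namely a label-preserving, order-preserving map, and that the toric coordinate assigned to each box is consistent between the two formulas. This follows from the setup in Section \ref{ssec:min_posets_and_Pluckers}: the Pl\"ucker coordinate $p_{\idealij}$ is attached to the minimal coset representative $\wij$, and Proposition \ref{prop:uij_repeated_action} shows $\wij$ is indeed a minimal coset representative with $\idealij$ its associated order ideal, so $\idealij\subideal\minposet$ and the inclusion is a bona fide embedding contributing the monomial $a_{\idealij}$. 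One should also note that, since $\idealij$ need not equal the full support $\weight^{-1}(\N) = \idealij[\cis]$ for $j<\cis$, the statement is genuinely about containment of a single monomial rather than equality of the two expressions; indeed the product $(p_{i_1}\cdots p_{i_{\cis}})|_{\opendunim}$ typically has many more terms, and the cancellation analysis of Section \ref{sec:denominators} is precisely what is needed to recover $\phiGLS[\is]$ from $\cD_{\is}$. For the present corollary, exhibiting the one distinguished term via the identity embeddings suffices.
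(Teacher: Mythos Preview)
Your proof is correct and follows essentially the same approach as the paper: both argue that the identity embedding $\id_{\idealij}:\idealij\hookrightarrow\minposet$ is a valid embedding for each $j$, contributing the term $a_{\idealij}$ to $p_{i_j}|_{\opendunim}$, so that the product $\prod_{j=1}^{\cis} a_{\idealij}$ appears in $(p_{i_1}\cdots p_{i_{\cis}})|_{\opendunim}$.
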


\begin{proof}
    By definition, the toric expression of $p_{i_j}$ contains toric monomials corresponding to all possible embeddings in $\minposet$ of the order ideal $\idealij$ corresponding to $\wij$. In particular the identity embedding $\id_{\idealij}:\idealij\hookrightarrow\minposet$ is a valid embedding of $\idealij$ whose corresponding toric monomial is $a_{\idealij}$. Thus, the toric monomial in Theorem \ref{thm:toric_monomial_phi} is contained in the toric expression for $(p_{i_1}p_{i_2}\dots p_{i_{\cis}})|_{\opendunim}$.
\end{proof}

\section{Pl\"ucker coordinate denominators from the generalized minors} \label{sec:denominators}
As we found in Corollary \ref{cor:leading_plucker_for_phi_i}, the toric monomial of Proposition \ref{thm:toric_monomial_phi} is contained in the toric expression for the Pl\"ucker monomial $\p_{i_1}\p_{i_2}\cdots\p_{i_\cis}$, where we denote by $\p_{i_j}$ the Pl\"ucker coordinate corresponding to the order ideal $\idealij\subideal\minposet$ or equivalently to $\wij\in\cosets$ as defined Section~\ref{sec:toric-monomials}. However, this Pl\"ucker monomial in general also contains extraneous terms in its toric expression, and we will now show how to use the combinatorics of order ideals of $\minposet$ to cancel these out and obtain a Pl\"ucker coordinate expression, which we denote by $\cD_{\is}$, for $\phiGLS[\is]$ on $\cmX$. We start by proving the following simplifying result:
\begin{lem}
    \label{lem:monomial_plucker}
    The restriction $\p_{i_{\cis}}|_{\opendunim}$ is a monomial in the coordinates of $\opendunim$.
\end{lem}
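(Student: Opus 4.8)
The plan is to show that the order ideal $\idealij[\cis]$, which is the maximal order ideal of $\minposet$ containing no box labeled $s_{i_\cis}$, admits a \emph{unique} labeled order embedding into $\minposet$, namely the identity. Once this is established, the closed-form toric expression \eqref{eq:toric_expression_for_Plucker_coord_in_posets} for $\p_{\idealij[\cis]}$ immediately collapses to the single monomial $a_{\idealij[\cis]}$, which is precisely the claim.

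First I would recall from Corollary \ref{cor:combinatorial_description_wij} (with $j=\cis$) that $\idealij[\cis]\subideal\minposet$ is the maximal order ideal obtained by adding all boxes $\pb$ with $\labl(\pb)\neq s_{i_\cis}$, i.e.\ $\idealij[\cis]$ is exactly the set of $\pb\in\minposet$ such that the order ideal $\sI(\pb)$ generated by $\pb$ contains no box labeled $s_{i_\cis}$. Equivalently, $\minposet\setminus\idealij[\cis]$ is the (principal) filter generated by the unique minimal box labeled $s_{i_\cis}$; here I would use that in a minuscule poset the boxes with a given label form a chain (full commutativity, cf.\ \cite{Stembridge_Fully_commutative_elements_of_Coxeter_groups}), so there is indeed a unique such minimal box, call it $\pb_0$, and $\idealij[\cis]=\minposet\setminus\{\pb\in\minposet\mid \pb\ge\pb_0\}$.

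Next, I would argue that any labeled order embedding $\iota\colon\idealij[\cis]\hookrightarrow\minposet$ is the identity. The key point is that $\idealij[\cis]$ is an order ideal of $\minposet$ with the same minimal element as $\minposet$ (the unique bottom box), and $\iota$ must send it to an order ideal of $\minposet$ of the same cardinality; but order embeddings of a \emph{minuscule} poset into itself that preserve labels are rigid. Concretely, one can induct on a linear extension: $\iota$ must fix the bottom box (it is the only box with its label, or the only minimal box), and having fixed an order ideal $\ideal'\subideal\idealij[\cis]$, the next box $\pb$ in the linear extension is one of at most two boxes covering the current configuration; its label together with the full-commutativity structure of $\minposet$ forces $\iota(\pb)=\pb$. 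Since the only boxes of $\minposet$ not in $\idealij[\cis]$ all lie above $\pb_0$ and $\pb_0\notin\idealij[\cis]$, there is simply no room to move anything: any nontrivial placement would require a box of $\idealij[\cis]$ to be sent strictly above some other box of $\idealij[\cis]$ carrying its label, violating either order-preservation or injectivity. Hence $\iota=\id_{\idealij[\cis]}$ is the only embedding, and $\p_{\idealij[\cis]}|_{\opendunim}=a_{\idealij[\cis]}$.

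The main obstacle will be making the rigidity argument of the previous paragraph fully type-independent rather than appealing to a case check against the list of minuscule posets in Example \ref{ex:list_of_minposets}. A clean way to do this is to phrase it representation-theoretically: $\p_{\idealij[\cis]}$ restricted to $\opendunim$ being a monomial is equivalent, via \eqref{eq:toric_expression_for_Plucker_coord_in_posets} and Lemma \ref{lem: Spacek_LPLG}, to the statement that the weight $\wij[\cis](-\dfwt[k])$ has a unique decreasing path from $-\dfwt[k]$ in the Bruhat order of $\PluckerRep$, i.e.\ that $\wij[\cis]$ is the \emph{unique} minimal coset representative of its coset and that this weight space is one-dimensional with a unique saturated chain below it. Since $\idealij[\cis]$ is a principal order ideal complement (the filter above $\pb_0$ removed), its Hasse diagram below any box is forced, and I would extract exactly this uniqueness from the combinatorics already set up in Section \ref{sec:toric-monomials}. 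If a completely uniform argument proves elusive, the fallback is the short finite verification over the seven families in Example \ref{ex:list_of_minposets}, using the explicit $i_\cis$ values recorded in the tables of Lemma \ref{lem:special_weyl_elt_action}.
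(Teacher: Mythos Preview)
Your overall strategy—showing that $\idealij[\cis]$ admits a unique labeled order embedding into $\minposet$—is correct and is also what the paper does. However, your key combinatorial identification of $\idealij[\cis]$ is wrong, and this breaks the argument.

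You claim, invoking Corollary~\ref{cor:combinatorial_description_wij} with $j=\cis$, that $\idealij[\cis]$ is the maximal order ideal of $\minposet$ containing no box labeled $s_{i_\cis}$. But that corollary says $\idealij[j]$ is obtained from $\idealij[j-1]$ (not from $\varnothing$) by adding boxes with label $\neq s_{i_j}$. Only for $j=1$ is the starting point empty. For $\cis\ge 2$, the ideal $\idealij[\cis-1]$ can and typically does contain boxes labeled $s_{i_\cis}$. For a concrete counterexample, take $\X=\LG(4,8)$ with $\is=2$ from the introduction: there $\cis=2$, $i_2=4$, and $\idealij[2]=\Yboxdim{3pt}\yng(1,2,2,2)$ contains \emph{two} boxes labeled $s_4=s_{i_\cis}$. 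So your description of $\idealij[\cis]$, and hence of its complement as the filter above ``the minimal $s_{i_\cis}$-box,'' is false, and the rigidity argument built on it has no foundation.

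The paper's fix is to look one step further in the index sequence. By Proposition~\ref{prop:uij_repeated_action} the weight of $\idealij[\cis]$ is $\dfwt[i_{\cis+1}]-\dfwt[i_{\cis}]$, which pairs to $+1$ only with $\sr_{i_{\cis+1}}$; hence $\idealij[\cis]$ has a \emph{unique} maximal box, and its label is $s_{i_{\cis+1}}$. On the other hand, applying Corollary~\ref{cor:combinatorial_description_wij} with $j=\cis+1$ together with $\idealij[\cis+1]=\minposet$ (Corollary~\ref{cor:full_action_special_weyl_elts}) shows that $\minposet\setminus\idealij[\cis]$ contains \emph{no} box labeled $s_{i_{\cis+1}}$. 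Any labeled embedding must therefore send the unique maximal box into $\idealij[\cis]$ itself; since that box generates $\idealij[\cis]$ as an order ideal, cardinality forces the embedding to be the identity. This is a two-line type-independent argument, so your worry about needing a case check over Example~\ref{ex:list_of_minposets} disappears once the correct label $s_{i_{\cis+1}}$ is used.
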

\begin{proof}
   As discussed in Section \ref{ssec:min_posets_and_Pluckers}, we may compute the restriction of $\p_{i_{\cis}}$ to $\opendunim$ by finding all possible embeddings of the order ideal $\ideal_{i_{\cis}}$ corresponding to $\wij[\cis]$ into $\minposet$. By Proposition \ref{prop:uij_repeated_action}, the weight corresponding to $\idealij[\cis]$, $\Pi(\idealij[\cis])(-\dfwt[k])=\wij[\cis](-\dfwt[k])= \dfwt[i_{\cis+1}] - \dfwt[i_{\cis}]$, is join-irreducible (considered in the Bruhat order) since it covers exactly one element (corresponding to the action of $s_{i_{\cis+1}}$). Hence $\ideal_{\cis}$ has a unique maximal element $\pb$ and this element has label $\labl(\pb)=s_{i_{\cis+1}}$. Therefore, any embedding $\emb_{\cis}:\ideal_{\cis}\rightarrow \minposet$ must have $\emb(\pb)$ as its maximal element. However, by construction $\minposet\setminus\idealij[\cis]$ has no element with label $s_{i_{\cis+1}}$. Thus, the only embedding of $\ideal_{\cis}$ is the identity embedding, corresponding to the monomial $a_{\ideal_{i_1}}$. 
\end{proof}
As an immediate consequence, we now obtain the following corollary.
\begin{cor}
\label{cor: cis=1}
    If $\cis=1$, then $\phiGLS[\is]|_{\opendunim}=\p_{i_\cis}|_{\opendunim}$.
\end{cor}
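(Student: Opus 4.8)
The plan is to combine Theorem~\ref{thm:toric_monomial_phi} with Lemma~\ref{lem:monomial_plucker} and then invoke the coordinate-ring description of Proposition~\ref{prop:coord_ring_dunimP_in_posets} together with the density of $\opendunim$ in $\dunimP$ to upgrade an equality of monomials on the torus into an equality of regular functions. Concretely: when $\cis=1$, the index sequence \eqref{eq:index_seq} has only the single relevant index $i_1$, so the product in Theorem~\ref{thm:toric_monomial_phi} collapses to $\phiGLS[\is]|_{\opendunim} = a_{\idealij[1]}$. On the other hand, $i_{\cis}=i_1$ when $\cis=1$, and Lemma~\ref{lem:monomial_plucker} tells us that $\p_{i_{\cis}}|_{\opendunim}=\p_{i_1}|_{\opendunim}$ is also just the single monomial $a_{\idealij[1]}$ (the identity embedding being the only embedding of $\idealij[1]$ into $\minposet$). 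Hence the two restrictions agree on $\opendunim$, which is exactly the claim.

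The one point that deserves a sentence of care is matching up the two monomials: Theorem~\ref{thm:toric_monomial_phi} gives $\phiGLS[\is]|_{\opendunim}=\prod_{j=1}^{\cis}a_{\idealij}$, and I want to observe that for $\cis=1$ this is literally $a_{\idealij[1]}$, with $\idealij[1]$ the maximal order ideal obtained by adding all boxes with label $\neq s_{i_1}$ to $\varnothing$ (Corollary~\ref{cor:combinatorial_description_wij}); meanwhile Lemma~\ref{lem:monomial_plucker} identifies $\p_{i_1}|_{\opendunim}$ with $a_{\idealij[1]}$ because $\idealij[1]=\idealij[\cis]$ has a unique maximal element labeled $s_{i_{\cis+1}}$ and $\minposet\setminus\idealij[1]$ contains no box with that label, forcing the identity embedding. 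So both sides equal the same toric monomial $a_{\idealij[1]}$, giving the equality on $\opendunim$. There is essentially no obstacle here — this is a direct corollary, and the only thing to be slightly careful about is that the sign $(-1)^{h}$ has already been absorbed (Remark~\ref{rem:generalized_minors_sign}), so no stray signs appear.

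If one wishes, one can immediately note that this equality extends from $\opendunim$ to all of $\cmX$: since $\opendunim\cong\dP_k\backslash\opendunim$ is dense in $\cmX$ (as used in Remark~\ref{rem:equality_of_functions_on_dunimP}, and in the spirit of Proposition~\ref{prop:coord_ring_dunimP_in_posets}) and both $\phiGLS[\is]$ and $\p_{i_{\cis}}$ are regular, agreement on a dense subset forces global agreement. This last step is not needed for the statement as phrased (which is only about $\opendunim$), but it is the reason the corollary is stated — it is the $\cis=1$ base case of the general computation of $\cD_{\is}$ carried out in the rest of Section~\ref{sec:denominators}. I do not anticipate any genuinely hard step; the content is entirely in Theorem~\ref{thm:toric_monomial_phi} and Lemma~\ref{lem:monomial_plucker}, both already established.
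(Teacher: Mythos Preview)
Your proposal is correct and takes essentially the same approach as the paper: both arguments reduce to observing that $\phiGLS[\is]|_{\opendunim}$ and $\p_{i_1}|_{\opendunim}$ are each the single toric monomial $a_{\idealij[1]}$, using Theorem~\ref{thm:toric_monomial_phi} (or its consequence Corollary~\ref{cor:leading_plucker_for_phi_i}) for the former and Lemma~\ref{lem:monomial_plucker} for the latter. The only cosmetic difference is that the paper phrases it as ``the monomial $\phiGLS[\is]|_{\opendunim}$ is contained in $\p_{i_1}|_{\opendunim}$, which is itself a monomial,'' whereas you explicitly identify both with $a_{\idealij[1]}$; your added remarks on signs and on extension to $\cmX$ are correct but not needed for the statement as written.
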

\begin{proof}
    By Corollary \ref{cor:leading_plucker_for_phi_i}, the toric expression for $\p_{i_1}|_{\opendunim}$ contains the desired toric monomial $\phiGLS[\is]|_{\opendunim}$, and the expression contains exactly one toric monomial by Lemma \ref{lem:monomial_plucker}.
\end{proof}

On the other hand, we will see that for $\cis>1$, the $\cD_{\is}$ are never monomials. Instead, we will construct a signed sum of Pl\"ucker monomials that cancels all extraneous toric terms. To organize the cancellation, we develop a classification of the toric terms in the restriction of a Pl\"ucker monomial. For now, we only consider the case $\cis=2$; the remaining cases $\cis\in\{3,4\}$ only occur in four cases---$\cis=3$ for $\is=4$ in $\LGE_6$ and $\is=3,5$ in $\LGE_7$ and $\cis=4$ for $\is=4$ in $E_7$. Rather than handling these four cases in the generality of this section, we will instead apply our construction directly in Appendix \ref{app:exceptional}.

\begin{df}
For any pair $\ideal_1,\ideal_2\subideal\minposet$ of order ideals, we say that a filter (upward closed set) $\sB\subset\ideal_1$ can be moved to $\ideal_2$ if there exists a subposet $\sB'\subset\minposet\setminus\ideal_2$ isomorphic to $\sB$ by a label-preserving isomorphism such that $\ideal_1\sqcup\sB'$ is an order ideal. We then define
\[
\movable{\ideal_1}{\ideal_2} = \{\sB\subset\ideal_1~|~\text{$\sB$ is a minimal (w.r.t.~inclusion) movable filter of $\ideal_1$}\}.
\]
For each $\sB\in\movable{\ideal_1}{\ideal_2}$ which can be moved to $\ideal_2$, we will use $\sB'$ to denote the corresponding label-isomorphic subset of $\minposet\setminus\ideal_2$. 
\end{df}
Note that the elements of $\movable{\ideal_1}{\ideal_2}$ need not have the same size; minimality only ensures that no strict subsets of an element of $\movable{\ideal_1}{\ideal_2}$ are also in $\movable{\ideal_1}{\ideal_2}$ themselves.

\begin{ex}\label{ex:D5P5_movable}
For example, consider $\LG(5,10)=\LGD_5^{\SC}/\P_5$ so that 
\begin{equation}\Yboxdim{7pt}
\wP=(s_5)(s_3s_4s_5)(s_3s_4s_5)(s_2s_3s_4s_5)(s_1s_2s_3s_4s_5)
\qand
\minposet=\raisebox{-14pt}{\scriptsize\young(5,45,345,2345,12345)},
\label{eq:D5P5_wP_and_minposet}
\end{equation}
then the elements 
\[\Yboxdim{7pt}
\left(\ideal_1=\raisebox{-7.5pt}{\scriptsize\young(5,45,3,2)},\ideal_2=\raisebox{-14pt}{\scriptsize\young(5,45,34,2,1)}\right) 
~~\text{have}~~
M_{\ideal_1,\ideal_2}=\left\{\raisebox{0pt}{\scriptsize\young(5)},\raisebox{-3.5pt}{\scriptsize\young(3,2)}\right\}
~~\text{yielding}~~
\left(\raisebox{-7.5pt}{\scriptsize\young(5,4,3,2)},\raisebox{-14pt}{\scriptsize\young(5,45,345,2,1)}\right) 
~~\text{resp.}~~
\left(\raisebox{6pt}{\scriptsize\young(5,45)},\raisebox{-14pt}{\scriptsize\young(5,45,34,23,12)}\right),
\]
giving an alternative description of $\movable{\ideal_1}{\ideal_2}$ as the set of all minimal filters $\sB$ (i.e.~subsets containing all elements larger than the elements contained) of $\ideal_1$ that can be added to $\ideal_2$ resulting in a new order ideal. 
\end{ex}

In order to study the combinatorics of these moves, we define a poset $\sP_{\is}$ as follows. 
\begin{df}
    The elements of $\sP_\is$ will be the set of all pairs of order ideals $(\ideal_1,\ideal_2)$ that can be obtained from the pair $(\idealij[1],\idealij[2])$ by any sequence of moves going from $\idealij[1]$ to $\idealij[2]$. More precisely, $\sP_{\is}$ consists of those pairs $(\ideal_1,\ideal_2)$ such that there exists a sequence of $d\ge 0$ moves
    \[
    \sB_1\in\movable{\idealij[1]}{\idealij[2]}, \quad 
    \sB_2\in\movable{\idealij[1]\setminus \sB_1}{\wij[2]\sqcup\sB_1'}, \quad 
    \sB_3\in\movable{\idealij[1]\setminus (\sB_1\sqcup \sB_2)}{\idealij[2]\sqcup(\sB_1'\sqcup\sB_2')},\quad \dots
    \] 
    with $\ideal_1=\idealij[1]\setminus(\sB_1\sqcup\dots\sqcup \sB_d)$ and $\ideal_2 =  \idealij[2]\sqcup(\sB_1'\sqcup\dots\sqcup\sB_d')$, where each $\sB_j'$ is the corresponding label-isomorphic subset of $\minposet\setminus (\idealij[2]\sqcup (\sB_1'\sqcup\dots\sqcup\sB_{j-1}'))$ to the move $\sB_j$. We define a partial order on $\sP_\is$ by $(\ideal_1,\ideal_2)>(\ideal_3,\ideal_4)$ if $(\ideal_3,\ideal_4)$ can be obtained from $(\ideal_1,\ideal_2)$ by a sequence of moves starting from $(\ideal_1,\ideal_2)$. In particular, $(\idealij[1],\idealij[2])$ will be the unique top element of $\sP_\is$.
\end{df}

For the rest of this section we will only want to consider moves between elements of $\sP_{\is}$, so we restrict to these \emph{admissible} moves as follows.

\begin{df}
    For any $(\ideal_1,\ideal_2)\in\sP_{\is}$, we call $\sB\in\movable{\ideal_1}{\ideal_2}$ is \emph{admissible} if $(\ideal_1\setminus\sB,\ideal_2\sqcup\sB')\in\sP_{\is}$ and we denote the subset of admissible moves by $A_{\ideal_1,\ideal_2}\subset \movable{\ideal_1}{\ideal_2}$. Analogously, we say that $\sB\in\movable{\ideal_2}{\ideal_1}$ is \emph{admissible} if $(\ideal_1\sqcup\sB',\ideal_2\setminus \sB)\in\sP_{\is}$ and we again the subset of admissible moves by $A_{\ideal_2,\ideal_1}\subset\movable{\ideal_2}{\ideal_1}$.
\end{df} 
 
Note by definition that for any $(\ideal_1,\ideal_2)\in\sP_{\is}$, we have $A_{\ideal_1,\ideal_2}=\movable{\ideal_1}{\ideal_2}$ since $\sP_{\is}$ is constructed using every $\sB\in \movable{\ideal_1}{\ideal_2}$. Moreover, by Remark \ref{rem:order_ideals_to_min_coset_reps}, we can associate to every cover relation in $\sP_{\is}$ a Weyl group element, namely $\labelprod{\sB}\in\weyl$ where $\sB$ is the movable poset. However, note that $\movable{\ideal_2}{\ideal_1}$ may contain $A_{\ideal_2,\ideal_1}$ as a proper subset for $(\ideal_1,\ideal_2)\in\sP_{\is}$. Furthermore, $\sB\in A_{\ideal_1,\ideal_2}$ if and only if $\sB'\in A_{\ideal_2\sqcup\sB',\ideal_1\setminus\sB}$, so by construction of $\sP_{\is}$ we have $A_{\ideal_2,\ideal_1}\neq\varnothing$ for any $(\ideal_1,\ideal_2)\in\sP_{\is}$ unequal to the top element $(\idealij[1],\idealij[2])$. In addition, we have a further simplification when studying only the admissible moves.

\begin{rem}
    \label{rem:poset_is_rep}
    Let $L^\vee$ denote the Levi subgroup of $\dG$ corresponding to the component of the Dynkin diagram of $\dG$ which contains $i_2$ after deleting the nodes $i_1$ and $i_3$. Then we note that the poset $\sP_{\is}$ can be identified with the Bruhat order on the weight space of the fundamental representation of $L^\vee$ corresponding to the node $i_2$. In particular, Corollary \ref{cor:i2_minuscule} implies that this is a minuscule representation so each cover relation $(\ideal_1,\ideal_2)\gtrdot(\ideal_3,\ideal_4)$ of $\sP_{\is}$ corresponds to an admissible move in $A_{\ideal_1,\ideal_2}$ of size $1$. Thus for the remainder of this section we will assume that all admissible moves have size $1$.
\end{rem}

Using this simplification, we can more easily study the possible moves of $\sP_{\is}$. First, we saw in Example \ref{ex:D5P5_movable} that the Weyl group elements $s_5$ and $s_2s_3$ corresponding to the two moves of $M_{\ideal_1,\ideal_2}$ commute with each other. In fact, this commutation holds more generally for any $(\ideal_1,\ideal_2)\in \sP_{\is}$ as we now show.

\begin{lem}
    \label{lem:moves_commute}
    For any $(\ideal_1,\ideal_2)\in\sP_{\is}$, the labels of all $\pb\in A_{\ideal_1,\ideal_2}$ commute with each other and similarly the labels of all elements of $A_{\ideal_2,\ideal_1}$ commute. 
\end{lem}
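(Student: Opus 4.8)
The claim is that within a single element $(\ideal_1,\ideal_2)$ of $\sP_\is$, the labels $\labl(\pb)$ for $\pb \in A_{\ideal_1,\ideal_2}$ pairwise commute, and likewise for $A_{\ideal_2,\ideal_1}$. By Remark~\ref{rem:poset_is_rep} we are in the situation $\cis = 2$ and all admissible moves have size $1$; moreover $\sP_\is$ is identified with (the Bruhat order on) the weight poset of a minuscule representation of a Levi $L^\vee$. The plan is to exploit this minuscule-poset structure directly. First I would translate the statement into the language of the minuscule poset $\minposetL$ of $L^\vee$ attached to the node $i_2$: an element $(\ideal_1,\ideal_2) \in \sP_\is$ corresponds to an order ideal $\sJ \subideal \minposetL$, the admissible moves in $A_{\ideal_1,\ideal_2}$ (moving a box from $\ideal_1$ to $\ideal_2$) correspond to the \emph{maximal} elements of $\sJ$ (removing a box, i.e.\ covering relations going down in Bruhat order), and the moves in $A_{\ideal_2,\ideal_1}$ correspond to the minimal elements of $\minposetL \setminus \sJ$ that can be added (covering relations going up). This identification is exactly the content of Remark~\ref{rem:poset_is_rep}, so I may assume it.

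Next I would observe that the label a given admissible move carries is precisely the label $\labl(\pb)$ of the corresponding box $\pb$ in $\minposetL$ — this needs a brief check that the Weyl element $\labelprod{\sB}$ attached to a size-one move $\sB$ is the single simple reflection $\labl(\pb)$, which is immediate from Remark~\ref{rem:order_ideals_to_min_coset_reps} since $|\sB|=1$. So the statement reduces to: the labels of the maximal elements of any order ideal $\sJ \subideal \minposetL$ pairwise commute (and dually for the minimal elements of the complement). This is a standard and purely combinatorial fact about minuscule posets / heaps: two distinct maximal elements $\pb \ne \pb'$ of $\sJ$ are incomparable, and in a minuscule poset two incomparable elements with \emph{non-commuting} labels (i.e.\ $(s_{\indx(\pb)} s_{\indx(\pb')})^2 \ne e$) cannot both be maximal — indeed in the heap of a fully commutative element, any two incomparable boxes whose labels fail to commute are forbidden, or more precisely any two boxes with the same label are comparable and any two with adjacent labels in the Dynkin diagram are comparable. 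I would cite Stembridge's fully-commutative heap structure (\cite{Stembridge_Fully_commutative_elements_of_Coxeter_groups}), already invoked in Section~\ref{ssec:Combinatorics_of_minuscule_posets}: since $\wP$ (and hence every $\labelprod{\sJ}$) is fully commutative, in its heap any two elements with equal label or with non-commuting adjacent labels are comparable, so a pair of incomparable elements automatically has commuting labels. Applying this to the pairwise-incomparable set of maximal elements of $\sJ$ gives the first half; applying it to the pairwise-incomparable set of minimal elements of $\minposetL \setminus \sJ$ (viewed inside the heap of $\labelprod{\minposetL}$, equivalently using that $\minposetL \setminus \sJ$ is a filter and the same heap axioms apply to filters) gives the second half.

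The only genuine subtlety — and the step I expect to be the main obstacle — is making the dual statement about $A_{\ideal_2,\ideal_1}$ fully rigorous: unlike the maximal elements of an ideal, the addable minimal elements of the complement are not literally ``boxes of a heap we have already named,'' so I would either (i) phrase it symmetrically via the complement poset $\minposetL^{\mathrm{op}}$, which is again a minuscule poset (minuscule posets are self-dual up to the Dynkin involution, cf.\ Example~\ref{ex:list_of_minposets}), and quote the same heap fact, or (ii) argue directly: if $\pb, \pb'$ are distinct minimal elements of $\minposetL \setminus \sJ$ with non-commuting labels, then $\sJ \sqcup \{\pb\}$ and $\sJ \sqcup \{\pb'\}$ are both order ideals, but $\pb, \pb'$ being incomparable with adjacent non-commuting labels would force one to lie below the other in $\minposetL$, contradicting that both are minimal in the complement. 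Option (ii) is cleaner and avoids invoking self-duality; I would write it that way. I expect the whole argument to be short — two or three sentences once the reduction to the minuscule poset and the heap axioms are in place — with essentially all the work being the reduction itself, which Remark~\ref{rem:poset_is_rep} and Corollary~\ref{cor:i2_minuscule} have already done.
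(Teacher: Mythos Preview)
Your underlying combinatorial idea is the same as the paper's --- two distinct admissible moves are incomparable maximal elements of an order ideal in a minuscule poset, and such elements necessarily carry commuting labels --- but you take a detour that the paper avoids. The paper never passes to the auxiliary poset $\minposetL$ of $L^\vee$ at all: it argues directly in $\ideal_1\subideal\minposet$. Since a size-one move $\pb\in A_{\ideal_1,\ideal_2}$ is by definition a box removable from $\ideal_1$, any two distinct such $\pb_1,\pb_2$ are maximal (hence incomparable) in $\ideal_1$. One then picks a linear extension of $\ideal_1\setminus\{\pb_1,\pb_2\}$ and places $\pb_1,\pb_2$ on top in either order; the two resulting reduced expressions for $\labelprod{\ideal_1}$ differ only by swapping the leading reflections $\labl(\pb_1),\labl(\pb_2)$, forcing these to commute. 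That is the whole proof, and the case of $A_{\ideal_2,\ideal_1}$ is identical with $\ideal_2$ in place of $\ideal_1$.

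Your route through Remark~\ref{rem:poset_is_rep} is not wrong, but it inserts a step you do not actually justify: you assert that the label $\labl(\pb)$ of the moved box in $\minposet$ coincides with the label of the corresponding box in $\minposetL$. Remark~\ref{rem:poset_is_rep} only identifies $\sP_\is$ with the weight poset of the $L^\vee$-representation as an abstract poset; it says nothing about matching the simple-reflection labels on cover relations, and your ``brief check'' (that a size-one $\sB$ has $\labelprod{\sB}=\labl(\pb)$) addresses only what the label \emph{is} in $\minposet$, not why it agrees with the label in $\minposetL$. This matching is true, but establishing it would require Lemma~\ref{lem: move-symmetry} (showing move labels avoid $s_{i_1},s_{i_3}$), which comes later. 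The paper's direct argument in $\minposet$ sidesteps this entirely and needs none of the $L^\vee$ machinery; your option (ii) for the dual statement is in fact closer in spirit to what the paper does throughout.
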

\begin{proof}
If $\pb_1,\pb_2\in A_{\ideal_1,\ideal_2}$ are two distinct elements, then they must be incomparable and both must be maximal elements of $\ideal_1$. To see this, note that if they were comparable, then at least one of them would not be maximal. If either was not maximal, then removing it from $\ideal_1$ would not yield a valid order ideal. 

Thus we can choose linear extensions of the partial order on $\ideal_1$ where the final two elements are either $\pb_1>\pb_2$ or $\pb_2>\pb_1$; clearly, since such total orders correspond to reduced expressions of $\labelprod{\ideal_1}$ differing only in reversing the order of the final reflections $\labl(\pb_1)$ and $\labl(\pb_2)$, this is only possible if $\labl(\pb_1)$ and $\labl(\pb_2)$ commute. The situation for $A_{\ideal_2,\ideal_1}$ is identical.
\end{proof}

Second, we may also study the actions of moves between elements of $\sP_{\is}$ in terms of their corresponding weights. Hence, we will in the following denote for a given order ideal $\ideal_i\subideal\minposet$ the corresponding weight by
\begin{equation}
\wt_i = \labelprod{\ideal_i}\cdot(-\dfwt[k]),    
\label{eq:df_wt_corresponding_to_order_ideal}
\end{equation}
and we will similarly write $\wtij=\labelprod{\idealij}\cdot(-\dfwt[k])$. We first establish two auxiliary statements which we will then use to characterize the possible moves in $\sP_{\is}$. 

\begin{lem} 
    \label{lem: move_action}
    Let $(\ideal_1,\ideal_2)\in\sP_{\is}$, then for any $\pb\in A_{\ideal_1,\ideal_2}$ we have
    \[
    \llan\wt_1+\wt_2,\sr_{\indx(\pb)}\rran=0.
    \]
\end{lem}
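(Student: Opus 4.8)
The plan is to translate the statement about toric coordinates into a statement about weights in a minuscule representation, and then to verify the vanishing of the pairing $\llan\wt_1+\wt_2,\sr_{\indx(\pb)}\rran$ directly from the structure of a single admissible box-move. Recall that $(\ideal_1,\ideal_2)\in\sP_{\is}$ arises from $(\idealij[1],\idealij[2])$ by a sequence of moves, and that by Remark \ref{rem:poset_is_rep} we may assume $\pb\in A_{\ideal_1,\ideal_2}$ has size $1$, so $\pb$ is a single box which is maximal in $\ideal_1$ and can be adjoined (as a label-isomorphic box $\pb'$) to $\ideal_2$. Write $i=\indx(\pb)$, so $\labl(\pb)=s_i$. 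The effect of removing $\pb$ from $\ideal_1$ is to replace $\wt_1$ by $\wt_1+\sdr_i$ at the level of weights (adding a box labelled $s_i$ corresponds to left-multiplication by $s_i$ and lowers the weight by $\sdr_i$, cf.~Section \ref{ssec:Combinatorics_of_minuscule_posets} and Remark \ref{rem:order_ideals_to_min_coset_reps}); since $\pb$ is maximal, $s_i\cdot\wt_1 = \wt_1+\sdr_i$, which forces $\llan\wt_1,\sdr_i\rran = -1$. Dually, adjoining $\pb'$ to $\ideal_2$ replaces $\wt_2$ by $\wt_2-\sdr_i$, and since $\ideal_2\sqcup\pb'$ is a valid order ideal, $s_i$ lowers $\wt_2$, i.e.~$s_i\cdot\wt_2 = \wt_2 - \sdr_i$, which forces $\llan\wt_2,\sdr_i\rran = +1$. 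Adding these gives $\llan\wt_1+\wt_2,\sr_i\rran = 0$ as claimed.

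First I would set up the weight bookkeeping carefully: fix the identification from Proctor's theorem between $\cosets$ with Bruhat order and the weights of $\PluckerRep$, so that $\ideal\subideal\minposet$ corresponds to the weight $\labelprod{\ideal}\cdot(-\dfwt[k])$ as in \eqref{eq:df_wt_corresponding_to_order_ideal}, and recall that in a minuscule representation every weight $\wt$ has $\llan\wt,\rt\rran\in\{-1,0,+1\}$ for every root $\rt$. Then I would record the two elementary facts about adding a box: (i) if $\pb$ with $\labl(\pb)=s_i$ is a maximal element of an order ideal $\ideal$ with corresponding weight $\wt$, then $\wt - \sdr_i$ is the weight of $\ideal\setminus\pb$ and hence $\llan\wt,\sdr_i\rran = +1$ (the box being present means the weight sits strictly below the weight one gets by the $s_i$-string); (ii) if $\pb'$ with $\labl(\pb')=s_i$ can be added to an order ideal $\ideal$ with weight $\wt$ to give a valid order ideal, then $\wt-\sdr_i$ is the weight of $\ideal\sqcup\pb'$ and $\llan\wt,\sdr_i\rran = +1$ as well. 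Applying (i) to $\ideal_1$ with its maximal box $\pb$ gives $\llan\wt_1,\sdr_i\rran = +1$, and applying (ii) to $\ideal_2$ with the addable box $\pb'$ (which exists because $\pb$ is admissible) gives $\llan\wt_2,\sdr_i\rran = +1$. But wait — these have the same sign, so I need to be careful about the orientation of the weights.

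The subtlety, and the step I expect to be the real obstacle, is getting the signs right: the statement requires $\llan\wt_1+\wt_2,\sr_i\rran = 0$, so I need $\llan\wt_1,\sdr_i\rran$ and $\llan\wt_2,\sdr_i\rran$ to be $-1$ and $+1$ (in some order), not both $+1$. The resolution comes from the fact that $\wt_1$ and $\wt_2$ play asymmetric roles in a move ``from $\ideal_1$ to $\ideal_2$'': removing $\pb$ from $\ideal_1$ is a \emph{downward} step in the order ideal lattice — it deletes a box — so if I want the weight of $\ideal_1\setminus\pb$ I should think of $\ideal_1$ as the \emph{larger} ideal, whose weight is obtained from that of $\ideal_1\setminus\pb$ by left-multiplication by $s_i$; this gives $s_i\cdot\wt_{\ideal_1\setminus\pb} = \wt_{\ideal_1\setminus\pb} - \sdr_i = \wt_1$, hence $\llan\wt_1,\sdr_i\rran = -\llan\wt_{\ideal_1\setminus\pb},\sdr_i\rran$ and with $\llan s_i\cdot\wt, \sdr_i\rran = -\llan \wt,\sdr_i\rran$ in general one extracts $\llan\wt_1,\sdr_i\rran = -1$. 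On the $\ideal_2$ side, \emph{adding} $\pb'$ is an upward step, $\ideal_2\sqcup\pb'$ is the larger ideal, its weight is $s_i\cdot\wt_2 = \wt_2 - \sdr_i$, giving $\llan\wt_2,\sdr_i\rran = +1$. I would phrase this cleanly by invoking Lemma \ref{lem: Spacek_LPLG}(ii) (or the direct description in Section \ref{ssec:Combinatorics_of_minuscule_posets}) to identify exactly which of the three values $\llan\wt_j,\sdr_i\rran$ takes from the combinatorial data (box present and maximal vs.~box absent and addable), being meticulous that a maximal box of $\ideal_1$ means $\wt_1$ is at the \emph{bottom} of its $\sdr_i$-string inside the poset-ideal picture while an addable box for $\ideal_2$ means $\wt_2$ is at the \emph{top}. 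Once that orientation is nailed down the computation is immediate, and I would also remark that this lemma is exactly the infinitesimal/additive shadow of the multiplicativity used later: it says the two Pl\"ucker monomials paired by a move sit in ``complementary'' $s_i$-positions, which is what makes the signed sum in \eqref{eq:intro_phiGLS} telescope.
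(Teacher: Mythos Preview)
Your approach is the same as the paper's: use minusculity to deduce that $\llan\wt_j,\sr_{\indx(\pb)}\rran\in\{-1,0,+1\}$, identify the value for each $\wt_j$ from whether a box labeled $s_{\indx(\pb)}$ is removable or addable, and add. The paper's proof is exactly these two sentences.

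However, your sign assignments are backwards. In the paper's convention, the empty ideal corresponds to the \emph{lowest} weight vector $\lwtv$ of weight $-\dfwt[k]$, and adding boxes \emph{raises} the weight (cf.\ Section~\ref{ssec:min_hom_spaces_and_plucker_coords}). Thus a box with label $s_i$ being maximal (removable) in $\ideal_1$ means $s_i\cdot\wt_1=\wt_1-\sdr_i$ is lower, so $\llan\wt_1,\sr_i\rran=+1$; a box with label $s_i$ being addable to $\ideal_2$ means $s_i\cdot\wt_2=\wt_2+\sdr_i$ is higher, so $\llan\wt_2,\sr_i\rran=-1$. You have these reversed throughout (you assert ``adding a box \ldots lowers the weight by $\sdr_i$''). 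The sum is still $0$, so the lemma goes through, but the individual values matter in later arguments (e.g.\ Lemma~\ref{lem: derivation_plucker_coordinate}, which characterizes nontrivial action of $\deis$ by $\llan\wt,\sr_{i_1}\rran=-1$), so it is worth getting this straight. Also note the pairing is with $\sr_i$ (a simple root of $\G$, hence a simple \emph{coroot} of $\dG$), not $\sdr_i$; you mix the two.
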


\begin{proof}
    The action of a simple reflection $s_m$ on the weight $\wt_i$ is given by $s_m(\wt_i) = \wt_i - \llan\wt_i,\sr_m\rran\sdr_m$, and by minusculity we have $\llan\wt_i,\sr_m\rran \in \{-1,0,+1\}$. The three possibilities $\{-1,0,+1\}$ correspond respectively to the situations: (1) there is some $\pb$ with label $s_m$ such that $\ideal_i\sqcup\pb\subideal\minposet$ is an order ideal, (2) no element with label $s_m$ can be removed from or added to $\ideal_i$, and (3) there is some $\pb$ with label $s_m$ such that $\ideal_i\setminus\pb\subideal\minposet$ is an order ideal. 

    Now let $\pb\in A_{\ideal_1,\ideal_2}$ which is moved to $\pb'\in\minposet\setminus\ideal_2$. Then by above we compute that 
    \[
    \llan \wt_1 + \wt_2,\sr_{\indx(\pb)}\rran = \llan \wt_1,\sr_{\indx(\pb)}\rran + \llan \wt_2, \sr_{\indx(\pb)}\rran = 1 - 1 = 0,
    \]
    as claimed.
\end{proof}

\begin{lem}
    \label{lem: move_equality}
    Let $(\ideal_1,\ideal_2)\in\sP_{\is}$, then for any $m\in [n]$ we have
    \[\llan\wt_1+\wt_2,\sr_m\rran = \llan \wt_{i_1}+\wt_{i_2},\sr_m\rran = \llan \dfwt[i_3]-\dfwt[i_1],\sr_m\rran.\]
\end{lem}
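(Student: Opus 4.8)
The plan is to prove \textbf{Lemma \ref{lem: move_equality}} by establishing that the quantity $\wt_1 + \wt_2$ is constant as a weight as $(\ideal_1,\ideal_2)$ ranges over $\sP_{\is}$, and then computing this constant at the top element $(\idealij[1],\idealij[2])$. First I would observe that the pairing $\llan\cdot,\sr_m\rran$ is linear, so it suffices to show the weight identity $\wt_1 + \wt_2 = \wtij[1] + \wtij[2]$ and then evaluate. For the evaluation at the top element, recall from Proposition \ref{prop:uij_repeated_action} (equation \eqref{eq:uij_repeated_action_min_wt}) that $\wij[1](\dfwt[k]) = -\dfwt[i_2] + \dfwt[i_1]$ and $\wij[2](\dfwt[k]) = -\dfwt[i_3] + \dfwt[i_2]$. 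Since $\wtij[j] = \wij[j](-\dfwt[k])$, we get $\wtij[1] = \dfwt[i_2] - \dfwt[i_1]$ and $\wtij[2] = \dfwt[i_3] - \dfwt[i_2]$, so $\wtij[1] + \wtij[2] = \dfwt[i_3] - \dfwt[i_1]$, which gives the second asserted equality directly.

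The core of the argument is the invariance of $\wt_1 + \wt_2$ under a single admissible move. Suppose $(\ideal_1,\ideal_2) \gtrdot (\ideal_3,\ideal_4)$ is a cover relation, so by Remark \ref{rem:poset_is_rep} this corresponds to moving a single box $\pb \in A_{\ideal_1,\ideal_2}$ from $\ideal_1$ to a label-isomorphic box $\pb' \in \minposet\setminus\ideal_2$, i.e.\ $\ideal_3 = \ideal_1\setminus\{\pb\}$ and $\ideal_4 = \ideal_2\sqcup\{\pb'\}$. Since $\pb$ is a maximal element of $\ideal_1$ with label $s_{\indx(\pb)}$, removing it corresponds to left-multiplication of $\labelprod{\ideal_1}$ by $s_{\indx(\pb)}$, so the new weight is $\wt_3 = \labelprod{\ideal_3}\cdot(-\dfwt[k]) = s_{\indx(\pb)}(\wt_1) = \wt_1 - \llan\wt_1,\sr_{\indx(\pb)}\rran\sdr_{\indx(\pb)} = \wt_1 - \sdr_{\indx(\pb)}$, using that $\llan\wt_1,\sr_{\indx(\pb)}\rran = +1$ as in the proof of Lemma \ref{lem: move_action} (since $\pb$ can be removed from $\ideal_1$, hence $\pb'$ with the same label can be added to $\minposet\setminus\ideal_1$; more precisely the ``removable'' case gives the value $+1$ by the minusculity trichotomy recorded there, and one must be slightly careful with the sign conventions relating removing a box to the value $\pm 1$). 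Symmetrically, adding $\pb'$ to $\ideal_2$ gives $\wt_4 = \wt_2 + \sdr_{\indx(\pb')} = \wt_2 + \sdr_{\indx(\pb)}$, since $\pb$ and $\pb'$ carry the same label. Adding these, the $\sdr_{\indx(\pb)}$ terms cancel: $\wt_3 + \wt_4 = \wt_1 + \wt_2$. Because $\sP_{\is}$ is connected (every element is obtained from the top element by a chain of moves, by construction), invariance along each cover relation propagates to give $\wt_1 + \wt_2 = \wtij[1] + \wtij[2]$ for every $(\ideal_1,\ideal_2)\in\sP_{\is}$, and pairing with $\sr_m$ finishes both equalities.

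The main obstacle I anticipate is purely bookkeeping with sign conventions: pinning down that moving a box out of $\ideal_1$ shifts $\wt_1$ by exactly $-\sdr_{\indx(\pb)}$ (not $+\sdr_{\indx(\pb)}$) and that moving it into $\ideal_2$ shifts $\wt_2$ by exactly $+\sdr_{\indx(\pb')}$, consistently with the convention $\wt_i = \labelprod{\ideal_i}\cdot(-\dfwt[k])$ in \eqref{eq:df_wt_corresponding_to_order_ideal} and the relation in Section \ref{ssec:Combinatorics_of_minuscule_posets} that adding a box labeled $s_i$ corresponds to left-multiplication by $s_i$. The cleanest way to sidestep this is to note that \emph{whatever} the sign is, it is the same for $\pb\in\ideal_1$ being removed and for $\pb'\notin\ideal_2$ being added, because both boxes have the same label and in both cases the relevant weight has pairing $\pm 1$ with $\sr_{\indx(\pb)}$ of opposite sign to each other (one is ``addable'', the other ``removable'' — exactly the content of the trichotomy in the proof of Lemma \ref{lem: move_action}, which already establishes $\llan\wt_1+\wt_2,\sr_{\indx(\pb)}\rran=0$). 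So the two shifts are negatives of each other and cancel, regardless of the overall sign convention; this makes the invariance argument robust and the only genuinely needed input is Lemma \ref{lem: move_action} together with the identification in Remark \ref{rem:poset_is_rep} that all admissible moves have size $1$.
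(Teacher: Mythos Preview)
Your proposal is correct and follows essentially the same approach as the paper: reduce to a single cover relation, observe that the simple-root shifts in $\wt_1$ and $\wt_2$ under a move cancel (which is exactly the content of Lemma \ref{lem: move_action}), and then evaluate at the top element via Proposition \ref{prop:uij_repeated_action}. The only cosmetic difference is that you phrase the invariance at the level of the weight sum $\wt_1+\wt_2$ itself rather than pairing-by-pairing, and you read off the explicit $\pm 1$ values from the minusculity trichotomy where the paper leaves them symbolic and invokes Lemma \ref{lem: move_action} directly; these are equivalent.
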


\begin{proof}
    Since $\sP_{\is}$ is connected by moves, we may reduce the statement to establishing that
    \[
    \llan\wt_1+\wt_2,\sr_m\rran = \llan\wt_3+\wt_4,\sr_m\rran
    \]
    for $(\ideal_3,\ideal_4)\in\sP_{\is}$ which can be obtained from $(\ideal_1,\ideal_2)$ by a move $\pb\in A_{\ideal_1,\ideal_2}$. The first equality $\llan\wt_1+\wt_2,\sr_l\rran = \llan \wt_{i_1}+\wt_{i_2},\sr_l\rran$ will then follow by a straightforward induction on the number of moves between $(\idealij[1],\idealij[2])$ and $(\ideal_1,\ideal_2)$. We now compute
    \begin{align*}
        \llan\wt_3+\wt_4,\sr_m\rran &= \llan \wt_1 - \llan\wt_1,\sr_{\indx(\pb)}\rran\sdr_{\indx(\pb)} + \wt_2 - \llan\wt_2,\sr_{\indx(\pb)}\rran\sdr_{\indx(\pb)}, \sr_m\rran\\
        &= \llan \wt_1 + \wt_2, \sr_m\rran - \llan \wt_1 + \wt_2, \sr_{\indx(\pb)}\rran\llan\sdr_{\indx(\pb)},\sr_m\rran
        = \llan\wt_1 + \wt_2, \sr_m\rran,
    \end{align*}
    where the last equality follows from Lemma \ref{lem: move_action}. Proposition \ref{prop:uij_repeated_action} now immediately implies the remainder of the lemma.
\end{proof}

Using these lemmas, we give a characterization of moves in $\sP_{\is}$. 

\begin{lem}
\label{lem: move-symmetry}
Let $(\ideal_1,\ideal_2)\in\sP_\is$, then $\llan \wt_1, \sr_m\rran = -\llan\wt_2,\sr_m\rran$ for any $m\in [n]\setminus \{i_1,i_3\}$.
\end{lem}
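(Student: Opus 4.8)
The plan is to leverage Lemma~\ref{lem: move_equality} together with the explicit description of the weights $\wt_{i_1}$ and $\wt_{i_2}$ from Proposition~\ref{prop:uij_repeated_action}. First I would recall that by Lemma~\ref{lem: move_equality} we have $\llan \wt_1 + \wt_2, \sr_m\rran = \llan \dfwt[i_3] - \dfwt[i_1], \sr_m\rran$ for every $m\in[n]$. The right-hand side involves only the fundamental coweights $\dfwt[i_1]$ and $\dfwt[i_3]$, which by definition satisfy $\llan \dfwt[i_1], \sr_m\rran = \delta_{i_1, m}$ and $\llan \dfwt[i_3], \sr_m\rran = \delta_{i_3, m}$. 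Hence for any $m \notin \{i_1, i_3\}$ this pairing vanishes, giving immediately $\llan \wt_1, \sr_m\rran = -\llan \wt_2, \sr_m\rran$. This is in fact the whole argument: the statement is a direct corollary of Lemma~\ref{lem: move_equality} once one observes that $\dfwt[i_1]$ and $\dfwt[i_3]$ pair trivially with all simple roots except $\sr_{i_1}$ and $\sr_{i_3}$.

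The one subtlety worth addressing carefully is the degenerate case $\cis = 2$ where $i_1 = i_3$ might fail to be distinct, or more precisely where the index set $\{i_1, i_3\}$ could collapse; I would check against the table in Corollary~\ref{cor:i2_minuscule} that in all the relevant cases $i_1$ and $i_3$ are the two ``boundary'' nodes flanking $i_2$, and that the identity $\wt_{i_1} + \wt_{i_2} = \dfwt[i_3] - \dfwt[i_1]$ coming from Proposition~\ref{prop:uij_repeated_action} (specifically equations \eqref{eq:uij_repeated_action_min_wt}) holds as stated. In the cases where $i_1 = i_3$ (which can happen, e.g.\ in type $\LGC_n$ where $i_1 = i_3 = \is$), the set $[n]\setminus\{i_1, i_3\}$ is simply $[n]\setminus\{i_1\}$, and the argument goes through identically: $\dfwt[i_3] - \dfwt[i_1] = 0$ in that case, so actually $\llan \wt_1 + \wt_2, \sr_m\rran = 0$ for \emph{all} $m \neq i_1$, which is consistent with and slightly stronger than the claim.

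I expect no real obstacle here — the main thing to get right is simply citing the correct earlier result (Lemma~\ref{lem: move_equality}) and noting the defining property of fundamental coweights. The proof should be two or three lines: invoke Lemma~\ref{lem: move_equality}, expand $\llan \dfwt[i_3] - \dfwt[i_1], \sr_m\rran$ using $\llan \dfwt[j], \sr_m\rran = \delta_{jm}$, observe it vanishes for $m \notin \{i_1, i_3\}$, and conclude by rearranging $\llan \wt_1 + \wt_2, \sr_m\rran = 0$. If anything, I would want to make sure the statement of Lemma~\ref{lem: move_equality} is invoked with the correct indices, since the excerpt writes both $\sr_l$ and $\sr_m$ in different places — this looks like a minor typo in the source, and I would standardize on $\sr_m$ throughout.
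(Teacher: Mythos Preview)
Your proposal is correct and follows exactly the same route as the paper: invoke Lemma~\ref{lem: move_equality} to get $\llan\wt_1+\wt_2,\sr_m\rran = \llan \dfwt[i_3]-\dfwt[i_1],\sr_m\rran$, then observe that the right-hand side vanishes for $m\notin\{i_1,i_3\}$. Your extra discussion of the $i_1=i_3$ case and the $\sr_l$/$\sr_m$ typo is accurate but not needed for the proof itself.
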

\begin{proof}
    By Lemma \ref{lem: move_equality}, we have that
    \[
    \llan\wt_1+\wt_2,\sr_m\rran = \llan \dfwt[i_3]-\dfwt[i_1],\sr_m\rran.
    \]
    Since $m\notin\{i_1,i_3\}$, the right hand side is zero, and we immediately obtain the claim.
\end{proof}

For any $(\ideal_1,\ideal_2)\in \sP_{\is}$, we interpret the previous lemma as follows. For any maximal element $\pb\in\ideal_1$ which has label distinct from $s_{i_1},s_{i_3}$, if $\minposet \setminus \ideal_2\neq\varnothing$, then $\pb\in A_{\ideal_1,\ideal_2}$. Similarly, for any maximal element $\pb$ of $\ideal_2$ which has label distinct from $s_{i_1},s_{i_3}$, then $\pb\in A_{\ideal_2,\ideal_1}$. The last statement follows from the fact that the unique maximal element of $\idealij[2]$ has label $s_{i_3}$ and that every other element of $\ideal_2\setminus \idealij[2]$ was moved from $\idealij[1]$. We now use moves between order ideals to distinguish embeddings of order ideals.

\begin{df}
    Let $(\ideal_1,\ideal_2)\in\sP_{\is}$. For $\pb\in A_{\ideal_2,\ideal_1}$ which is moved to $\pb'\in\minposet\setminus\ideal_1$, we define 
    \[
        \first[\pb]{\ideal_1}{\ideal_2} = 
        \Bigl\{(\emb_1,\emb_2) 
        ~\Big|~
        \emb_i:\ideal_i\hookrightarrow\minposet,~
        \text{$\emb_1'|_{\ideal_1}=\emb_1$ and $\emb_1'(\pb')=\emb_2(\pb)$ gives valid $\emb_1':\ideal_1\sqcup\pb'\hookrightarrow\minposet$}\Bigr\}.
    \]
    Analogously for $\pb\in A_{\ideal_1,\ideal_2}$ which is moved to $\pb'\in\minposet\setminus\ideal_2$, we define
    \[
        \second[\pb]{\ideal_1}{\ideal_2} = 
        \Bigl\{(\emb_1,\emb_2) 
        ~\Big|~ \emb_i:\ideal_i\hookrightarrow\minposet,~
        \text{$\emb_2'|_{\ideal_2}=\emb_2$ and $\emb_2'(\pb')=\emb_1(\pb)$ gives valid $\emb_2':\ideal_2\sqcup\pb'\hookrightarrow\minposet$}\Bigr\}.
    \]
\end{df}

In the context of embeddings of order ideals, or equivalently terms of the toric expression of a Pl\"ucker monomial $p_{\ideal_1}p_{\ideal_2}$, the set $\first[\pb]{\ideal_1}{\ideal_2}$ can be thought of as those toric pairs $(\emb_1, \emb_2)$ which allow the removal of an element $\emb_2(\pb)$ from the image of $\emb_2:\ideal_2\rightarrow\minposet$ and simultaneously the addition of the same element $\emb_2(\pb)$ to the image of $\emb_1:\ideal_1\rightarrow\minposet$. The set $\second[\pb]{\ideal_1}{\ideal_2}$ can be thought of similarly, but with the movement in the other direction from $\emb_1$ to $\emb_2$. We will use $\first[\pb]{\ideal_1}{\ideal_2}$ and $\second[\pb]{\ideal_1}{\ideal_2}$ to refine the information of $A_{\ideal_1,\ideal_2}$ and $A_{\ideal_2,\ideal_1}$ in the sense that moves which may be allowed in general for order ideals $\ideal_1$ and $\ideal_2$ may not be allowed for particular embeddings $\emb_1:\ideal_1\rightarrow\minposet$ and $\emb_2:\ideal_2\rightarrow\minposet$. 
\begin{ex}[Continuing from Example \ref{ex:D5P5_movable}]
  Consider again $\LG(5,10)=\LGD_5^\SC/\P_5$ with $\wP$ and $\minposet$ given in \eqref{eq:D5P5_wP_and_minposet}. Then the order ideals
  \[\Yboxdim{7pt}
    \left(\ideal_1=
    \raisebox{-14pt}{\scriptsize\young(5,45,345,23,12)},
    \ideal_2=
    \raisebox{-14pt}{\scriptsize\young(5,45,345,234,123)}\right)\quad\text{have}\quad M_{\ideal_1,\ideal_2}=\left\{\raisebox{0pt}{\scriptsize\young(5)}\right\}\quad\text{giving}\quad
    \left(\ideal_1\setminus\raisebox{0pt}{\scriptsize\young(5)}=
    \raisebox{-14pt}{\scriptsize\young(5,45,34,23,12)},
    \ideal_2\sqcup\raisebox{0pt}{\scriptsize\young(5)}=
    \raisebox{-14pt}{\scriptsize\young(5,45,345,2345,123)}\right).
  \]
To determine the set $\second[\pb]{\ideal_1}{\ideal_2}$, where $\pb$ denotes the maximal element of $\ideal_1$ with label $s_5$, we need to consider the embeddings of the order ideals $\ideal_1,\ideal_2\subideal\minposet$. To distinguish order ideals from their images under embeddings, we will draw the latter on top of a light gray copy of $\minposet$. The order ideal $\ideal_1\subideal\minposet$ allows three embeddings $\emb_i$ with images given respectively by
    \[\Yboxdim{7pt}
    \emb_i:\quad
    \ideal_1=
    \raisebox{-14pt}{\scriptsize\young(5,45,345,23,12)}
    \quad\hookrightarrow\quad
    \raisebox{-14pt}{\color{white!60!gray} \scriptsize\young(5,45,345,2345,12345)}
    \hspace{-33.75pt}
    \raisebox{-14pt}{\scriptsize\young(5,45,345,23,12)}
    \hspace{14pt}
    ,
    \quad
    \raisebox{-14pt}{\color{white!60!gray} \scriptsize\young(5,45,345,2345,12345)}
    \hspace{-33.75pt}
    \raisebox{-14pt}{\scriptsize\young(5,45,34,23,12)}
    \hspace{6.4pt}
    \raisebox{-7.3pt}{\scriptsize\young(5)}
    \hspace{7pt},
    \quad
    \raisebox{-14pt}{\color{white!60!gray} \scriptsize\young(5,45,345,2345,12345)}
    \hspace{-33.75pt}
    \raisebox{-14pt}{\scriptsize\young(5,45,34,23,12)}
    \hspace{13pt}
    \raisebox{-14pt}{\scriptsize\young(5)}
    \quad\subset\quad
    \raisebox{-14pt}{\scriptsize\young(5,45,345,2345,12345)}=\minposet.
    \]
    On the other hand, the ideal $\ideal_2\subideal\minposet$ consists of the three leftmost columns of $\minposet$ and only has a single embedding, namely
    \[\Yboxdim{7pt}
    \id_{\ideal_2}:\quad
    \ideal_2=
    \raisebox{-14pt}{\scriptsize\young(5,45,345,234,123)}
    \quad\hookrightarrow\quad
    \raisebox{-14pt}{\color{white!60!gray} \scriptsize\young(5,45,345,2345,12345)}
    \hspace{-33.75pt}
    \raisebox{-14pt}{\scriptsize\young(5,45,345,234,123)}
    \hspace{14pt}
    \quad\subset\quad
    \raisebox{-14pt}{\scriptsize\young(5,45,345,2345,12345)}
    =\minposet.
    \]
    Clearly we have $\emb_1(\pb)\cap\id_{\ideal_2}(\ideal_2)\neq\varnothing$, so the map from $\ideal_2\sqcup\pb'$ to $\minposet$ sending $\ideal_2$ to $\id_{\ideal_2}(\ideal_2)\subset\minposet$ and $\pb'$ to $\emb_1(\pb)$ is \emph{not} an embedding. On the other hand, $\emb_2$ and $\emb_3$ both map $\pb$ to the complement of $\id_{\ideal_2}(\ideal_2)$, so we do get valid embeddings of $\ideal_2\sqcup\pb'$ by extending $\id_{\ideal_2}$ to $\pb'$ by $\emb_2(\pb)$ or $\emb_3(\pb)$. Hence, we find that
    \[
    \second[\pb]{\ideal_1}{\ideal_2} = \bigl\{(\emb_2,\id_{\ideal_2}),~(\emb_3,\id_{\ideal_2})\bigr\}
    \]
    
    Now, consider the resulting pair of order ideals $\ideal_3=\ideal_1\setminus\pb$ and $\ideal_4=\ideal_2\sqcup\pb'$. These order ideals of course allow the element $\pb'$ with label $s_5$ to move back from $\ideal_4$ to $\ideal_3$, and we indeed find $\movable{\ideal_4}{\ideal_3}=\Yboxdim{7pt}\{\raisebox{0pt}{\scriptsize\young(5)}\}$. The ideal $\ideal_3$ now consists of the leftmost two columns of $\minposet$, and hence has a single embedding, namely
    \[\Yboxdim{7pt}
    \id_{\ideal_3}:\quad
    \ideal_3=
    \raisebox{-14pt}{\scriptsize\young(5,45,34,23,12)}
    \quad\hookrightarrow\quad
    \raisebox{-14pt}{\color{white!60!gray} \scriptsize\young(5,45,345,2345,12345)}
    \hspace{-33.75pt}
    \raisebox{-14pt}{\scriptsize\young(5,45,34,23,12)}
    \hspace{21pt}
    \quad\subset\quad
    \raisebox{-14pt}{ \scriptsize\young(5,45,345,2345,12345)}
    =\minposet
    \]
    On the other hand, $\ideal_4\subideal\minposet$ now allows two embeddings $\emb_4$ and $\emb_5$ with images given respectively by
    \[\Yboxdim{7pt}
    \emb_i:\quad
    \ideal_4=
    \raisebox{-14pt}{\scriptsize\young(5,45,345,2345,123)}
    \quad\hookrightarrow\quad
    \raisebox{-14pt}{\color{white!60!gray} \scriptsize\young(5,45,345,2345,12345)}
    \hspace{-33.75pt}
    \raisebox{-14pt}{\scriptsize\young(5,45,345,2345,123)}
    \hspace{14pt}
    ,
    \quad
    \raisebox{-14pt}{\color{white!60!gray} \scriptsize\young(5,45,345,2345,12345)}
    \hspace{-33.75pt}
    \raisebox{-14pt}{\scriptsize\young(5,45,345,234,123)}
    \hspace{6.4pt}
    \raisebox{-14pt}{\scriptsize\young(5)}
    \quad\subset\quad
    \raisebox{-14pt}{\scriptsize\young(5,45,345,2345,12345)}=\minposet.
    \]
    Note that $\emb_4(\pb')$ and $\emb_5(\pb')$ both lie in the complement of $\id_{\ideal_3}(\ideal_3)$, so the identity embedding on $\ideal_3$ can be extended to $\ideal_3\sqcup\pb$ by defining the image of $\pb$ to be either of these elements. Hence, we find
    \[
    \first[s_5]{\ideal_3}{\ideal_4} = \bigl\{(\id_{\ideal_3},\emb_4),~(\id_{\ideal_3},\emb_5)\bigr\}.
    \]
    Observe that the sets of embeddings $\second[\pb]{\ideal_1}{\ideal_2}$ and $\first[\pb']{\ideal_3}{\ideal_4}$ are closely related: if we consider the associated toric terms in the sense of \eqref{eq:toric_term_from_poset}, we find that the pairs of embeddings in $\second[\pb]{\ideal_1}{\ideal_2}$ and $\first[\pb']{\ideal_3}{\ideal_4}$ satisfy
    \[
    a_{\emb_2(\ideal_1)}a_{\ideal_2} = a_{\ideal_3}a_{\emb_4(\ideal_4)} 
    \qand
    a_{\emb_3(\ideal_1)}a_{\ideal_2} = a_{\ideal_3}a_{\emb_5(\ideal_4)} 
    \]
    since $\emb_2(\ideal_1)\cup\ideal_2=\ideal_3\cup\emb_4(\ideal_4)$ as well as $\emb_2(\ideal_1)\cap\ideal_2=\ideal_3\cap\emb_4(\ideal_4)$ and analogously for the other toric expression. Hence, the corresponding terms in the toric expression for the Pl\"ucker coordinate expression $\p_{\ideal_1}\p_{\ideal_2}-\p_{\ideal_3}\p_{\ideal_4}$ will cancel by \eqref{eq:toric_expression_for_Plucker_coord_in_posets}.
\end{ex}

In the discussion at the end of the above example we showed that there will be a natural correspondence of toric terms in a Pl\"ucker coordinate expression obtained by moving order ideal elements, and we will address the general situation in Lemma \ref{lem: toric-bijections}. Before we handle the general situation, we have to be more precise with comparing the various toric terms, leading us to the following refinment of the partitioning of the toric terms for any $(\ideal_1,\ideal_2)\in\sP_{\is}$.
\begin{df}
    Let $(\ideal_1,\ideal_2)\in \sP_{\is}$, then for any $A_1\subset A_{\ideal_2,\ideal_1}$ and $A_2\subset A_{\ideal_1,\ideal_2}$ we define the set of pairs of embeddings
    \[
    T^{A_1, A_2}_{\ideal_1,\ideal_2}=
    \left(\bigcap_{\pb\in A_1} \first[\pb]{\ideal_1}{\ideal_2}\right)
    \cap 
    \left(\bigcap_{\pb\in A_2} \second[\pb]{\ideal_1}{\ideal_2}\right)
    \cap 
    \left(\bigcap_{\pb\in A_{\ideal_2,\ideal_1}\setminus A_1} (\first[\pb]{\ideal_1}{\ideal_2})^c\right)
    \cap 
    \left(\bigcap_{\pb\in A_{\ideal_1,\ideal_2}\setminus A_2}(\second[\pb]{\ideal_1}{\ideal_2})^c\right),
    \]
    where the complements in the definition above are taken with respect to the overall set of pairs of embeddings $\{(\emb_1:\ideal_1\hookrightarrow\minposet, \emb_2:\ideal_2\hookrightarrow\minposet)\}$. If a set $A_i=\{\pb\}$ consists of a single element, we will frequently abuse notation and simply write $\pb$ instead of $\{\pb\}$.
\end{df}

Informally speaking, membership of a pair of embeddings $(\emb_1,\emb_2)$ in $T^{A_1,A_2}_{\ideal_1,\ideal_2}$ indicates that the images of $\emb_1$ and $\emb_2$ allow moves in $A_2$ from $\emb_1(\ideal_1)$ to $\emb_2(\ideal_2)$ as well as moves in $A_1$ from $\emb_2(\ideal_2)$ to $\emb_1(\ideal_1)$, but do not allow moves in $A_{\ideal_2,\ideal_1}\setminus A_1$ or in $A_{\ideal_1,\ideal_2}\setminus A_2$. 
Note that any elements $\sB\in (\movable{\ideal_1}{\ideal_2}\cup\movable{\ideal_2}{\ideal_1})\setminus (A_{\ideal_1,\ideal_2}\cup A_{\ideal_2,\ideal_1})$, do not play a role in the definition above. The reason for this is that if a move is not admissible, and hence does not occur between elements of $\sP_{\is}$, it will not affect our cancellation argument in a meaningful way. Furthermore by construction, the sets $T^{A_1,A_2}_{\ideal_1,\ideal_2}$ and $T^{A_1',A_2'}_{\ideal_1,\ideal_2}$ for $A_1\neq A_1'$ or $A_2\neq A_2'$ are disjoint. Thus for $(\ideal_1,\ideal_2)\in\sP_{\is}$, these $T^{A_1,A_2}_{\ideal_1,\ideal_2}$ partition the toric terms of $\p_{\ideal_1}\p_{\ideal_2}$ via the decomposition
\[
\p_{\ideal_1}\p_{\ideal_2}|_{\opendunim} = \sum_{(A_1,A_2)} \sum_{(\emb_1,\emb_2)} a_{\emb_1}a_{\emb_2}
\]
where the first sum is over all pairs $(A_1,A_2)$ with $A_1\subset A_{\ideal_2,\ideal_1}$ and $A_2\subset A_{\ideal_1,\ideal_2}$ and the second sum is over all pairs $(\emb_1,\emb_1)\in T^{A_1,A_2}_{\ideal_1,\ideal_2}$. We now study this decomposition explicitly for the Pl\"ucker monomial $\p_{i_1}\p_{i_2}$ which corresponds to the top element $(\idealij[1],\idealij[2])\in \sP_{\is}$.

\begin{lem}
\label{lem: initial-plucker}
The only nonempty toric sets for the top element $(\idealij[1],\idealij[2])\in\sP_{\is}$ are
\[
T^{\varnothing,\varnothing}_{\idealij[1],\idealij[2]}
=\bigl\{(\id_{\idealij[1]},\id_{\idealij[2]})\bigr\}
\qand
T^{\varnothing, \pb}_{\idealij[1],\idealij[2]},
\]  
where $\pb$ is the maximal element of $\idealij[1]$ with label $s_{i_2}$, and these sets exhaust all toric terms of $\p_{i_1}\p_{i_2}$.
\end{lem}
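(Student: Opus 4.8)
The plan is to analyze the structure of $\idealij[1]$ and $\idealij[2]$ directly using Corollary \ref{cor:combinatorial_description_wij} and the results of Lemma \ref{lem: move-symmetry} and the surrounding discussion, and then to enumerate which embeddings of these two order ideals into $\minposet$ exist and which toric sets $T^{A_1,A_2}_{\idealij[1],\idealij[2]}$ they fall into. The key observations are: (1) the top element of $\sP_{\is}$ has no admissible moves $\idealij[1] \to \idealij[2]$ beyond possibly one, because of the rigid shape of $\idealij[1]$; and (2) since $\idealij[2]$ is the maximal ideal avoiding label $s_{i_2}$ plus all it can absorb, it is very close to $\minposet$ itself, hence has few embeddings.

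First I would pin down $A_{\idealij[1],\idealij[2]}$ and $A_{\idealij[2],\idealij[1]}$. By the definition of $\sP_{\is}$, the top element admits no moves $\idealij[2]\to\idealij[1]$, i.e.\ $A_{\idealij[2],\idealij[1]}=\varnothing$, which immediately forces $A_1=\varnothing$ in every nonempty $T^{A_1,A_2}$; this is why only $T^{\varnothing,\varnothing}$ and $T^{\varnothing,A_2}$ can appear. Next, using Lemma \ref{lem: move-symmetry} (the maximal elements of $\idealij[1]$ with label $\neq s_{i_1},s_{i_3}$ that can still be added to $\idealij[2]$ are admissible moves) together with Corollary \ref{cor:combinatorial_description_wij}, which says $\idealij[1]$ is the maximal ideal obtained by adding boxes with label $\neq s_{i_1}$ to $\varnothing$, one checks that $\idealij[1]$ has a \emph{unique} maximal element, and its label is $s_{i_1}$ — wait, more carefully: $\wij[1](-\dfwt[k]) = \wij[1](\dfwt[k])$ reversed; from Proposition \ref{prop:uij_repeated_action}, $\wij[1](\dfwt[k]) = -\dfwt[i_2]+\dfwt[i_1]$, which covers exactly one element in Bruhat order (the action of $s_{i_2}$), so the \emph{maximal} element of $\idealij[1]$ has label $s_{i_2}$. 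Since $i_2\notin\{i_1,i_3\}$ (by the tables in the proof of Lemma \ref{lem:special_weyl_elt_action}, as $\cis=2$ means $i_2$ is minuscule in the sub-diagram, distinct from $i_1,i_3$), Lemma \ref{lem: move-symmetry} and the remark following it give that this maximal element $\pb$ with label $s_{i_2}$ lies in $A_{\idealij[1],\idealij[2]}$, provided $\minposet\setminus\idealij[2]\neq\varnothing$; and it is the \emph{only} maximal element, so $A_{\idealij[1],\idealij[2]}=\{\pb\}$. Thus the only candidate nonempty toric sets are $T^{\varnothing,\varnothing}_{\idealij[1],\idealij[2]}$ and $T^{\varnothing,\pb}_{\idealij[1],\idealij[2]}$.

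Then I would show these two sets exhaust all toric terms of $\p_{i_1}\p_{i_2}$ and that $T^{\varnothing,\varnothing}_{\idealij[1],\idealij[2]}=\{(\id_{\idealij[1]},\id_{\idealij[2]})\}$. For the latter, I claim both $\idealij[1]$ and $\idealij[2]$ admit the identity as the unique embedding compatible with the ``no box of $\idealij[2]$ moves to $\idealij[1]$'' constraint: indeed $\idealij[2]=\minposet\setminus\su_{i_{\cis+1}}$ where $\su_{i_{\cis+1}}$ consists of boxes with label $s_{i_3}$ (the complement of $\idealij[\cis]$ in $\idealij[\cis+1]=\minposet$, with $\cis=2$ so $i_{\cis+1}=i_3$), and one argues that $\idealij[2]$, being all of $\minposet$ except a ``rim'' of $s_{i_3}$-boxes, has exactly one embedding into $\minposet$ unless one of those boundary $s_{i_3}$-boxes is moved — but such a move would be an element of $A_{\idealij[2],\idealij[1]}=\varnothing$, contradiction. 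Hence $\id_{\idealij[2]}$ is forced, and then $\emb_1$ must avoid the complement $\su_{i_3}$; since by Corollary \ref{cor:combinatorial_description_wij} $\idealij[1]$ cannot contain a box with label $s_{i_1}$ while still being maximal after the absorption, a counting/shape argument shows $\emb_1=\id_{\idealij[1]}$ is the only such embedding not in $\second[\pb]{\idealij[1]}{\idealij[2]}$. Every other embedding pair must allow the box $\pb$ (label $s_{i_2}$, the unique top box of $\idealij[1]$) to move into $\emb_2(\idealij[2])$, hence lies in $\second[\pb]{\idealij[1]}{\idealij[2]} = T^{\varnothing,\pb}_{\idealij[1],\idealij[2]}$; combined with the fact that all of these embeddings of $\idealij[1]$ and $\idealij[2]$ arise as genuine embeddings into $\minposet$, this accounts for all toric terms of $\p_{i_1}\p_{i_2}$.

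\emph{Main obstacle.} The delicate point is the careful bookkeeping of embeddings of $\idealij[1]$ and $\idealij[2]$ into $\minposet$ — in particular verifying that $\idealij[2]$ has a unique embedding (given the $A_{\idealij[2],\idealij[1]}=\varnothing$ constraint) and that every non-identity embedding $\emb_1$ of $\idealij[1]$ necessarily puts the top box $\pb$ onto a cell of $\minposet$ that overlaps $\emb_2(\idealij[2])$ unless $\emb_2$ is suitably shifted, forcing membership in $T^{\varnothing,\pb}$. This requires using the explicit shape of $\idealij[1] = \minposet$-minus-the-$s_{i_1}$-closure together with Lemma \ref{lem: move-symmetry} to exclude spurious toric sets $T^{\varnothing,A_2}$ with $|A_2|\geq 2$; fortunately Remark \ref{rem:poset_is_rep} (minusculity, all admissible moves of size $1$, cover relations) plus the uniqueness of the maximal element of $\idealij[1]$ collapses this to the single move $\pb$, so the enumeration stays manageable.
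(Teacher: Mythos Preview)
Your overall strategy is the same as the paper's: identify $A_{\idealij[2],\idealij[1]}=\varnothing$ and $A_{\idealij[1],\idealij[2]}=\{\pb\}$ from the weight $\wij[1](\dfwt[k])=-\dfwt[i_2]+\dfwt[i_1]$, then classify all pairs of embeddings. Two points need fixing.

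First, your argument for the uniqueness of the embedding of $\idealij[2]$ is off. You write that $\su_{i_{\cis+1}}=\minposet\setminus\idealij[2]$ ``consists of boxes with label $s_{i_3}$'', but by Corollary~\ref{cor:combinatorial_description_wij} it consists of boxes with label \emph{unequal} to $s_{i_3}$ (they are the boxes added when avoiding $s_{i_3}$). More importantly, the uniqueness of the embedding of $\idealij[2]$ has nothing to do with $A_{\idealij[2],\idealij[1]}=\varnothing$; that set concerns moves between two ideals, not embeddings of a single ideal into $\minposet$. The correct argument is precisely Lemma~\ref{lem:monomial_plucker}, already proved: $\idealij[\cis]$ has a unique maximal element labeled $s_{i_{\cis+1}}$, and $\minposet\setminus\idealij[\cis]$ contains no such label, so the identity is the only embedding. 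Just cite it.

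Second, your ``counting/shape argument'' for classifying the embeddings $\emb_1$ of $\idealij[1]$ is vague where the paper is sharp. Since $\idealij[2]\setminus\idealij[1]=\su_{i_2}$ contains no box labeled $s_{i_2}$ (again Corollary~\ref{cor:combinatorial_description_wij}), any embedding $\emb_1$ with $\emb_1(\pb)\in\idealij[2]$ must in fact have $\emb_1(\pb)\in\idealij[1]$; since $\pb$ is the unique maximum of $\idealij[1]$, a size comparison forces $\emb_1=\id_{\idealij[1]}$. Every other $\emb_1$ then has $\emb_1(\pb)\notin\idealij[2]$, so extending $\id_{\idealij[2]}$ by $\pb'\mapsto\emb_1(\pb)$ is a valid embedding of $\idealij[2]\sqcup\pb'$, i.e.\ $(\emb_1,\id_{\idealij[2]})\in\second[\pb]{\idealij[1]}{\idealij[2]}=T^{\varnothing,\pb}_{\idealij[1],\idealij[2]}$. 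This replaces the obstacle you flagged.
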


\begin{proof}
    Recall from Proposition \ref{prop:uij_repeated_action} that the weights corresponding to $\p_{i_1}$ and $p_{i_2}$ are $\dfwt[i_2]-\dfwt[i_1]$ and $\dfwt[i_3]-\dfwt[i_2]$. Thus $\idealij[1]$ has a unique maximal element $\pb$ and this element has label $s_{i_2}$, and by Lemma \ref{lem: move-symmetry} we have $A_{\idealij[1],\idealij[2]}=\{\pb\}$ which is moved to $\pb'\in\minposet\setminus\idealij[2]$. Thus, any embedding $\emb_{i_1}:\ideal_{i_1}\rightarrow\minposet$ has the unique maximal element $\emb_{i_1}(\pb)$. By Lemma \ref{lem:monomial_plucker}, the unique embedding of $\ideal_{i_2}$ in $\minposet$ is the identity. Since no element of $\ideal_{i_2}\setminus \ideal_{i_1}$ is labeled by $s_{i_2}$, the identity embedding of $\ideal_{i_1}$ in $\minposet$ is the only embedding of $\ideal_{i_1}$ with image contained in $\ideal_{i_2}$, and every other embedding $\emb_{i_1}:\ideal_{i_1}\hookrightarrow\minposet$ must have its maximal element $\emb_{i_1}(\pb)\notin\ideal_{i_2}$.
    
    For the identity embeddings $(\id_{\idealij[1]},\id_{\idealij[2]})$, since $\id_{\idealij[1]}(\pb)\in \ideal_{i_2}$, extending the identity embedding of $\idealij[2]$ to $\pb'$ mapping it to $\id_{\idealij[1]}(\pb)$ is not a valid embedding of the order ideal $\ideal_{i_2}\sqcup\pb'$, so $(\id_{\idealij[1]}, \id_{\idealij[2]})\notin \second[\pb]{\idealij[1]}{\idealij[2]}$. Next, for any other pair of embeddings $(\emb_{i_1}, \id_{\ideal_{i_2}})$ with $\emb_{i_1}\neq\id_{\ideal_{i_1}}$, we have $\emb_{i_1}(\pb)\notin \ideal_{i_2}$, so mapping $\pb'$ to $\emb_{i_1}(\pb)$ is in fact a valid extension of the identity embedding of $\idealij[2]$ to $\idealij[2]\sqcup\pb'$, and thus $(\emb_{i_1},\id_{\idealij[2]})\in \second[\pb]{\idealij[1]}{\idealij[2]}$ for any non-identity embedding $\emb_{i_1}$ of $\idealij[1]$. Thus, we have 
    \[
    \second[\pb]{\idealij[1]}{\idealij[2]} = \bigl\{(\emb, \id_{\idealij[2]})~\big|~ \emb\neq\id_{\idealij[1]}\bigr\}
    \qand 
    \Bigl(\second[s_{i_2}]{\wij[1]}{\wij[2]}\Bigr)^c = \bigl\{(\id_{\idealij[1]},\id_{\idealij[2]})\bigr\}.
    \]
    
    Recall that $A_{\idealij[1],\idealij[2]}=\{\pb\}$ and $A_{\idealij[2],\idealij[1]}=\varnothing$ since $(\idealij[1],\idealij[2])$ is the top element of $\sP_{\is}$. In the case $A_1=A_2=\varnothing$, the only nontrivial set defining $T^{\varnothing,\varnothing}_{\idealij[1],\idealij[2]}$ is
    \[
    T^{\varnothing,\varnothing}_{\idealij[1],\idealij[2]}
    = \Bigl(\second[\pb]{\idealij[1]}{\idealij[2]}\Bigr)^c 
    = \bigl\{(\id_{\idealij[1]},\id_{\idealij[2]})\bigr\}.
    \]
    Next, in the case $A_1=\varnothing$ and $A_2=\{\pb\}$, the only nontrivial set defining $T^{\varnothing,\pb}_{\idealij[1],\idealij[2]}$ is
    \[
    T^{\varnothing,\pb}_{\idealij[1],\idealij[2]} = \second[\pb]{\idealij[1]}{\idealij[2]}.
    \]
    Since these two sets exhaust all the toric terms of $p_{i_1}p_{i_2}|_{\opendunim}$, and the toric sets are disjoint, the lemma follows.
\end{proof}

While the maximal element of $\sP_{\is}$ is remarkable since $T^{\varnothing,\varnothing}_{\idealij[1],\idealij[2]}\neq\varnothing$, the other toric sets $T^{\varnothing,\varnothing}_{\ideal_1,\ideal_2}$ are empty, as we now show. 

\begin{lem} 
  \label{lem: toric-partition}
  For $(\ideal_1,\ideal_2) \neq (\idealij[1],\idealij[2])\in \sP_{\is}$, the toric set $T^{\varnothing,\varnothing}_{\ideal_1,\ideal_2}$ is empty. 
\end{lem}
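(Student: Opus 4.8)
The plan is to show that any pair of embeddings in $T^{\varnothing,\varnothing}_{\ideal_1,\ideal_2}$ would force $(\ideal_1,\ideal_2)$ to coincide with the top element $(\idealij[1],\idealij[2])$. First I would recall from the discussion following Lemma~\ref{lem: move-symmetry} that since $(\ideal_1,\ideal_2)\neq(\idealij[1],\idealij[2])$, there is a nonempty set of admissible moves $A_{\ideal_2,\ideal_1}\neq\varnothing$: indeed, $(\ideal_1,\ideal_2)$ was reached from $(\idealij[1],\idealij[2])$ by at least one move, and the reverse move witnesses a nonempty $A_{\ideal_2,\ideal_1}$. Fix such an element $\pb\in A_{\ideal_2,\ideal_1}$, moved to $\pb'\in\minposet\setminus\ideal_1$. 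Now suppose for contradiction that $(\emb_1,\emb_2)\in T^{\varnothing,\varnothing}_{\ideal_1,\ideal_2}$. By definition of $T^{\varnothing,\varnothing}$, in particular $(\emb_1,\emb_2)\notin\first[\pb]{\ideal_1}{\ideal_2}$, so the map extending $\emb_1$ to $\ideal_1\sqcup\pb'$ by sending $\pb'\mapsto\emb_2(\pb)$ is \emph{not} a valid embedding; since $\emb_1$ and $\emb_2$ are themselves valid and $\pb'$ sits above exactly the elements it covers in $\ideal_1$, the only obstruction is that $\emb_2(\pb)\in\emb_1(\ideal_1)$, i.e.\ $\emb_2(\pb)$ already lies in the image of $\emb_1$.

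The key step is then to exploit this overlap to run a ``collapse'' argument. Having $\emb_2(\pb)\in\emb_1(\ideal_1)$ for every element $\pb\in A_{\ideal_2,\ideal_1}$ means that the images $\emb_1(\ideal_1)$ and $\emb_2(\ideal_2)$ overlap in a controlled way near the maximal elements of $\ideal_2$ that are not $s_{i_1},s_{i_3}$-labeled. I would combine this with Lemma~\ref{lem: move-symmetry}: for $m\notin\{i_1,i_3\}$ we have $\llan\wt_1,\sr_m\rran=-\llan\wt_2,\sr_m\rran$, which was interpreted right after that lemma as saying that every maximal element of $\ideal_2$ with label distinct from $s_{i_1},s_{i_3}$ lies in $A_{\ideal_2,\ideal_1}$. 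So the obstruction above applies to \emph{all} such maximal elements simultaneously. Peeling these maximal elements off one at a time (using Remark~\ref{rem:poset_is_rep} so that each admissible move has size $1$, and Lemma~\ref{lem:moves_commute} so the order of peeling is immaterial), one shows inductively that $\emb_2(\ideal_2)$ must be forced ``downward'' into $\emb_1(\ideal_1)$ until $\emb_2(\ideal_2)\subseteq\emb_1(\ideal_1)$; but then $\ell(\ideal_2)\le\ell(\ideal_1)$. Since moving an element from $\idealij[1]$ to $\idealij[2]$ preserves the total size $\ell(\ideal_1)+\ell(\ideal_2)=\ell(\idealij[1])+\ell(\idealij[2])$ and $\ell(\idealij[1])<\ell(\idealij[2])$ by Lemma~\ref{lem:monomial_plucker} combined with Proposition~\ref{prop:uij_repeated_action} (the weight $\wt_{i_1}=\dfwt[i_2]-\dfwt[i_1]$ lies strictly above $\wt_{i_2}=\dfwt[i_3]-\dfwt[i_2]$ so $\ideal_{i_1}\subsetneq\ideal_{i_2}$), any sequence of moves can only decrease $\ell(\ideal_2)-\ell(\ideal_1)$, so $\ell(\ideal_2)>\ell(\ideal_1)$ is preserved along $\sP_{\is}$; this contradicts $\ell(\ideal_2)\le\ell(\ideal_1)$, forcing $T^{\varnothing,\varnothing}_{\ideal_1,\ideal_2}=\varnothing$.

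An alternative and perhaps cleaner route, which I would present instead if the peeling induction gets technical, is via the representation-theoretic identification of Remark~\ref{rem:poset_is_rep}: $\sP_{\is}$ is the Bruhat order on the weights of a minuscule $L^\vee$-representation, with $(\idealij[1],\idealij[2])$ the top (highest-weight) vertex. For any non-top $(\ideal_1,\ideal_2)$ there is a cover $(\ideal_3,\ideal_4)\gtrdot(\ideal_1,\ideal_2)$, i.e.\ an admissible move $\pb\in A_{\ideal_2,\ideal_1}$, and the claim $\first[\pb]{\ideal_1}{\ideal_2}\supseteq\{\text{all pairs }(\emb_1,\emb_2)\}$, equivalently $(\first[\pb]{\ideal_1}{\ideal_2})^c=\varnothing$, follows from the fact that the weight $\wt_1$ satisfies $\llan\wt_1,\sr_{\indx(\pb)}\rran=+1$ by Lemma~\ref{lem: move_action} (since $\llan\wt_1+\wt_2,\sr_{\indx(\pb)}\rran=0$ and $\llan\wt_2,\sr_{\indx(\pb)}\rran=-1$ as $\pb$ is removable from $\ideal_2$), so a box with label $s_{\indx(\pb)}$ can always be added to any embedded copy of $\ideal_1$, and a dimension/position count shows it can always be placed at $\emb_2(\pb)$. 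Then $T^{\varnothing,\varnothing}_{\ideal_1,\ideal_2}\subseteq(\first[\pb]{\ideal_1}{\ideal_2})^c=\varnothing$. The main obstacle in either approach is the bookkeeping needed to verify that the obstruction ``$\emb_2(\pb)\in\emb_1(\ideal_1)$'' genuinely propagates — i.e.\ that one cannot have $\emb_2(\pb)$ in the image of $\emb_1$ while $\emb_1$ is still a ``small'' embedding — and this is exactly where minusculity (all Cartan pairings in $\{-1,0,1\}$) and the uniqueness-of-embedding statements in Lemma~\ref{lem:monomial_plucker} and Lemma~\ref{lem: initial-plucker} do the heavy lifting.
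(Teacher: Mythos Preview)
Your analysis of why the extension $\emb_1'|_{\ideal_1}=\emb_1$, $\emb_1'(\pb')=\emb_2(\pb)$ fails is incomplete, and this is where both approaches break down. The obstruction is not merely injectivity: since $\emb_1(\ideal_1)$ need not be an order ideal of $\minposet$ (see the non-identity embeddings in Example~\ref{ex:D5P5_movable}), the extension can fail because some $q\lessdot\pb'$ in $\ideal_1$ has $\emb_1(q)>\emb_2(\pb)$ without $\emb_2(\pb)$ lying in $\emb_1(\ideal_1)$ at all. With only the weaker injectivity conclusion in hand, the ``collapse'' induction---peeling off maximal elements of $\ideal_2$ to force $\emb_2(\ideal_2)\subseteq\emb_1(\ideal_1)$---does not go through: after removing $\pb$ from $\ideal_2$ you are no longer looking at a pair in $\sP_{\is}$, so Lemma~\ref{lem: move-symmetry} no longer applies to the next layer. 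The alternative route has the same defect: knowing abstractly that $\llan\wt_1,\sr_{\indx(\pb)}\rran=-1$ (you wrote $+1$, but the sign is the minor issue) only says a box of that label can be added to $\ideal_1$; it says nothing about whether it can be placed at the specific position $\emb_2(\pb)$ relative to a given $\emb_1$. For a minimal element of $\sP_{\is}$, where $A_{\ideal_1,\ideal_2}=\varnothing$, the claim $(\first[\pb]{\ideal_1}{\ideal_2})^c=\varnothing$ is literally equivalent to the lemma, so ``a dimension/position count'' is circular.

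What is missing is the paper's asymmetric ``bounce'': rather than trying to show $A_1\neq\varnothing$, one assumes $A_1=\varnothing$ and constructs an element of $A_2$. The key move is to choose $\pb_2\in A_{\ideal_2,\ideal_1}$ so that $\emb_2(\pb_2)$ is \emph{maximal in the image} $\emb_2(\ideal_2)$, not merely $\pb_2$ maximal in $\ideal_2$; this is possible because the unique maximal element of $\idealij[2]$ is labeled $s_{i_3}$ and is fixed by every embedding of $\ideal_2\supseteq\idealij[2]$. The correct order-violation obstruction to $\first[\pb_2]$ then furnishes a maximal $\pb_1\in\ideal_1$ with $\pb_1<\pb_2'$ and $\emb_1(\pb_1)>\emb_2(\pb_2)$. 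From $\emb_1(\pb_1)>\emb_2(\pb_2)\ge\pb_2$ one deduces $\emb_1(\pb_1)\in\minposet\setminus\ideal_2$, hence $\labl(\pb_1)\notin\{s_{i_1},s_{i_3}\}$, so $\pb_1\in A_{\ideal_1,\ideal_2}$ by Lemma~\ref{lem: move-symmetry}; and now maximality of $\emb_2(\pb_2)$ in $\emb_2(\ideal_2)$ guarantees that extending $\emb_2$ by $\pb_1'\mapsto\emb_1(\pb_1)$ \emph{is} valid, giving $\pb_1\in A_2$. This bounce from a failed $\first$-move to a successful $\second$-move is the idea your proposal does not contain.
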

Lemma \ref{lem: toric-partition} says that every pair of embeddings $(\emb_1,\emb_2)$ of the order ideals $\ideal_1,\ideal_2\subideal\minposet$ allows some nonempty subset of the admissible moves $A_{\ideal_2,\ideal_1}\cup A_{\ideal_1,\ideal_2}$ between $\ideal_1$ and $\ideal_2$. 
\begin{proof}
    To prove the lemma, it is equivalent to show that for each $(\ideal_1,\ideal_2)\in\sP_{\is}$ unequal to $(\idealij[1],\idealij[2])$ and each pair of embeddings $(\emb_1:\ideal_1\hookrightarrow\minposet,\emb_2:\ideal_2\hookrightarrow \minposet)$, at least one of the two sets $A_1=\{\pb\mid (\emb_1,\emb_2)\in \first[\pb]{\ideal_1}{\ideal_2}\}$ and $A_2=\{\pb\mid (\emb_1,\emb_2)\in\second[\pb]{\ideal_1}{\ideal_2}\}$ is nonempty. Note that all $(\ideal_1,\ideal_2)\in\sP_{\is}$ unequal to $(\idealij[1],\idealij[2])$ satisfy $A_{\ideal_2,\ideal_1}\neq\varnothing$. Moreover, since $A_1\neq\varnothing$ would satisfy the desired statement, we will establish this by considering the case where $A_1=\varnothing$ and constructing an element of $A_2$.
    
    We will first make a careful selection of an element of $A_{\ideal_2,\ideal_1}$ as follows. Since $\ideal_2\neq\idealij[2]$, there must be a maximal element of $\ideal_2$ with label unequal to $s_{i_3}$. Moreover, since $\minposet\setminus\idealij[2]$ has no element with label $s_{i_3}$ by construction and the unique maximal element $\pb_{\max}\in\idealij[2]$ has label $s_{i_3}$, we must also have $\emb_2(\pb_{\max})=\pb_{\max}$. Hence, any maximal element of $\ideal_2$ can never become smaller in $\emb_2(\ideal_2)$ than an element with label $s_{i_3}$. Since maximal elements of $\emb_2(\ideal_2)$ are images of maximal elements of $\ideal_2$, there must thus be an element $\pb_2\in\ideal_2$ with $\labl(\pb_2)\neq s_{i_3}$ that is maximal in $\ideal_2$ and whose image $\emb_2(\pb_2)\in\emb_2(\ideal_2)$ maximal as well. By Lemma \ref{lem: move-symmetry} and the discussion following it, we have $\pb_2\in A_{\ideal_2,\ideal_1}$. We fix such an element $\pb_2\in\ideal_2$ which moves to $\pb_2'\in\minposet\setminus\ideal_1$ for the remainder of this proof.
    
    Since $\pb_2\notin A_1=\varnothing$, extending $\emb_1:\ideal_1\hookrightarrow\minposet$ to $\ideal_1\sqcup\pb_2'$ by mapping $\pb_2'$ to $\emb_2(\pb_2)$ does \emph{not} yield a valid embedding. This means that there must be a maximal element $\pb_1\in \ideal_1$, whose label we write as $s_m=\labl(\pb_1)$, such that $\pb_1 < \pb_2'$ and $\emb_1(\pb_1) > \emb_2(\pb_2)$. We will now show that $\pb_1$ is an element of $A_2$.
    
    First, since $s_m$ is the label of an element of $\ideal_1\subideal\idealij[1]$, we have $s_m\neq s_{i_1}$. Furthermore, we have $\emb_1(\pb_1)\in\minposet\setminus\ideal_2$ since $\emb_1(\pb_1) > \emb_2(\pb_2) \ge \pb_2$ is larger than a maximal element of $\ideal_2$, so $s_m\neq s_{i_3}$ as well. Lemma \ref{lem: move-symmetry} then implies that $\pb_1$ is a move in $A_{\ideal_1,\ideal_2}$ which moves to $\pb_1'\in\minposet\setminus\ideal_2$. 
    
    Finally, we show that extending $\emb_2:\ideal_2\hookrightarrow\minposet$ to $\ideal_2\sqcup\pb_1'$ by mapping $\pb_1'$ to $\emb_1(\pb_1)$ is a valid embedding. If it is not, then there must be some $\pb_3\in\ideal_2$ such that $\emb_2(\pb_3) > \emb_1(\pb_1)$. However, this yields the chain of inequalities
    \[
    \emb_2(\pb_3) > \emb_1(\pb_1) > \emb_2(\pb_2),
    \]
    contradicting the maximality of $\emb_2(\pb_2)$ in $\emb_2(\ideal_2)$. Thus we have $\pb_1\in A_2$.
\end{proof}

Now that we have constructed and studied a way to classify embeddings of $(\ideal_1,\ideal_2)\in\sP_{\is}$, we will need one final definition to organize the cancellation.

\begin{df}\label{df:restricted_subposet_order}
    For a subset $\sC\subset\sP_\is$, we define the partial order that $a>b$ in $\sC$ if and only if there exists a sequence $(a=c_0, c_1,\dots, b=c_n)$ of elements of $\sC$ such that for each $i$, $c_i \gtrdot c_{i+1}$ is a cover relation in $\sP$. We call $\sC\subset\sP_{\is}$ an \emph{$m$-cube} if $\sC$ equipped with this partial order is isomorphic to the finite boolean lattice of subsets of an $m$-element set.
\end{df}

The reason for this terminology is that an $m$-cube in $\sP_\is$ can be visualized in the Hasse diagram of $\sP_\is$ as the $1$-skeleton (vertices and edges) of the $m$-cube $[0,1]^m$. 

\begin{rem}
    Note that the above definition does not use the natural induced poset structure on $\sC$ as a subset of $\sP_{\is}$. For example, if $\sP_{\is}$ were the poset $a>b>c$, the natural induced partial order on $\{a,c\}$ would be $a>c$ while the partial order described above on $\{a,c\}$ would be the two element antichain with $a$ and $c$ incomparable. 
    
    As a result, the poset $\sP_{\is}$ may generally allow many embeddings of boolean lattices which we do not call $m$-cubes by the definition above. For example, we have two order embeddings of the Boolean lattice in the following five element poset (drawn as Hasse diagrams with the largest element on the left where the filled-in circles are the image of the Boolean lattice), however the partial order on these subsets as given in Definition \ref{df:restricted_subposet_order} yields the two posets on the right which are clearly not isomorphic to the Boolean lattice:
\[
\begin{tikzpicture}[rotate=90,scale=.5,style=very thick,baseline=-.25em]
    \draw 
	(0,0)--(1,1)--(0,2)--(-1,1)--(0,0)
    ;
    \draw[black, fill=black] 
	(0,0) circle (.15)
	(1,1) circle (.15)
	(0,2) circle (.15)
	(-1,1) circle (.15)
    ;
\end{tikzpicture}
\quad\hookrightarrow\quad
\begin{tikzpicture}[rotate=90,scale=.5,style=very thick,baseline=-.25em]
    \draw 
	(0,0)--(1,1.5)--(0,3)--(-1,2)--(-1,1)--(0,0)
    ;
    \draw[black, fill=black] 
	(0,0) circle (.15)
	(1,1.5) circle (.15)
	(0,3) circle (.15)
	(-1,1) circle (.15)
    ;
    \draw[black, fill=white]
        (-1,2) circle (.15)
    ;
\end{tikzpicture}
\quad,\quad
\begin{tikzpicture}[rotate=90,scale=.5,style=very thick,baseline=-.25em]
    \draw 
	(0,0)--(1,1.5)--(0,3)--(-1,2)--(-1,1)--(0,0)
    ;
    \draw[black, fill=black] 
	(0,0) circle (.15)
	(1,1.5) circle (.15)
	(0,3) circle (.15)
	(-1,2) circle (.15)
    ;
    \draw[black, fill=white]
        (-1,1) circle (.15)
    ;
\end{tikzpicture}
\quad\rightsquigarrow\quad
\begin{tikzpicture}[rotate=90,scale=.5,style=very thick,baseline=-.25em]
    \draw 
	(-1,1)--(0,0)--(1,1)--(0,2)
    ;
    \draw[black, fill=black] 
	(0,0) circle (.15)
	(1,1) circle (.15)
	(0,2) circle (.15)
	(-1,1) circle (.15)
    ;
\end{tikzpicture}
\quad,\quad
\begin{tikzpicture}[rotate=90,scale=.5,style=very thick,baseline=-.25em]
    \draw 
	(0,0)--(1,1)--(0,2)--(-1,1)
    ;
    \draw[black, fill=black] 
	(0,0) circle (.15)
	(1,1) circle (.15)
	(0,2) circle (.15)
	(-1,1) circle (.15)
    ;
\end{tikzpicture}
\]
    Hence, the two subposets are not $2$-cubes as defined in Definition \ref{df:restricted_subposet_order}. One way to think of the partial order on $\sC$ is that it is the one whose Hasse diagram is obtained by removing vertices and edges from the Hasse diagram of $\sP_{\is}$ if the vertex is not in $\sC$ or the edge has an endpoint not in $\sC$. 
\end{rem}

These cubes will be the main tool for organizing our cancellation argument. Recall that every cover relation in $\sP_{\is}$ corresponds to an admissible move between two elements of $\sP_{\is}$, which we may also consider as a simple reflection via its label. Since cover relations of a cube $\sC$ are cover relations in $\sP_{\is}$, we also obtain the same correspondence between cover relations in $\sC$ and labels describing admissible moves between elements of $\sC$. Thus, we may associate to each cube $\sC$ the set $S_{\sC}$ of all labels corresponding to all cover relations between elements in $\sC$. We next establish some elementary facts about $S_{\sC}$, which we will refer to as the \emph{set of labels} of $\sC$.

\begin{lem}
    \label{lem:mcube-moves}
    Let $\sC\subset\sP_{\is}$ be an $m$-cube and $S_{\sC}$ its set of labels. Then $|S_{\sC}|=m$ and all elements of $S_{\sC}$ commute with each other.
\end{lem}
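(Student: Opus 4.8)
The plan is to prove both statements by induction on the dimension $m$ of the cube, leveraging the recursive structure of boolean lattices: an $m$-cube $\sC$ decomposes as a disjoint union of two $(m-1)$-cubes $\sC_0$ and $\sC_1$ (the ``bottom'' and ``top'' faces) together with a perfect matching of cover relations between them, all carrying a single common label $s$.

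First I would set up the base case. For $m=0$ the cube is a single element, $S_\sC=\varnothing$, and $|S_\sC|=0$; for $m=1$ the cube is a single cover relation $a\gtrdot b$ in $\sP_\is$, so $S_\sC$ consists of the single label $\labelprod{\sB}$ of the admissible move $\sB$ realizing this cover, hence $|S_\sC|=1$ and the commutation statement is vacuous. For the inductive step, fix an $m$-cube $\sC$ with $m\ge 2$ and a top element $(\ideal_1,\ideal_2)$ of $\sC$ (which exists since $\sC$ is a boolean lattice, with the order of Definition \ref{df:restricted_subposet_order}). The two coatoms of $\sC$ below $(\ideal_1,\ideal_2)$ correspond to two distinct admissible moves $\pb_1,\pb_2\in A_{\ideal_1,\ideal_2}$; by Lemma \ref{lem:moves_commute} their labels commute. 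I would argue that applying $\pb_1$ and then $\pb_2$, versus $\pb_2$ and then $\pb_1$, lands in the same element of $\sP_\is$ — the unique element covered by both coatoms in $\sC$ — and more generally that the two opposite faces obtained by ``removing'' the move $\pb_1$ form $(m-1)$-cubes $\sC_0,\sC_1$, with every ``vertical'' cover relation between them labeled by the (single) label of $\pb_1$. This uses Remark \ref{rem:poset_is_rep} that $\sP_\is$ is (away from the four exceptional cases) the Bruhat order on a minuscule weight space, so cube faces behave like the coordinate subcubes of a distributive lattice.

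Then $S_\sC = S_{\sC_0}\cup S_{\sC_1}\cup\{s\}$ where $s$ is the label of the vertical edges. By induction $|S_{\sC_i}|=m-1$ and the labels within each $S_{\sC_i}$ commute; I must check (a) $s\notin S_{\sC_0}\cup S_{\sC_1}$, (b) $S_{\sC_0}=S_{\sC_1}$, so that $|S_\sC| = (m-1)+1 = m$, and (c) $s$ commutes with every element of $S_{\sC_0}$. For (a): if a cover relation inside the face $\sC_0$ also carried label $s$, then at some element $(\ideal,\ideal')$ of $\sC_0$ both an $s$-labeled horizontal move and the $s$-labeled vertical move would be admissible; but two distinct admissible moves out of the same element have incomparable maximal boxes with commuting — hence distinct — labels, contradiction (here I use Lemma \ref{lem:moves_commute} together with the observation in its proof that distinct admissible moves are incomparable maximal elements, so they cannot share a label). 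For (b) and (c): a horizontal move with label $t$ at a vertex of $\sC_0$ and the vertical move with label $s$ commute because they are two distinct admissible moves at the relevant element, again by Lemma \ref{lem:moves_commute}; and the standard diamond/exchange property of the minuscule Bruhat order (Remark \ref{rem:poset_is_rep}) gives that the $t$-move is available at the matched vertex of $\sC_1$ as well, so $S_{\sC_0}\subseteq S_{\sC_1}$ and by symmetry equality holds.

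The main obstacle I anticipate is (b) — showing the two opposite faces of the cube have the \emph{same} label set, i.e.\ that the matching of cover relations between $\sC_0$ and $\sC_1$ is ``label-respecting'' in the sense that a $t$-labeled edge in $\sC_0$ is matched to a $t$-labeled edge in $\sC_1$. This is really the statement that in $\sP_\is$, commuting admissible moves can be performed in either order (a local confluence / diamond lemma), which is exactly where the identification of $\sP_\is$ with a minuscule weight poset in Remark \ref{rem:poset_is_rep} and the full commutativity of the relevant Weyl elements (as in Remark \ref{rem:order_ideals_to_min_coset_reps} and Corollary \ref{cor:i2_minuscule}) do the heavy lifting; for the four exceptional $\cis\in\{3,4\}$ cases one would instead verify the claim by the direct computations deferred to Appendix \ref{app:exceptional}.
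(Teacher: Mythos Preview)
Your inductive face-decomposition is a valid strategy and would go through, but it is a genuinely different route from the paper's.  The paper does not induct on $m$: instead it proves directly that every label in $S_\sC$ already occurs among the moves at the \emph{top} element $(\ideal_{t_1},\ideal_{t_2})$ of $\sC$, via the set identity
\[
\{\pb\in A_{\ideal_1,\ideal_2}\mid (\ideal_1\setminus\pb,\ideal_2\sqcup\pb')\in \sC\} \;=\; \{\pb\in A_{\ideal_3,\ideal_4}\mid (\ideal_3\setminus\pb,\ideal_4\sqcup\pb')\in \sC\} \cup \{\pb_*\}
\]
for each cover $(\ideal_1,\ideal_2)\gtrdot(\ideal_3,\ideal_4)$ in $\sC$ with move $\pb_*$; both inclusions are checked by elementary order-ideal manipulations (intersections and unions of order ideals are order ideals) combined with the boolean-lattice axioms for $\sC$.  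Once $S_\sC\subset\labl(A_{\ideal_{t_1},\ideal_{t_2}})$, the $m$ coatoms of $\sC$ give $|S_\sC|=m$, and a single application of Lemma~\ref{lem:moves_commute} at the top element gives commutation.  Your approach instead buys a familiar recursive picture at the cost of leaning on Remark~\ref{rem:poset_is_rep} for the diamond/label-propagation step (your obstacle (b)); the paper's argument is entirely intrinsic to the definitions of Section~\ref{sec:denominators} and never invokes that identification.  Two small points on your sketch: in (a), ``commuting hence distinct labels'' is not literally what Lemma~\ref{lem:moves_commute} says---you need the extra observation that two distinct maximal boxes of an order ideal cannot share a label (else the corresponding word ends $\ldots s_m s_m$ and is not reduced, or equivalently $\llan\wt_1,\sr_m\rran\ge2$ contradicts minusculity); and your caveat about $\cis\in\{3,4\}$ is moot here since the whole section already restricts to $\cis=2$.
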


\begin{proof}
    Let $(\ideal_{t_1},\ideal_{t_2})$ denote the top element of $\sC$. We claim that 
    \[
    S_{\sC}=\{\labl(\pb)\mid \pb\in A_{\ideal_{t_1},\ideal_{t_2}},  (\ideal_{t_1}\setminus\pb,\ideal_{t_2}\sqcup\pb')\in \sC\}.
    \]
    The inclusion ``$\supset$'' is straightforward: if $\pb\in A_{\ideal_{t_1},\ideal_{t_2}}$ and $(\ideal_{t_1}\setminus\pb, \ideal_{t_2}\sqcup\pb')\in \sC$, then $\pb$ corresponds to a cover relation of $\sC$ and hence $\labl(\pb)\in S_{\sC}$ by definition. For the reverse inclusion, we will first show that if there is a cover relation $(\ideal_1,\ideal_2)\gtrdot(\ideal_3,\ideal_4)$ in $\sC$ with corresponding admissible move $\pb_*\in A_{\ideal_1,\ideal_2}$ which moves to $\pb_*'\in\minposet\setminus\ideal_2$, then 
    \[\{\pb\in A_{\ideal_1,\ideal_2}\mid (\ideal_1\setminus\pb,\ideal_2\sqcup\pb')\in \sC\} = \{\pb\in A_{\ideal_3,\ideal_4}\mid (\ideal_3\setminus\pb,\ideal_4\sqcup\pb')\in \sC\} \cup \{\pb_*\}.\]
    
    First, let $\pb\in A_{\ideal_1,\ideal_2}$ with $(\ideal_1\setminus\pb,\ideal_2\sqcup\pb')\in \sC$ and $\pb\neq \pb_*$ (if $\pb=\pb_*$ then $\pb_*\in \{\pb_*\}$ and the claim holds). Let $\ideal_3'=\ideal_1\setminus\pb$ and $\ideal_4'=\ideal_2\sqcup\pb'$. Since the union and intersection of order ideals are also order ideals, both $\ideal_5=\ideal_3\cap\ideal_3'$ and $\ideal_6=\ideal_4\cup\ideal_4'$ are order ideals. By this construction, $\ideal_5$ can be obtained from $\ideal_3$ by removing $\pb$ and from $\ideal_3'$ by removing $\pb_*$, while $\ideal_6$ can be obtained from $\ideal_4$ by adding $\pb'$ and from $\ideal_4'$ by adding $\pb_*'$, so in particular $(\ideal_5,\ideal_6)$ is covered by both $(\ideal_3,\ideal_4)$ and $(\ideal_3',\ideal_4')$ in $\sP_{\is}$ and is the unique such element. Then by definition of a cube, $\sC$ must contain $(\ideal_5,\ideal_6) = (\ideal_3,\ideal_4) \wedge (\ideal_3',\ideal_4')$. Thus, we find $\pb\in A_{\ideal_3,\ideal_4}$ corresponding to $(\ideal_3,\ideal_4)\gtrdot (\ideal_5=\ideal_3\setminus\pb,\ideal_6=\ideal_4\sqcup\pb')$ in $\sC$.

    Conversely, let $\pb\in A_{\ideal_3,\ideal_4}$ with $(\ideal_3\setminus\pb,\ideal_4\sqcup\pb')\in\sC$ and furthermore let $\ideal_5=\ideal_3\setminus\pb=\ideal_1\setminus\{\pb,\pb_*\}$ and $\ideal_6=\ideal_4\sqcup\pb'=\ideal_2\sqcup\{\pb',\pb_*'\}$ so $(\ideal_5,\ideal_6)\in \sC$. Since $\sC$ is a cube, and we have the chain $(\ideal_1,\ideal_2)\gtrdot(\ideal_3,\ideal_4)\gtrdot(\ideal_5,\ideal_6)$, there must be another element $(\ideal_3',\ideal_4')\in\sC$ such that $(\ideal_1,\ideal_2)\gtrdot(\ideal_3',\ideal_4')\gtrdot(\ideal_5,\ideal_6)$ where $(\ideal_3',\ideal_4')$ and $(\ideal_3,\ideal_4)$ are incomparable. This implies that $\ideal_1 \supset \ideal_3' \supset \ideal_5$ and $\ideal_2 \subset \ideal_4' \subset \ideal_6$. By construction, $\ideal_5$ is obtained from $\ideal_1$ by removing two elements $\pb,\pb_*$. Since $\ideal_3$ is obtained by removing $\pb_*$ from $\ideal_1$, the only possibility for $\ideal_3'$ is that it must be obtained by removing $\pb$ from $\ideal_1$. Similarly, since $\ideal_6$ is obtained from $\ideal_2$ by adding $\pb',\pb_*'$ and $\ideal_4$ is obtained by adding $\pb_*'$ to $\ideal_2$, the only possibility for $\ideal_4'$ is that it must be obtained by adding $\pb'$ to $\ideal_2$. Thus, we must have $(\ideal_1,\ideal_2)\gtrdot(\ideal_3'=\ideal_1\setminus\pb,\ideal_4'=\ideal_2\sqcup\pb')$ in $\sC$ with corresponding move $\pb\in A_{\ideal_1,\ideal_2}$. 

    With this result, we can now establish $S_{\sC} \subset \{\labl(\pb)\mid \pb\in A_{\ideal_{t_1},\ideal_{t_2}}\mid (\ideal_{t_1}\setminus\pb,\ideal_{t_2}\sqcup\pb')\in \sC\}$. Let $\labl(\pb)\in S_{\sC}$, so that there is a cover relation $(\ideal_1,\ideal_2)\gtrdot(\ideal_3,\ideal_4)\in \sC$ with corresponding move $\pb\in A_{\ideal_1,\ideal_2}$. Since $\sC$ is a cube, there must also be a chain $(\ideal_{t_1},\ideal_{t_2})\gtrdot \dots \gtrdot (\ideal_1,\ideal_2)$ in $\sC$. Applying the previously established equality for each of these inclusions, we immediately conclude that $\labl(\pb)\in \{\labl(\pb)\in A_{\ideal_{t_1},\ideal_{t_2}}\mid (\ideal_{t_1}\setminus\pb,\ideal_{t_2}\sqcup\pb')\in \sC\}$. Since $\sC$ is an $m$-cube, it contains exactly $m$ cover relations of the form $(\ideal_{t_1},\ideal_{t_2})\gtrdot(\ideal_1,\ideal_2)$ so that the set $\{\labl(\pb)\in A_{\ideal_{t_1},\ideal_{t_2}}\mid (\ideal_{t_1}\setminus\pb,\ideal_{t_2}\sqcup\pb')\in\sC\}$ must correspondingly have exactly $m$ elements. Finally, recall by Lemma \ref{lem:moves_commute} that the labels of elements of $A_{\ideal_{t_1},\ideal_{t_2}}$ all commute when considered as Weyl group elements.
\end{proof}

Letting $\labl(A_{\ideal_1,\ideal_2})$ denote the labels $\{\labl(\pb)\mid\pb\in A_{\ideal_1,\ideal_2}\}$ corresponding to elements of $A_{\ideal_1,\ideal_2}$, we obtain the following statement as an immediate corollary of the proof of Lemma \ref{lem:mcube-moves}. 
\begin{cor}
    \label{cor:mcube-subsets}
    Let $\sC\subset\sP_{\is}$ be an $m$-cube and $S_{\sC}$ its set of labels. Then the map which sends $(\ideal_1,\ideal_2)\in\sC\mapsto \labl(A_{\ideal_1,\ideal_2})\cap S_{\sC}$ identifies $\sC$ with the Boolean lattice of subsets of $S_{\sC}$. In particular, each element $(\ideal_1,\ideal_2)\in \sC$ admits $m$ moves with labels in $S_{\sC}$, consisting of moves with labels in $\labl(A_{\ideal_1,\ideal_2})\cap S_{\sC}$ which go down in the partial order on $\sC$ and moves with labels in $\labl(A_{\ideal_2,\ideal_1})\cap S_{\sC}$ which go up in the partial order on $\sC$.
\end{cor}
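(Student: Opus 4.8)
The plan is to read everything off the proof of Lemma~\ref{lem:mcube-moves}. Recall that in that proof we established, for each cover relation $(\ideal_1,\ideal_2)\gtrdot(\ideal_3,\ideal_4)$ of $\sC$ with associated admissible move $\pb_*\in A_{\ideal_1,\ideal_2}$ (moving to $\pb_*'\in\minposet\setminus\ideal_2$), the identity
\[
\{\pb\in A_{\ideal_1,\ideal_2}\mid (\ideal_1\setminus\pb,\ideal_2\sqcup\pb')\in \sC\}=\{\pb\in A_{\ideal_3,\ideal_4}\mid (\ideal_3\setminus\pb,\ideal_4\sqcup\pb')\in \sC\}\sqcup\{\pb_*\},
\]
and, by iterating it up a cover-chain to the top element $(\ideal_{t_1},\ideal_{t_2})$ of $\sC$, that the set $D(\ideal_1,\ideal_2)$ of labels of admissible downward moves of $(\ideal_1,\ideal_2)$ that stay in $\sC$ is exactly $\labl(A_{\ideal_1,\ideal_2})\cap S_{\sC}$, is contained in $S_{\sC}$, and equals all of $S_{\sC}$ at the top. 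The displayed identity also shows that the labels met along a cover-chain from the top down to $(\ideal_1,\ideal_2)$ form a set $U(\ideal_1,\ideal_2)$ which is the complement $S_{\sC}\setminus D(\ideal_1,\ideal_2)$, and hence is independent of the chosen chain.

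First I would fix an abstract poset isomorphism $\psi\colon\sC\xrightarrow{\sim}2^{[m]}$ (available since $\sC$ is an $m$-cube). Its restriction to the coatoms, together with Lemma~\ref{lem:mcube-moves}, produces a bijection between the $m$ coordinate directions of $2^{[m]}$ and the $m$ elements of $S_{\sC}$. To promote $\psi$ to a label-indexed isomorphism I need that the cover labels of $\sC$ are \emph{parallel-consistent}: every $2$-cube in $\sC$ has opposite edges carrying the same label. This follows from the chain-independence of $U$ above applied to the bottom vertex of the square, together with Lemma~\ref{lem:moves_commute} to rule out a coincidence of the two transverse labels. Granting this, $\psi$ transports to an order isomorphism $\sC\xrightarrow{\sim}2^{S_{\sC}}$ sending $(\ideal_1,\ideal_2)$ to $S_{\sC}\setminus U(\ideal_1,\ideal_2)=D(\ideal_1,\ideal_2)=\labl(A_{\ideal_1,\ideal_2})\cap S_{\sC}$, which is precisely the asserted map.

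For the closing ``in particular'' I would invoke reversibility of admissible moves ($\sB\in A_{\ideal_1,\ideal_2}$ iff $\sB'\in A_{\ideal_2\sqcup\sB',\ideal_1\setminus\sB}$): a label lies in $U(\ideal_1,\ideal_2)$ exactly when it labels an admissible \emph{upward} move of $(\ideal_1,\ideal_2)$ staying in $\sC$, so $U(\ideal_1,\ideal_2)=\labl(A_{\ideal_2,\ideal_1})\cap S_{\sC}$. Therefore $S_{\sC}$ is the disjoint union of $\labl(A_{\ideal_1,\ideal_2})\cap S_{\sC}$ and $\labl(A_{\ideal_2,\ideal_1})\cap S_{\sC}$, giving exactly $m=|S_{\sC}|$ moves with labels in $S_{\sC}$ from each vertex of $\sC$, oriented downward and upward as stated. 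The step I expect to be the real work is the parallel-consistency of labels in $2$-cubes of $\sP_{\is}$ --- i.e.\ that opposite edges of a square share a label --- since that is where one must reason directly with the simultaneous addition and deletion of incomparable boxes (and the commutation of Lemma~\ref{lem:moves_commute}); once it is in hand, the rest is bookkeeping layered on the proof of Lemma~\ref{lem:mcube-moves}.
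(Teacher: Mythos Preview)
Your approach is correct and matches the paper's, which simply states this as ``an immediate corollary of the proof of Lemma~\ref{lem:mcube-moves}'' with no further argument. You have essentially unpacked what that phrase means.

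Two small points where you make things harder than necessary. First, the parallel-consistency you flag as ``the real work'' is already proved in the \emph{Conversely} paragraph of Lemma~\ref{lem:mcube-moves}: given a $2$-chain $(\ideal_1,\ideal_2)\gtrdot(\ideal_3,\ideal_4)\gtrdot(\ideal_5,\ideal_6)$ in $\sC$ with moves $\pb_*,\pb$, that argument produces the completing vertex $(\ideal_3',\ideal_4')=(\ideal_1\setminus\pb,\ideal_2\sqcup\pb')$ and checks that the two paths use the same pair of labels in opposite orders. So opposite edges of every $2$-face of $\sC$ share a label, and no new box-manipulation is required. Second, the abstract isomorphism $\psi\colon\sC\to 2^{[m]}$ is superfluous: once the recursion $D(\ideal_1,\ideal_2)=D(\ideal_3,\ideal_4)\sqcup\{\labl(\pb_*)\}$ and parallel-consistency give that $U(\ideal_1,\ideal_2)$ is chain-independent, the map $(\ideal_1,\ideal_2)\mapsto D(\ideal_1,\ideal_2)$ is itself the desired poset isomorphism (it preserves covers by the recursion and is a bijection by cardinality).

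One genuine detail you glide over: the equality $D(\ideal_1,\ideal_2)=\labl(A_{\ideal_1,\ideal_2})\cap S_{\sC}$ (as opposed to just $\subset$) needs that no label of $S_{\sC}$ can simultaneously be an upward and a downward admissible move at $(\ideal_1,\ideal_2)$. This is immediate from minusculity via Lemma~\ref{lem: move-symmetry}: for $m\notin\{i_1,i_3\}$ one has $\llan\wt_1,\sr_m\rran=-\llan\wt_2,\sr_m\rran$, so the label $s_m$ is available in at most one of $A_{\ideal_1,\ideal_2}$, $A_{\ideal_2,\ideal_1}$. With that in hand, $D\sqcup U=S_{\sC}$ together with $D\subset\labl(A_{\ideal_1,\ideal_2})$, $U\subset\labl(A_{\ideal_2,\ideal_1})$ forces both containments to be equalities.
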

Our definition of cube captures the situation where a pair of embeddings $(\emb_1,\emb_2)$ of the order ideals $(\ideal_1,\ideal_2)\in\sP_{\is}$ is an element of $T^{A_1,A_2}_{\ideal_1,\ideal_2}$ and the set consisting of all elements of $\sP_{\is}$ obtainable from $(\ideal_1,\ideal_2)$ using any moves with labels in $S=\{\labl(\pb)\mid \pb\in A_1\cup A_2\}$ forms an $|S|$-cube. We now show that this characterization is unique.

\begin{lem}
    \label{lem: unique-mcube}
    If two $m$-cubes $\sC_1, \sC_2\subset \sP_{\is}$ have nonempty intersection and have the same set $S$ of labels, then $\sC_1=\sC_2$. 
\end{lem}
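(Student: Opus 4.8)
The plan is to show that the top elements of $\sC_1$ and $\sC_2$ coincide; once this is established, Corollary \ref{cor:mcube-subsets} finishes the argument since it identifies an $m$-cube with the Boolean lattice on its set of labels in a way that is entirely determined by the top element together with $S$. So the whole proof reduces to showing: \emph{if $(\ideal_1,\ideal_2)$ lies in both cubes, then the top element of $\sC_1$ equals the top element of $\sC_2$.}

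First I would set up the following local description. Fix $(\ideal_1,\ideal_2)\in\sC_1\cap\sC_2$. By Corollary \ref{cor:mcube-subsets} applied to $\sC_1$, the element $(\ideal_1,\ideal_2)$ admits exactly $m$ moves with labels in $S$: a subset $S_1^{\uparrow}=\labl(A_{\ideal_2,\ideal_1})\cap S$ of ``upward'' moves and its complement $S_1^{\downarrow}=S\setminus S_1^{\uparrow}\subset\labl(A_{\ideal_1,\ideal_2})$ of ``downward'' moves, and the top element of $\sC_1$ is reached from $(\ideal_1,\ideal_2)$ by performing all the upward moves in $S_1^{\uparrow}$ (in any order, since by Lemma \ref{lem:mcube-moves} all labels in $S$ commute, and by Lemma \ref{lem:moves_commute} the corresponding poset elements are incomparable maximal elements, so the moves are independent). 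The same statement holds verbatim for $\sC_2$ with a set $S_2^{\uparrow}$. Hence it suffices to prove $S_1^{\uparrow}=S_2^{\uparrow}$, i.e. that the partition of $S$ into ``up'' and ``down'' moves at $(\ideal_1,\ideal_2)$ is intrinsic to $(\ideal_1,\ideal_2)$ and $S$, independent of which cube we are in.

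This is where I expect the main obstacle, and the key observation is that the direction of a move labeled $s_j\in S$ at $(\ideal_1,\ideal_2)$ is \emph{not} a matter of choice: a label $s_j$ corresponds to a move in $A_{\ideal_1,\ideal_2}$ (downward) precisely when there is a maximal element $\pb\in\ideal_1$ with $\labl(\pb)=s_j$ that can be moved to $\ideal_2$, equivalently (by Lemma \ref{lem: move-symmetry} and the discussion following it, since $j\notin\{i_1,i_3\}$ as $s_j$ labels a move) when $\llan\wt_1,\sr_j\rran=+1$; and it corresponds to a move in $A_{\ideal_2,\ideal_1}$ (upward) precisely when $\llan\wt_1,\sr_j\rran=-1$. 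These two conditions are mutually exclusive and determined entirely by $\ideal_1$ and $j$. Thus $S_1^{\uparrow}=\{s_j\in S\mid \llan\wt_1,\sr_j\rran=-1\}=S_2^{\uparrow}$, so the two cubes have the same top element $(\ideal_{t_1},\ideal_{t_2})$, obtained from $(\ideal_1,\ideal_2)$ by performing all upward moves in this common set.

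Finally I would invoke Corollary \ref{cor:mcube-subsets} once more: starting from the common top element $(\ideal_{t_1},\ideal_{t_2})$ and the common set of labels $S$, the corollary presents $\sC_1$ (respectively $\sC_2$) as exactly the set of pairs obtainable from $(\ideal_{t_1},\ideal_{t_2})$ by performing, for each subset $S'\subseteq S$, the downward moves with labels in $S'$ — and by the commutation/independence of these moves this set of pairs depends only on $(\ideal_{t_1},\ideal_{t_2})$ and $S$. Therefore $\sC_1=\sC_2$. I should double-check one edge case in the write-up: that the move labeled $s_j\in S$ at $(\ideal_1,\ideal_2)$ really does have label distinct from $s_{i_1}$ and $s_{i_3}$ so that Lemma \ref{lem: move-symmetry} applies — but this is immediate since $S=S_{\sC_1}$ consists of labels of admissible moves, every such move is a maximal element of some $\ideal\subideal\idealij[1]$ (hence label $\neq s_{i_1}$) whose image lies in $\minposet\setminus\ideal'$ with $\ideal'\supseteq\idealij[2]$ having no box labeled $s_{i_3}$ outside it in the relevant range (hence label $\neq s_{i_3}$), exactly as in the proof of Lemma \ref{lem: toric-partition}.
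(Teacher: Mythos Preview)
Your proof is correct and uses the same essential ingredient as the paper's, namely Corollary~\ref{cor:mcube-subsets}, but the organization differs slightly. The paper bypasses the identification of the top element entirely: from the common point $(\ideal_1,\ideal_2)$ it simply applies every subset $S'\subseteq S$ of moves (in whatever direction each move naturally goes at that point), obtaining $2^m$ distinct elements that must lie in both $\sC_1$ and $\sC_2$; since each $m$-cube has exactly $2^m$ elements, equality follows by counting. Your route---showing the up/down partition of $S$ at $(\ideal_1,\ideal_2)$ is intrinsically determined by $\llan\wt_1,\sr_j\rran$, hence the top elements agree, hence the cubes agree---is equally valid and makes the mechanism a bit more explicit, at the cost of a few extra lines. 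The counting argument is shorter because it never needs to say which moves go up and which go down; your weight-based criterion for the direction is a nice observation but not strictly required here.
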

\begin{proof}
    Let $(\ideal_1,\ideal_2)\in \sC_1\cap \sC_2$. By Corollary \ref{cor:mcube-subsets}, for any subset $S'\subset S$, the result of applying the moves with corresponding labels in $S'$ to $(\ideal_1,\ideal_2)$ must be another element of $\sC_1\cap\sC_2$, which we denote by $S'\cdot (\ideal_1,\ideal_2)$. (Note that this element does not depend on the order of applying the moves due to the commutativity of the elements of $S$.) Furthermore, if $S_1'\neq S_2'$, then $S_1'\cdot(\ideal_1,\ideal_2)\neq S_2'\cdot(\ideal_1,\ideal_2)$. Thus, both $\sC_1$ and $\sC_2$ must contain the subset
    \[\{S'\cdot (\ideal_1,\ideal_2)\mid S'\subset S\}\]
    which contains $2^m$ elements. However, since any $m$-cube has exactly $2^m$ elements, we must have that $\sC_1=\sC_2$ are both equal to the set above.
\end{proof}
These are the main structural properties that we will need for $m$-cubes to organize our cancellation of toric terms. We now present our main technical result in this section about the uniformity of toric terms in an $m$-cube. To refer to pairs of embeddings yielding the same toric terms, we make the following definition:

\begin{df}
    \label{df:toric_equality}
    Consider two elements $(\ideal_1,\ideal_2),(\ideal_3,\ideal_4)\in\sP_\is$ and corresponding embeddings $\emb_i:\ideal_i\hookrightarrow\minposet$. We say that $(\emb_1,\emb_2)$ and $(\emb_3,\emb_4)$ are \emph{torically equal}, written $(\emb_1,\emb_2)\overset{t}{=}(\emb_3,\emb_4)$, if
    \[
    a_{\emb_1}a_{\emb_2} = a_{\emb_3}a_{\emb_4},
    \]
    where we write $a_{\emb_i} = a_{\emb_i(\ideal_i)}$, which is in turn defined in equation \eqref{eq:toric_term_from_poset}.
\end{df} 
Note that the case that $a_{\emb_1}\neq a_{\emb_3}$ or $a_{\emb_2}\neq a_{\emb_4}$ is not excluded, as long as the products agree. To simplify notation in the remainder of this section, we will use $A_{\ideal_1,\ideal_2}(S)$ to denote the set $\{\pb\in A_{\ideal_1,\ideal_2}\mid \labl(\pb)\in S\}$ of moves in $A_{\ideal_1,\ideal_2}$ whose labels are contained in the set of labels $S$. 

\begin{lem}
  \label{lem: toric-bijections}
    Let $\sC\subset \sP_\is$ be an $m$-cube with the set $S=S_{\sC}$ of labels. For any distinct pairs $(\ideal_1,\ideal_2),(\ideal_3,\ideal_4)\in \sC$, let $A_{12}=A_{\ideal_1,\ideal_2}(S)$, $A_{21}=A_{\ideal_2,\ideal_1}(S)$ and similarly $A_{34}=A_{\ideal_3,\ideal_4}(S)$, $A_{43}=A_{\ideal_4,\ideal_3}(S)$. There is a bijection
    \[
    \kappa_{A_{21},A_{12}}^{A_{43},A_{34}}:T^{A_{21},A_{12}}_{\ideal_1,\ideal_2} \overset{\sim}{\longrightarrow} T^{A_{43},A_{34}}_{\ideal_3,\ideal_4}
    \]
    such that for any $(\emb_1,\emb_2)\in T^{A_{21},A_{12}}_{\ideal_1,\ideal_2}$, we have $(\emb_1,\emb_2) \overset{t}{=} \kappa_{A_{21},A_{12}}^{A_{43},A_{34}}(\emb_1,\emb_2)$.
\end{lem}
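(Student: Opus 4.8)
The plan is to build the bijection $\kappa$ by moving step-by-step through the $m$-cube $\sC$, reducing to the case where $(\ideal_1,\ideal_2)$ and $(\ideal_3,\ideal_4)$ differ by a single move. Precisely, since $\sC$ is an $m$-cube, any two elements are connected by a chain of cover relations in $\sC$, and composing the single-move bijections along such a chain will give the desired $\kappa$; one must check independence of the chosen chain, which will follow from the commutativity of the labels in $S_{\sC}$ (Lemma~\ref{lem:mcube-moves}) together with uniqueness of $m$-cubes with a given label set (Lemma~\ref{lem: unique-mcube}), much as in the proof of Lemma~\ref{lem: unique-mcube} itself. So the crux is the cover-relation case: suppose $(\ideal_1,\ideal_2)\gtrdot(\ideal_3,\ideal_4)$ in $\sC$ via an admissible move $\pb_*\in A_{\ideal_1,\ideal_2}$, $\labl(\pb_*)\in S$, moved to $\pb_*'\in\minposet\setminus\ideal_2$, so that $\ideal_3=\ideal_1\setminus\pb_*$ and $\ideal_4=\ideal_2\sqcup\pb_*'$.

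For the cover-relation step, the idea is exactly the one illustrated in the worked $\LG(5,10)$ example following Lemma~\ref{lem: initial-plucker}: given $(\emb_1,\emb_2)\in T^{A_{21},A_{12}}_{\ideal_1,\ideal_2}$, I would first record that $\pb_*\in A_{12}$ forces $(\emb_1,\emb_2)\in\second[\pb_*]{\ideal_1}{\ideal_2}$, i.e.\ there is a valid extension $\emb_2':\ideal_2\sqcup\pb_*'\hookrightarrow\minposet$ with $\emb_2'|_{\ideal_2}=\emb_2$ and $\emb_2'(\pb_*')=\emb_1(\pb_*)$. Define $\emb_3=\emb_1|_{\ideal_3}$ and $\emb_4=\emb_2'$. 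Then $\emb_3(\ideal_3)\cup\emb_4(\ideal_4)=\emb_1(\ideal_1)\cup\emb_2(\ideal_2)$ and $\emb_3(\ideal_3)\cap\emb_4(\ideal_4)=\emb_1(\ideal_1)\cap\emb_2(\ideal_2)$ as multisets of boxes, since we have simply transferred the single box $\emb_1(\pb_*)=\emb_2'(\pb_*')$ from the first factor to the second; hence $a_{\emb_3}a_{\emb_4}=a_{\emb_1}a_{\emb_2}$, giving toric equality. The inverse map is symmetric, using that $\pb_*'\in A_{\ideal_4,\ideal_3}$ forces $(\emb_3,\emb_4)\in\first[\pb_*']{\ideal_3}{\ideal_4}$. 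The substantive content is to verify that $(\emb_3,\emb_4)$ lands in the \emph{correct} toric set $T^{A_{43},A_{34}}_{\ideal_3,\ideal_4}$, and here I would use Corollary~\ref{cor:mcube-subsets}: the moves in $S$ available at $(\ideal_3,\ideal_4)$ are $A_{34}=A_{12}\setminus\{\pb_*\}$ (those going further down in $\sC$) together with $A_{43}=A_{21}\sqcup\{\pb_*'\}$ (those going back up), and I must show $(\emb_3,\emb_4)$ admits precisely the moves in $A_{43}$ from $\ideal_4$ to $\ideal_3$ and precisely those in $A_{34}$ from $\ideal_3$ to $\ideal_4$. For the moves from $\ideal_4$ to $\ideal_3$: $\pb_*'$ itself works by construction; a move $\pb\in A_{21}$ with $(\emb_1,\emb_2)\in\first[\pb]{\ideal_1}{\ideal_2}$ remains valid after transferring the box $\emb_1(\pb_*)$, because that box, being $\emb_1(\pb_*)$ with $\pb_*$ maximal in $\ideal_1$ over $\pb_*'$, does not obstruct moving $\emb_2(\pb)$ into $\emb_3(\ideal_3)$ (one checks the relevant order comparisons, using that the labels in $S$ commute so the boxes $\pb,\pb_*$ are incomparable in $\ideal_1$ and likewise in images). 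The analogous checks handle the moves from $\ideal_3$ to $\ideal_4$, and the ``no other moves'' direction uses that the $T$-sets partition all embeddings (by disjointness and Lemma~\ref{lem: toric-partition}) so it suffices to land in \emph{some} $T^{A_1,A_2}_{\ideal_3,\ideal_4}$ and then identify $(A_1,A_2)=(A_{43},A_{34})$.

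I would then assemble the general bijection: for arbitrary $(\ideal_1,\ideal_2),(\ideal_3,\ideal_4)\in\sC$, pick any sequence of cover relations in $\sC$ connecting them (going down then up, say, through the meet in $\sC$), compose the corresponding single-move bijections and their inverses, and observe that the composite preserves toric equality since each factor does. Independence of the chosen path: any two paths differ by squares (commuting pairs of moves), and on each such square the two composites agree because transferring two boxes in either order yields the same pair of embeddings — this is where commutativity of $S_{\sC}$ enters decisively, and it also guarantees the target toric set $T^{A_{43},A_{34}}_{\ideal_3,\ideal_4}$ computed at the endpoint is well-defined independent of path via Corollary~\ref{cor:mcube-subsets}.

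The main obstacle I anticipate is the bookkeeping in the cover-relation step that the transferred-box construction sends $T^{A_{21},A_{12}}_{\ideal_1,\ideal_2}$ \emph{onto} $T^{A_{43},A_{34}}_{\ideal_3,\ideal_4}$ and nothing larger — i.e.\ that no ``new'' move becomes available and no ``old'' move is lost beyond the expected swap $\pb_*\leftrightarrow\pb_*'$. Making this precise requires carefully tracking, for each $\pb\in A_{\ideal_2,\ideal_1}\cup A_{\ideal_1,\ideal_2}$, the order relations between $\emb_i(\pb)$ and the transferred box in $\minposet$, and invoking Lemma~\ref{lem: move-symmetry} and the discussion after it (which characterizes which maximal elements are admissible moves) together with the commutativity from Lemma~\ref{lem:mcube-moves}. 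The rest — composing along chains and checking path-independence — is then a formal consequence of the cube structure established in Lemmas~\ref{lem:mcube-moves} through \ref{lem: unique-mcube}.
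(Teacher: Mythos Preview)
Your proposal is correct and follows essentially the same approach as the paper: reduce to a single cover-relation step, transfer the box $\emb_1(\pb_*)$ from one embedding to the other to define $(\emb_3,\emb_4)$, and verify toric equality and $T$-set membership directly. The paper dispatches the general case with a bare ``straightforward induction'' and does not discuss path-independence at all, since the lemma only asserts the \emph{existence} of some toric-equality-preserving bijection, not a canonical one; your extra care on this point is not wrong, but it is also not needed for the application in Theorem~\ref{thm: phi_cancellation}, which only uses equality of the corresponding sums of toric monomials.
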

\begin{proof}
    Since $\sC$ is connected by moves corresponding to labels in $S$, we may reduce to the case when $(\ideal_3,\ideal_4)$ can be obtained from $(\ideal_1,\ideal_2)$ by a single move with label in $S$. The full statement of the lemma then follows by a straightforward induction. Thus, we consider the two cases $(\ideal_3,\ideal_4) \gtrdot (\ideal_1,\ideal_2)$ and $(\ideal_3,\ideal_4) \lessdot (\ideal_1,\ideal_2)$. To simplify notation, we will write $T_{12}=T^{A_{21},A_{12}}_{\ideal_1,\ideal_2}$ and $T_{34}=T^{A_{43},A_{34}}_{\ideal_3,\ideal_4}$, while writing $\kappa_{12}^{34}=\kappa_{A_{21},A_{12}}^{A_{43},A_{34}}$ for the remainder of this proof.

    In the first case, $(\ideal_3,\ideal_4) \gtrdot (\ideal_1,\ideal_2)$ corresponding to a move $\pb_3\in A_{34}$ which moves to $\pb_3'$, we have $(\ideal_1,\ideal_2) = (\ideal_3\setminus\pb_3,\ideal_2=\ideal_4\sqcup\pb_3')$. In this case, we define $\kappa_{12}^{34}$ as follows. Given a pair of embeddings $(\emb_1,\emb_2)\in T_{12}$ of $(\ideal_1,\ideal_2)$, we define the embeddings $(\emb_3,\emb_4)$ of $(\ideal_3,\ideal_4)$ by setting
    \[
    \emb_3|_{\ideal_1} = \emb_1 \qand \emb_3(\pb_3)=\emb_2(\pb_3'), \qwhile \emb_4 = \emb_2|_{\ideal_4}.
    \tag{$\dag$}
    \label{aux_eq:def_kappa_case_1}
    \]
    We then set $\kappa_{12}^{34}(\emb_1,\emb_2) = (\emb_3, \emb_4)$. The fact that this defines a map $\kappa_{12}^{34}:T_{12}\rightarrow T_{34}$ follows immediately from the definitions of $A_{\ideal_2,\ideal_1}$ and $\first[\pb_3]{\ideal_1}{\ideal_2}$ along with Corollary \ref{cor:mcube-subsets}.

    In the second case, $(\ideal_3,\ideal_4) \lessdot (\ideal_1,\ideal_2)$ corresponding to a move $\pb_1\in A_{12}$ which moves to $\pb_1'$, we have $(\ideal_3,\ideal_4)=(\ideal_1\setminus\pb_1,\ideal_4=\ideal_2\sqcup\pb_1')$. In this case, we define $\kappa_{12}^{34}$ as follows. Given a pair of embeddings $(\emb_1,\emb_2)\in T_{12}$ of $(\ideal_1,\ideal_2)$, we define the embeddings $(\emb_3,\emb_4)$ of $(\ideal_3,\ideal_4)$ by setting
    \[
    \emb_3 = \emb_1|_{\ideal_3}, \qwhile \emb_4|_{\ideal_2}=\emb_2 \qand \emb_4(\pb_1')=\emb_1(\pb_1).
    \tag{$\ddag$}
    \label{aux_eq:def_kappa_case_2}
    \]
    We then set $\kappa_{12}^{34}(\emb_1,\emb_2) = (\emb_3, \emb_4)$. Similarly to before, the fact that $\kappa_{12}^{34}$ is well-defined follows immediately from the definitions of $A_{\ideal_1,\ideal_2}$ and $\second[\pb_1]{\ideal_1}{\ideal_2}$ along with Corollary \ref{cor:mcube-subsets}.

    We now show in both cases that $\kappa_{34}^{12}\circ\kappa_{12}^{34}=\mathrm{id}$. Fix any pair $(\emb_1,\emb_2)\in T_{12}$ and write $(\emb_3,\emb_4)=\kappa_{12}^{34}(\emb_1,\emb_2)$ and $(\emb_1',\emb_2')=\kappa_{34}^{12}\circ\kappa_{12}^{34}(\emb_1,\emb_2)$. First, suppose that $(\ideal_3,\ideal_4) \gtrdot (\ideal_1,\ideal_2)$, then  
    \[
    \emb_1' \overset{\text{\eqref{aux_eq:def_kappa_case_2}}}{=\!=} \emb_3|_{\ideal_1}\overset{\text{\eqref{aux_eq:def_kappa_case_1}}}{=\!=}\emb_1, 
    \qwhile 
    \emb_2'|_{\ideal_4} \overset{\text{\eqref{aux_eq:def_kappa_case_2}}}{=\!=} \emb_4 \overset{\text{\eqref{aux_eq:def_kappa_case_1}}}{=\!=} \emb_2|_{\ideal_4}
    \qand
    \emb_2'(\pb_3') \overset{\text{\eqref{aux_eq:def_kappa_case_2}}}{=\!=} \emb_3(\pb_3) \overset{\text{\eqref{aux_eq:def_kappa_case_1}}}{=\!=} \emb_2(\pb_3')
    \]
    where $\pb_3'\in\ideal_2$ and $\pb_3\in\ideal_3$ are as above. In the second case, $(\ideal_3,\ideal_4) \lessdot (\ideal_1,\ideal_2)$, we instead find
    \[
    \emb_1'|_{\ideal_3} \overset{\text{\eqref{aux_eq:def_kappa_case_1}}}{=\!=} \emb_3 \overset{\text{\eqref{aux_eq:def_kappa_case_2}}}{=\!=} \emb_1|_{\ideal_3}
    \qand
    \emb_1'(\pb_1) \overset{\text{\eqref{aux_eq:def_kappa_case_1}}}{=\!=} \emb_4(\pb_1') \overset{\text{\eqref{aux_eq:def_kappa_case_2}}}{=\!=} \emb_1(\pb_1),
    \qwhile
    \emb_2' \overset{\text{\eqref{aux_eq:def_kappa_case_1}}}{=\!=} \emb_4|_{\ideal_4} \overset{\text{\eqref{aux_eq:def_kappa_case_2}}}{=\!=} \emb_2,
    \]
    with $\pb_1\in\ideal_1$ and $\pb_1'\in\ideal_4$ as above.    
    
    We now prove the toric equality in both cases. As before, let $(\emb_1,\emb_2)\in T_{12}$, write $(\emb_3,\emb_4)=\kappa_{12}^{34}$, and consider the two cases. First, suppose $(\ideal_3,\ideal_4) \gtrdot (\ideal_1,\ideal_2)$, so $\ideal_3=\ideal_1\sqcup\pb_3$ and $\ideal_2=\ideal_4\sqcup\pb_3'$ as above, then
    \[
    a_{\emb_1}a_{\emb_2} = a_{\emb_1(\ideal_1)}a_{\emb_2(\pb_3')}a_{\emb_2(\ideal_4)}
    \overset{\text{\eqref{aux_eq:def_kappa_case_1}}}{=\!=}     a_{\emb_3(\ideal_1)}a_{\emb_3(\pb_3)}a_{\emb_4(\ideal_4)}
    = a_{\emb_3}a_{\emb_4}.
    \]
    Second, suppose $(\ideal_3,\ideal_4) \lessdot (\ideal_1,\ideal_2)$, so $\ideal_1=\ideal_3\sqcup\pb_1$ and $\ideal_4=\ideal_2\sqcup\pb_1'$ as above, then
    \[
    a_{\emb_1}a_{\emb_2} = a_{\emb_1(\ideal_3)}a_{\emb_1(\pb_1)}a_{\emb_2(\ideal_2)} \overset{\text{\eqref{aux_eq:def_kappa_case_2}}}{=\!=} a_{\emb_3(\ideal_3)}a_{\emb_4(\pb_1')}a_{\emb_4(\ideal_2)}=a_{\emb_3}a_{\emb_4}
    \]
    completing the proof of the statement.
\end{proof}

We are finally ready to prove the main theorem of this section. For this, we define $L_d\subset \sP_{\is}$ to be the set of pairs $(\ideal_1,\ideal_2)\in \sP_{\is}$ which can be obtained from the top element $(\idealij[1],\idealij[2])\in\sP_{\is}$ by a minimal length sequence of $d\ge 0$ moves 
\[
\pb_1\in A_{\idealij[1],\idealij[2]}, \quad \dots, \quad
\pb_d\in A_{\idealij[1]\setminus\{\pb_1,\dots, \pb_{d-1}\}, \idealij[2]\sqcup\{\pb_1',\dots, \pb_{d-1}'\}}
\] 
from $\idealij[1]$ to $\idealij[2]$. (Here, minimality is considered among all sequences of moves between $(\idealij[1],\idealij[2])$ and $(\ideal_1,\ideal_2)$ in $\sP_{\is}$.) If there are no such pairs in $\sP_{\is}$, then we define $L_d=\varnothing$. In particular, since $\sP_{\is}$ is finite, then $L_d=\varnothing$ for all $d$ sufficiently large.

\begin{thm} 
\label{thm: phi_cancellation}
    The generalized minor $\phiGLS[\is]$ is equal on $\dunimP$ to the polynomial $\cD_{\is}$ obtained as the alternating sum over the sets $L_d\subset\sP_{\is}$
    \[\cD_{\is} = \sum_{d\ge 0} (-1)^d\sum_{(\ideal_1,\ideal_2)\in L_d} p_{\ideal_1}p_{\ideal_2}.\]
\end{thm}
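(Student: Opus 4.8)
The strategy is to reduce the equality $\phiGLS[\is] = \cD_{\is}$ on $\dunimP$ to an equality of toric polynomials on the dense torus $\opendunim \subset \dunimP$, and then to verify this toric equality by a careful matching of monomials. The reduction step is immediate from Remark \ref{rem:equality_of_functions_on_dunimP} (cited in the introduction) together with the fact that $\opendunim$ is dense in $\dunimP$: since both $\phiGLS[\is]$ and $\cD_{\is}$ are regular functions on $\dunimP$, it suffices to show they agree on $\opendunim$. On $\opendunim$, Theorem \ref{thm:toric_monomial_phi} tells us that $\phiGLS[\is]|_{\opendunim} = \prod_{j=1}^{\cis} a_{\idealij}$ is a single monomial, while \eqref{eq:toric_expression_for_Plucker_coord_in_posets} expresses each $p_{\ideal_1}p_{\ideal_2}|_{\opendunim}$ as a sum of monomials indexed by pairs of embeddings $(\emb_1,\emb_2)$. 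So the theorem becomes the combinatorial claim: in the alternating sum $\sum_{d\ge 0}(-1)^d \sum_{(\ideal_1,\ideal_2)\in L_d} p_{\ideal_1}p_{\ideal_2}|_{\opendunim}$, all toric monomials cancel except the single monomial $a_{\idealij[1]}a_{\idealij[2]} = \prod_{j=1}^{\cis} a_{\idealij}$ coming from $(\id_{\idealij[1]}, \id_{\idealij[2]})$ at $d=0$. (Here I am using that $\cis=2$ for all cases handled in this section, with the four exceptional $\cis\in\{3,4\}$ cases deferred to Appendix \ref{app:exceptional} as announced.)

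\textbf{Main combinatorial argument.} The key is to organize \emph{all} pairs of embeddings $(\emb_1,\emb_2)$ of all $(\ideal_1,\ideal_2)\in\sP_{\is}$ into cancelling groups using the $m$-cube structure. For each $(\ideal_1,\ideal_2)\in\sP_{\is}$, the sets $T^{A_1,A_2}_{\ideal_1,\ideal_2}$ partition the toric terms of $p_{\ideal_1}p_{\ideal_2}|_{\opendunim}$, as noted after Definition \ref{df:restricted_subposet_order}. Given a term $(\emb_1,\emb_2)\in T^{A_1,A_2}_{\ideal_1,\ideal_2}$, I would form the set $S = \labl(A_1\cup A_2)$ of labels and consider the unique $|S|$-cube $\sC$ through $(\ideal_1,\ideal_2)$ with label set $S$; existence and uniqueness of such a cube is exactly the content of Lemma \ref{lem: unique-mcube} together with Remark \ref{rem:poset_is_rep} and Corollary \ref{cor:mcube-subsets} (the latter guaranteeing that starting from $(\ideal_1,\ideal_2)$ and applying all subsets of moves with labels in $S$ produces a genuine $2^{|S|}$-element cube). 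By Lemma \ref{lem: toric-bijections}, the bijections $\kappa$ identify $T^{A_1,A_2}_{\ideal_1,\ideal_2}$ torically with $T^{A_1',A_2'}_{\ideal_3,\ideal_4}$ for each of the $2^{|S|}$ vertices $(\ideal_3,\ideal_4)$ of $\sC$, where $A_1',A_2'$ are the "up" and "down" moves at that vertex within $\sC$. So the term $(\emb_1,\emb_2)$ occurs — torically equal — once at each vertex of $\sC$. Now I would show that the signs alternate across the cube: the partial order on $\sC$ coincides, up to the level, with the order induced from $\sP_{\is}$, so a vertex of $\sC$ at cube-rank $r$ below the top of $\sC$ lies in $L_{d_0+r}$ where $d_0$ is the level of the top of $\sC$. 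This requires checking that a minimal-length sequence of moves from $(\idealij[1],\idealij[2])$ to a vertex of $\sC$ has length $d_0 + r$ — i.e. that moving "into" the cube is never a detour — which follows because all the relevant moves commute (Lemma \ref{lem:mcube-moves}) and because $\sP_{\is}$ is a distributive lattice / minuscule weight poset (Remark \ref{rem:poset_is_rep}), so levels are well-behaved under the cover relations. Granting this, the $2^{|S|}$ torically-equal copies of $(\emb_1,\emb_2)$ appear with signs $(-1)^{d_0+r}$ for $r=0,1,\dots,|S|$ with multiplicity $\binom{|S|}{r}$, summing to $(-1)^{d_0}(1-1)^{|S|} = 0$ whenever $|S|\ge 1$.

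\textbf{The surviving term and the main obstacle.} The only terms with $|S|=0$, i.e. $A_1=A_2=\varnothing$, are those in $T^{\varnothing,\varnothing}_{\ideal_1,\ideal_2}$; by Lemma \ref{lem: toric-partition} these are empty unless $(\ideal_1,\ideal_2) = (\idealij[1],\idealij[2])$ is the top element, and by Lemma \ref{lem: initial-plucker} the only such term there is $(\id_{\idealij[1]},\id_{\idealij[2]})$, which contributes $a_{\idealij[1]}a_{\idealij[2]} = \prod_{j=1}^{\cis} a_{\idealij}$ with sign $(-1)^0 = +1$. This is exactly $\phiGLS[\is]|_{\opendunim}$ by Theorem \ref{thm:toric_monomial_phi}, completing the proof on $\opendunim$ and hence on $\dunimP$. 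I expect the main obstacle to be the bookkeeping in the cancellation: one must verify that every toric term of every $p_{\ideal_1}p_{\ideal_2}$ with $(\ideal_1,\ideal_2)\in L_d$ is captured by exactly one cube — i.e. that the assignment (term) $\mapsto$ (cube) is well-defined and that no term is counted in two different cubes — and that the level function $d$ restricted to a cube is an affine function of cube-rank with no collisions. The potential subtlety is that a single pair $(\ideal_1,\ideal_2)$ may lie in $L_d$ via several minimal move-sequences and may be a vertex of several cubes with different label sets; the partition of its toric terms by the $T^{A_1,A_2}_{\ideal_1,\ideal_2}$ (which is canonical, depending only on $(\emb_1,\emb_2)$) is what disambiguates this, and Lemma \ref{lem: unique-mcube} ensures the cube attached to a given term is unique. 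Making this precise — and confirming that the $L_d$ used in the statement indeed biject with cube-vertices-at-level-$d$ without over- or under-counting — is the crux; everything else is assembling the already-proven lemmas.
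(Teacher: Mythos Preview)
Your proposal is correct and follows essentially the same approach as the paper's proof: reduce to the dense torus, partition the toric terms of each $p_{\ideal_1}p_{\ideal_2}$ by the sets $T^{A_1,A_2}_{\ideal_1,\ideal_2}$, regroup the whole alternating sum as a sum over cubes $\sC\subset\sP_{\is}$ via Lemma~\ref{lem: unique-mcube}, use the toric bijections of Lemma~\ref{lem: toric-bijections} and Corollary~\ref{cor:mcube-subsets} to reduce each cube's contribution to an alternating binomial sum $\sum_r(-1)^r\binom{|S_\sC|}{r}$, and invoke Lemmas~\ref{lem: initial-plucker} and~\ref{lem: toric-partition} for the surviving $0$-cube. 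The bookkeeping concern you flag as the ``main obstacle'' is exactly what the paper handles by writing the level of $(\ideal_1,\ideal_2)\in\sC$ as $d_{\sC}+|A_{\ideal_2,\ideal_1}(S_{\sC})|$, which is well-defined because $\sP_{\is}$ is graded (Remark~\ref{rem:poset_is_rep}).
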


\begin{proof}
    We will prove this statement by computing the restrictions of both sides to the open, dense torus $\opendunim \subset \dunimP$ and verifying that they are equal. The desired result then follows from the irreducibility of $\dunimP$. We computed the restriction $\phiGLS[\is]|_{\opendunim}$ in Theorem \ref{thm:toric_monomial_phi}, so we now focus on the claimed Pl\"ucker coordinate expression. We begin by rewriting the restriction of the Pl\"ucker coordinate expression using the toric sets $T^{A_1,A_2}_{\ideal_1,\ideal_2}$
    \[\cD_{\is}|_{\opendunim} = \sum_{d\ge 0} (-1)^d\sum_{(\ideal_1,\ideal_2)\in L_d} p_{\ideal_1}p_{\ideal_2}|_{\opendunim} = \sum_{d\ge 0}(-1)^d\sum_{(\ideal_1,\ideal_2)\in L_d}\sum_{A_1, A_2}\sum_{(\emb_1,\emb_2)\in T^{A_1,A_2}_{\ideal_1,\ideal_2}} a_{\emb_1} a_{\emb_2}\]
    where the third sum on the rightmost side is over all subsets $A_1\subset A_{\ideal_2,\ideal_1}$ and $A_2\subset A_{\ideal_1,\ideal_2}$. For each $(\ideal_1,\ideal_2)\in \sP_{\is}$ and each pair $A_1\subset A_{\ideal_2,\ideal_1}$ and $A_2\subset A_{\ideal_1,\ideal_2}$, there is a unique $(|A_1|+|A_2|)$-cube with the set $\{\labl(\pb)\mid\pb\in A_1\cup A_2\}$ of labels containing $(\ideal_1,\ideal_2)$ by Lemma \ref{lem: unique-mcube}.
    
    For any cube $\sC$, let $d_{\sC}$ denote the minimum number of moves in $\sP_{\is}$ between the top element $(\idealij[1],\idealij[2])$ of $\sP_{\is}$ and the top element of $\sC$ and let $S_{\sC}$ denote the set of labels of $\sC$. Then we may rewrite the sum above by instead summing over all cubes contained in $\sP_{\is}$. Before rewriting it like this, we first note that by Lemma \ref{lem: toric-bijections}, we have for all cubes $\sC$ and all pairs $(\ideal_1,\ideal_2), (\ideal_3,\ideal_4)\in \sC$ that 
    \[\sum_{(\emb_1,\emb_2)\in T^{A_1,A_2}_{\ideal_1,\ideal_2}} a_{\emb_1}a_{\emb_2} = \sum_{(\emb_3,\emb_4)\in T^{A_3,A_4}_{\ideal_3,\ideal_4}} a_{\emb_3}a_{\emb_4},\]
    where $A_1=A_{\ideal_2,\ideal_1}(S_{\sC}), A_2=A_{\ideal_1,\ideal_2}(S_{\sC})$ and $A_3=A_{\ideal_4,\ideal_3}(S_{\sC}), A_4 = A_{\ideal_3,\ideal_4}(S_{\sC})$. Thus the sum of monomials corresponding to pairs of embeddings in the toric set $T^{A_{\ideal_2,\ideal_1}(S_{\sC}),A_{\ideal_1,\ideal_2}(S_{\sC})}_{\ideal_1,\ideal_2}$ is independent of the particular element $(\ideal_1,\ideal_2)\in\sC$, so for each cube $\sC$ we will simply use $T_{\sC}^{S_{\sC}}$ to denote any one of the torically equivalent toric sets $T^{A_{\ideal_2,\ideal_1}(S_{\sC}),A_{\ideal_1,\ideal_2}(S_{\sC})}_{\ideal_1,\ideal_2}$. We now rewrite the sum as
    \[\cD_{\is}|_{\opendunim}=\sum_{\sC\subset \sP_{\is}} \sum_{(\ideal_1,\ideal_2)\in \sC}(-1)^{d_{\sC}+|A_{\ideal_2,\ideal_1}(S_{\sC})|}\sum_{(\emb_1,\emb_2)\in T_{\sC}^{S_{\sC}}} a_{\emb_1} a_{\emb_2}.\]
    
    Furthermore, by Corollary \ref{cor:mcube-subsets}, $|A_{\ideal_2,\ideal_1}(S_{\sC})|$ is equal to the number of cover relations of the form $(\ideal_3,\ideal_4) \gtrdot (\ideal_1,\ideal_2)$ in $\sC$ (i.e.~where $(\ideal_1,\ideal_2)$ is the lesser element) and for a fixed $j$, there are $\binom{|S_{\sC}|}{j}$ elements $(\ideal_1,\ideal_2)\in\sC$ with $|A_{\ideal_2,\ideal_1}(S_{\sC})|=j$. Thus we may further rewrite the previous sum over all cubes in $\sP_{\is}$ as
    \begin{align*}
        \cD_{\is}|_{\opendunim} &= \sum_{\sC\subset \sP_{\is}} (-1)^{d_{\sC}} \sum_{j=0}^{|S_{\sC}|} (-1)^j\binom{|S_{\sC}|}{j}\sum_{(\emb_1,\emb_2)\in T_{\sC}^{S_{\sC}}} a_{\emb_1} a_{\emb_2}\\
        &= \sum_{\sC\subset \sP_{\is}} (-1)^{d_{\sC}} \sum_{(\emb_1,\emb_2)\in T_{\sC}^{S_{\sC}}} a_{\emb_1} a_{\emb_2}\sum_{j=0}^{|S_{\sC}|} (-1)^j\binom{|S_{\sC}|}{j}.
    \end{align*}
    The alternating sum of coefficients is equal to
    \[
    \sum_{j=0}^{S_{\sC}} (-1)^j \binom{|S_{\sC}|}{j} = (1+(-1))^{|S_{\sC}|} = \begin{cases} 0 & |S_{\sC}|>0\\ 1 & |S_{\sC}|=0\end{cases}.
    \]
    Thus, the only contributions to the overall sum come from $0$-cubes with $S_{\sC}=\varnothing$. By Lemmas \ref{lem: initial-plucker} and \ref{lem: toric-partition}, the only nonempty $0$-cube is $\sC_0 = \{(\idealij[1],\idealij[2])\}$ with $T^{\varnothing,\varnothing}_{\idealij[1],\idealij[2]} = \{(\id_{\ideal_{i_1}},\id_{\ideal_{i_2}})\}$, so the right-hand side above reduces to
    \[
    \cD_{\is}|_{\opendunim}=a_{\ideal_{i_1}}a_{\ideal_{i_2}} = \phiGLS[\is]|_{\opendunim},
    \]
    as claimed.
\end{proof}
\section{Using derivations to obtain toric expressions for the numerators   }\label{sec:numerators}

In this section, we will define for each $\is\in[n]$ a derivation $\deis$ on the coordinate ring $\C[\cmX]$, where we recall from Section \ref{sec:toric-monomials} that $i_1=\DSi[k](\is)$ for $\is\in[n]\setminus\{k\}$ and $i_1=\DSi[k](k)=k$ for $\is=k$. As we have mentioned in Section \ref{sec:intro}, this derivation will play a central role in defining the canonical potentials, and hence we will need to investigate its action specifically on monomials appearing in $\cD_\is$. To simplify this investigation, we will translate the derivation $\deis$ on $\C[\cmX]$ into a map on $\sP_\is$ and investigate the induced poset structure of its image. More precisely, our aim will be showing that the torus expansion of $\deis(\cD_\is)$ restricted to $\opendunim$ is closely related to that of $\cD_\is$. For each $\is\in [n]$, we define the derivation $\deis: \C[\cmX]\rightarrow \C[\cmX]$ as follows. First, for a Pl\"ucker coordinate $\p_\ideal\in\C[\cmX]$ labeled by $\ideal\subideal\minposet$, the derivation $\deis$ acts either by adding the minimal element $\pb$ of $\minposet\setminus\ideal$ with label $s_{i_1}$ if $\ideal\sqcup\pb$ is a valid order ideal or trivially otherwise
\[
\deis(p_{\ideal}) = \begin{cases} 
\p_{\ideal\sqcup\{\pb\}}, & \text{if $\ideal\sqcup\{\pb\}\subideal\minposet$,}\\ 
0, & \textrm{otherwise.}\end{cases}
\]
We extend $\deis$ to polynomials in the Pl\"ucker coordinates as a derivation. With this definition, it is immediately clear that: 
\begin{cor}
    $\Yboxdim{3pt}\deis[k](\cD_0) = \deis[k](p_\varnothing) = p_{\yng(1)}$.
\end{cor}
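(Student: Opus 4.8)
The statement to prove is that $\deis[k](\cD_0) = \deis[k](p_\varnothing) = p_{\Yboxdim{3pt}\yng(1)}$, which asserts that applying the derivation $\deis[k]$ to the Pl\"ucker coordinate indexed by the empty order ideal yields the Pl\"ucker coordinate indexed by the one-box diagram.

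This is a direct consequence of unwinding the definition. First, recall that $\cD_0 = p_\varnothing$ by the convention fixed in the introduction (and established rigorously in equation \eqref{eq:phi_0}), so the first equality is immediate. For the second equality, I would apply the definition of $\deis$ to the Pl\"ucker coordinate $p_\varnothing$ labeled by the empty order ideal $\varnothing\subideal\minposet$. By the defining formula, $\deis[k](p_\varnothing)$ equals $p_{\varnothing\sqcup\{\pb\}}$, where $\pb$ is the minimal element of $\minposet\setminus\varnothing = \minposet$ with label $s_{i_1}$, provided $\varnothing\sqcup\{\pb\} = \{\pb\}$ is a valid order ideal. Here $i_1 = \DSi[k](k) = k$ by the convention recalled at the start of Section \ref{sec:numerators}, so $\pb$ is the minimal element of $\minposet$ with label $s_k$.

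The key observation is that the minimal element of $\minposet$ is unique (as noted in Section \ref{ssec:Combinatorics_of_minuscule_posets}, there is a unique weight covering the lowest weight in a minuscule representation, so the minuscule poset has a unique minimal element), and this minimal element has label $s_k$. Indeed, the unique minimal element of $\minposet$ corresponds to the simple reflection $s_k$ since the weight covering the lowest weight $-\dfwt[k]$ is obtained by applying $s_k$ (as $s_i \cdot (-\dfwt[k]) = -\dfwt[k]$ for $i \neq k$). Therefore $\pb$ is exactly this unique minimal element, and $\{\pb\}$ is automatically a valid order ideal (being a single minimal element). The corresponding one-box diagram is precisely $\Yboxdim{3pt}\yng(1)$, so $\deis[k](p_\varnothing) = p_{\{\pb\}} = p_{\Yboxdim{3pt}\yng(1)}$, as claimed.

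There is essentially no obstacle here; this corollary is purely a matter of matching the combinatorial definition of $\deis[k]$ against the structure of the minuscule poset, both of which have been set up in the preceding sections. The only point requiring a moment's care is confirming that the minimal element of $\minposet$ carries the label $s_k$ and not some other label, which follows immediately from the fact that $s_k$ is the unique simple reflection not fixing the lowest weight $-\dfwt[k]$.
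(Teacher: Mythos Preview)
Your proposal is correct and takes the same approach as the paper, which simply states that the corollary is ``immediately clear'' from the definition of $\deis$ without further elaboration. Your unwinding of the definition (identifying $i_1=k$, noting that the unique minimal element of $\minposet$ carries label $s_k$, and concluding that adding this box to $\varnothing$ yields the one-box ideal) is exactly the verification the paper leaves implicit.
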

For the remainder of this section, we will work with $\is\neq0,k$ and study the action of $\deis$ on $\cD_{\is}$. Recall from \eqref{eq:df_wt_corresponding_to_order_ideal} that we defined $\wt_i$ to be the weight $\Pi(\ideal_i)(-\dfwt[k])$ corresponding to the order ideal $\ideal_i$. First, we will make the following easy observation regarding the action of $\deis$:

\begin{lem}
    \label{lem: derivation_plucker_coordinate}
    Let $p_\ideal\in\C[\cmX]$ be a Pl\"ucker coordinate on $\cmX$, then $\deis$ acts nontrivially on $p_\ideal$ if and only if $\llan\wt,\sr_{i_1}\rran=-1$.
\end{lem}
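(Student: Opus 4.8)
The statement to prove is: for a Pl\"ucker coordinate $p_\ideal$ with corresponding weight $\wt = \Pi(\ideal)(-\dfwt[k])$, the derivation $\deis$ acts nontrivially on $p_\ideal$ if and only if $\llan\wt,\sr_{i_1}\rran = -1$. The plan is to unwind the combinatorial definition of $\deis$ in terms of order ideals and then translate the condition ``$\ideal$ admits adding a box with label $s_{i_1}$'' into the language of pairings of weights with simple roots, using the minuscule dictionary already established in the excerpt.

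First I would recall that by the definition given just above the lemma, $\deis(p_\ideal) = p_{\ideal\sqcup\{\pb\}}$ if there exists an element $\pb \in \minposet\setminus\ideal$ with $\labl(\pb) = s_{i_1}$ such that $\ideal\sqcup\{\pb\}\subideal\minposet$ is a valid order ideal, and $\deis(p_\ideal)=0$ otherwise; moreover, in the nontrivial case such a $\pb$ is the (necessarily unique) minimal element of $\minposet\setminus\ideal$ with label $s_{i_1}$. So $\deis$ acts nontrivially on $p_\ideal$ precisely when there is a box $\pb$ with label $s_{i_1}$ that can be added to $\ideal$ to obtain a larger order ideal. Next I would invoke the correspondence between order ideals of $\minposet$ and weights of the minuscule representation (Section \ref{ssec:Combinatorics_of_minuscule_posets}): adding a box labeled $s_i$ to $\ideal$ corresponds to applying $s_i$ on the left to the minimal coset representative, i.e.\ to the Bruhat cover $s_i\cdot\wt \gtrdot \wt$ in the weight poset. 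This is exactly the content already used in the proof of Lemma \ref{lem: move_action}, where the three values $\llan\wt,\sr_m\rran\in\{-1,0,+1\}$ are matched respectively to: (1) there is some $\pb$ with label $s_m$ with $\ideal\sqcup\pb\subideal\minposet$; (2) no element with label $s_m$ can be added to or removed from $\ideal$; (3) there is some $\pb$ with label $s_m$ with $\ideal\setminus\pb\subideal\minposet$.

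Therefore the proof reduces to citing this trichotomy with $m = i_1$: the condition ``some box labeled $s_{i_1}$ can be added to $\ideal$'' is case (1), which holds if and only if $\llan\wt,\sr_{i_1}\rran = -1$. This gives both directions of the iff simultaneously. I would write this out as: by minusculity of $\PluckerRep$, $\llan\wt,\sr_{i_1}\rran\in\{-1,0,1\}$, and $s_{i_1}(\wt) = \wt - \llan\wt,\sr_{i_1}\rran\sdr_{i_1}$ is strictly greater than $\wt$ in the Bruhat order (equivalently, a box labeled $s_{i_1}$ may be appended to $\ideal$) if and only if $\llan\wt,\sr_{i_1}\rran = -1$. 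The equivalence of ``$s_{i_1}(\wt)\gtrdot\wt$'' with ``$\exists\,\pb$ labeled $s_{i_1}$ with $\ideal\sqcup\{\pb\}\subideal\minposet$'' is precisely the dictionary of Section \ref{ssec:Combinatorics_of_minuscule_posets} (adding a box labeled $s_i$ $\leftrightarrow$ multiplying the coset representative on the left by $s_i$, and such a product is again a minimal coset representative of greater length exactly when the weight strictly increases).

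\textbf{Main obstacle.} There is essentially no real obstacle here; the lemma is a direct bookkeeping consequence of facts already assembled in the excerpt. The only point requiring slight care is making sure the minuscule dictionary is applied in the correct direction: ``adding a box'' corresponds to \emph{increasing} the weight in the Bruhat order (since the minimal element of $\minposet$ sits at the top-left and corresponds to the lowest weight $-\dfwt[k]$ being lifted by simple reflections), so the relevant pairing value is $-1$ rather than $+1$. I would double-check this sign against the conventions of equation \eqref{eq:toric_expression_for_Plucker_coord_in_posets} and the discussion in the proof of Lemma \ref{lem: move_action}, where the value $\llan\wt_i,\sr_m\rran = -1$ is explicitly identified with case (1), i.e.\ the existence of an addable box labeled $s_m$. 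Once that sign is fixed, the proof is two or three sentences long.
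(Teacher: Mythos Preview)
Your proposal is correct and follows essentially the same route as the paper: both argue that by minusculity $\llan\wt,\sr_{i_1}\rran\in\{-1,0,1\}$, and that the value $-1$ is exactly the case where $s_{i_1}$ raises the weight, equivalently where a box labeled $s_{i_1}$ can be added to $\ideal$. The paper phrases the middle step via the length condition $\ell(s_{i_1}\labelprod{\ideal})=\ell(\labelprod{\ideal})+1$ rather than citing the trichotomy from Lemma~\ref{lem: move_action}, but this is the same dictionary.
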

\begin{proof}
    Since our Pl\"ucker coordinates come from a minuscule representation, we must have $\llan\wt,\sr_{i_1}\rran\in \{-1,0,+1\}$ and in particular we have $\llan\wt,\sr_{i_1}\rran=-1$ if and only if $s_{i_1}$ acts by adding $\sdr_{i_1}$ to $\wt$, which in turn is equivalent to $\ell(s_{i_1}\labelprod{\ideal})=\ell(\labelprod{\ideal})+1$, where $\labelprod{\ideal}\in\cosets$ corresponds to $\ideal\subideal\minposet$ as in Remark \ref{rem:order_ideals_to_min_coset_reps}. This last statement is equivalent to the existence of a $\pb\in\minposet\setminus\ideal$ with label $s_{i_1}$ such that $\ideal\sqcup\pb$ is a valid order ideal.
\end{proof}

Thus, we will frequently be interested in the coefficient $\llan\wt,\sr_{i_1}\rran$ when investigating the action of $\delta_{i_1}$ on the polynomial $\cD_{\is}$. This action is straightforward when $\cis=1$:
\begin{lem}
    \label{lem: numerator_cis=1}
    If $\cis=1$ and $\is\neq k$, then the restriction to $\opendunim$ of $\delta_{i_1}(\cD_{\is})$ is given by
    \[
    \delta_{i_1}(\cD_{\is})|_{\opendunim}=\left(\tsum[_{\pb:\,\indx(\pb) = i_1}] a_{\pb}\right)a_{\ideal_{i_1}},
    \]
    where the sum is over all elements $\pb\in\minposet$ with label $s_{i_1}$. 
\end{lem}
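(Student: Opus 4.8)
The plan is to reduce to the already-established machinery by observing that when $\cis = 1$, the polynomial $\cD_{\is}$ is a single Pl\"ucker coordinate. Indeed, Corollary \ref{cor: cis=1} tells us that $\phiGLS[\is]|_{\opendunim} = p_{i_\cis}|_{\opendunim}$, and combined with Theorem \ref{thm: phi_cancellation} (which identifies $\cD_{\is}$ with $\phiGLS[\is]$ on all of $\dunimP$, hence on $\cmX$) and Lemma \ref{lem:monomial_plucker}, we get that $\cD_{\is} = p_{i_1}$ since $\cis = 1$ forces $i_\cis = i_1$. So the statement to prove becomes $\delta_{i_1}(p_{i_1})|_{\opendunim} = \bigl(\sum_{\pb:\indx(\pb)=i_1} a_\pb\bigr)\, a_{\ideal_{i_1}}$.

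Next I would unwind the definition of $\delta_{i_1}$: by definition $\delta_{i_1}(p_{\ideal_{i_1}}) = p_{\ideal_{i_1}\sqcup\{\pb_0\}}$ where $\pb_0$ is the minimal element of $\minposet\setminus\ideal_{i_1}$ with label $s_{i_1}$, provided this is a valid order ideal (and $0$ otherwise). Recall from Corollary \ref{cor:combinatorial_description_wij} that $\ideal_{i_1}$ is the maximal ideal obtained by adding all possible boxes with label $\neq s_{i_1}$ to $\varnothing$; in particular $\minposet\setminus\ideal_{i_1}$ consists precisely of those boxes lying above a box labeled $s_{i_1}$ together with the $s_{i_1}$-labeled boxes themselves, and the minimal such box $\pb_0$ is $s_{i_1}$-labeled, so $\ideal_{i_1}\sqcup\{\pb_0\}$ is a genuine order ideal and $\delta_{i_1}(\cD_{\is}) = p_{\ideal_{i_1}\sqcup\{\pb_0\}}$ is a single Pl\"ucker coordinate of degree $\ell(\ideal_{i_1})+1$.

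Then I would apply the closed-form toric expansion \eqref{eq:toric_expression_for_Plucker_coord_in_posets}, which gives
\[
\delta_{i_1}(\cD_{\is})|_{\opendunim} = p_{\ideal_{i_1}\sqcup\{\pb_0\}}|_{\opendunim} = \sum_{\iota:\,\ideal_{i_1}\sqcup\{\pb_0\}\hookrightarrow\minposet} a_{\iota(\ideal_{i_1}\sqcup\{\pb_0\})}.
\]
The crux is then a bijective analysis of the label-preserving order embeddings $\iota$ of $\ideal_{i_1}\sqcup\{\pb_0\}$ into $\minposet$. Since $\ideal_{i_1}$ already contains all boxes of $\minposet$ with label $\neq s_{i_1}$ except those sitting strictly above an $s_{i_1}$-box, one shows that every embedding $\iota$ must restrict to the identity on $\ideal_{i_1}$ (there is no room to move any box with label $\neq s_{i_1}$, by the same maximality argument as in Lemma \ref{lem:monomial_plucker}, and $\pb_0$ being the unique $s_{i_1}$-box in the complement forces the rest); the only freedom is the choice of image of the extra $s_{i_1}$-labeled box $\pb_0$, which can be sent to any box $\pb\in\minposet$ with $\indx(\pb)=i_1$ for which $\ideal_{i_1}\sqcup\{\pb\}$ remains an order ideal — and by the structure of $\ideal_{i_1}$ this is \emph{every} $s_{i_1}$-labeled box. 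Thus the embeddings are indexed by $\{\pb\in\minposet:\indx(\pb)=i_1\}$, each contributing the monomial $a_{\ideal_{i_1}}\,a_{\pb}$, giving exactly $\bigl(\sum_{\pb:\indx(\pb)=i_1} a_\pb\bigr)a_{\ideal_{i_1}}$. The main obstacle is this last combinatorial claim — that the identity is forced on $\ideal_{i_1}$ and that $\pb_0$ can be relocated freely among all $s_{i_1}$-boxes — but this should follow cleanly from Corollary \ref{cor:combinatorial_description_wij} and the incomparability/maximality reasoning already used in the proof of Lemma \ref{lem:monomial_plucker}.
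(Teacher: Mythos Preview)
Your proposal is correct and follows essentially the same approach as the paper: reduce to $\cD_{\is}=p_{\ideal_{i_1}}$ via Corollary~\ref{cor: cis=1}, note that $\delta_{i_1}$ adds the minimal $s_{i_1}$-labeled box $\pb_0$, then expand $p_{\ideal_{i_1}\sqcup\{\pb_0\}}$ torically and observe that every embedding is the identity on $\ideal_{i_1}$ (Lemma~\ref{lem:monomial_plucker}) with $\pb_0$ free to land on any $s_{i_1}$-labeled box since $\pb_0$ is the minimal such element. The paper phrases the nontriviality of $\delta_{i_1}$ via Lemma~\ref{lem: derivation_plucker_coordinate} and the weight $\dfwt[i_2]-\dfwt[i_1]$ rather than via Corollary~\ref{cor:combinatorial_description_wij}, but this is cosmetic.
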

\begin{proof}
Since $\cis=1$, by Corollary \ref{cor: cis=1}, we have $\cD_{\is}=p_{\idealij[1]}$, where we recall from Proposition \ref{prop:uij_repeated_action} that $p_{\idealij[1]}$ corresponds to the weight $\dfwt[i_2]-\dfwt[i_1]$. Thus, by Lemma \ref{lem: derivation_plucker_coordinate}, there is $\pb\in\minposet\setminus\idealij[1]$ with label $s_{i_1}$ such that $\delta_{i_1}(\cD_{\is}) = \delta_{i_1}(p_{\idealij[1]}) = p_{\idealij[1]\sqcup\pb}$. Since $\idealij[1]$ has no element with label $s_{i_1}$, $\pb$ must be the minimal element of $\minposet$ with label $s_{i_1}$.

Furthermore, any embedding $\emb:\ideal_{i_1}\sqcup\pb\rightarrow\minposet$ must restrict to the identity on $\ideal_{i_1}\subideal \ideal_{i_1}\sqcup\pb$ by Lemma \ref{lem:monomial_plucker}, while $\emb(\pb)$ may be any element of $\minposet$ with label $s_{i_1}$ since $\pb$ is the minimal such element of $\minposet$. Thus in the case $\cis=1$, we immediately obtain the restriction
\[
\delta_{i_1}(\cD_{\is})|_{\opendunim} = \sum_{\emb:\,\ideal_{i_1}\sqcup\pb\hookrightarrow\minposet} a_\emb = \sum_{\emb:\,\ideal_{i_1}\sqcup\pb\hookrightarrow\minposet} a_{\emb(\pb)} a_{\ideal_{i_1}} = \left(\sum_{\pb}a_\pb\right)a_{\ideal_{i_1}}\]
where the final sum is over all $\pb\in\minposet$ with label $s_{i_1}$.
\end{proof}

Studying the action of $\deis$ on the polynomial $\cD_\is$ for $\cis>1$ will be more complicated, but the first monomial can be considered directly. Recall that $p_{i_j}$ refers to the Pl\"ucker coordinate $p_{\idealij}$.
\begin{lem}
\label{lem:leading-term-derivation}
    For any $\is\in[n]$, the action of $\delta_{i_1}$ on $p_{i_j}$ is nontrivial if and only if $j=1$. In particular, 
    \[
    \delta_{i_1}(p_{i_1}\dots p_{i_\cis}) = p_{\idealij[1]\sqcup\pb}p_{i_2}\dots p_{i_{\cis}},
    \]
    where $\pb$ is the minimal element of $\minposet$ with label $s_{i_1}$.
\end{lem}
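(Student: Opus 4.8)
The statement has two parts: first, that $\delta_{i_1}$ acts nontrivially on $p_{i_j}$ only when $j=1$; second, the explicit formula for $\delta_{i_1}(p_{i_1}\cdots p_{i_\cis})$, which follows from the first part together with the product rule. I will focus on the first part, since the second is then immediate: $\delta_{i_1}$ is a derivation, so $\delta_{i_1}(p_{i_1}\cdots p_{i_\cis}) = \sum_j p_{i_1}\cdots\delta_{i_1}(p_{i_j})\cdots p_{i_\cis}$, and only the $j=1$ summand survives; for that summand, $\delta_{i_1}(p_{i_1}) = p_{\idealij[1]\sqcup\pb}$ where $\pb$ is the minimal element of $\minposet\setminus\idealij[1]$ with label $s_{i_1}$, and since $\idealij[1]$ contains no element labeled $s_{i_1}$ (by Corollary \ref{cor:combinatorial_description_wij}), $\pb$ is in fact the minimal element of all of $\minposet$ with label $s_{i_1}$.

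\textbf{The core computation.} To show $\delta_{i_1}$ acts trivially on $p_{i_j}$ for $j\ge 2$, I would invoke Lemma \ref{lem: derivation_plucker_coordinate}: $\delta_{i_1}(p_{i_j})\ne 0$ iff $\llan\wtij, \sr_{i_1}\rran = -1$, where $\wtij = \wij(-\dfwt[k])$ is the weight of $p_{i_j}$. By Proposition \ref{prop:uij_repeated_action}, equation \eqref{eq:uij_repeated_action_min_wt}, we have $\wtij = -\dfwt[i_{j+1}] + \dfwt[i_j]$ for $j\in\{1,\dots,\cis\}$. So I need to compute $\llan -\dfwt[i_{j+1}] + \dfwt[i_j], \sr_{i_1}\rran = -\delta_{i_{j+1}, i_1} + \delta_{i_j, i_1}$ (Kronecker deltas, since $\llan\dfwt[m],\sr_l\rran = \delta_{ml}$). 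This equals $-1$ precisely when $i_{j+1} = i_1$ and $i_j \ne i_1$, and equals $+1$ when $i_j = i_1$ and $i_{j+1}\ne i_1$. For $j=1$: $i_1 = i_1$ and I need $i_2 \ne i_1$, which holds because $\sigma_{i_1}$ is an involution of the Dynkin diagram with node $i_1$ removed, so $i_2 = \sigma_{i_1}(i_0)$ cannot equal $i_1$ — giving $\llan\wtij[1],\sr_{i_1}\rran = +1$, confirming $\delta_{i_1}$ acts nontrivially on $p_{i_1}$. For $j\ge 2$: I need to rule out $i_{j+1} = i_1$. Here I would appeal to the explicit sequences $(i_j)$ tabulated in the proof of Lemma \ref{lem:special_weyl_elt_action} (and in Corollary \ref{cor:i2_minuscule}, equations \eqref{eq:table_of_ij_for_Cayley} and \eqref{eq:table_of_ij_for_Freudenthal}): in every case, the index $i_1$ appears in the sequence $(i_{-1}, i_0, i_1, i_2, \dots)$ only at position $j=1$ among the relevant range, except possibly when $i_{j} = i_1$ for some later $j$, in which case I also need $i_{j-1}\ne i_1$ so that there is no $\pb$ labeled $s_{i_1}$ that can be added to $\idealij$ — but actually the cleaner argument is that $\llan\wtij,\sr_{i_1}\rran = -\delta_{i_{j+1},i_1}+\delta_{i_j,i_1} \in \{-1,0,1\}$, and I only need it to be $\ne -1$, i.e., $i_{j+1}\ne i_1$ OR $i_j = i_1$.

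\textbf{Handling the index sequences.} The main obstacle is the case analysis over the sequences $(i_j)$. The key structural fact I would extract is: in types $\LGB_n, \LGC_n, \LGD_n$ where $\cis = 2$, the sequence has the form $i_{-1}=\is$, $i_0=k$, $i_1, i_2, i_3=\is$, $i_4=k$, and one checks $i_1 \ne \is = i_3$ directly (e.g. in $\LGC_n$, $i_1 = \is$ but wait — in type $\LGC_n$ with $k=1$, $i_1 = \sigma_1(\is) = \is$; here $i_3 = \is$ too, so $i_{j+1}=i_1$ would occur at $j=2$ if $i_3 = i_1$, i.e. $\is = \is$, which IS true!). This means I must be more careful: in type $\LGC_n$, $i_1 = i_3 = \is$, so for $j=2$, $\llan\wtij[2],\sr_{i_1}\rran = -\delta_{i_3,i_1} + \delta_{i_2,i_1} = -1 + 0 = -1$, which would wrongly suggest $\delta_{i_1}$ acts nontrivially on $p_{i_2}$. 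The resolution must be that $p_{i_2}$ is NOT actually $-\dfwt[i_3]+\dfwt[i_2]$ as a weight in the minuscule Plücker representation in a way that permits adding a box — or rather, Lemma \ref{lem: derivation_plucker_coordinate} says $\delta_{i_1}$ acts nontrivially iff there EXISTS $\pb\in\minposet\setminus\idealij[2]$ labeled $s_{i_1}$ with $\idealij[2]\sqcup\pb$ an order ideal. Since $\llan\wtij[2],\sr_{i_1}\rran = -1$ would indeed give such a $\pb$, I must recheck: actually $\wtij[2] = -\dfwt[i_3] + \dfwt[i_2]$ and with $i_3 = i_1$, the coefficient on $\sr_{i_1}$-direction is computed as $\llan -\dfwt[i_1]+\dfwt[i_2],\sr_{i_1}\rran = -1 + 0 = -1$ only if $i_2 \ne i_1$, which holds. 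So there genuinely seems to be a tension; I would resolve it by re-examining whether $\idealij[2]$ already contains an element labeled $s_{i_1}$ — by Corollary \ref{cor:combinatorial_description_wij}, $\idealij[2]$ is built by adding all boxes with label $\ne s_{i_2}$ to $\idealij[1]$, and $\idealij[1]$ is built by adding all boxes with label $\ne s_{i_1}$ to $\varnothing$; so $\idealij[1]$ has no $s_{i_1}$-box but $\idealij[2]$ DOES acquire $s_{i_1}$-boxes (all of them that are "reachable"). Hence the minimal $s_{i_1}$-box of $\minposet$ is already in $\idealij[2]$, and adding another $s_{i_1}$-box to $\idealij[2]$: whether this is possible is governed by the full combinatorics, not just the weight — and I claim the correct statement is that $\llan\wtij[2],\sr_{i_1}\rran = +1$ or $0$ in all cases, meaning I made an arithmetic slip and should recompute using the actual Dynkin involution carefully. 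The safest route: verify directly from the tables in the proof of Lemma \ref{lem:special_weyl_elt_action} that for each $\is$ and each $j\in\{2,\dots,\cis\}$, one has $i_{j+1}\ne i_1$ whenever $i_j \ne i_1$; equivalently, $i_1$ occurs in the tail $(i_2, i_3, \dots, i_{\cis+1})$ only immediately preceded by an occurrence of $i_1$ — but since consecutive entries of a Dynkin involution sequence are distinct, $i_1$ never recurs consecutively, so $i_1$ does not appear in $(i_3,\dots,i_{\cis+1})$ at all, and if $i_1 = i_j$ for some $j\in\{2,\dots,\cis\}$ then $\llan\wtij,\sr_{i_1}\rran = +1 \ne -1$. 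This gives a clean uniform argument, and I would present it in that form, citing the tables only to confirm $i_2\ne i_1$ and that $i_1\notin\{i_3,\dots,i_{\cis+1}\}$ — both of which follow from the involution structure combined with $i_0 = k \ne i_1$.
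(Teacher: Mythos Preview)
Your overall strategy matches the paper's: compute the weight of $p_{i_j}$ from Proposition \ref{prop:uij_repeated_action}, apply Lemma \ref{lem: derivation_plucker_coordinate}, and then check that $i_1$ does not occur among the relevant later indices. However, you have a sign error that derails the execution. Equation \eqref{eq:uij_repeated_action_min_wt} gives $\wij(\dfwt[k]) = -\dfwt[i_{j+1}] + \dfwt[i_j]$, but the Pl\"ucker weight is $\wtij = \wij(-\dfwt[k]) = \dfwt[i_{j+1}] - \dfwt[i_j]$. With the correct sign,
\[
\llan\wtij,\sr_{i_1}\rran = \delta_{i_{j+1},i_1} - \delta_{i_j,i_1},
\]
which equals $-1$ precisely when $i_j = i_1$ (recall $i_{j+1}\neq i_j$ always, since $i_{j+1}=\sigma_{i_j}(i_{j-1})\in[n]\setminus\{i_j\}$). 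So for $j=1$ you get $-1$ (nontrivial action, as Lemma \ref{lem: derivation_plucker_coordinate} requires), and for $j\ge 2$ you need $i_j\neq i_1$, i.e.\ $i_1\notin\{i_2,\dots,i_{\cis}\}$ --- not $i_1\notin\{i_3,\dots,i_{\cis+1}\}$ as you wrote.

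This sign slip is exactly what created your ``tension'' in type $\LGC_n$: there $i_3=i_1$, which looked problematic under your flipped condition, but with the correct condition it is $i_2=\is-1\neq\is=i_1$ that matters, and there is no issue. (You also wrote that $\llan\wtij[1],\sr_{i_1}\rran=+1$ ``confirms'' nontrivial action, but Lemma \ref{lem: derivation_plucker_coordinate} requires $-1$; this inconsistency is another symptom of the same sign error.) Once the sign is fixed, your argument becomes exactly the paper's: it simply cites the sequences computed in the proof of Lemma \ref{lem:special_weyl_elt_action} to observe $i_1\neq i_2,\dots,i_{\cis}$, and the rest follows.
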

\begin{proof}
    By Proposition \ref{prop:uij_repeated_action}, we have that $p_{i_j}$ corresponds to the weight $\dfwt[i_{j+1}] - \dfwt[i_j]$. Since these are weights in a minuscule representation, then by definition of the derivation action, we have that $\delta_{i_1}$ acts nontrivially on $p_{i_j}$ if and only if $i_j=i_1$, in which case there is $\pb\in\minposet\setminus\idealij[1]$ with label $s_{i_1}$ such that $\idealij[w]\sqcup\pb$ is an order ideal. Furthermore, $\pb$ is the minimal element of $\minposet$ with label $s_{i_1}$ since $\idealij[1]$ has no element with label $s_{i_1}$. By the proof of Lemma \ref{lem:special_weyl_elt_action}, we have that $i_1\neq i_2,\dots, i_{\cis}$, so the derivation action acts nontrivially only on $p_{i_1}$ by Lemma \ref{lem: derivation_plucker_coordinate}. 
\end{proof}
Although our construction applies to the general case, it is again more straightforward to apply our construction directly to the four exceptional cases with $\cis>2$. Hence, as in the previous section, we will restrict ourselves to the case $\cis=2$ from now on and defer four exceptional cases to Appendix \ref{app:exceptional}.

With this restriction, we will now establish the more general statement that $\delta_{i_1}$ acts nontrivially on at most one Pl\"ucker coordinate of each monomial $p_{\ideal_1}p_{\ideal_2}$ corresponding to an element $(\ideal_1,\ideal_2)\in \sP_{\is}$. 

\begin{lem}
    \label{lem:derivation-one-term}
    For any $\is\in[n]$ and any $(\ideal_1,\ideal_2)\in \sP_{\is}$, the derivation $\deis$ acts nontrivially on at most one factor of the quadratic monomial $p_{\ideal_1}p_{\ideal_2}$. 
\end{lem}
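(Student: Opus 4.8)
The plan is to translate the claim into a statement about the weights $\wt_1,\wt_2$ corresponding to the order ideals $\ideal_1,\ideal_2$ and to use the fact that $\cis = 2$ together with Lemma~\ref{lem: move_equality}. By Lemma~\ref{lem: derivation_plucker_coordinate}, $\deis$ acts nontrivially on $p_{\ideal_i}$ exactly when $\llan \wt_i, \sr_{i_1}\rran = -1$, and by minusculity each of $\llan \wt_1,\sr_{i_1}\rran$ and $\llan\wt_2,\sr_{i_1}\rran$ lies in $\{-1,0,+1\}$. So it suffices to rule out the possibility that $\llan\wt_1,\sr_{i_1}\rran = \llan\wt_2,\sr_{i_1}\rran = -1$, i.e.\ that $\llan \wt_1 + \wt_2, \sr_{i_1}\rran = -2$.

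First I would invoke Lemma~\ref{lem: move_equality}, which gives $\llan \wt_1 + \wt_2, \sr_{i_1}\rran = \llan \dfwt[i_3] - \dfwt[i_1], \sr_{i_1}\rran$ for any $(\ideal_1,\ideal_2)\in\sP_\is$. The right-hand side equals $\llan \dfwt[i_3],\sr_{i_1}\rran - \llan \dfwt[i_1],\sr_{i_1}\rran$. Now $\llan\dfwt[i_1],\sr_{i_1}\rran = 1$ since $\dfwt[i_1]$ is a fundamental coweight dual to $\sr_{i_1}$, and $\llan\dfwt[i_3],\sr_{i_1}\rran$ is a nonnegative integer: it is $0$ if $i_1 \neq i_3$ and $1$ if $i_1 = i_3$. (In fact from the proof of Lemma~\ref{lem:special_weyl_elt_action} and Corollary~\ref{cor:i2_minuscule} one sees $i_1 \ne i_3$ can occur, e.g.\ in type $\LGC_n$; but we do not even need to know which case we are in.) Therefore $\llan\wt_1 + \wt_2,\sr_{i_1}\rran \in \{-1, 0\}$, and in particular it is never $-2$. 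Hence at most one of $\llan\wt_1,\sr_{i_1}\rran$, $\llan\wt_2,\sr_{i_1}\rran$ can equal $-1$, which by Lemma~\ref{lem: derivation_plucker_coordinate} means $\deis$ acts nontrivially on at most one factor of $p_{\ideal_1}p_{\ideal_2}$.

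I expect the only subtlety to be bookkeeping the value $\llan\dfwt[i_3],\sr_{i_1}\rran$ cleanly, i.e.\ making sure the argument is genuinely uniform and does not secretly require a type-by-type check. Since $\dfwt[i_3]$ is a dominant coweight and $\sr_{i_1}$ is a positive root, $\llan\dfwt[i_3],\sr_{i_1}\rran \ge 0$ automatically, which is all that is needed; the precise value is irrelevant. This makes the whole argument a short consequence of Lemma~\ref{lem: move_equality} and Proposition~\ref{prop:uij_repeated_action}, with no case analysis, so the "hard part" is really just choosing to phrase everything in terms of the single pairing $\llan\wt_1+\wt_2,\sr_{i_1}\rran$ rather than the two pairings separately.
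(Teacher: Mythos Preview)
Your proposal is correct and follows essentially the same approach as the paper: both reduce to showing $\llan\wt_1+\wt_2,\sr_{i_1}\rran\in\{-1,0\}$ via Lemma~\ref{lem: move_equality}, then conclude that two values in $\{-1,0,+1\}$ summing to at least $-1$ cannot both be $-1$. The only cosmetic difference is that the paper first evaluates $\llan\wt_{i_1}+\wt_{i_2},\sr_{i_1}\rran$ at the top element using Lemma~\ref{lem:leading-term-derivation} and then invokes Lemma~\ref{lem: move_equality}, whereas you go directly to the right-hand side $\llan\dfwt[i_3]-\dfwt[i_1],\sr_{i_1}\rran$ of that lemma and evaluate it using $\llan\dfwt[j],\sr_{i_1}\rran=\delta_{j,i_1}$.
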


\begin{proof} 
    Because the Pl\"ucker coordinates $p_\ideal$ that we consider come from a minuscule representation, the corresponding weights $\wt$ satisfy that $\llan\wt,\sr_m\rran\in \{-1,0,+1\}$ for any $m$. By Lemma \ref{lem: derivation_plucker_coordinate}, $\deis$ acts on $p_\ideal$ nontrivially if and only if $\llan\wt,\sr_{i_1}\rran=-1$ so it suffices to show that at most one of the Pl\"ucker coordinates $p_{\ideal_1}, p_{\ideal_2}$ satisfies this property for any $(\ideal_1,\ideal_2)\in\sP_{\is}$. 

    By Lemma \ref{lem:leading-term-derivation}, we have
    \[
    \llan\wt_{i_1},\sr_{i_1}\rran=\llan\dfwt[i_2]-\dfwt[i_1],\sr_{i_1}\rran=-1
    \qand 
    \llan\wt_{i_2},\sr_{i_1}\rran=\llan\dfwt[i_3]-\dfwt[i_2],\sr_{i_1}\rran\in\{0,+1\}.
    \]
    Therefore, we have $\llan\wt_{i_1} + \wt_{i_2}, \sr_{i_1}\rran =\llan \dfwt[i_3]-\dfwt[i_1],\sr_{i_1}\rran\in \{-1,0\}$ when $i_1\neq i_3$ respectively $i_1=i_3$. Now by Lemma \ref{lem: move_equality}, letting $\wt_i$ denote the weight corresponding to $\ideal_i$, we have
    \[
    \llan\wt_1 + \wt_2,\sr_{i_1}\rran = \llan \dfwt[i_3]-\dfwt[i_1],\sr_{i_1}\rran \in \{-1,0\}.
    \]
    Since this is a sum of the two integers $\llan\wt_1,\sr_{i_1}\rran, \llan \wt_2,\sr_{i_1}\rran\in \{-1,0,1\}$, we cannot have that both of these integers are equal to $-1$, so by Lemma \ref{lem: derivation_plucker_coordinate}, $\deis$ can act nontrivially on at most one of the Pl\"ucker coordinates $p_{\ideal_1},p_{\ideal_2}$ for any $(\ideal_1,\ideal_2)\in\sP_{\is}$. 
\end{proof}

In this proof we used arguments about the coefficient $\llan\wt,\sr_{i_1}\rran$ where $\wt=\Pi(\ideal)\cdot(-\dfwt[k])$. These coefficients will be a useful tool for our upcoming arguments as well, and for notational convenience we will sometimes use the tuple $(c_{\ideal_1},c_{\ideal_2})$ to refer to these coefficients; i.e.
\begin{equation}
    c_{\ideal_1} = \llan \Pi(\ideal_1)\cdot(-\dfwt[k]),\sr_{i_1}\rran \qand c_{\ideal_2} = \llan \Pi(\ideal_2)\cdot(-\dfwt[k]),\sr_{i_1}\rran.
\label{eq:df_pair_of_coefs}
\end{equation}

Using Lemma \ref{lem:derivation-one-term}, we will now translate the action of $\deis$ to elements $(\ideal_1,\ideal_2)\in\sP_{\is}$. 
\begin{df}
The map $\deis:\sP_\is\to(\cosets)^2\cup\{0\}$ is given by
\[
\deis(\ideal_1,\ideal_2) = \begin{cases}
    (\ideal_1\sqcup\pb_1,\ideal_2) & \text{if $\deis(\p_{\ideal_1}\p_{\ideal_2})=\p_{\ideal_1\sqcup\pb_1}\p_{\ideal_2}$,} \\
    (\ideal_1,\ideal_2\sqcup\pb_2) & \text{if $\deis(\p_{\ideal_1}\p_{\ideal_2})=\p_{\ideal_1}\p_{\ideal_2\sqcup\pb_2}$,} \\
    0             & \text{else,}
\end{cases}
\]
where $\pb_1$ and $\pb_2$ are elements of $\minposet\setminus\ideal_1$ and $\minposet\setminus\ideal_2$ with label $s_{i_1}$ such that $\ideal_1\sqcup\pb_1$ and $\ideal_2\sqcup\pb_2$ are order ideals respectively. When $\deis(\ideal_1,\ideal_2)\neq0$, we will often refer to its image as $(\ideal_1^+,\ideal_2^+) = \deis(\ideal_1,\ideal_2)\neq0$.   
\end{df}
In the remainder of this section, we will mainly be interested in the image of this map $\deis$.

\begin{df}
    Let $\sP_\is^+$ denote the poset obtained from $\sP_\is$ by applying $\deis$ as follows: The elements of $\sP_{\is}^+$ will be the set $\{\deis(\ideal_1,\ideal_2)~|~\deis(\ideal_1,\ideal_2)\neq 0\}$ and the partial order $>_+$ will be the one inherited from $\sP_{\is}$:
    \[
    \deis(\ideal_1,\ideal_2) >_+ \deis(\ideal_3,\ideal_4) \quad\Leftrightarrow\quad (\ideal_1,\ideal_2)>(\ideal_3,\ideal_4).
    \]
    We also identify $\delta_{i_1}$ with the (partial) function $\delta_{i_1}:\sP_{\is}\dashrightarrow \sP_{\is}^+$ obtained by ignoring those elements of $\sP_{\is}$ on which $\delta_{i_1}$ acts trivially. 
\end{df}

Lemma \ref{lem:derivation-one-term} says that for any pair $(\ideal_1,\ideal_2)\in\sP_\is$, $\delta_{i_1}$ acts nontrivially on at most one element of the pair $(\ideal_1,\ideal_2)$. As a result, the poset $\sP_{\is}^+$ may contain strictly fewer elements than $\sP_{\is}$ and this case will require special care. We first identify exactly when this case will occur.

\begin{lem}
    \label{lem:unique_non_commuting}
    The order ideal $\idealij[1]$ contains exactly one element whose label $s_j$ does not commute with $s_{i_1}$.
\end{lem}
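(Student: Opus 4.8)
I want to show that the order ideal $\idealij[1]\subideal\minposet$ — which by Corollary \ref{cor:combinatorial_description_wij} is the maximal order ideal obtained by adding to $\varnothing$ all possible boxes with label $\neq s_{i_1}$ — contains exactly one box whose label does not commute with $s_{i_1}$. First I would recall from Proposition \ref{prop:uij_repeated_action} that the weight $\wtij[1] = \Pi(\idealij[1])\cdot(-\dfwt[k])$ equals $\dfwt[i_2]-\dfwt[i_1]$, and in particular $\idealij[1]$ has a unique maximal element $\pb_{\max}$ with label $s_{i_2}$, and $\langle\wtij[1],\sr_{i_1}\rangle = -1$ so $s_{i_1}$ can be added to $\idealij[1]$. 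The strategy is: the boxes of $\minposet$ whose labels do not commute with $s_{i_1}$ are exactly the neighbours of $i_1$ in the Dynkin diagram of $\dG$ (plus $i_1$ itself, but $\idealij[1]$ has no box labeled $s_{i_1}$), so I must count how many boxes of $\minposet$ lying in $\idealij[1]$ have label a Dynkin-neighbour of $i_1$.

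\textbf{Key steps.} (1) Translate ``label does not commute with $s_{i_1}$'' into ``index is a Dynkin-neighbour of $i_1$''. (2) Argue that a box $\pb\in\minposet$ with such a label lies in $\idealij[1]$ unless it is forced out by the absence of $s_{i_1}$-boxes — more precisely, by the maximality in Corollary \ref{cor:combinatorial_description_wij}, $\pb\notin\idealij[1]$ exactly when every way of building up an order ideal reaching $\pb$ must pass through a box labeled $s_{i_1}$. (3) Use the structure of minuscule posets: the box one wants to add labeled $s_{i_1}$ is the \emph{minimal} box of $\minposet\setminus\idealij[1]$ with that label (cf.\ the proof of Lemma \ref{lem:leading-term-derivation}), call it $\pb^*$. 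Then $\idealij[1]$ is exactly the order ideal of $\minposet$ consisting of all boxes $\pb$ with $\pb\not\geq\pb^*$, i.e.\ $\idealij[1] = \minposet\setminus\sI_{\minposet}(\{\,\pb : \pb\geq\pb^*\,\})$ — wait, more carefully: $\idealij[1]$ is the largest order ideal not containing any $s_{i_1}$-box, hence equals $\{\pb\in\minposet : \sI(\pb)\text{ contains no }s_{i_1}\text{-box}\} = \{\pb : \pb\not\geq\pb^*\}$, using that there is a unique minimal $s_{i_1}$-box $\pb^*$ (minuscule posets have a unique box of each minimal-for-its-label position). (4) Now a box $\pb\in\idealij[1]$ with label a Dynkin-neighbour of $i_1$: I claim there is exactly one, namely the box covered by $\pb^*$ along the ``$s_{i_1}$-direction''. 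Concretely, in the minuscule poset the box $\pb^*$ (minimal $s_{i_1}$-box) covers at most two boxes and is covered by at most two boxes; its covers in $\idealij[1]$ are exactly the boxes whose labels are Dynkin-neighbours of $i_1$ and which are $<\pb^*$. Since $\pb^*$ is the \emph{minimal} $s_{i_1}$-box and $\idealij[1]$ is a genuine order ideal, the boxes of $\idealij[1]$ with a label non-commuting with $s_{i_1}$ are precisely those boxes $\pb<\pb^*$ that are covered by $\pb^*$; I must show there is exactly one such. This is where a short case analysis on the minuscule posets (Example \ref{ex:list_of_minposets}) or a general argument about covers of join-irreducible elements comes in: the weight $\wtij[1]=\dfwt[i_2]-\dfwt[i_1]$ being as described, the box $\pb^*$ together with $\idealij[1]$ forms the order ideal corresponding to $s_{i_1}\labelprod{\idealij[1]} = \wij[1]$ up to the $s_{i_1}$-move, and its cover structure is pinned down by Lemma \ref{lem:special_weyl_elt_action}.

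\textbf{Cleaner route via weights.} Rather than the poset combinatorics, the most robust argument is representation-theoretic: the boxes of $\minposet$ with label $s_j$ for $s_j$ non-commuting with $s_{i_1}$ correspond to join-irreducible weights $\mu$ with $\langle\mu,\sr_{i_1}\rangle = +1$ (after the $s_j$-move lowers them, the $s_{i_1}$-coefficient drops). The number of such boxes \emph{inside} $\idealij[1]$ is governed by $\langle\wtij[1] + \dfwt[i_1] - \dfwt[i_2],\sr_{i_1}\rangle$-type bookkeeping: summing the contributions $-\langle\wt,\sr_{i_1}\rangle$ over the move sequence building $\idealij[1]$, each box $\pb$ with $\indx(\pb)$ a neighbour of $i_1$ contributes $-1$ to the change in $s_{i_1}$-coefficient and no other box does. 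Since $\langle\wtij[1],\sr_{i_1}\rangle = \langle\dfwt[i_2]-\dfwt[i_1],\sr_{i_1}\rangle = -1$ and $\langle -\dfwt[k],\sr_{i_1}\rangle \in\{0,-1\}$ (it is $0$ since $i_1\neq k$), the net change is $-1 - 0 = -1$, forcing exactly one such box. \emph{The main obstacle} will be pinning down this last sign computation rigorously: I need $\langle-\dfwt[k],\sr_{i_1}\rangle = 0$, which holds because $\dfwt[k]$ is a fundamental weight and $i_1\neq k$ (guaranteed since $i_1 = \DSi[k](\is)$ and the Dynkin involution fixes $k$ only on $k$, while $\is\neq k$); and I need that building an order ideal never re-raises the $s_{i_1}$-coefficient by adding a box labeled by a non-neighbour of $i_1$, which is immediate since $\langle\sdr_j,\sr_{i_1}\rangle = 0$ for such $j$. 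Assembling these, the count of non-commuting-label boxes in $\idealij[1]$ is exactly $1$, as claimed. I expect the referee-facing write-up to favor the weight-bookkeeping argument over a type-by-type check, but I would keep Example \ref{ex:list_of_minposets} in mind as a sanity check.
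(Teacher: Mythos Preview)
Your ``cleaner route via weights'' is correct and is essentially the paper's own proof: compute $\langle \wtij[1],\sr_{i_1}\rangle$ in two ways---once via Proposition~\ref{prop:uij_repeated_action} to get $-1$, once as $\langle -\dfwt[k],\sr_{i_1}\rangle + \sum_{\pb\in\idealij[1]}\langle \sdr_{\indx(\pb)},\sr_{i_1}\rangle = 0 + \text{(sum of nonpositive integers)}$---and conclude that exactly one summand is negative. One small imprecision: you assert each neighbour-labeled box ``contributes $-1$'', but in non-simply-laced $\dG$ (which does occur here, e.g.\ types $\LGB_n$ and $\LGC_n$) the off-diagonal Cartan integer could a priori be $-2$; the argument still goes through because a sum of nonpositive integers equal to $-1$ forces exactly one nonzero term, and that term must then be $-1$, but you should phrase it that way rather than assuming the value upfront.
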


\begin{proof}
    Recall that $\labelprod{\idealij}=\wij$. We will prove this statement by computing the coefficient $\llan\wij[1](-\dfwt[k]),\sr_{i_1}\rran$ in two ways, where we recall that $\wij[1]$ has no occurrence of the simple reflection $s_{i_1}$. First, Lemmas \ref{lem:leading-term-derivation} and \ref{lem: derivation_plucker_coordinate} immediately imply that $\llan\wij[1](-\dfwt[k]),\sr_{i_1}\rran=-1$.

    Second, we consider the action of $\wij[1]$ on $-\dfwt[k]$. Since $\wij[1]$ is a minimal coset representative, any simple reflection $s_{m_j}$ in any reduced expression for $\wij[1]$ acts by adding $\sdr_{m_j}$, and we find
    \[
    \llan\wij[1](-\dfwt[k]),\sr_{i_1}\rran 
    = \llan\dfwt[k]+\tsum[_{j=1}^{\ell(\wij[1])}] \sdr_{m_j},\sr_{i_1}\rran 
    = \sum_{j=1}^{\ell(\wij[1])} \llan\sdr_{m_j},\sr_{i_1}\rran.
    \]
    Equating these results yields
    \[-1 = \sum_{j=1}^{\ell(\wij[1])} \llan \sdr_{m_j}, \sr_{i_1}\rran.\]
    Since $\wij[1]$ has no occurrences of $s_{i_1}$, then the right hand side above is a sum of nonpositive integers. Thus, there must be exactly one $j$ such that $\llan \sdr_{m_j}, \sr_{i_1}\rran = -1$. The lemma follows immediately since each simple reflection in $\wij[1]$ corresponds to a unique element of $\idealij[1]$.
\end{proof}

We can now describe when $\deis$ acts trivially on $(\ideal_1,\ideal_2)\in\sP_{\is}$. 

\begin{lem}
    \label{lem: i1=i3}
    For $(\ideal_1,\ideal_2)\in\sP_{\is}$, $\deis$ acts trivially on $(\ideal_1,\ideal_2)\in\sP_{\is}$ if and only if $i_1=i_3$ and there is a sequence of moves from the top element $(\idealij[1],\idealij[2])\in\sP_{\is}$ to $(\ideal_1,\ideal_2)$
    \[
    \pb_1\in A_{\idealij[1],\idealij[2]},~
    \pb_2\in A_{\idealij[1]\setminus\pb_1,\idealij[2]\sqcup\pb_1'},~
    \ldots,~
    \pb_d\in A_{\idealij[1]\setminus\{\pb_1,\dots, \pb_{d-1}\},\ideal_2\sqcup\{\pb_1',\dots,\pb_{d-1}'\}}\]
    such that $\ideal_1=\idealij[1]\setminus\{\pb_1,\dots, \pb_d\}$ and $\ideal_2=\idealij[2]\sqcup\{\pb_1',\dots, \pb_d'\}$, and the label of one of the $\pb_j$ does not commute with $s_{i_1}$. 
\end{lem}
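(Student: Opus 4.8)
The plan is to characterize when $\deis$ acts trivially on $(\ideal_1,\ideal_2)\in\sP_{\is}$ by tracking the coefficient pair $(c_{\ideal_1},c_{\ideal_2})$ from \eqref{eq:df_pair_of_coefs} along sequences of moves. By Lemma \ref{lem: derivation_plucker_coordinate}, $\deis$ acts trivially on $p_{\ideal_i}$ exactly when $c_{\ideal_i}\in\{0,+1\}$, so it acts trivially on the monomial $p_{\ideal_1}p_{\ideal_2}$ precisely when $(c_{\ideal_1},c_{\ideal_2})$ contains no $-1$. By Lemma \ref{lem: move_equality}, the sum $c_{\ideal_1}+c_{\ideal_2} = \llan\dfwt[i_3]-\dfwt[i_1],\sr_{i_1}\rran$ is constant on $\sP_{\is}$, equal to $-1$ if $i_1\neq i_3$ and $0$ if $i_1=i_3$. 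In the case $i_1\neq i_3$, the pair must always contain exactly one $-1$ (since the other entry lies in $\{-1,0,+1\}$ and the sum is $-1$, the only options are $(-1,0)$, $(0,-1)$, $(-1,-1)$ is excluded by summing to $-2$, wait $(-1,0)$ and $(0,-1)$ sum to $-1$, but also... let me note $(-1,-1)$ sums to $-2\neq -1$ so it is impossible, while $(1,-2)$ is impossible since entries are in $\{-1,0,1\}$), so $\deis$ always acts nontrivially; this recovers the fact that the trivial case forces $i_1=i_3$. In the case $i_1=i_3$ the sum is $0$, so the possibilities are $(-1,+1)$, $(+1,-1)$, and $(0,0)$, and the first step is to observe that $\deis$ acts trivially on $(\ideal_1,\ideal_2)$ if and only if $(c_{\ideal_1},c_{\ideal_2})=(0,0)$.

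Next I would translate the condition $(c_{\ideal_1},c_{\ideal_2})=(0,0)$ into combinatorics of the move sequence. Starting from the top element, Lemma \ref{lem:leading-term-derivation} gives $(c_{\idealij[1]},c_{\idealij[2]})=(-1,\,\llan\dfwt[i_3]-\dfwt[i_2],\sr_{i_1}\rran)$, and since $i_1=i_3$ here this second entry is $+1$, so the initial pair is $(-1,+1)$. Each move $\pb_j$ with label $s_m$ changes $\ideal_1$ by removing $\pb_j$ (labeled $s_m$) and $\ideal_2$ by adding $\pb_j'$ (labeled $s_m$); the effect on $c_{\ideal_1}$ is to subtract $\llan\wt_1,\sr_m\rran\llan\sdr_m,\sr_{i_1}\rran$ with an appropriate reversal of sign coming from removal rather than addition, and analogously for $c_{\ideal_2}$. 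The key point, exactly as in the proof of Lemma \ref{lem: move_equality}, is that $c_{\ideal_1}+c_{\ideal_2}$ is unchanged, so a move can only toggle the pair between $(-1,+1)$, $(0,0)$, and $(+1,-1)$ — and it genuinely changes the pair if and only if $\llan\sdr_m,\sr_{i_1}\rran\neq 0$, i.e.\ if and only if the label $s_m$ of the move does not commute with $s_{i_1}$. By Lemma \ref{lem:unique_non_commuting} there is exactly one element of $\idealij[1]$ whose label does not commute with $s_{i_1}$, so along any move sequence from the top element at most one move is ``non-commuting,'' and after it the pair becomes $(0,0)$ (it cannot overshoot to $(+1,-1)$ because $c_{\ideal_1}$ starts at $-1$ and the non-commuting Cartan integer is $-1$, raising it by exactly one). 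Conversely, if all moves commute with $s_{i_1}$ the pair stays $(-1,+1)$ and $\deis$ acts nontrivially. This gives the biconditional: $\deis$ acts trivially on $(\ideal_1,\ideal_2)$ iff $i_1=i_3$ and some move in a (equivalently, any) minimal-length sequence realizing $(\ideal_1,\ideal_2)$ has label not commuting with $s_{i_1}$.

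The one subtlety to address carefully — and the step I expect to be the main obstacle — is the phrase ``there is a sequence of moves \ldots such that the label of one of the $\pb_j$ does not commute'' versus ``every such sequence.'' I would argue that whether the non-commuting label appears is independent of the chosen sequence: by Remark \ref{rem:poset_is_rep}, $\sP_{\is}$ is the Bruhat order on a minuscule weight poset and the multiset of labels along any saturated chain from the top element to a fixed element $(\ideal_1,\ideal_2)$ is independent of the chain (this is the standard fact that the reduced words for a minimal coset representative all use the same multiset of simple reflections, here applied to the relevant sub-root-system). Hence the number of non-commuting moves is an invariant of $(\ideal_1,\ideal_2)$, and by Lemma \ref{lem:unique_non_commuting} it is $0$ or $1$; it is $1$ exactly in the trivial-action case. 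Assembling these observations — the constancy of $c_{\ideal_1}+c_{\ideal_2}$, the toggling behavior controlled by $\llan\sdr_m,\sr_{i_1}\rran$, uniqueness from Lemma \ref{lem:unique_non_commuting}, and chain-independence of the label multiset — yields the statement. I would present the coefficient-pair bookkeeping as the technical core and keep the chain-independence argument brief by citing Remark \ref{rem:poset_is_rep} and the Lemma \ref{lem:special_weyl_elt_action} machinery already in place.
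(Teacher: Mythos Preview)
Your proposal is correct and takes essentially the same approach as the paper: track the coefficient pair $(c_{\ideal_1},c_{\ideal_2})$, use Lemma~\ref{lem: move_equality} to see the sum is constant (forcing $i_1=i_3$ for triviality to be possible), and identify moves whose label does not commute with $s_{i_1}$ as exactly those shifting the pair from $(-1,+1)$ to $(0,0)$. Your added chain-independence paragraph is correct but unnecessary for the statement as phrased, since only the existence of one such sequence is asserted; the simpler observation that the multiset of labels along any move sequence is determined by $\idealij[1]\setminus\ideal_1$ would also suffice there.
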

\begin{proof}
    Suppose first that $\delta_{i_1}(p_{\ideal_1}p_{\ideal_2})=0$. Then by Lemma \ref{lem: derivation_plucker_coordinate}, it must be the case that the coefficients $(c_{\ideal_1},c_{\ideal_2})$ as defined in equation \eqref{eq:df_pair_of_coefs} are $0$ or $+1$. On the other hand, by Lemma \ref{lem:leading-term-derivation} the coefficients 
    \[
    c_{i_1}=\llan\wt_{i_1},\sr_{i_1}\rran = \llan \dfwt[i_2]-\dfwt[i_1],\sr_{i_1}\rran 
    \qand
    c_{i_2}=\llan\wt_{i_2},\sr_{i_1}\rran = \llan \dfwt[i_3]-\dfwt[i_2],\sr_{i_1}\rran
    \]
    satisfy $(c_{i_1},c_{i_2})\in\{(-1,0),~(-1,+1)\}$. Thus, since the coefficients $(c_{\ideal_1},c_{\ideal_2})$ differ from $(c_{i_1},c_{i_2})$, any sequence of moves $\pb_1,\dots, \pb_d$ between $(\idealij[1],\idealij[2])$ and $(\ideal_1,\ideal_2)$ must contain one move $\pb_j$ whose label affects these coefficients. 
    
    By Lemma \ref{lem: move_action}, $\labl(\pb_j)$ can only affect the values of these coefficients if $\labl(\pb_j)=s_{i_1}$ or if $\labl(\pb_j)$ does not commute with $s_{i_1}$. Since $\idealij[1]$ contains no element with label $s_{i_1}$, we conclude that $\labl(\pb_j)$ does not commute with $s_{i_1}$. Furthermore, since $c_{i_2}-1\in\{0,+1\}$, we must have had $c_{i_2}=+1$. Since the weight corresponding to $p_{i_2}$ is $\dfwt[i_3]-\dfwt[i_2]$ by Proposition \ref{prop:uij_repeated_action}, the equality $c_{i_2}=+1$ can only occur if if $i_3=i_1$. 

    Conversely, suppose that $i_1=i_3$ and that there is a sequence of moves $\pb_1,\dots, \pb_d$ as in the statement such that one of the $\labl(\pb_j)$ does not commute with $s_{i_1}$. Since $i_1=i_3$, we have $(c_{i_1},c_{i_2})=(-1,1)$. Then Lemma \ref{lem: move_action} implies that $(c_{\ideal_1},c_{\ideal_2})=(0,0)$, so by Lemma \ref{lem: derivation_plucker_coordinate}, the derivation acts trivially on the Pl\"ucker monomial $p_{\ideal_1}p_{\ideal_2}$. 
\end{proof}

The following corollary is an immediate consequence of Lemma \ref{lem: i1=i3}.

\begin{cor}
    \label{cor:Pi*+}
    As posets, $\sP_{\is}\cong \sP_{\is}^+$ if and only if $i_1\neq i_3$. If $i_1=i_3$, then $\sP_{\is}^+$ is isomorphic to the connected (by cover relations) component of $\sP_{\is}$ that contains the top element $(\idealij[1],\idealij[2])$ after deleting all cover relations corresponding to moves $\pb$ whose labels do not commute with $s_{i_1}$.
\end{cor}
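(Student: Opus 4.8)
The plan is to deduce Corollary \ref{cor:Pi*+} directly from Lemma \ref{lem: i1=i3} together with the structural definition of $\sP_\is^+$. First I would dispatch the case $i_1\neq i_3$. By Lemma \ref{lem: i1=i3}, the derivation $\deis$ acts trivially on $(\ideal_1,\ideal_2)\in\sP_\is$ only if $i_1=i_3$; hence when $i_1\neq i_3$, the map $\deis$ is everywhere nonzero on $\sP_\is$, so it is a well-defined function $\sP_\is\to(\cosets)^2$. By Lemma \ref{lem:derivation-one-term} it is injective (a Pl\"ucker coordinate $\p_\ideal$ determines $\ideal$, and $\deis$ modifies exactly one factor, in a way that can be reversed by deleting the unique element with label $s_{i_1}$ that was added), and it is order-preserving and order-reflecting by the very definition of $>_+$. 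Thus $\deis:\sP_\is\to\sP_\is^+$ is a poset isomorphism, giving $\sP_\is\cong\sP_\is^+$.

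For the case $i_1=i_3$, I would argue that the elements of $\sP_\is$ on which $\deis$ acts \emph{nontrivially} are precisely the elements reachable from the top element $(\idealij[1],\idealij[2])$ by a sequence of moves \emph{none} of whose labels fail to commute with $s_{i_1}$. The ``only if'' direction is the contrapositive of the forward implication of Lemma \ref{lem: i1=i3}: if every sequence of moves to $(\ideal_1,\ideal_2)$ that lies in $\sP_\is$ uses only labels commuting with $s_{i_1}$, then $\deis$ cannot act trivially there. For the ``if'' direction I would use that Lemma \ref{lem: i1=i3} shows triviality forces the existence of \emph{some} move-sequence containing a non-commuting label; but here a small subtlety must be handled: two different move-sequences to the same $(\ideal_1,\ideal_2)$ could in principle use different multisets of labels. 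I would rule this out by observing (via Lemma \ref{lem: move_equality}) that the coefficient pair $(c_{\ideal_1},c_{\ideal_2})$ depends only on $(\ideal_1,\ideal_2)$, and that by Lemma \ref{lem: derivation_plucker_coordinate} triviality of $\deis$ at $(\ideal_1,\ideal_2)$ is equivalent to $c_{\ideal_1},c_{\ideal_2}\in\{0,+1\}$; since $(c_{i_1},c_{i_2})=(-1,+1)$ when $i_1=i_3$ while $(c_{\ideal_1},c_{\ideal_2})=(0,0)$ exactly when a non-commuting label has been used (again by Lemma \ref{lem: move_action}, which shows only labels equal to $s_{i_1}$ — impossible here — or non-commuting with $s_{i_1}$ can change these coefficients), the property of ``$\deis$ acts trivially'' is intrinsic to $(\ideal_1,\ideal_2)$ and coincides with reachability only through non-commuting moves.

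Granting this, the identification of $\sP_\is^+$ follows: its underlying set is $\{\deis(\ideal_1,\ideal_2)\mid \deis(\ideal_1,\ideal_2)\neq0\}$, which by the previous paragraph is in $\deis$-bijection with the set of $(\ideal_1,\ideal_2)\in\sP_\is$ reachable from $(\idealij[1],\idealij[2])$ using only moves whose labels commute with $s_{i_1}$. Since $\deis$ restricted to this set is injective (same argument as in the $i_1\neq i_3$ case, using Lemma \ref{lem:derivation-one-term}) and the order $>_+$ is by definition inherited from $\sP_\is$, the poset $\sP_\is^+$ is isomorphic to that subset of $\sP_\is$ with its induced order. Finally I would note that this subset is precisely the connected component of the top element in the graph obtained from the Hasse diagram of $\sP_\is$ by deleting every cover-relation edge whose associated move $\pb$ has $\labl(\pb)$ not commuting with $s_{i_1}$: a vertex lies in that component iff there is a path — i.e.\ a sequence of moves — from $(\idealij[1],\idealij[2])$ using only commuting labels. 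This is exactly the description in the statement.

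The main obstacle I anticipate is the well-definedness subtlety in the $i_1=i_3$ case, namely showing that whether $\deis$ acts trivially at $(\ideal_1,\ideal_2)$ is independent of the chosen move-sequence and equivalent to reachability-avoiding-non-commuting-labels; this is where Lemmas \ref{lem: move_action} and \ref{lem: move_equality} do the real work, by pinning the coefficient pair $(c_{\ideal_1},c_{\ideal_2})$ to the vertex rather than the path. Everything else — injectivity of $\deis$ and the order-reflection property — is a direct unwinding of the definitions of $\deis$, of the order $>_+$, and of a cube/move in $\sP_\is$.
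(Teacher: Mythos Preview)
Your proposal is correct and aligns with the paper's approach: the paper gives no proof at all, simply declaring the corollary an immediate consequence of Lemma~\ref{lem: i1=i3}, and your argument supplies exactly the unpacking one would expect, including the key observation (via Lemmas~\ref{lem: move_action} and~\ref{lem: move_equality}) that triviality of $\deis$ is a vertex property rather than a path property. One small omission: you do not explicitly close the ``only if'' direction of the first sentence (that $i_1=i_3$ forces $\sP_\is\not\cong\sP_\is^+$), but this follows at once from your second part together with Lemma~\ref{lem:unique_non_commuting}, which guarantees the non-commuting move exists and hence that the connected component is a proper subset.
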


As we did in the previous section, we now partition the toric terms for any $(\ideal_1^+,\ideal_2^+)\in\sP_{\is}^+$. While we could define these partitions from scratch, it will turn out to be much more convenient to define them in terms of our existing sets $T^{A_1,A_2}_{\ideal_1,\ideal_2}$ as follows. For each pair of embeddings $(\emb_1^+,\emb_2^+)$ of $(\ideal_1^+,\ideal_2^+)$, $\emb_i^+:\ideal_i^+\hookrightarrow\minposet$, we obtain a pair of embeddings $(\emb_1^+|_{\ideal_1}, \emb_2^+|_{\ideal_2})$ of $(\ideal_1,\ideal_2)$ by restriction. 

\begin{df}
    For each $(\ideal_1^+,\ideal_2^+)=\delta_{i_1}(\ideal_1,\ideal_2)\in\sP_{\is}^+$, $A_1\subset A_{\ideal_2,\ideal_1}$, and $A_2\subset A_{\ideal_1,\ideal_2}$, we define the set
    \[
    T^{A_1,A_2,+}_{\ideal_1,\ideal_2}=\Bigl\{(\emb_1^+:\ideal_1^+\hookrightarrow\minposet, \emb_2^+:\ideal_2^+\hookrightarrow\minposet)~\Big|~(\emb_1^+|_{\ideal_1}, \emb_2^+|_{\ideal_2})\in T^{A_1,A_2}_{\ideal_1,\ideal_2}\Bigr\}.
    \]
\end{df}

Since the restrictions are well-defined for every pair of embeddings $(\emb_1^+,\emb_2^+)$, these sets partition the toric terms of $p_{\ideal_1^+}p_{\ideal_2^+}$ and we will use them to organize the cancellation of terms in $\delta_{i_1}(\cD_{\is})|_{\opendunim}$. We will be able to use the same strategy as we used in Section \ref{sec:denominators}, but we first need a few preparatory lemmas. The first statement we will need for the cancellation is an immediate corollary of the analogous statements in Section \ref{sec:denominators}.

\begin{cor}
\label{cor: toric_sets+}
    Let $(\ideal_1,\ideal_2)\in\sP_{\is}$ such that $\delta_{i_1}(\ideal_1,\ideal_2)\neq 0$. Then the sets $T^{\varnothing,\varnothing,+}_{\ideal_1,\ideal_2}$ are empty unless $(\ideal_1,\ideal_2)=(\idealij[1],\idealij[2])$ is the top element of $\sP_{\is}$, in which case we have
    \[
    T^{\varnothing,\varnothing,+}_{\idealij[1],\idealij[2]} = \bigl\{ (\emb_1^+,\id_{\idealij[2]})~\big|~\emb_1^+|_{\idealij[1]}=\id_{\idealij[1]} \bigr\}.
    \]
    Moreover, for every $\pb_{i_1}\in\minposet$ with $\labl(\pb_{i_1})=s_{i_1}$ there exists a (necessarily unique) embedding $\emb_1^+$ with $(\emb_1^+,\id_{\idealij[2]})\in T^{\varnothing,\varnothing,+}_{\idealij[1],\idealij[2]}$ such that $\emb_1^+(\pb)=\pb_{i_1}$, where $\pb\in\minposet\setminus\idealij[1]$ is such that $\idealij[1]^+=\idealij[1]\sqcup\{\pb\}$.
\end{cor}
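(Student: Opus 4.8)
The statement is a direct translation of Lemmas~\ref{lem: initial-plucker} and \ref{lem: toric-partition} into the ``$+$''-setting via the definition of $T^{A_1,A_2,+}_{\ideal_1,\ideal_2}$, together with a count of embeddings of the single added box $\pb$. The plan is to first dispose of the claim that $T^{\varnothing,\varnothing,+}_{\ideal_1,\ideal_2}=\varnothing$ for $(\ideal_1,\ideal_2)\neq(\idealij[1],\idealij[2])$, and then analyze $T^{\varnothing,\varnothing,+}_{\idealij[1],\idealij[2]}$ explicitly.

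First I would observe that by definition, a pair $(\emb_1^+,\emb_2^+)\in T^{\varnothing,\varnothing,+}_{\ideal_1,\ideal_2}$ requires $(\emb_1^+|_{\ideal_1},\emb_2^+|_{\ideal_2})\in T^{\varnothing,\varnothing}_{\ideal_1,\ideal_2}$. By Lemma~\ref{lem: toric-partition}, the set $T^{\varnothing,\varnothing}_{\ideal_1,\ideal_2}$ is empty whenever $(\ideal_1,\ideal_2)\neq(\idealij[1],\idealij[2])$, so in that case there can be no such restriction and hence $T^{\varnothing,\varnothing,+}_{\ideal_1,\ideal_2}=\varnothing$ as well. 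This handles the first claim with essentially no work.

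Next, for the top element, Lemma~\ref{lem: initial-plucker} gives $T^{\varnothing,\varnothing}_{\idealij[1],\idealij[2]}=\{(\id_{\idealij[1]},\id_{\idealij[2]})\}$, so $(\emb_1^+,\emb_2^+)\in T^{\varnothing,\varnothing,+}_{\idealij[1],\idealij[2]}$ if and only if $\emb_1^+|_{\idealij[1]}=\id_{\idealij[1]}$ and $\emb_2^+|_{\idealij[2]}=\id_{\idealij[2]}$. Since by Lemma~\ref{lem:monomial_plucker} the only embedding of $\idealij[2]$ into $\minposet$ is the identity (recall $\cis\ge 1$ and $\idealij[2]=\ideal_{i_{\cis}}$ when $\cis=2$), the condition $\emb_2^+|_{\idealij[2]}=\id_{\idealij[2]}$ already forces $\emb_2^+$ to be built on the identity; but $\idealij[2]^+=\idealij[2]$ here since $\delta_{i_1}$ acts nontrivially only on the first factor $p_{i_1}$ by Lemma~\ref{lem:leading-term-derivation}, so $\emb_2^+=\id_{\idealij[2]}$ outright. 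This establishes the displayed description of $T^{\varnothing,\varnothing,+}_{\idealij[1],\idealij[2]}$. Finally, for the moreover clause: $\idealij[1]^+=\idealij[1]\sqcup\{\pb\}$ where $\pb$ is the minimal element of $\minposet$ with label $s_{i_1}$ (Lemma~\ref{lem:leading-term-derivation}), and an embedding $\emb_1^+$ extending $\id_{\idealij[1]}$ is determined by the choice of image $\emb_1^+(\pb)$; since $\pb$ is the minimal element of $\minposet$ with label $s_{i_1}$ and $\idealij[1]$ contains no such element (so $\id_{\idealij[1]}(\idealij[1])$ contains no element with label $s_{i_1}$, and any element $\pb_{i_1}$ with $\labl(\pb_{i_1})=s_{i_1}$ sits above only elements of the fixed lower set $\sI(\pb_{i_1})\subset\minposet$, which one checks lies entirely in $\idealij[1]$ because $\idealij[1]$ is the maximal order ideal avoiding label $s_{i_1}$), the choice $\emb_1^+(\pb)=\pb_{i_1}$ is valid for every such $\pb_{i_1}$ and yields a (necessarily unique) embedding. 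I expect the last verification---that extending $\id_{\idealij[1]}$ by sending $\pb$ to an arbitrary $s_{i_1}$-labeled box always yields a valid order embedding---to be the only point requiring care, and it follows from Corollary~\ref{cor:combinatorial_description_wij} characterizing $\idealij[1]$ as the maximal ideal avoiding label $s_{i_1}$, which guarantees $\sI(\pb_{i_1})\setminus\{\pb_{i_1}\}\subseteq\idealij[1]$.
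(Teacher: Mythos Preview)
Your approach matches the paper's almost exactly: both invoke Lemmas~\ref{lem: initial-plucker} and~\ref{lem: toric-partition} to pin down $T^{\varnothing,\varnothing}_{\ideal_1,\ideal_2}$, then use Lemma~\ref{lem:leading-term-derivation} to see that $\idealij[2]^+=\idealij[2]$ so that $\emb_2^+=\id_{\idealij[2]}$, and finally argue that $\pb$ may be sent to any $s_{i_1}$-labeled box. The first two parts are fine.

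The flaw is in your justification of the ``moreover'' clause. You claim that $\sI(\pb_{i_1})\setminus\{\pb_{i_1}\}\subseteq\idealij[1]$ because $\idealij[1]$ is the maximal order ideal avoiding the label $s_{i_1}$, but this inclusion is false whenever $\pb_{i_1}$ is not the \emph{minimal} $s_{i_1}$-labeled element: any smaller $s_{i_1}$-labeled box lies in $\sI(\pb_{i_1})\setminus\{\pb_{i_1}\}$ yet cannot lie in $\idealij[1]$. (Concretely, in the $\LG(4,8)$ example with $\is=1$ and $i_1=3$, the top $s_3$-labeled box has two other $s_3$-labeled boxes below it.) Moreover, this inclusion is not what you actually need. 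To check that $\emb_1^+|_{\idealij[1]}=\id_{\idealij[1]}$ together with $\emb_1^+(\pb)=\pb_{i_1}$ is order-preserving, you must verify that every $\pb'<\pb$ in $\idealij[1]^+$ satisfies $\pb'<\pb_{i_1}$, i.e.\ that $\sI(\pb)\setminus\{\pb\}\subseteq\sI(\pb_{i_1})$. This is the reverse containment, and it follows immediately from $\pb\le\pb_{i_1}$, which holds because elements with the same label in a heap (equivalently, in a minuscule poset) always form a chain and $\pb$ is the minimal such element. The paper's own proof is terse here (``places no restrictions''), but this is the underlying reason.
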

\begin{proof}
    The first part of the statement follows immediately from Lemmas \ref{lem: initial-plucker} and \ref{lem: toric-partition}, because only the toric set $T^{\varnothing,\varnothing}_{\idealij[1],\idealij[2]}$ is nonempty among all the toric sets $T^{\varnothing,\varnothing}_{\ideal_1,\ideal_2}$. 
    
    For the second part, Lemma \ref{lem:leading-term-derivation} gives $(\idealij[1]^+,\idealij[2]^+)=\delta_{i_1}(\idealij[1],\idealij[2])=(\idealij[1]\sqcup\pb, \idealij[2])$, where $\pb$ is the minimal element of $\minposet$ with label $s_{i_1}$. Now let $(\emb_{i_1}^+, \emb_{i_2}^+)\in T^{\varnothing,\varnothing,+}_{\idealij[1],\idealij[2]}$, so that by Lemma \ref{lem: initial-plucker} we have $\emb_{i_1}^+|_{\ideal_{i_1}} = \id_{\ideal_{i_1}}$ and $\emb_{i_2}^+|_{\ideal_{i_2}} = \id_{\ideal_{i_2}}$ since $(\id_{\ideal_{i_1}},\id_{\ideal_{i_2}})$ is the only element in $T_{\idealij[1],\idealij[2]}^{\varnothing,\varnothing}$. Since $\idealij[2]^+ = \idealij[2]$, we thus also have $\emb_{i_2}^+ = \id_{\ideal_{i_2}}$. 
    
    Finally, note that for \emph{every} $\pb_*\in\minposet$ with label $s_{i_1}$ the assignment $\pb\mapsto\pb_*$ defines a valid embedding of $\idealij[1]^+$ since $\emb_{i_1}^+|_{\idealij[1]}=\id_{\idealij[1]}$ places no restrictions on the image of $\pb\in\idealij[1]^+$ under $\emb_{i_1}^+$.
\end{proof}

Cubes are defined the same way for $\sP_{\is}^+$ as they were for $\sP_{\is}$. However, it will be more convenient for us to study cubes in $\sP_{\is}^+$ by considering cubes $\sC\subset \sP_{\is}$ and applying $\delta_{i_1}$ to $\sC$. Because $\deis$ may act trivially on some elements of $\sC$, we first verify that the image $\deis(\sC)$ is a cube in $\sP_{\is}^+$. 

\begin{lem}
    \label{lem: deriv_cube}
    For any $m$-cube $\sC\subset\sP_{\is}$ with set $S_{\sC}$ of labels, the non-zero image $\deis(\sC)\subset \sP_{\is}^+$ of $\sC$ under $\deis$ is one of the following:
    \begin{enumerate}
        \item an $m$-cube if $i_1\neq i_3$ or if $i_1=i_3$ and every minimal length sequence $\pb_1,\pb_2,\dots$ of moves between $(\idealij[1],\idealij[2])$ and the bottom element of $\sC$ consists of moves whose labels commute with $s_{i_1}$,
        \item an $(m-1)$-cube if $i_1=i_3$ and $S_{\sC}$ contains an label not commuting with $s_{i_1}$,
        \item or empty if $i_1=i_3$ and there is a minimal length sequence $\pb_1,\pb_2,\dots$ of moves between $(\idealij[1],\idealij[2])$ and the top element of $\sC$ which contains a move whose label does not commute with $s_{i_1}$.
    \end{enumerate} 
\end{lem}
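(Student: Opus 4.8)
The statement is a case analysis on how the derivation $\deis$ (more precisely, the induced partial map $\delta_{i_1}:\sP_{\is}\dashrightarrow\sP_{\is}^+$) interacts with an $m$-cube $\sC\subset\sP_{\is}$. The key structural input is Corollary \ref{cor:Pi*+}, which already describes $\sP_{\is}^+$ as a poset: it is all of $\sP_{\is}$ when $i_1\neq i_3$, and when $i_1=i_3$ it is the connected component of the top element $(\idealij[1],\idealij[2])$ in the graph obtained from the Hasse diagram of $\sP_{\is}$ by deleting all cover relations whose move-label does not commute with $s_{i_1}$. Combined with Corollary \ref{cor:mcube-subsets} (identifying $\sC$ with the Boolean lattice on $S_{\sC}$, with cover relations labeled by the commuting set $S_{\sC}$) and Lemma \ref{lem:mcube-moves} (all labels in $S_{\sC}$ commute with each other), the three cases should fall out by tracking which cover relations of $\sC$ survive and whether the whole of $\sC$ survives.

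First I would handle the case $i_1\neq i_3$. Here Corollary \ref{cor:Pi*+} says $\delta_{i_1}$ is injective on $\sP_{\is}$ (it acts nontrivially on every element, by Lemma \ref{lem: i1=i3}), and by the definition of $>_+$ it is an isomorphism of posets onto $\sP_{\is}^+$. Hence $\deis(\sC)$ is order-isomorphic to $\sC$, an $m$-cube; one must also check that cover relations are preserved, which follows because $\delta_{i_1}$ preserves cover relations in both directions (it is bijective and order-preserving both ways). This gives case (1) in the subcase $i_1\neq i_3$. Next, in the case $i_1=i_3$: by Lemma \ref{lem: i1=i3}, $\delta_{i_1}$ kills exactly those $(\ideal_1,\ideal_2)\in\sP_{\is}$ reachable from the top element by a minimal-length sequence of moves containing a label not commuting with $s_{i_1}$. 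Since cover relations inside $\sC$ are labeled by elements of $S_{\sC}$ (Corollary \ref{cor:mcube-subsets}), and minimal-length paths from $(\idealij[1],\idealij[2])$ down to elements of $\sC$ factor as a path to the top element $(\ideal_{t_1},\ideal_{t_2})$ of $\sC$ followed by a path through $\sC$ using only $S_{\sC}$-labels, I would argue: (a) if $S_{\sC}$ contains a label $s_j$ not commuting with $s_{i_1}$, then exactly half of the vertices of $\sC$ (those subsets of $S_{\sC}$ containing $s_j$) are killed while the other half survive — here one must be slightly careful that the surviving half is precisely the $(m-1)$-subcube "$s_j\notin$", using that the distinguishing label $s_j$ in a minimal path to a vertex of $\sC$ is used at most once and independently of the ordering, again by commutativity of $S_{\sC}$; (b) if $S_{\sC}$ consists entirely of labels commuting with $s_{i_1}$, then whether $\deis$ kills $\sC$ depends only on the path to the top element $(\ideal_{t_1},\ideal_{t_2})$ of $\sC$: if every minimal such path uses only $s_{i_1}$-commuting labels then $\deis$ acts nontrivially on $(\ideal_{t_1},\ideal_{t_2})$ hence (adding only commuting moves) on all of $\sC$, giving an $m$-cube — case (1); if some minimal path to $(\ideal_{t_1},\ideal_{t_2})$ contains a non-commuting label then $\deis$ kills $(\ideal_{t_1},\ideal_{t_2})$, hence (since all other elements of $\sC$ lie below it and Lemma \ref{lem: i1=i3}'s criterion is inherited by extending the path with $S_{\sC}$-moves) kills all of $\sC$ — case (3).

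The remaining point, and the one I expect to be the main obstacle, is verifying that in case (2) the surviving set $\deis(\sC)$ is genuinely an $(m-1)$-\emph{cube} in $\sP_{\is}^+$ (with the subposet-order of Definition \ref{df:restricted_subposet_order}), not merely $2^{m-1}$ vertices: one needs that $\deis$ restricted to the surviving $(m-1)$-subcube $\sC'=\{S'\subset S_{\sC}\mid s_j\notin S'\}$ of $\sC$ is a poset isomorphism onto its image and that cover relations of $\sC'$ in $\sP_{\is}$ map to cover relations in $\sP_{\is}^+$. Injectivity on the surviving vertices follows from Corollary \ref{cor:Pi*+} (which makes $\sP_{\is}^+$ a subposet-quotient on which $\delta_{i_1}$ is a bijection away from the killed elements), and cover relations are preserved because a cover relation of $\sC'$ has a commuting label (it lies in $S_{\sC}\setminus\{s_j\}$, which by Lemma \ref{lem:mcube-moves} commutes with everything in $S_{\sC}$, and by hypothesis of this subcase $S_{\sC}\setminus\{s_j\}$ may still contain non-$s_{i_1}$-commuting labels — so I would instead argue directly that both endpoints survive and $\delta_{i_1}$ sends the cover to a cover, using that $\delta_{i_1}$ adds a single box labeled $s_{i_1}$ disjointly from the move). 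Once these bookkeeping facts are in place, Lemma \ref{lem: unique-mcube}-style uniqueness is not needed; the result is a direct consequence of Corollaries \ref{cor:Pi*+}, \ref{cor:mcube-subsets} and Lemmas \ref{lem:mcube-moves}, \ref{lem: i1=i3}. I would therefore structure the proof as: reduce to $i_1=i_3$ via Corollary \ref{cor:Pi*+}; then within $i_1=i_3$ split on whether a minimal path to the top of $\sC$ contains a non-commuting label (case 3) and, if not, on whether $S_{\sC}$ contains a non-commuting label (case 2 vs. case 1), in each subcase exhibiting the explicit Boolean-sublattice identification of the surviving vertices.
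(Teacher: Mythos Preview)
Your approach is essentially the same as the paper's: both proofs use Lemma~\ref{lem: i1=i3} (equivalently Corollary~\ref{cor:Pi*+}) to characterize which vertices of $\sC$ survive, and Corollary~\ref{cor:mcube-subsets} to identify $\sC$ with the Boolean lattice on $S_{\sC}$, with the case analysis organized only slightly differently (the paper splits first on whether a minimal path to the \emph{bottom} of $\sC$ contains a non-commuting move, then on whether that move lies in $S_{\sC}$; you split first on the path to the top, then on $S_{\sC}$).

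There is one genuine gap in your case~(2) argument. You assert that ``exactly half of the vertices of $\sC$ (those subsets of $S_{\sC}$ containing $s_j$) are killed,'' but this is only correct if $S_{\sC}$ contains a \emph{unique} label not commuting with $s_{i_1}$; if it contained two such labels the killed set would be three quarters of the cube and the survivors would form an $(m-2)$-cube, contradicting the stated conclusion. You notice this later (``$S_{\sC}\setminus\{s_j\}$ may still contain non-$s_{i_1}$-commuting labels'') but your proposed workaround, arguing directly that cover relations are preserved, does not address the vertex count. The correct fix is exactly what the paper does: invoke Lemma~\ref{lem:unique_non_commuting}, which says $\idealij[1]$ contains exactly one element whose label does not commute with $s_{i_1}$. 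Since every admissible move is a single box removed from $\idealij[1]$, at most one label in $S_{\sC}$ can be non-commuting with $s_{i_1}$, and your ``exactly half'' claim then goes through cleanly.
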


\begin{proof}
    We consider first the cases that $i_1\neq i_3$ or the cases that $i_1=i_3$ but every minimal length sequence 
    \[
    \pb_1\in A_{\idealij[1],\idealij[2]}, ~
    \pb_2\in A_{\idealij[1]\setminus\pb_1,\idealij[2]\sqcup\pb_1'}, ~
    \ldots
    \]
    between $(\idealij[1],\idealij[2])$ and the bottom element of $\sC$ consists only of moves $\pb_j$ whose labels commute with $s_{i_1}$. In these cases, Lemma \ref{lem: i1=i3} implies that $\deis$ acts nontrivially on each element of $\sC$, and thus $\deis(\sC)$ is an $m$-cube in $\sP_{\is}^+$ since every cover relation between elements of $\sC$ is also a cover relation between the corresponding elements of $\deis(\sC)$ by definition of $\sP_{\is}^+$. 

    We now consider the case where $i_1=i_3$ and there is a minimal length sequence
    \[
    \pb_1\in A_{\idealij[1],\idealij[2]},~
    \pb_2\in A_{\idealij[1]\setminus\pb_1,\idealij[2]\sqcup\pb_1'},~
    \ldots
    \]
    between $(\idealij[1],\idealij[2])$ and the bottom element of $\sC$ such that the label of some $\pb_j$, $s_{m_j}=\labl(\pb_j)$ does not commute with $s_{i_1}$. Letting $S_{\sC}$ denote the set of labels of the cube $\sC$, we consider the cases that $s_{m_j}\in S_{\sC}$ or $s_{m_j}\notin S_{\sC}$. If $s_{m_j}\notin S_{\sC}$, then by Lemma \ref{lem: Spacek_LPLG} there must be a sequence of moves between $(\idealij[1],\idealij[2])$ and the top element of $\sC$ containing $\pb_j$, which by Lemma \ref{lem: i1=i3} implies that $\delta_{i_1}$ acts trivially on every element of $\sC$ and hence that $\deis(\sC)=\varnothing$. 
    
    Finally suppose $s_{m_j}\in S_{\sC}$. Since $\sC$ is an $m$-cube, we may identify it with the set of subsets of $S_{\sC}$ ordered by inclusion by Corollary \ref{cor:mcube-subsets}. Explicitly, we identify an element $(\ideal_1,\ideal_2)\in \sC$ with the labels $\labl(A_{\ideal_1,\ideal_2})\cap S_{\sC}$ corresponding to cover relations $(\ideal_1,\ideal_2)\gtrdot (\ideal_3,\ideal_4)$ in the partial ordering on $\sC$. Then by Lemmas \ref{lem:unique_non_commuting} and \ref{lem: i1=i3}, we have that $\delta_{i_1}$ acts trivially on $(\ideal_1,\ideal_2)\in\sC$ if and only if $s_{m_j}\notin \labl(A_{\ideal_1,\ideal_2})$, so that
    \[
    \{\deis(\ideal_1,\ideal_2)~|~ (\ideal_1,\ideal_2)\in \sC:~\deis(\ideal_1,\ideal_2)\neq 0\} = \{\deis(\ideal_1,\ideal_2)\in\sC~|~\labl(A_{\ideal_1,\ideal_2})\ni s_{m_j}\}
    \] 
    can be identified with the sub-cube of $\sC$ consisting of elements $(\ideal_1,\ideal_2)$ with $s_{m_j}\in \labl(A_{\ideal_1,\ideal_2})$, which is an $(m-1)$-cube. Again, $\delta_{i_1}(\sC)$ is a cube in $\sP_{\is}^+$ because every cover relation between elements of $\sC$ in which $\delta_{i_1}$ acts nontrivially on both elements corresponds to a cover relation between the corresponding elements of $\delta_{i_1}(\sC)$ by definition of $\sP_{\is}^+$.
\end{proof}

In fact, it is an immediate consequence of Corollary \ref{cor:Pi*+} that every cube in $\sP_{\is}^+$ arises in this way, since this corollary implies that the Hasse diagram of $\sP_{\is}^+$ is isomorphic to a connected component of the Hasse diagram of $\sP_{\is}$. With this lemma, we can use $\deis$ to lift the bijections of Lemma \ref{lem: toric-bijections} to the sets $T^{A_1,A_2,+}_{\ideal_1,\ideal_2}$. Recall that as before we will write $A_{\ideal_1,\ideal_2}(S)$ to denote the set $\{\pb\in A_{\ideal_1,\ideal_2}\mid \pb\in S\}$ of moves in $A_{\ideal_1,\ideal_2}$ with labels in $S$. 

\begin{lem}
    \label{lem: toric_bijections+}
    Let $\sC\subset \sP_{\is}$ be an $m$-cube with set of labels $S$. For any two distinct elements $(\ideal_1,\ideal_2), (\ideal_3,\ideal_4)\in \sC$ such that $\delta_{i_1}$ acts nontrivially on both, consider the sets $A_{21} = A_{\ideal_2,\ideal_1}(S)$, $A_{12}=A_{\ideal_1,\ideal_2}(S)$, $A_{43}=A_{\ideal_4,\ideal_3}(S)$, and $A_{34} = A_{\ideal_3,\ideal_4}(S)$. Then there is a bijection
    \[
    \kappa_{A_{21},A_{12}}^{A_{43},A_{34},+}: T^{A_{21},A_{12},+}_{\ideal_1,\ideal_2} \overset{\sim}{\longrightarrow} T^{A_{43},A_{34},+}_{\ideal_3,\ideal_4}
    \]
    such that for any $(\emb_1^+,\emb_2^+)\in T^{A_{21},A_{12},+}_{\ideal_1,\ideal_2}$, we have $(\emb_1^+,\emb_2^+) \overset{t}{=} \kappa_{A_{21},A_{12}}^{A_{43},A_{34},+}(\emb_1^+,\emb_2^+)$, where $\overset{t}{=}$ denotes toric equality as in Definition \ref{df:toric_equality}.
\end{lem}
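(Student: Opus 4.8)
The strategy mirrors that of Lemma \ref{lem: toric-bijections}, but now carried out one level ``above'' via the derivation $\delta_{i_1}$. First I would reduce to the case where $(\ideal_3,\ideal_4)$ is obtained from $(\ideal_1,\ideal_2)$ by a single move $\pb$ with $\labl(\pb)\in S$, since $\sC$ is connected by such moves (restricted to the sub-cube on which $\delta_{i_1}$ acts nontrivially, which is again a cube by Lemma \ref{lem: deriv_cube}) and the full statement follows by induction on the number of moves. As in Lemma \ref{lem: toric-bijections}, I would consider the two cases $(\ideal_3,\ideal_4)\gtrdot(\ideal_1,\ideal_2)$ and $(\ideal_3,\ideal_4)\lessdot(\ideal_1,\ideal_2)$. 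The key new input is that by Lemma \ref{lem:derivation-one-term}, for each of $(\ideal_1,\ideal_2)$ and $(\ideal_3,\ideal_4)$ the derivation $\delta_{i_1}$ adds a box labeled $s_{i_1}$ to exactly one of the two order ideals in the pair. Moreover, by Lemma \ref{lem: move_action} the move $\pb$ has $\labl(\pb)\in S$ and the labels in $S$ all commute (Lemma \ref{lem:mcube-moves}); in particular $\labl(\pb)\neq s_{i_1}$ is forced when $\delta_{i_1}$ acts nontrivially on both endpoints, so by Lemma \ref{lem: move_action} applied to $\pb$ we have $\llan \wt_1+\wt_2,\sr_{i_1}\rran$ unchanged under the move, and hence the box that $\delta_{i_1}$ adds lands ``on the same side'' (i.e.\ $\delta_{i_1}$ acts on $\ideal_1$ for $(\ideal_1,\ideal_2)$ iff it acts on $\ideal_3$ for $(\ideal_3,\ideal_4)$) precisely when $\labl(\pb)$ commutes with $s_{i_1}$, which holds since $\labl(\pb)\in S$. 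This coherence is what lets the construction go through.

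The construction of $\kappa^{+}$ itself proceeds as follows. Given $(\emb_1^+,\emb_2^+)\in T^{A_{21},A_{12},+}_{\ideal_1,\ideal_2}$, restrict to get $(\emb_1,\emb_2)=(\emb_1^+|_{\ideal_1},\emb_2^+|_{\ideal_2})\in T^{A_{21},A_{12}}_{\ideal_1,\ideal_2}$, apply the bijection $\kappa_{A_{21},A_{12}}^{A_{43},A_{34}}$ of Lemma \ref{lem: toric-bijections} to obtain $(\emb_3,\emb_4)\in T^{A_{43},A_{34}}_{\ideal_3,\ideal_4}$, and then ``reinstate'' the derivation box: say $\delta_{i_1}$ acts on the first factor, so $\ideal_1^+=\ideal_1\sqcup\pb_{i_1}$ and $\ideal_3^+=\ideal_3\sqcup\pb_{i_1}$ with $\pb_{i_1}$ the minimal element of $\minposet$ with label $s_{i_1}$ not in $\ideal_1$ resp.\ $\ideal_3$ (these coincide since $\ideal_1\setminus\ideal_3$ or $\ideal_3\setminus\ideal_1$ has label in $S$, hence $\neq s_{i_1}$). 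I would then set $\emb_3^+|_{\ideal_3}=\emb_3$ and $\emb_3^+(\pb_{i_1})=\emb_1^+(\pb_{i_1})$, and $\emb_4^+=\emb_4$ (and symmetrically when $\delta_{i_1}$ acts on the second factor). To see $\emb_3^+$ is a valid embedding I would use that $\emb_1^+$ is valid together with the explicit form \eqref{aux_eq:def_kappa_case_1}--\eqref{aux_eq:def_kappa_case_2} of $\kappa$: the only comparabilities involving $\pb_{i_1}$ in $\ideal_3^+$ are with elements of $\ideal_3$ whose images under $\emb_3$ differ from their $\emb_1$-images only by the at most one box that $\kappa$ moves, and that box has label in $S$, hence distinct from and commuting with $s_{i_1}$, so it cannot obstruct the placement of $\emb_3^+(\pb_{i_1})=\emb_1^+(\pb_{i_1})$. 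Bijectivity follows because $\kappa^{+,34}_{12}$ and $\kappa^{+,12}_{34}$ are mutually inverse: their restrictions invert each other by Lemma \ref{lem: toric-bijections}, and the value on the added box is determined coherently ($\emb_3^+(\pb_{i_1})=\emb_1^+(\pb_{i_1})$ forces $\emb_1^+(\pb_{i_1})=\emb_3^+(\pb_{i_1})$ under the reverse map).

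Finally, for the toric equality, observe $a_{\emb_i^+}=a_{\emb_i}\cdot a_{\pb_{i_1}^{\,\emb}}$ where $\pb_{i_1}^{\,\emb}$ is the image of the added box (on whichever side $\delta_{i_1}$ acted), so
\[
a_{\emb_1^+}a_{\emb_2^+} = a_{\emb_1}a_{\emb_2}\cdot a_{\emb_1^+(\pb_{i_1})} \overset{t}{=} a_{\emb_3}a_{\emb_4}\cdot a_{\emb_3^+(\pb_{i_1})} = a_{\emb_3^+}a_{\emb_4^+},
\]
using $(\emb_1,\emb_2)\overset{t}{=}(\emb_3,\emb_4)$ from Lemma \ref{lem: toric-bijections} and $\emb_1^+(\pb_{i_1})=\emb_3^+(\pb_{i_1})$ by construction. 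The main obstacle I anticipate is the bookkeeping around the two cases of which factor $\delta_{i_1}$ acts on, and verifying carefully that this choice of factor is preserved by the single-step move $\pb$ when $\labl(\pb)\in S$; once that coherence is pinned down (via Lemma \ref{lem: move_action} and the commutativity of labels in $S$), the rest is a routine lift of the argument already carried out in Lemma \ref{lem: toric-bijections}.
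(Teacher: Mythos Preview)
Your reduction to a single move and your treatment of the case where $\labl(\pb)$ commutes with $s_{i_1}$ are fine, and indeed the paper dispatches that case in one sentence by invoking Lemma~\ref{lem: toric-bijections} verbatim. The gap is your claim that $\labl(\pb)\in S$ forces $\labl(\pb)$ to commute with $s_{i_1}$. Lemma~\ref{lem:mcube-moves} only tells you that the labels in $S$ commute \emph{with each other}; nothing says they commute with $s_{i_1}$. By Lemma~\ref{lem:unique_non_commuting} there is exactly one label in $\idealij[1]$ that fails to commute with $s_{i_1}$, and that label can certainly appear in $S_{\sC}$. When $i_1\neq i_3$, Corollary~\ref{cor:Pi*+} shows $\delta_{i_1}$ acts nontrivially on every element of $\sP_\is$, so two adjacent elements of such a cube connected by this non-commuting move are both in the domain of $\delta_{i_1}$ and the lemma must cover them.

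In that non-commuting case your ``same side'' heuristic is false: if $(\ideal_1,\ideal_2)\gtrdot(\ideal_3,\ideal_4)$ via a move with label $s_m$ not commuting with $s_{i_1}$, then $c_{\ideal_3}=c_{\ideal_1}-\llan\sdr_m,\sr_{i_1}\rran = c_{\ideal_1}+1$ and $c_{\ideal_4}=c_{\ideal_2}-1$, so $(c_{\ideal_1},c_{\ideal_2})=(-1,0)$ becomes $(c_{\ideal_3},c_{\ideal_4})=(0,-1)$ and $\delta_{i_1}$ switches from the first factor to the second. Consequently your recipe $\emb_3^+(\pb_{i_1})=\emb_1^+(\pb_{i_1})$ does not even typecheck, since $\pb_{i_1}\in\ideal_1^+$ but there is no corresponding box in $\ideal_3^+=\ideal_3$. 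The paper handles this by moving the added $s_{i_1}$-box across the pair along with the move: e.g.\ in the case $(\ideal_1,\ideal_2)\gtrdot(\ideal_3,\ideal_4)$ one sets $\emb_4^+(\pb_4)=\emb_1^+(\pb_1)$ together with $\emb_4^+(\pb')=\emb_1^+(\pb)$, so that two boxes (the move box and the derivation box) are transferred simultaneously. You need to add this second case and verify validity of the resulting embedding and toric equality separately there.
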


\begin{proof}
    As in the proof of Lemma \ref{lem: toric-bijections}, we may immediately reduce to the case where $(\ideal_3,\ideal_4)$ can be obtained from $(\ideal_1,\ideal_2)$ by a single move $\pb$ with label $s_m=\labl(\pb)\in S$, corresponding to a cover relation of $\sP_{\is}$. Furthermore, we will similarly simplify notation by writing $T_{12}^+ = T^{A_{21},A_{12},+}_{\ideal_1,\ideal_2}$ and $T_{34}^+ = T^{A_{43},A_{34},+}_{\ideal_3,\ideal_4}$, while writing $\kappa_{12}^{34,+}=\kappa_{A_{21},A_{12}}^{A_{43},A_{34},+}$ for the remainder of this proof.
    
    We now consider two cases depending on whether $s_m$ commutes with $s_{i_1}$. If $s_m$ commutes with $s_{i_1}$, then both the construction and proof of Lemma \ref{lem: toric-bijections} can be applied directly, with $(\emb_1^+,\emb_2^+)$ in place of $(\emb_1,\emb_2)$, to construct $\kappa_{12}^{34,+}$ and show that it satisfies the required properties. 

    Now suppose that $s_m$ does not commute with $s_{i_1}$. In this case since we are assuming that $\delta_{i_1}$ acts nontrivially on both $(\ideal_1,\ideal_2)$ and $(\ideal_3,\ideal_4)$, then by Lemmas \ref{lem: derivation_plucker_coordinate},  \ref{lem:unique_non_commuting}, and \ref{lem: move_action}, we have that
    \[
    \begin{cases}
        \ideal_1^+ = \ideal_1,~\ideal_2^+=\ideal_2\sqcup\pb_2,~\ideal_3^+=\ideal_3\sqcup\pb_3 ~\text{and}~\ideal_4^+=\ideal_4, 
        &\text{if $(\ideal_1,\ideal_2) \lessdot (\ideal_3,\ideal_4)$,} \\
        \ideal_1^+ = \ideal_1\sqcup\pb_1,~\ideal_2^+ = \ideal_2,~\ideal_3^+=\ideal_3~\text{and}~\ideal_4^+=\ideal_4\sqcup\pb_4, 
        &\text{if $(\ideal_1,\ideal_2)\gtrdot(\ideal_3,\ideal_4)$,}
    \end{cases}
    \]
    where in each case, $\pb_i$ denotes the minimal element of $\minposet\setminus\ideal_i$ with label $i_1$.  
    
    First, consider the case $(\ideal_1,\ideal_2) \lessdot (\ideal_3,\ideal_4)$ corresponding to the move $\pb\in A_{\ideal_3,\ideal_4}$ with label $\labl(\pb)=s_m$. This means that $(\ideal_1,\ideal_2) = (\ideal_3\setminus\pb, \ideal_4\sqcup\pb')$, so that we obtain (since $\ideal_1^+=\ideal_1$ and $\ideal_4^+=\ideal_4$)
    \[
    \ideal_3^+\setminus\ideal_1^+ = \{\pb_3, \pb\} \qand \ideal_2^+\setminus \ideal_4^+ =\{\pb_2,\pb'\}.
    \]
    Then given a pair of embeddings $(\emb_1^+,\emb_2^+)\in T_{12}^+$ of $\ideal_1^+$ and $\ideal_2^+$, we define 
    \[
    \emb_3^+|_{\ideal_1^+} = \emb_1^+
    ,\quad \emb_3^+(\pb_3)=\emb_2^+(\pb_2) 
    \qand \emb_3^+(\pb)=\emb_2^+(\pb'), \qwhile \emb_4^+=\emb_2^+|_{\ideal_4^+},
    \tag{$\dag\dag$}
    \label{aux_eq:def_kappa_plus_case1}
    \]
    and set $\kappa_{12}^{34,+}(\emb_1^+,\emb_2^+) = (\emb_3^+, \emb_4^+)$. The image lies in $T_{34}^+$, since $\emb_3^+|_{\ideal_3}$ and $\emb_4^+|_{\ideal_4}$ are clearly embeddings of $\ideal_3$ and $\ideal_4$ and $(\emb_3^+|_{\ideal_3}, \emb_4^+|_{\ideal_4})\in T_{34}$ by the definitions of $A_{\ideal_2,\ideal_1}$ and $\first[\pb]_{\ideal_1,\ideal_2}$ along with Corollary \ref{cor:mcube-subsets} as in the proof of Lemma \ref{lem: toric-bijections}.
    
    In the second case, we have $(\ideal_1,\ideal_2) \gtrdot (\ideal_3,\ideal_4)$ corresponding to the move $\pb\in A_{\ideal_1,\ideal_2}$ with label $s_m$.
    This means that $(\ideal_3,\ideal_4) = (\ideal_1\setminus\pb, \ideal_2\sqcup\pb)$, so that we obtain (since $\ideal_3^+=\ideal_3$ and $\ideal_2^+=\ideal_2$)
    \[
    \ideal_1^+\setminus\ideal_3^+ = \{\pb,\pb_1\}\qand \ideal_4^+\setminus\ideal_2^+=\{\pb_4,\pb'\}.
    \]
    Then given a pair of embeddings $(\emb_1^+,\emb_2^+)\in T_{12}^+$ of $\ideal_1^+$ and $\ideal_2^+$, we define 
    \[
    \emb_3^+ = \emb_1^+|_{\ideal_3^+}, \qwhile \emb_4^+|_{\ideal_2^+}=\emb_2^+, \quad \emb_4^+(\pb_4)=\emb_1^+(\pb_1) \qand \emb_4^+(\pb')=\emb_1^+(\pb),
    \tag{$\ddag\ddag$}
    \label{aux_eq:def_kappa_plus_case2}
    \]
    and set $\kappa_{12}^{34,+}(\emb_1^+,\emb_2^+)=(\emb_3^+,\emb_4^+)$. Again, the image clearly lies in $T_{34}^+$.
    
    Showing that $\kappa_{12}^{34,+}$ defines a bijection and that $\kappa_{12}^{34,+}(\emb_1^+,\emb_2^+) \overset{t}{=} (\emb_1^+,\emb_2^+)$ is completely analogous to the proof of Lemma \ref{lem: toric-bijections}.
\end{proof}
Before we can compute the toric restrictions of the $\delta_{i_1}(\cD_{\is})$, we need one final technical detail in the situation of Lemma \ref{lem: deriv_cube} where a $1$-cube in $\sP_{\is}$ becomes a $0$-cube, since as we saw before in the proof of Theorem \ref{thm: phi_cancellation}, $0$-cubes correspond to the toric terms which do not cancel. 
\begin{lem}
    \label{lem: cube_demotion}
    Let $(\ideal_1,\ideal_2) \gtrdot (\ideal_3,\ideal_4)\in\sP_{\is}$ be a $1$-cube corresponding to a move $\pb\in A_{\ideal_1,\ideal_2}$ with label $s_m$ which does not commute with $s_{i_1}$ and suppose that $i_1=i_3$. Then $T^{\varnothing,s_m,+}_{\ideal_1,\ideal_2}=\varnothing$.
\end{lem}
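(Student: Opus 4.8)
The goal is to show that the toric set $T^{\varnothing,s_m,+}_{\ideal_1,\ideal_2}$ is empty, i.e. there is no pair of embeddings $(\emb_1^+,\emb_2^+)$ of $(\ideal_1^+,\ideal_2^+)$ whose restriction $(\emb_1^+|_{\ideal_1},\emb_2^+|_{\ideal_2})$ lies in $T^{\varnothing,s_m}_{\ideal_1,\ideal_2}$. First I would unwind the hypotheses: since $s_m$ does not commute with $s_{i_1}$ and $i_1=i_3$, by Lemma \ref{lem: i1=i3} the derivation $\delta_{i_1}$ acts trivially on exactly those elements of the $1$-cube whose minimal move-sequence from $(\idealij[1],\idealij[2])$ contains a non-commuting move. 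By Corollary \ref{cor:Pi*+} and Lemma \ref{lem: deriv_cube}, one of the two elements of the $1$-cube survives under $\delta_{i_1}$ and the other does not; say $\delta_{i_1}$ acts nontrivially on $(\ideal_1,\ideal_2)$ with $\delta_{i_1}(\ideal_1,\ideal_2)=(\ideal_1,\ideal_2\sqcup\pb_2)$ or $(\ideal_1\sqcup\pb_1,\ideal_2)$, and I would pin down which of the two from the coefficient analysis $(c_{\ideal_1},c_{\ideal_2})$ in \eqref{eq:df_pair_of_coefs} combined with Lemma \ref{lem: move_action}: the non-commuting move $\pb$ is precisely the unique element of Lemma \ref{lem:unique_non_commuting} whose presence toggles these coefficients between $(-1,+1)$ and $(0,0)$.

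The heart of the argument is a direct combinatorial contradiction. Suppose for contradiction that $(\emb_1^+,\emb_2^+)\in T^{\varnothing,s_m,+}_{\ideal_1,\ideal_2}$. Membership in $T^{\varnothing,s_m}_{\ideal_1,\ideal_2}$ of the restricted pair means: the move $\pb\in A_{\ideal_1,\ideal_2}$ with label $s_m$ is \emph{realized} by the embeddings, i.e. $(\emb_1^+|_{\ideal_1},\emb_2^+|_{\ideal_2})\in\second[\pb]{\ideal_1}{\ideal_2}$, so there is a valid extension of $\emb_2^+|_{\ideal_2}$ to $\ideal_2\sqcup\pb'$ sending $\pb'$ to $\emb_1^+(\pb)$; and simultaneously \emph{no} move in $A_{\ideal_2,\ideal_1}$ is realized (the $\varnothing$ in the first slot). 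I would then exploit the interaction between the $s_m$-box $\emb_1^+(\pb)\in\minposet$ and the newly added $s_{i_1}$-box coming from $\delta_{i_1}$. Concretely: if $\delta_{i_1}$ added $\pb_2$ to $\ideal_2$ (so $\ideal_2^+=\ideal_2\sqcup\pb_2$), then $\emb_2^+$ must place an $s_{i_1}$-box on top of the $s_m$-structure of $\emb_2^+|_{\ideal_2}$, and since $s_m$ and $s_{i_1}$ do not commute, the Hasse-diagram geometry of $\minposet$ forces $\emb_2^+(\pb_2)$ and the image $\emb_1^+(\pb)$ of the $s_m$-move to conflict — either $\emb_2^+(\pb_2)$ sits where $\emb_1^+(\pb)$ must go, blocking the $\second[\pb]{}{}$ condition, or else the $s_{i_1}$-box that $\emb_1^+$ does \emph{not} carry (since $\ideal_1^+=\ideal_1$ in this subcase) together with non-commutativity produces a realizable move in $A_{\ideal_2,\ideal_1}$ moving an $s_{i_1}$-box back, contradicting the $\varnothing$ in the first slot. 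The symmetric subcase $\ideal_1^+=\ideal_1\sqcup\pb_1$ is handled the same way with the roles of $1$ and $2$ swapped. I'd phrase this using the weight-coefficient computations: $\llan\wt_i,\sr_m\rran$ and $\llan\wt_i,\sr_{i_1}\rran$ are all in $\{-1,0,+1\}$ by minusculity, and $\llan\sdr_{i_1},\sr_m\rran\le -1$ by non-commutativity, so adding an $s_{i_1}$-box and realizing an $s_m$-move simultaneously is numerically obstructed on one side or the other.

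The step I expect to be the main obstacle is making the "geometry forces a conflict" claim rigorous without descending into case-checking over the seven minuscule types. The cleanest route is probably to argue entirely at the level of the weight lattice: a pair of embeddings $(\emb_1^+,\emb_2^+)$ of $(\ideal_1^+,\ideal_2^+)$ with the stated membership would have to satisfy, at the corresponding weights $\wt_1^+,\wt_2^+$ (obtained from $\wt_1,\wt_2$ by adding $\sdr_{i_1}$ to one of them), the relations $\llan \wt_1^+ + \wt_2^+,\sr_m\rran$ dictated by Lemma \ref{lem: move_equality} applied in $\sP_{\is}^+$, while the $\second[\pb]{}{}$-condition demands $\llan\wt_1^+,\sr_m\rran=+1$ and $\llan\wt_2^+,\sr_m\rran=-1$ be achievable jointly with no compensating $A_{\ideal_2,\ideal_1}$-move; tracking how the $+\sdr_{i_1}$ shift changes $\llan\cdot,\sr_m\rran$ by $-\llan\sdr_{i_1},\sr_m\rran\ge 1$ then forces one of these two pairings out of the range $\{-1,0,+1\}$, which is impossible in a minuscule representation. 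I would set this up carefully and let the arithmetic close the argument, falling back on Corollary \ref{cor:i2_minuscule} to justify that $\sP_{\is}^+$ still records a minuscule weight poset so the coefficient bounds genuinely apply.
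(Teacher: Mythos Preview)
Your proposal has real gaps. First, under the hypotheses $\delta_{i_1}$ necessarily adds its box to $\ideal_1$: since $\pb\in A_{\ideal_1,\ideal_2}$ means the unique non-commuting element of Lemma~\ref{lem:unique_non_commuting} is still in $\ideal_1$, the coefficients stay $(c_{\ideal_1},c_{\ideal_2})=(-1,+1)$, so $\ideal_1^+=\ideal_1\sqcup\pb_{i_1}$ and $\ideal_2^+=\ideal_2$. Your detailed sketch is for the other subcase, and the appeal to symmetry fails because the two factors play asymmetric roles ($\ideal_2\supset\idealij[2]$ versus $\ideal_1\subset\idealij[1]$). Second, the weight-arithmetic route does not close: $\llan\wt_i^+,\sr_m\rran$ is determined by the ideal $\ideal_i^+$ alone and has nothing to do with any particular embedding, so the $\second[\pb]{\ideal_1}{\ideal_2}$ condition cannot force $\llan\wt_1^+,\sr_m\rran=+1$; computing directly gives $\llan\wt_1^+,\sr_m\rran=1+\llan\sdr_{i_1},\sr_m\rran$, which is $0$ whenever the Cartan entry is $-1$, so nothing leaves $\{-1,0,+1\}$. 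The appeal to Corollary~\ref{cor:i2_minuscule} is beside the point, since the obstruction lives at the level of embeddings into $\minposet$, not at the level of weights of $\sP_{\is}^+$.

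The paper's argument is a short placement argument for the embedding $\emb_1^+$ and rests on a fact you never invoke: since $i_1=i_3$, the region $\minposet\setminus\idealij[2]$ contains \emph{no} box with label $s_{i_1}$ (this is the content of the proof of Lemma~\ref{lem:monomial_plucker} at $j=\cis=2$). Given $(\emb_1^+,\emb_2^+)\in T^{\varnothing,s_m,+}_{\ideal_1,\ideal_2}$, the $\second[\pb]{\ideal_1}{\ideal_2}$ condition on the restrictions forces $\emb_1(\pb)\notin\emb_2(\ideal_2)\supset\idealij[2]$ (the last containment because $\idealij[2]$ has only the identity embedding and $\ideal_2\supset\idealij[2]$). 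Since $s_m$ does not commute with $s_{i_1}$ and $\pb_{i_1}$ is maximal in $\ideal_1^+$, one has $\pb_{i_1}>\pb$, hence $\emb_1^+(\pb_{i_1})>\emb_1(\pb)$ also lies in $\minposet\setminus\idealij[2]$ --- a region with no $s_{i_1}$-labeled element. That is the contradiction; no root-pairing numerics are needed.
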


\begin{proof}
    Suppose $(\emb_1^+,\emb_2^+)\in T^{\varnothing,s_m,+}_{\ideal_1,\ideal_2}$ and let $\pb\in\ideal_1$ which moves to $\pb'\in\minposet\setminus\ideal_2$ be as in the statement. By the proof of Lemma \ref{lem: i1=i3}, we have that $(\ideal_1^+,\ideal_2^+)=\delta_{i_1}(\ideal_1,\ideal_2)=(\ideal_1\sqcup\pb_{i_1},\ideal_2)$, where $\pb_{i_1}$ is the minimal element of $\minposet\setminus\ideal_1$ with label $s_{i_1}$. Let $\emb_i=\emb_i^+|_{\ideal_i}$. Then by definition of $T^{\varnothing,s_m,+}_{\ideal_1,\ideal_2}$, we have $(\emb_1,\emb_2)\in T^{\varnothing,s_m}_{\ideal_1,\ideal_2}$, so that extending $\emb_2$ by $\emb_2(\pb')=\emb_1(\pb)$ defines a valid embedding of $\ideal_2\sqcup \pb$. Since $\ideal_2\supset \ideal_{i_2}$, we must have in particular that $\emb_1(\pb_1)\in \minposet\setminus \ideal_{i_2}$.
    
    Since $s_m$ does not commute with $s_{i_1}$, then $\pb_{i_1}$ and $\pb$ must be comparable in $\minposet$. Furthermore, since $\pb_{i_1}$ is maximal in $\ideal_1^+$, we must have that $\emb_1^+(\pb_{i_1}) > \emb_1^+(\pb) = \emb_1(\pb)$. Combined with $\emb_1(\pb)\in\minposet\setminus\ideal_{i_2}$, we thus have that $\emb_1^+(\pb_{i_1})\in \minposet\setminus\ideal_{i_2}$. However, this is a contradiction since $i_1=i_3$ implies that $\minposet\setminus\ideal_{i_2}$ has no elements with label $s_{i_1}=s_{i_3}$.
\end{proof}

We now come to the main theorem in this section computing the toric expression of $\delta_{i_1}(\cD_{\is})$.
\begin{thm}
    \label{thm:derivation_cancellation}
    For each $\is\neq k\in[n]$, the restriction of $\delta_{i_1}(\cD_{\is})$ to the torus $\opendunim$ is given by
    \[
    \delta_{i_1}(\cD_{\is})|_{\opendunim} = a_{\ideal_{i_1}}
    a_{\ideal_{i_2}} \left(\tsum[_{\pb:\,\indx(\pb)={i_1}}] a_{\pb}\right), 
    \]
    where the sum is over all $\pb\in\minposet$ with index $i_1$ or equivalently $\labl(\pb)=s_{i_1}$. 
\end{thm}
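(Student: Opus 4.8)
The strategy mirrors the proof of Theorem \ref{thm: phi_cancellation}: restrict to the dense torus $\opendunim\subset\dunimP$, compute the toric expansion of $\delta_{i_1}(\cD_{\is})|_{\opendunim}$ via the classification of toric terms, and show that all contributions cancel except those coming from a single distinguished $0$-cube. Since $\dunimP$ is irreducible and both sides are regular functions, verifying the identity on $\opendunim$ suffices. The case $\cis=1$ has already been handled directly in Lemma \ref{lem: numerator_cis=1}, so the work is in the case $\cis=2$; the four exceptional cases with $\cis\ge 3$ will be treated separately in Appendix \ref{app:exceptional}. The starting point is the expansion
\[
\delta_{i_1}(\cD_{\is})|_{\opendunim} = \sum_{d\ge 0}(-1)^d\sum_{(\ideal_1,\ideal_2)\in L_d:\,\delta_{i_1}(\ideal_1,\ideal_2)\neq 0}\;\sum_{A_1,A_2}\;\sum_{(\emb_1^+,\emb_2^+)\in T^{A_1,A_2,+}_{\ideal_1,\ideal_2}} a_{\emb_1^+}a_{\emb_2^+},
\]
obtained from Theorem \ref{thm: phi_cancellation} by applying $\delta_{i_1}$ termwise (using Lemma \ref{lem:derivation-one-term} to see that $\delta_{i_1}$ acts on at most one factor of each monomial $p_{\ideal_1}p_{\ideal_2}$, so that the image is again a sum of Pl\"ucker monomials indexed by $\sP_{\is}^+$) and then decomposing the toric terms of each $p_{\ideal_1^+}p_{\ideal_2^+}$ using the sets $T^{A_1,A_2,+}_{\ideal_1,\ideal_2}$.

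\textbf{Reorganizing by cubes.} The next step is to regroup the sum over cubes of $\sP_{\is}$, exactly as in Theorem \ref{thm: phi_cancellation}. Given a cube $\sC\subset\sP_{\is}$ with set of labels $S_{\sC}$, Lemma \ref{lem: deriv_cube} tells us that $\delta_{i_1}(\sC)$ is either an $m$-cube, an $(m-1)$-cube, or empty; in the first two cases Lemma \ref{lem: toric_bijections+} provides bijections $\kappa^{+}$ between the toric sets $T^{A_{\ideal_2,\ideal_1}(S_{\sC}),A_{\ideal_1,\ideal_2}(S_{\sC}),+}_{\ideal_1,\ideal_2}$ attached to different elements of $\sC$, all respecting toric equality, so that the inner sum $\sum_{(\emb_1^+,\emb_2^+)}a_{\emb_1^+}a_{\emb_2^+}$ depends only on $\sC$ and not on the chosen element. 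Using Corollary \ref{cor:mcube-subsets} to count, for fixed $j$, the number $\binom{|S_{\sC}|}{j}$ of elements of $\sC$ with $|A_{\ideal_2,\ideal_1}(S_{\sC})|=j$, the contribution of a surviving $m'$-cube $\delta_{i_1}(\sC)$ (with $m'\in\{m,m-1\}$) collapses via the binomial identity $\sum_j(-1)^j\binom{m'}{j}=(1-1)^{m'}$, which vanishes unless $m'=0$. Here two subtleties must be addressed. First, a $1$-cube $\sC$ in $\sP_{\is}$ whose unique label does not commute with $s_{i_1}$ (in the case $i_1=i_3$) degenerates to a $0$-cube $\delta_{i_1}(\sC)$; Lemma \ref{lem: cube_demotion} shows precisely that the resulting $0$-cube has empty toric set $T^{\varnothing,s_m,+}_{\ideal_1,\ideal_2}$, so it contributes nothing. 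Second, one needs to check that every cube in $\sP_{\is}^+$ arises as $\delta_{i_1}(\sC)$ for some cube $\sC\subset\sP_{\is}$ — this follows from Corollary \ref{cor:Pi*+}, since the Hasse diagram of $\sP_{\is}^+$ is a connected component of that of $\sP_{\is}$.

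\textbf{The surviving term.} After all cancellations, the only contributions come from $0$-cubes $\delta_{i_1}(\sC)$ with $S_{\sC}=\varnothing$, i.e.\ $\sC=\{(\idealij[1],\idealij[2])\}$ is the top element of $\sP_{\is}$. By Corollary \ref{cor: toric_sets+}, the only nonempty toric set here is $T^{\varnothing,\varnothing,+}_{\idealij[1],\idealij[2]} = \{(\emb_1^+,\id_{\idealij[2]}) \mid \emb_1^+|_{\idealij[1]}=\id_{\idealij[1]}\}$, and the same corollary shows that as $\emb_1^+$ ranges over this set, $\emb_1^+(\pb)$ ranges bijectively over all elements $\pb_{i_1}\in\minposet$ with label $s_{i_1}$, where $\idealij[1]^+=\idealij[1]\sqcup\{\pb\}$. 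Hence the surviving sum is exactly
\[
\sum_{\pb_{i_1}:\,\labl(\pb_{i_1})=s_{i_1}} a_{\idealij[1]}\,a_{\pb_{i_1}}\,a_{\idealij[2]} = a_{\ideal_{i_1}}a_{\ideal_{i_2}}\Bigl(\tsum[_{\pb:\,\indx(\pb)=i_1}] a_{\pb}\Bigr),
\]
which is the claimed expression. I expect the main obstacle to be bookkeeping in the degenerate case $i_1=i_3$: one must carefully track which cubes of $\sP_{\is}$ survive under $\delta_{i_1}$, which collapse in dimension, and which vanish entirely, and verify that the degenerate $0$-cubes produced by collapsing $1$-cubes do not spuriously contribute — this is exactly what Lemmas \ref{lem: deriv_cube} and \ref{lem: cube_demotion} are designed to handle, but assembling them correctly into the alternating sum requires care. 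The commuting-labels case is a routine adaptation of Theorem \ref{thm: phi_cancellation}; the non-commuting case in Lemma \ref{lem: toric_bijections+} (equations \eqref{aux_eq:def_kappa_plus_case1} and \eqref{aux_eq:def_kappa_plus_case2}) is where the extra added box $\pb_{i_j}$ must be shuffled along with the moved box, and checking toric equality there is the most delicate computation.
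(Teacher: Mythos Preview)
Your proposal is correct and follows essentially the same approach as the paper's proof: both reorganize the alternating sum over cubes in $\sP_{\is}$, invoke Lemma \ref{lem: toric_bijections+} for the toric-equality bijections, apply Lemma \ref{lem: deriv_cube} and the binomial identity to reduce to $0$-cubes, use Lemma \ref{lem: cube_demotion} to kill the spurious $0$-cubes arising from collapsed $1$-cubes, and finish with Corollary \ref{cor: toric_sets+}. Your remarks about the delicate bookkeeping in the $i_1=i_3$ case and the role of Corollary \ref{cor:Pi*+} are accurate and match the paper's treatment.
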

\begin{proof}
    This proof is entirely analogous to the proof of Theorem \ref{thm: phi_cancellation} using the results proved so far in this section. Recall that we defined $L_d\subset\sP_{\is}$ to be the subset of elements of $\sP_{\is}$ which can be obtained from the top element $(\idealij[1],\idealij[2])$ by a minimal length sequence of $d\ge 0$ moves
    \[
    \pb_1\in A_{\idealij[1],\idealij[2]},~\ldots,~
    \pb_d\in A_{\idealij[1]\setminus\{\pb_1,\dots,\pb_{d-1}\}, \idealij[2]\sqcup\{\pb_1',\dots,\pb_{d-1}'\}}.
    \]
    We begin by applying $\deis$ to $\cD_{\is}$:
    \[
    \deis(\cD_{\is}) 
    = \sum_{d\ge 0}(-1)^d \sum_{(\ideal_1,\ideal_2)\in L_d} \deis(p_{\ideal_1}p_{\ideal_2})
    = \sum_{d\ge 0}(-1)^d \sum_{(\ideal_1^+,\ideal_2^+)\in L_d^+} p_{\ideal_1^+}p_{\ideal_2^+},
    \]
    where we write $L_d^+ = \deis(L_d)$ and used Lemma \ref{lem:derivation-one-term} in the last equality. Since the toric sets $T^{A_1,A_2,+}_{\ideal_1,\ideal_2}$ partition the toric terms of $p_{w_1^+}p_{w_2^+}$, we obtain the following expression for the restriction
    \[
    \delta_{i_1}(\cD_{\is})|_{\opendunim} = \sum_{d\ge 0}(-1)^d\sum_{(\ideal_1^+,\ideal_2^+)\in L_d^+}~\sum_{A_1,A_2}~\sum_{(\emb_1^+,\emb_1^+)\in T^{A_1,A_2,+}_{\ideal_1,\ideal_2'}} a_{\emb_1^+}a_{\emb_2^+},
    \]
    where the third sum is over all $A_1\subset A_{\ideal_1,\ideal_2}$ and $A_2\subset A_{\ideal_2,\ideal_1}$. 
    
    For each pair $A_1,A_2$, by Lemma \ref{lem: unique-mcube}, there is a unique $(|A_1|+|A_2|)$-cube in $\sP_{\is}$ containing $(\ideal_1,\ideal_2)$, and this cube has set of labels $\{\labl(\pb)\mid \pb\in A_1\cup A_2\}$. For any cube $\sC\subset\sP_{\is}$, let $d_{\sC}$ denote the minimum number of moves in $\sP_{\is}$ between the top element $(\idealij[1],\idealij[2])$ of $\sP_{\is}$ and the top element of $\sC$ and let $S_{\sC}$ denote the set of labels of $\sC$. Then we may rewrite the sum above by summing over all cubes contained in $\sP_{\is}$. Before rewriting it like this, we first note that by Lemma \ref{lem: toric_bijections+}, we have for all cubes $\sC\subset\sP_\is$ and all pairs $(\ideal_1,\ideal_2), (\ideal_3,\ideal_4)\in \sC$ such that $\deis$ acts nontrivially on both $(\ideal_1,\ideal_2)$ and $(\ideal_3,\ideal_4)$ that 
    \[
    \sum_{(\emb_1^+,\emb_2^+)\in T^{A_1,A_2,+}_{\ideal_1,\ideal_2}}a_{\emb_1^+}a_{\emb_2^+} = \sum_{(\emb_3^+,\emb_4^+)\in T^{A_3,A_4,+}_{\ideal_3,\ideal_4}}a_{\emb_3^+}a_{\emb_4^+}
    \]
    where $A_1 = A_{\ideal_2,\ideal_1}(S_{\sC})$, $A_2=A_{\ideal_1,\ideal_2}(S_{\sC})$, $A_3=A_{\ideal_4,\ideal_3}(S_{\sC})$ and $A_4= A_{\ideal_3,\ideal_4}(S_{\sC})$. Thus, the sum of monomials corresponding to pairs of embeddings in the toric set $T^{A_{\ideal_2,\ideal_1}(S_{\sC}), A_{\ideal_1,\ideal_2}(S_{\sC}),+}_{\ideal_1,\ideal_2}$ is independent of the particular element $(\ideal_1,\ideal_2)\in \sC$ and instead only depends on whether $\delta_{i_1}$ acts nontrivially on $(\ideal_1,\ideal_2)$ or not. Thus, for each cube $\sC$, we will simply use the notation $T_{\sC}^{S_{\sC},+}$ to denote any one of the torically equivalent toric sets $T^{A_{\ideal_2,\ideal_1}(S_{\sC}), A_{\ideal_1,\ideal_2}(S_{\sC}),+}_{\ideal_1,\ideal_2}$ where $\delta_{i_1}$ acts on $(\ideal_1,\ideal_2)$ nontrivially or the empty set if $\delta_{i_1}$ acts trivially on every $(\ideal_1,\ideal_2)\in\sC$. We may now rewrite the sum as

    \begin{align*}
      \deis(\cD_{\is})|_{\opendunim} 
      &= \sum_{\sC\subset \sP_{\is}} \sum_{(\ideal_1,\ideal_2)\in\sC:\,\deis(\ideal_1,\ideal_2)\neq 0}(-1)^{d_{\sC}+|A_{\ideal_2,\ideal_1}(S_{\sC})|}\sum_{(\emb_1^+,\emb_2^+)\in T^{S_\sC,+}_{\sC}} a_{\emb_1^+}a_{\emb_2^+}\\
      &= \sum_{\sC\subset \sP_{\is}} \sum_{(\emb_1^+,\emb_2^+)\in T^{S_\sC,+}_{\sC}} a_{\emb_1^+}a_{\emb_2^+}\sum_{(\ideal_1,\ideal_2)\in\sC:\,\deis(\ideal_1,\ideal_2)\neq 0}(-1)^{d_{\sC}+|A_{\ideal_2,\ideal_1}(S_{\sC})|}.
    \end{align*}    
    By Lemma \ref{lem: deriv_cube}, the innermost sum above is the alternating sum over the levels of an $m'=|S_{\sC}|$-cube, an $m'=(|S_{\sC}|-1)$-cube, or an empty cube, which again by the binomial theorem we may evaluate as
    \[
    \sum_{j=0}^{m'} (-1)^j \binom{m'}{j} = (1+(-1))^{m'} = \begin{cases} 0, & \text{if $m'>0$,} \\ 1, & \text{if $m'=0$.}\end{cases}
    \]
    The sum above is nonzero only when $m'=0$, which can occur in two cases by Lemma \ref{lem: deriv_cube}: either when $\sC=\sC_0$ is a $0$-cube where $\deis$ acts nontrivially; or else when $\sC=\sC_1$ is a $1$-cube where $\deis$ acts nontrivially on the top element of $\sC$ and trivially on the bottom element of $\sC$, that is, the case where the unique label $s_m$ of $\sC$ does not commute with $s_{i_1}$. Hence, we can split the nonzero terms into two sums depending on these cases:
    \[
    \deis(\cD_{\is})|_{\opendunim} = \left(\sum_{\sC=\sC_0} \sum_{(\emb_1^+,\emb_2^+)\in T_{\sC}^{\varnothing,+}} a_{\emb_1^+}a_{\emb_2^+}\right) + \left(\sum_{\sC=\sC_1} \sum_{(\emb_1^+,\emb_2^+)\in T_{\sC}^{s_m,+}} a_{\emb_1^+}a_{\emb_2^+}\right).
    \]
    However, the second sum over $\sC=\sC_1$ is zero since $T_{\sC}^{s_m,+}=\varnothing$ by Lemma \ref{lem: cube_demotion}. Furthermore, by Corollary \ref{cor: toric_sets+}, there is only one nonzero term in the first sum coming from $\sC_0 = \{(\idealij[1],\idealij[2])\}$, so we obtain
    \[
    \deis(\cD_{\is})|_{\opendunim} 
    = \sum_{(\emb_{i_1}^+, \emb_{i_2}^+)\in T_{\idealij[1],\idealij[2]}^{\varnothing,\varnothing,+}} a_{\emb_{i_1}^+}a_{\emb_{i_2}^+} 
    = a_{\ideal_{i_1}}a_{\ideal_{i_2}}\left(\tsum[_{\pb:\,\indx(\pb)={i_1}}] a_{\pb}\right)
    \]
    where the rightmost sum is over $\pb\in\minposet$ with index $i_1$. 
\end{proof}
\section{Proof of main theorem}
\label{sec:main_proof}

In the previous Section \ref{sec:numerators}, we explicitly avoided $\is=k$ because the corresponding term $\pot_k$ behaves slightly differently than the terms $\pot_{\is}$ where $\is\neq k$ in our construction. Before we can present the proof of our main theorem, we handle this special term which we often call the ``quantum term'' directly in the first subsection. 

\addtocontents{toc}{\SkipTocEntry}
\subsection{The quantum term}\label{sec:quantum}
Comparing Theorem \ref{thm:toric_monomial_phi} and Theorem \ref{thm:derivation_cancellation} with the terms of toric restriction of the Lie-theoretic superpotential as given in \eqref{eq:toric_expression_pot_Lie_3}, we can see that the terms $\pot_{\is}$ for $\is\in[n]\setminus\{k\}$ of $\pot_{\can}$ deal with the terms $\dChfdual[_{i_1}](u_-)$ and it is clear that $\pot_k=\p_{\yng(1)}/\p_{\varnothing}$ takes care of the term $\dChfdual[_k](u_-)$. Hence, the remaining term in the Lie-theoretic superpotential is $\dChedual[_k](u_+^{-1})$ where $k$ is such that $\P=\P_k$. We will start with the toric expression for this term as stated in equation \eqref{eq:toric_expression_pot_Lie_3}:
\[
\dChedual[_k](u_+^{-1}) = q\frac{\sum_{\iota:\idealPrime\hookrightarrow\minposet} a_{\iota(\idealPrime)}}{a_{\minposet}}.
\]
In our case, this term is the only one containing the quantum parameter, so it is often referred to as the \emph{quantum term}. Note that this is the only term of the Lie-theoretic potential (or more precisely its restriction to $\opendunim$) that depends on the formal quantum paramter $q\in\C^*$.

Both the numerator and the denominator of the toric expression above are given by summing over the products of toric coordinates corresponding to embeddings of a poset into the minuscule poset $\minposet$: for the numerator it is the subposet $\idealPrime$, whereas for the denominator it is the order ideal $\minposet$ itself. Hence, we directly see using \eqref{eq:toric_expression_for_Plucker_coord_in_posets} and \eqref{eq:phi_k_and_p_minposet} that the denominator equals $\phiGLS[k]=\p_{\minposet}$ on the torus and therefore on all of $\dunimP$ since $\dunimP$ is irreducible and $\opendunim\subset\dunimP$ is dense. On the other hand, for the numerator to correspond to a Pl\"ucker coordinate, the subset $\idealPrime$ must be an order ideal, or equivalently $\wPrime$ must be a minimal coset representative, a fact which we foreshadowed already in Remark \ref{rem:wPrime_is_min_coset_rep}. We will establish this by showing a more general result, which does require $k$ to be a minuscule vertex of $\dG$.

For $j\in[n]$, recall that we wrote $\woj\in\weylj$ for the longest elements of the parabolic Weyl subgroups. Write $\cosetsj$ for the set of minimal coset representatives of the cosets in $\weyl/\weylj$, with $\wJ\in\cosetsj$ the representative of $\wo\weylj$. Moreover, let $\wJpprime\in\cosetsj$ be the representative of $\woj s_j\weylj$, and let $\wJprime = \wJ(\wJpprime)^{-1}\in\weyl$. Then:
\begin{lem}\label{lem:wJprime_min_coset_rep}
    The element $\wJprime=\wJ(\wJpprime)^{-1}\in\weyl$ lies in $\cosetsj$.
\end{lem}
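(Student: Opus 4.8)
The plan is to characterize when a product $\wJ(\wJpprime)^{-1}$ of minimal coset representatives for $\weyl/\weylj$ is again a minimal coset representative, using the length-additivity criterion. Recall that for $w\in\weyl$, we have $w\in\cosetsj$ if and only if $\ell(ws_i)>\ell(w)$ for every $i\neq j$, equivalently $w(\sr_i)>0$ for every $i\neq j$. So I would aim to show $\wJprime(\sr_i)>0$ for all $i\neq j$, or equivalently that $\ell(\wJprime) + \ell(\wop[j]) = \ell(\wJprime \wop[j])$ where I abbreviate $\wop[j]=\woj$. Since $\wJ = \wJprime \wJpprime$ as elements of $\weyl$ and $\wJpprime\in\cosetsj$, the key observation is that $\wJ$ is the \emph{maximal} element of the coset $\wo\weylj$ written as $\wJ\cdot e$, while $\wo = \wJ\wop[j]$ with $\ell(\wo) = \ell(\wJ) + \ell(\wop[j])$; I want to transfer this length-additivity to $\wJprime$.

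First I would record the basic length identities: $\ell(\wo) = \ell(\wJ) + \ell(\wop[j])$ by definition of $\wJ\in\cosetsj$, and $\ell(\woj s_j) = \ell(\wop[j]) + 1$ (since $s_j\notin\weylj$, multiplying $\wop[j]$ by $s_j$ on the right necessarily increases length — indeed $\wop[j](\sr_j)$ must be a positive root as $\wop[j]$ only involves $s_i$ with $i\neq j$, so $\ell(\wop[j]s_j) = \ell(\wop[j])+1$). Hence $\ell(\wJpprime) = \ell(\woj s_j) - \ell(\wop[j]) = 1$? No — that is wrong in general; rather $\ell(\wJpprime) + \ell(\wop[j]) = \ell(\woj s_j) = \ell(\wop[j]) + 1$ would force $\ell(\wJpprime)=1$, which is false. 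The correct statement is $\woj s_j = \wJpprime \cdot \wop[j]$ is NOT length additive unless... Let me reconsider: $\wJpprime$ is the minimal coset representative of $\woj s_j \weylj$, so $\woj s_j = \wJpprime \cdot v$ for some $v\in\weylj$ with $\ell(\woj s_j) = \ell(\wJpprime) + \ell(v)$, but $v$ need not be $\wop[j]$. So the cleaner approach: use that $\wJprime = \wJ(\wJpprime)^{-1}$ and compute $\wJprime(\sr_i)$ for $i\neq j$ directly via the minuscule hypothesis on $k=j$... but the lemma is stated for general $j$, not just minuscule $j$. Looking again at the text, it says ``which does require $k$ to be a minuscule vertex'' for the \emph{application}, but then states Lemma \ref{lem:wJprime_min_coset_rep} for general $j$; so I should prove it for general $j$.

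The route I would actually take: show $\wJprime\in\cosetsj$ by proving $(\wJprime)^{-1}$ maps each positive root of the Levi (i.e.\ each $\sr_i$, $i\neq j$) to a positive root. Equivalently, using $(\wJprime)^{-1} = \wJpprime \wJ^{-1}$, and the general fact that for $u\in\cosetsj$ one has $u^{-1}(\proots\setminus\proots_j) \subset \proots$ where $\proots_j$ denotes roots of the Levi. Concretely, $\wJ^{-1}$ sends the inversion set of $\wJ$ (a set of positive roots, none lying in $\proots_j$ since $\wJ$ is a minimal representative) to negative roots, and the complement to positive roots; since $\wJpprime\in\cosetsj$ and $\wJpprime^{-1}$ similarly controls $\proots_j$, I would chase through the composition. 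The cleanest formulation is via the \emph{heap}/subword combinatorics already developed: since both $\wJ$ and $\wJpprime$ are the minimal representatives of $\wo\weylj$ and $\woj s_j\weylj$ respectively, and $\woj s_j \leq \wo$ in the appropriate order, $\wJpprime$ is a subexpression of $\wJ$ (this is exactly the kind of statement proved in the context of Corollary \ref{cor:quantum_term}), hence $\ell(\wJprime) = \ell(\wJ) - \ell(\wJpprime)$ and one has the factorization $\wJ = \wJprime\wJpprime$ with length-additivity. Once length-additivity $\ell(\wJ) = \ell(\wJprime) + \ell(\wJpprime)$ is established, together with the fact that $\wJpprime^{-1}(\sr_i) > 0$ for $i\neq j$ would NOT immediately give what I want; instead I would argue: if $\wJprime(\sr_i) < 0$ for some $i\neq j$, then $\ell(\wJprime s_i) < \ell(\wJprime)$, so $\wJ = \wJprime\wJpprime = (\wJprime s_i)(s_i\wJpprime)$; since $s_i\wJpprime$ might leave $\weylj$... this requires care, which is why I flag it below.

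\textbf{Main obstacle.} The delicate point is handling the interaction between $s_i$ (for $i\neq j$, hence $s_i\in\weylj$) and the coset structure: showing that $\wJprime(\sr_i)<0$ leads to a contradiction with $\wJ$ being the minimal representative of its coset. I expect the clean argument is: $\wJprime(\sr_i)<0$ together with $\ell(\wJ)=\ell(\wJprime)+\ell(\wJpprime)$ implies, after writing $\wJ = \wJprime\wJpprime$ and using that $\wJpprime^{-1}\in\weylj\cdot(\text{something})$... Actually the right tool is: $\wJprime = \wJ\wJpprime^{-1}$ and $\wJpprime^{-1}$ sends $-\sr_i$ ($i\neq j$) into the root system of the Levi (positive or negative), while $\wJ$ then sends Levi roots... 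Rather than the root-theoretic chase, I would instead exploit the explicit minuscule-poset description: $\wJprime$ corresponds to the complement of $\idealPPrime$ in $\minposet$ (Remark \ref{rem:order_ideals_to_min_coset_reps}), and a subdiagram of $\minposet$ is automatically an order ideal or order filter depending on orientation — being the complement of the order ideal $\idealPPrime$, it is an order \emph{filter}, which after the order-reversal that identifies filters of $\minposet$ with ideals of the opposite poset corresponds to a minimal coset representative. I expect the cleanest writeup fixes $j$ to be such that $\dfwt[j]$-type data is available (the general-$j$ case reducing to a purely Coxeter-combinatorial lemma about $\ell(\wJ) = \ell(\wJprime) + \ell(\wJpprime)$ plus the subword property), and the bulk of the work is verifying the length-additivity, which follows from $\woj s_j\weylj \le \wo\weylj$ in Bruhat order on $\weyl/\weylj$ — a fact I would prove by noting $\wo = \wJ\wop[j] \ge \wJ s_j \cdot(\text{lower})$ hmm; concretely $\woj s_j$ is below $\woj \le \wo$ is false since $\woj\le\wo$ always but we need the coset $\woj s_j\weylj$, and $\woj s_j \in \wJ\weylj$ would need checking. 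I would resolve this by the direct computation that $\wJpprime$, being the minimal representative of $\woj s_j\weylj$, satisfies $\wJpprime \le \wJ$ because $\woj s_j \le \woj \cdot s_j \le \wop[j]\cdots$ — in any case this reduces to a finite check or a short Bruhat-order argument, and I do not expect it to be the hard part; the hard part is the length-additivity/subword claim, which I would attack via Corollary \ref{cor:full_action_special_weyl_elts}-style reasoning if $j$ is minuscule, or via a general exchange-condition argument otherwise.
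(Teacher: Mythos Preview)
Your proposal identifies the right ingredients but inverts the difficulty. The length additivity $\ell(\wJ) = \ell(\wJprime) + \ell(\wJpprime)$ that you flag as the main obstacle is in fact immediate, and holds with \emph{any} $v\in\cosetsj$ in place of $\wJpprime$: writing $\wJ = \wo\woj$ (from $\wo=\wJ\woj$ length-additively), one computes
\[
\ell(\wJ v^{-1}) = \ell(\wo\woj v^{-1}) = \ell(\wo) - \ell(v\woj) = \ell(\wo) - \ell(v) - \ell(\woj) = \ell(\wJ) - \ell(v),
\]
using only $\ell(\wo x)=\ell(\wo)-\ell(x)$ and $\ell(v\woj)=\ell(v)+\ell(\woj)$ for $v\in\cosetsj$. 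Your proposed justification via Bruhat comparability $\wJpprime\le\wJ$ does not by itself force length additivity in a general Coxeter group, but the correct argument is this one line.

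The genuine gap is the step you yourself flag and then leave unresolved: from $\wJ = (\wJprime s_i)(s_i\wJpprime)$ with $\ell(\wJprime s_i)=\ell(\wJprime)-1$, you need a representative of $\wo\weylj$ strictly shorter than $\wJ$, but left-multiplying $\wJpprime$ by $s_i$ does not interact well with $\cosetsj$. The paper bypasses this by working with the \emph{long} representative $\woj s_j$ of the coset $\wJpprime\weylj$ rather than the short one: since $i\ne j$ and $\woj$ is longest in $\weylj$, we have $\ell(s_i\woj)=\ell(\woj)-1$, hence $s_i\woj s_j < \woj s_j$ in Bruhat order, and projecting to $\weyl/\weylj$ yields a minimal representative $\wJpppprime$ of $(s_i\woj s_j)\weylj$ with $\ell(\wJpppprime)\le\ell(\wJpprime)$. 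Then $(\wJprime s_i)\cdot\wJpppprime$ sends $-\dfwt[j]$ to $\wJprime s_i\cdot s_i\woj s_j(-\dfwt[j]) = \wJprime\wJpprime(-\dfwt[j]) = \wJ(-\dfwt[j])$, so it represents $\wo\weylj$ with length at most $(\ell(\wJprime)-1)+\ell(\wJpprime)=\ell(\wJ)-1$, the desired contradiction. (The paper's final inequality thus also relies on the length additivity above, though it does not state it explicitly.)

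Your minuscule-poset fallback applies only when $j$ is minuscule, whereas the lemma is stated for arbitrary $j$; and as phrased it is circular, since associating an order ideal to $\wJprime$ already presupposes $\wJprime\in\cosetsj$.
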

\begin{proof}
We begin by excluding the trivial case where $\wJpprime=\wJ$, since $\wJprime=e\in\cosetsj$. (This only occurs for type $\LGA_n$ with $j\in\{1,n\}$; cf.~\cite[Remark 7.3]{Spacek_LP_LG_models}.)

Working from contradiction, assume that $\wJprime\notin\cosetsj$, i.e.~there exists an $i\in[n]\setminus\{j\}$ such that $\wJppprime=\wJprime s_i$ has $\ell(\wJppprime)=\ell(\wJprime)-1$. On the other hand, since $\woj\in\weylj$ is the longest element of the parabolic Weyl subgroup, we have $\ell(s_i\woj)=\ell(\woj)-1$, so that $s_i\woj s_j(-\dfwt[j]) = \wJpppprime(-\dfwt[j])$ with $\ell(\wJpppprime)\le\ell(\wJpprime)$. 
However, now we find that
\[
\dfwt[\DS(j)]=\wJ(-\dfwt[j])=\wJprime\wJpprime(-\dfwt[j])=\wJppprime\wJpppprime(-\dfwt[j]).
\]
In other words, $\wJppprime\wJpppprime$ and $\wJ$ are both coset representatives of $\wo\weylj$, with lengths related by $\ell(\wJppprime\wJpppprime)\le\ell(\wJ)-1$, which contradicts the minimality of the coset representative $\wJ$.
\end{proof}
\begin{rem}
The above proof generalizes straightforwardly to parabolic Weyl subgroups $\weylj[J]$ with $J\subset[n]$ having $|J|>1$.
\end{rem}
Since $\weylp=\weylj[k]$, $\wP=\wJ[k]$, $\wop=\woj[k]$, $\wPrime=\wJprime[k]$ and $\wPPrime=\wJpprime[k]$, we combine Lemma \ref{lem:wJprime_min_coset_rep} with the preceding discussion to obtain the following corollary.
\begin{cor}\label{cor:quantum_term}
The element $\wPrime=\wP(\wPPrime)^{-1}\in\weyl$ lies in $\cosets$ and hence $\idealPrime$ is an order ideal of $\minposet$. In particular, the complement of the ideal $\ideal''\subideal\minposet$ defined by $\wPPrime$ is isomorphic to the ideal $\ideal'\subideal\minposet$ defined by $\wPrime$. 
\end{cor}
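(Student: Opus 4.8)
## Proof proposal for Corollary \ref{cor:quantum_term}

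The plan is to show that both claims follow almost immediately from Lemma \ref{lem:wJprime_min_coset_rep} together with the dictionary of Remark \ref{rem:order_ideals_to_min_coset_reps} between minimal coset representatives in $\cosets$ and order ideals of $\minposet$. First, I would note that since $\weylp = \weylj[k]$, $\wP = \wJ[k]$, $\wop = \woj[k]$, $\wPPrime = \wJpprime[k]$ and $\wPrime = \wJprime[k]$, applying Lemma \ref{lem:wJprime_min_coset_rep} with $j = k$ gives directly that $\wPrime = \wP(\wPPrime)^{-1} \in \cosets$ (this uses that $k$ is a minuscule vertex of $\dG$, which holds by our standing assumptions since $\cmX = \udP_k\backslash\udG$ is minuscule). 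Since minimal coset representatives in $\cosets$ are in bijection with order ideals of $\minposet$ by Remark \ref{rem:order_ideals_to_min_coset_reps}, this immediately yields that $\idealPrime$, the heap of $\wPrime$, is a genuine order ideal of $\minposet$.

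For the second claim, I would argue that the order ideal $\idealPPrime$ associated to $\wPPrime$ is, as noted in Remark \ref{rem:order_ideals_to_min_coset_reps} and Proposition \ref{prop:coord_ring_dunimP_in_posets}, precisely the maximal order ideal of $\minposet$ containing exactly one box labeled $s_k$ (since $\wPPrime$ is the maximal minimal coset representative with exactly one occurrence of $s_k$). The key computation is then the following: since $\wP = \wPrime\wPPrime$ with $\wP\in\cosets$ the longest minimal coset representative (corresponding to the full poset $\minposet$), and since $\ell(\wP) = \ell(\wPrime) + \ell(\wPPrime)$ — which follows from $\wPrime$ being a minimal coset representative and $\wPPrime\in\cosets$, using the standard length-additivity $\ell(xy) = \ell(x)+\ell(y)$ when $x\in\cosets$ and $y\in\weylp$ combined with $\wPPrime^{-1}$ having the appropriate reduced expression compatible with $\wP$ — a reduced expression for $\wPrime$ followed by one for $\wPPrime$ gives a reduced expression for $\wP$. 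Under the heap dictionary, this exhibits $\idealPPrime$ as a sub-order-ideal of $\minposet = \idealPrime \sqcup (\text{complement})$ and identifies $\idealPrime$ (respecting labels) with the complement $\minposet \setminus \idealPPrime$, since reading off a linear extension of $\minposet$ that refines "elements of $\idealPPrime$ first, then the rest" corresponds to the factorization $\wP = \wPrime\wPPrime$ read in the appropriate order.

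The main obstacle — though a mild one — will be making the identification of $\idealPrime$ with the set-theoretic complement of $\idealPPrime$ inside $\minposet$ fully precise at the level of labeled posets rather than just as abstract posets. Concretely, one must check that the heap of $\wPrime = \wP(\wPPrime)^{-1}$ is label-isomorphic to the induced subposet $\minposet\setminus\idealPPrime$, which amounts to verifying that removing the "front" reduced word of $\wPPrime$ from a reduced word of $\wP$ leaves exactly a reduced word for $\wPrime$ occupying the positions of $\minposet$ not used by $\idealPPrime$; this is where the full commutativity of $\wP$ (Stembridge, as cited) and the invariance of the heap construction under commutation moves do the work, so I would cite the relevant facts from Section \ref{ssec:Combinatorics_of_minuscule_posets} rather than reprove them. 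Everything else is bookkeeping. I would also remark, for completeness, that the equality $\wP = \wPrime\wPPrime$ together with $\wPrime,\wPPrime$ both being minimal coset representatives forces the length-additivity, since otherwise $\wP$ would fail to be the \emph{minimal} representative of $\wo\weylp$, exactly as in the proof of Lemma \ref{lem:wJprime_min_coset_rep}.
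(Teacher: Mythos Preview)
Your approach matches the paper's: specialize Lemma~\ref{lem:wJprime_min_coset_rep} to $j=k$ and then invoke the order-ideal/heap dictionary of Remark~\ref{rem:order_ideals_to_min_coset_reps}, which is exactly what the paper does (in one sentence, leaving the complement claim to the reader).

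There is one muddle to fix in your second paragraph. Both justifications you offer for $\ell(\wP)=\ell(\wPrime)+\ell(\wPPrime)$ are off: the ``standard length-additivity $\ell(xy)=\ell(x)+\ell(y)$ when $x\in\cosets$ and $y\in\weylp$'' does not apply, since $\wPPrime\in\cosets$, not $\weylp$; and ``otherwise $\wP$ would fail to be the minimal representative'' is not a valid deduction either. The clean argument is the one you half-state as ``$\wPPrime^{-1}$ having the appropriate reduced expression compatible with $\wP$'': since $\wPPrime\le\wP$ in Bruhat order ($\wP$ being the maximal element of $\cosets$), we already have $\idealPPrime\subideal\minposet$, and any linear extension of $\minposet$ listing the elements of $\idealPPrime$ first gives a reduced word for $\wP$ whose rightmost $\ell(\wPPrime)$ letters spell $\wPPrime$. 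Deleting that suffix leaves a reduced word for $\wP(\wPPrime)^{-1}=\wPrime$ of length $\ell(\wP)-\ell(\wPPrime)$, whose letters are precisely the labels of $\minposet\setminus\idealPPrime$. Full commutativity of $\wPrime$ (which follows from $\wPrime\in\cosets$, the minuscule case) then identifies its heap $\idealPrime$ with $\minposet\setminus\idealPPrime$ as a labeled poset, exactly as you conclude. (Minor aside: Lemma~\ref{lem:wJprime_min_coset_rep} itself holds for arbitrary $j$ and does not use minusculity; it is only the heap interpretation and full commutativity that need it.)
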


Since equation \eqref{eq:toric_expression_for_Plucker_coord_in_posets} tells us that $\p_{\minposet}(u_-)=a_{\minposet}$ and $\p_{\idealPrime}(u_-)=\sum_{\iota:\idealPrime\hookrightarrow\minposet} a_{\iota(\idealPrime)}$ for $u_-\in\opendunim$, we directly conclude the following:

\begin{prop}
\label{prop:quantum_term}
For any fixed $q\in\C^*$, the following equality holds on the open dense torus $\opendunim$ and hence on $\dunimP$
\[
\dChedual[_k](u_+^{-1}) = q\frac{\p_{\idealPrime}(u_-)}{\p_{\minposet}(u_-)},
\]
where $u_+\in\dunip$ is determined by $u_-\in\dunimP$ and $q\in\C^*$ as in Corollary \ref{cor:unique_u+_for_u-_for_fixed_q}.
\end{prop}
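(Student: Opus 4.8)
The plan is to prove Proposition~\ref{prop:quantum_term} by reducing the desired identity to the already-established Laurent polynomial model and the Pl\"ucker coordinate restriction formulas, using Corollary~\ref{cor:quantum_term} to identify the numerator with a Pl\"ucker coordinate. First I would recall that by Theorem~\ref{thm:LP_LG_model_posets} (pulled back to $\opendunim$), for any fixed $q\in\C^*$ the restriction of the Lie-theoretic superpotential to $\opendunim$ decomposes as in equation~\eqref{eq:toric_expression_pot_Lie_3}, and in particular the term $\dChedual[_k](u_+^{-1})|_{\opendunim}$ equals
\[
q\frac{\sum_{\iota:\idealPrime\hookrightarrow\minposet} a_{\iota(\idealPrime)}}{a_{\minposet}},
\]
where $\idealPrime\subideal\minposet$ is the order ideal corresponding to $\wPrime=\wP(\wPPrime)^{-1}$. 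The fact that $\idealPrime$ is genuinely an order ideal (equivalently, that $\wPrime\in\cosets$) is exactly the content of Corollary~\ref{cor:quantum_term}, so this formula makes sense as written. Next I would invoke the toric expression for Pl\"ucker coordinates from equation~\eqref{eq:toric_expression_for_Plucker_coord_in_posets}: for $u_-=\dy_{r_1}(a_1)\cdots\dy_{r_\ellwP}(a_\ellwP)\in\opendunim$ we have $\p_\ideal(u_-)=\sum_{\iota:\ideal\hookrightarrow\minposet} a_{\iota(\ideal)}$ for any $\ideal\subideal\minposet$. Applying this with $\ideal=\idealPrime$ gives that the numerator above is $\p_{\idealPrime}(u_-)$, and applying it with $\ideal=\minposet$ (whose only embedding into itself is the identity) gives that the denominator is $\p_{\minposet}(u_-)=a_{\minposet}$; alternatively the denominator identification also follows from equation~\eqref{eq:phi_k_and_p_minposet}. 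This already yields the claimed equality on the open dense torus $\opendunim$.

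To pass from $\opendunim$ to all of $\dunimP$, I would argue as follows. The left-hand side $\dChedual[_k](u_+^{-1})$ is a regular function on $\dunimP$ for each fixed $q$: indeed, by Corollary~\ref{cor:unique_u+_for_u-_for_fixed_q}, $u_+\in\dunip$ depends regularly (in fact, morphically) on $u_-\in\dunimP$ once $q$ is fixed, and $\dChedual[_k]$ is a regular function on $\dunip$; more precisely this is part of the construction recalled in Definition~\ref{df:Lie_potential_on_dunimP} of $\pot_{\dunimP}$, which is a morphism $\dunimP\to\C$ for fixed $q$. On the other hand the right-hand side $q\,\p_{\idealPrime}/\p_{\minposet}$ is a priori only a rational function on $\dunimP$, regular away from the vanishing locus of $\p_{\minposet}=\phiGLS[k]$. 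But by Proposition~\ref{prop:coord_ring_dunimP_in_posets}, the generalized minor $\phiGLS[k]=\p_{\minposet}$ is one of the functions inverted in the coordinate ring $\C[\dunimP]$, so $\p_{\minposet}$ is a unit on $\dunimP$ and hence $q\,\p_{\idealPrime}/\p_{\minposet}$ is in fact regular on all of $\dunimP$. Since $\dunimP$ is irreducible (it is a unipotent cell, hence an affine space up to the localization, and in any case an irreducible variety) and $\opendunim\subset\dunimP$ is dense, two regular functions on $\dunimP$ agreeing on $\opendunim$ must agree everywhere. This completes the proof.

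I expect the only genuine subtlety to be the justification of the final density/regularity step — specifically, verifying cleanly that both sides extend to regular functions on $\dunimP$ (rather than merely rational functions) so that the dense-torus equality propagates. The regularity of the right-hand side is handled by the observation that $\p_{\minposet}=\phiGLS[k]$ is inverted in $\C[\dunimP]$ via Proposition~\ref{prop:coord_ring_dunimP_in_posets}, and the regularity of the left-hand side is built into the construction of $\pot_{\dunimP}$ in Definition~\ref{df:Lie_potential_on_dunimP} together with Corollary~\ref{cor:unique_u+_for_u-_for_fixed_q}; once both are seen to be regular, irreducibility of $\dunimP$ and density of $\opendunim$ finish the argument immediately. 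Everything else is a direct combination of the toric formulas~\eqref{eq:toric_expression_pot_Lie_3}, \eqref{eq:toric_expression_for_Plucker_coord_in_posets}, and \eqref{eq:phi_k_and_p_minposet} with Corollary~\ref{cor:quantum_term}.
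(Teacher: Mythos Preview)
Your proposal is correct and follows essentially the same approach as the paper: combine the toric expression~\eqref{eq:toric_expression_pot_Lie_3} for $\dChedual[_k](u_+^{-1})$ with the Pl\"ucker restriction formula~\eqref{eq:toric_expression_for_Plucker_coord_in_posets} (using Corollary~\ref{cor:quantum_term} to ensure $\idealPrime$ is an order ideal), then extend from $\opendunim$ to $\dunimP$ by irreducibility and density. You supply more explicit justification for the regularity of both sides on $\dunimP$ than the paper does, but the underlying argument is identical.
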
 

We now have descriptions for all the terms of the superpotential in Pl\"ucker coordinates, so we can put these together and prove that the Lie-theoretic and canonical mirror models are isomorphic type-independently.

\addtocontents{toc}{\SkipTocEntry}
\subsection{The canonical mirror variety}
\label{sec:mirror_isom}

In order to relate our LG models to Rietsch's LG models, we first establish that the mirror spaces $\mX_\can$ and $\mX_\Lie$ are isomorphic. Our strategy will follow the outline used for the proof of \cite[Theorem 3.1]{Spacek_Wang_exceptional}, although we will need to make several adjustments to generalize the statement to all cominuscule types simultaneously without relying on type-specific computations. 

We will construct the isomorphism via the composition
\[
\mX_\Lie = \dRichard_{\wop,\wo} \xrightarrow{\varphi} \dunimP \xrightarrow{\pi} \cmX = \udP_k \backslash \udG \supset \mX_\can,
\]
where $\pi$ is the quotient map given by $u_-\mapsto \udP_k u_-$ and where $\varphi: \mX_\Lie \rightarrow \dunimP$ is the isomorphism obtained via restricting the composition of isomorphisms $\mX_\Lie\times\invdtorus \xrightarrow{\RisoZ} \decomps \xrightarrow{\sim}  \dunimP \times \invdtorus$ to the first factor via $\mX_\Lie \cong \mX_\Lie \times \{\mathrm{id}\}$. See \cite[Remark 3.1]{Spacek_LP_LG_models} for a discussion of the isomorphism $\RisoZ$ with our current conventions or \cite[Section 4]{Rietsch_Mirror_Construction} for the original definition of $\RisoZ$; the isomorphism $\decomps \xrightarrow{\sim}  \dunimP \times \invdtorus$ is discussed in Lemma \ref{lem:decomps_iso_dunimp_times_invdtorus}. 

Since $\varphi$ is an isomorphism, it suffices to show that $\pi$ is an isomorphism $\dunimP\to\mX_\can$ to allow us to conclude $\mX_\Lie\cong\mX_\can$. Proposition 8.5 of \cite{GLS_Kac_Moody_groups_and_cluster_algebras} implies that $\dunimP$ is affine while we know from the fact that $\mX_\can = \cmX\setminus D_{\ac}$ is the complement in $\cmX$ of an anticanonical (hence ample, since $\cmX$ is Fano) divisor that $\mX_\can$ is also affine. Thus, it suffices to prove at the level of coordinate rings that the induced map $\pi^*:\C[\mX_\can] \rightarrow \C[\dunimP]$ is an isomorphism. 

To see that $\pi^*$ is injective, we observe that the dimensions of $\dunimP$ and $\mX_\can$ are both  equal to the dimension of $\cmX$ as well as the fact that $\pi$ is an open map since it is the quotient by the action of $\udP$. Hence, $\pi(\dunimP)\cap\mX_\can$ is an open, dense subset of $\mX_\can$ so that $\pi: \dunimP \dashrightarrow\mX_\can$ is dominant and hence $\pi^*$ is injective. 

On the other hand, to see that $\pi^*$ is surjective, we first recall Proposition \ref{prop:coord_ring_dunimP_in_posets}, which gives a presentation of the coordinate ring of $\dunimP$ as 
\[
\C[\dunimP] = \C\bigl[\p_{\ideal}~\big|~\ideal\subideal\idealPPrime\bigr] \bigl[\phiGLS[j]^{-1}~\big|~j\in[n]\bigr]
\]
where we recall that $\idealPPrime\subideal\minposet$ is the order ideal corresponding to $\wPPrime\in\cosets$ the minimal coset representative of $\wop s_k\weylp$.

Each generator of this coordinate ring is a Pl\"ucker coordinate $p_{\ideal}$ which is a priori also a well-defined function on $\mX_\can$. Furthermore, we proved in Theorem \ref{thm: phi_cancellation} that each of the generalized minors $\phiGLS[j]$ is equal to the polynomial in Pl\"ucker coordinates $\cD_{\is}$ on $\mX_\can$. Furthermore, the zero set $D_j=\{\cD_{\is}=0\}$ is contained in $D_{\ac}$ which was removed from $\cmX$ to obtain $\mX_\can$. Thus, each $\phiGLS[j]$ is a well-defined function on $\mX_\can$ which is nonzero everywhere on $\mX_\can$ and hence invertible. Therefore $\pi^*$ is surjective, so we conclude that $\pi^*$ and hence $\pi$ is an isomorphism. Hence:

\begin{thm}
    \label{thm: mirror_space}
    The spaces $\mX_\Lie$ and $\mX_\can$ are isomorphic via the composition
    \[
    \mX_\Lie \xrightarrow{\varphi} \dunimP \xrightarrow{\pi} \mX_\can.
    \]
\end{thm}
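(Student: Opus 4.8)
The statement follows almost immediately from the discussion preceding it, so the plan is to assemble the pieces already in place into a clean argument. First I would factor the claimed isomorphism as $\mX_\Lie \xrightarrow{\varphi} \dunimP \xrightarrow{\pi} \mX_\can$, where $\varphi$ is the restriction to the first factor of the composite $\mX_\Lie\times\invdtorus \xrightarrow{\RisoZ} \decomps \xrightarrow{\sim} \dunimP\times\invdtorus$; here the first arrow is Rietsch's isomorphism (see \cite[Remark 3.1]{Spacek_LP_LG_models} for the present conventions) and the second is Lemma \ref{lem:decomps_iso_dunimp_times_invdtorus}. Since $\varphi$ is by construction an isomorphism onto $\dunimP$, it suffices to prove that $\pi:\dunimP\to\mX_\can$, the map $u_-\mapsto\udP_k u_-$, is an isomorphism.

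The next step is to reduce to a statement about coordinate rings: both $\dunimP$ and $\mX_\can$ are affine — $\dunimP$ by Proposition 8.5 of \cite{GLS_Kac_Moody_groups_and_cluster_algebras}, and $\mX_\can = \cmX\setminus D_{\ac}$ because it is the complement in the Fano variety $\cmX$ of the anticanonical (hence ample) divisor $D_{\ac}$ whose anticanonicality is established in Remark \ref{rem:anticanonical}. So I would show instead that $\pi^\ast:\C[\mX_\can]\to\C[\dunimP]$ is an isomorphism. For injectivity of $\pi^\ast$: $\pi$ is the quotient by the $\udP_k$-action, hence open, and $\dim\dunimP = \dim\mX_\can = \dim\cmX$, so $\pi(\dunimP)\cap\mX_\can$ is open and dense in $\mX_\can$; thus $\pi$ is dominant and $\pi^\ast$ injective. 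For surjectivity: use the presentation $\C[\dunimP]=\C[\p_\ideal\mid\ideal\subideal\idealPPrime][\phiGLS[j]^{-1}\mid j\in[n]]$ from Proposition \ref{prop:coord_ring_dunimP_in_posets}; each Pl\"ucker coordinate $\p_\ideal$ is the restriction of a regular function on $\mX_\can$, and by Theorem \ref{thm: phi_cancellation} each $\phiGLS[j]$ equals the Pl\"ucker polynomial $\cD_j$ (with $\phiGLS[0]=\p_\varnothing$, $\phiGLS[k]=\p_{\minposet}$ by \eqref{eq:phi_0} and \eqref{eq:phi_k_and_p_minposet}) on $\cmX$, whose vanishing locus $D_j\subset D_{\ac}$ has been removed, so $\phiGLS[j]$ is invertible on $\mX_\can$. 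Hence every generator of $\C[\dunimP]$ lies in the image of $\pi^\ast$, so $\pi^\ast$ is surjective, and therefore an isomorphism; consequently so is $\pi$, and the composite $\pi\circ\varphi:\mX_\Lie\to\mX_\can$ is the desired isomorphism.

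I do not expect any serious obstacle here, since all the nontrivial inputs — affineness of $\dunimP$, anticanonicality of $D_{\ac}$, the Pl\"ucker-coordinate identities $\phiGLS[j]=\cD_j$, and the coordinate-ring presentation — are already available. The only point requiring a little care is the comparison of dimensions used for injectivity and, implicitly, the claim that $\pi$ restricted to $\dunimP$ lands in $\mX_\can$ rather than meeting $D_{\ac}$; this is guaranteed precisely because the $\cD_j$, equivalently the $\phiGLS[j]$, are nowhere vanishing on the cell $\dunimP$ (being invertible elements of its coordinate ring by Proposition \ref{prop:coord_ring_dunimP_in_posets}). Since this paper follows the outline of the proof of \cite[Theorem 3.1]{Spacek_Wang_exceptional}, the main novelty is simply that the type-independent Theorem \ref{thm: phi_cancellation} replaces the type-by-type computations of $\phiGLS[j]$ used there, making the argument uniform across all cominuscule types.
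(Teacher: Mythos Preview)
Your proposal is correct and follows essentially the same argument as the paper's own proof: the same factorization through $\dunimP$, the same reduction to showing $\pi^\ast$ is an isomorphism of coordinate rings, and the same injectivity (via dominance from openness and dimension count) and surjectivity (via Proposition~\ref{prop:coord_ring_dunimP_in_posets} and Theorem~\ref{thm: phi_cancellation}) steps. Your additional remark that $\pi(\dunimP)\subset\mX_\can$ follows from the invertibility of the $\phiGLS[j]$ on $\dunimP$ is a nice clarification that the paper leaves implicit.
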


This isomorphism justifies many of the computations we have performed up to this point, especially the restrictions of various functions to the torus $\opendunim$.
\begin{rem}\label{rem:equality_of_functions_on_dunimP}
Since the quotient map $\pi:\dunimP\rightarrow\mX_\can$ is an isomorphism, the dense torus $\opendunim \subset \dunimP$ maps to an isomorphic dense torus $\dP_k \backslash \opendunim\subset \mX_\can$ which can thus be endowed with the same toric coordinates $\{a_{\pb}\mid \pb\in\minposet\}$ as $\opendunim$. This allows for the simultaneous interpretations of the toric expressions obtained in Theorems \ref{thm:toric_monomial_phi} and \ref{thm:derivation_cancellation} on both the tori $\opendunim$ and $\dP_k\backslash \opendunim$.
\end{rem}

\addtocontents{toc}{\SkipTocEntry}
\subsection{The canonical superpotential}
We finally have all the results necessary to prove our main Theorem \ref{thm:main_theorem}. 

\begin{proof}[Proof of Thm. \ref{thm:main_theorem}]
In light of Theorem \ref{thm: mirror_space} and Remark \ref{rem:equality_of_functions_on_dunimP}, parts (i), (ii), (iii), and (iv) of Theorem \ref{thm:main_theorem} immediately follow from equations \ref{eq:phi_0}, \ref{eq:phi_k_and_p_minposet}, and Theorem \ref{thm: phi_cancellation}; Remark \ref{rem:anticanonical}; Theorem \ref{thm:derivation_cancellation}; and Proposition \ref{prop:quantum_term} respectively. Furthermore, using these tools, we will now establish part (v) as well.

We verified in Theorem \ref{thm: mirror_space} that $\mX_\can\cong\mX_\Lie$, so it remains to check that the superpotentials $\pot_\can$ and $\pot_\Lie$ agree under this isomorphism, i.e. that for any value of the formal quantum parameter $q$, the two potentials agree. As with many computations we have performed, we will check this statement by restricting the pullbacks $\varphi^*(\pot_\Lie)$ and $\pi^*(\pot_\can)$ of both superpotentials to the torus $\opendunim$, and thus comparing the restriction of our potential $\pi^*(\pot_\can)|_{\opendunim}$ to the restriction 
\[
\varphi^*(\pot_{\Lie})|_{\opendunim} = \tsum[_{i\in[n]}]\Bigl(\tsum[_{\pb\in\minposet:\, 
\indx(\pb)=i }]a_{\pb}\Bigr) + q\frac{\sum_{\iota:\idealPrime\hookrightarrow\minposet} a_{\iota(\idealPrime)}}{a_{\minposet}}
\]
determined in equation \eqref{eq:toric_expression_pot_Lie_2}.

First, for $\pot_0$ we have
\[
\Yboxdim{3pt}
\delta_k(\cD_0) = \delta_k(p_{\varnothing}) = p_{\yng(1)} 
\quad\Rightarrow\quad 
\pot_0|_{\opendunim} 
= \frac{p_{\yng(1)}}{p_{\varnothing}}\bigg|_{\opendunim} 
= \sum_{\indx(\pb) = k} a_{\pb},
\]
where the sum is over all elements $\pb\in\minposet$ with index $k$. Next, for any $\is\in [n]\setminus \{k\}$, we obtain from Theorems \ref{thm: phi_cancellation} and \ref{thm:derivation_cancellation} the equality
\[
\pot_{\is}|_{\opendunim} = \frac{\delta_{i_1}(\cD_{\is})}{\cD_{\is}}\bigg|_{\opendunim} = \sum_{\indx(\pb)=i_1} a_{\pb},
\]
where the sum is over all elements $\pb\in\minposet$ with index $i_1$. Since we never have $i_1=k$ for $\is\neq k$, we thus have so far that
\[
\pot_0|_{\opendunim} + \sum_{\is\in [n]\setminus \{k\}} \pot_{\is}|_{\opendunim} = \sum_{\indx(\pb)=k} a_{\pb} + \sum_{\is\in [n]\setminus \{k\}} \sum_{\indx(\pb)=i_1} a_{\pb} = \sum_{i\in [n]} \sum_{\indx(\pb)=i} a_{\pb},
\]
where the final equality follows from the fact that $\sigma_k$ is an involution on $[n]\setminus \{k\}$. Thus we obtain exactly the first term 
\[
\tsum[_{i\in[n]}]\Bigl(\tsum[_{\pb\in\minposet: \indx(\pb)=i }]a_{\pb}\Bigr)
\]
of the Lie-theoretic potential via the terms $\pot_0 + \sum_{\is\in [n]\setminus \{k\}} \pot_{\is}$. 

The remaining quantum term is exactly equal to $\pot_k$ for any value of the quantum parameter $q$ by Proposition \ref{prop:quantum_term}. This establishes the equality between $\pot_\Lie$ and $\pot_\can$ for any choice of the quantum parameter $q$, completing the proof of the isomorphism of Landau-Ginzburg models.
\end{proof}

\bibliographystyle{amsalpha}
\bibliography{bib}

\providecommand{\bysame}{\leavevmode\hbox to3em{\hrulefill}\thinspace}
\providecommand{\MR}{\relax\ifhmode\unskip\space\fi MR }
\providecommand{\MRhref}[2]{%
  \href{http://www.ams.org/mathscinet-getitem?mr=#1}{#2}
}
\providecommand{\href}[2]{#2}
\begin{thebibliography}{PRW16}

\bibitem[Bir37]{birkhoff_representation}
Garrett Birkhoff, \emph{{Rings of sets}}, Duke Mathematical Journal \textbf{3}
  (1937), no.~3, 443 -- 454.

\bibitem[CMP08]{CMP_Quantum_cohomology_of_minuscule_homogeneous_spaces}
P.-E. Chaput, L.~Manivel, and N.~Perrin, \emph{Quantum cohomology of minuscule
  homogeneous spaces}, Transform. Groups \textbf{13} (2008), no.~1, 47--89.

\bibitem[Dem11]{demonet-skew-sym-cluster}
Laurent Demonet, \emph{Categorification of skew-symmetrizable cluster
  algebras}, Algebr. Represent. Theory \textbf{14} (2011), no.~6, 1087--1162.
  \MR{2844757}

\bibitem[Ful04]{Fulton_on_qH_of_hom_spaces}
W.~Fulton, \emph{On the quantum cohomology of homogeneous varieties}, The
  legacy of {N}iels {H}enrik {A}bel, Springer, Berlin, 2004, pp.~729--736.
  \MR{2077592}

\bibitem[FW04]{Fulton_Woodward}
W.~Fulton and C.~Woodward, \emph{On the quantum product of schubert classes},
  J. Algebraic Geom. \textbf{13} (2004), 641--661.

\bibitem[FZ99]{Fomin_Zelevinsky_Double_Bruhat_Cells_and_Total_Positivity}
S.~Fomin and A.~Zelevinsky, \emph{Double {B}ruhat cells and total positivity},
  J. Amer. Math. Soc. \textbf{12} (1999), no.~2, 335--380.

\bibitem[Gec17]{Geck_Minuscule_weights_and_Chevalley_groups}
Meinolf Geck, \emph{Minuscule weights and {C}hevalley groups}, Finite simple
  groups: thirty years of the atlas and beyond, Contemp. Math., vol. 694, Amer.
  Math. Soc., Providence, RI, 2017, pp.~159--176. \MR{3682597}

\bibitem[GLS08]{GLS_partial_flag_varieties_and_preprojective_algebras}
C.~Gei\ss, B.~Leclerc, and J.~Schr\"{o}er, \emph{Partial flag varieties and
  preprojective algebras}, Ann. Inst. Fourier (Grenoble) \textbf{58} (2008),
  no.~3, 825--876.

\bibitem[GLS11]{GLS_Kac_Moody_groups_and_cluster_algebras}
\bysame, \emph{Kac-{M}oody groups and cluster algebras}, Adv. Math.
  \textbf{228} (2011), no.~1, 329--433.

\bibitem[Gre08]{Green_Reps_from_polytopes}
R.~M. Green, \emph{Representations of {L}ie algebras arising from polytopes},
  Int. Electron. J. Algebra \textbf{4} (2008), 27--52.

\bibitem[KLS14]{KLS-projections-richardson}
Allen Knutson, Thomas Lam, and David~E. Speyer, \emph{Projections of
  {R}ichardson varieties}, J. Reine Angew. Math. \textbf{687} (2014), 133--157.
  \MR{3176610}

\bibitem[LT17]{Lam_Templier_The_mirror_conjecture_for_minuscule_flag_varieties}
T.~Lam and N.~Templier, \emph{The mirror conjecture for minuscule flag
  varieties}, arXiv (2017), math.RT/1705.00758.

\bibitem[MR20]{Marsh_Rietsch_Grassmannians}
R.~J. Marsh and K.~Rietsch, \emph{The {$B$}-model connection and mirror
  symmetry for {G}rassmannians}, Adv. Math. \textbf{366} (2020), 107027, 131.

\bibitem[Pet97]{Peterson}
D.~Peterson, \emph{Quantum cohomology of {$G/P$}, a lecture course at {MIT}},
  Spring Term 1997.

\bibitem[PR13]{Pech_Rietsch_Lagrangian_Grassmannians}
C.~M.~A. Pech and K.~Rietsch, \emph{A {L}andau-{G}inzburg model for
  {L}agrangian {G}rassmannians, {L}anglands duality and relations in quantum
  cohomology}, arXiv (2013), math.AG/1304.4958.

\bibitem[PR18]{Pech_Rietsch_Odd_Quadrics}
\bysame, \emph{A comparison of {L}andau-{G}inzburg models for odd dimensional
  quadrics}, Bull. Inst. Math. Acad. Sin. (N.S.) \textbf{13} (2018), no.~3,
  249--291.

\bibitem[Pro84]{proctor_bruhat_lattices}
Robert~A. Proctor, \emph{Bruhat lattices, plane partition generating functions,
  and minuscule representations}, European J. Combin. \textbf{5} (1984), no.~4,
  331--350. \MR{782055}

\bibitem[PRW16]{Pech_Rietsch_Williams_Quadrics}
C.~M.~A. Pech, K.~Rietsch, and L.~Williams, \emph{On {L}andau-{G}inzburg models
  for quadrics and flat sections of {D}ubrovin connections}, Adv. Math.
  \textbf{300} (2016), 275--319.

\bibitem[Rie08]{Rietsch_Mirror_Construction}
K.~Rietsch, \emph{A mirror symmetric construction of {$qH^\ast_T(G/P)_{(q)}$}},
  Adv. Math. \textbf{217} (2008), no.~6, 2401--2442.

\bibitem[RW19]{Rietsch_Williams_NO_bodies_cluster_duality_and_mirror_symmetry_for_Grassmannians}
K.~Rietsch and L.~Williams, \emph{Newton-{O}kounkov bodies, cluster duality,
  and mirror symmetry for {G}rassmannians}, Duke Math. J. \textbf{168} (2019),
  no.~18, 3437--3527.

\bibitem[Spa21]{Spacek_LP_LG_models}
P.~Spacek, \emph{Laurent polynomial {L}andau-{G}inzburg models for cominuscule
  homogeneous spaces}, Transform. Groups (2021), DOI:
  10.1007/s00031--020--09636--7.

\bibitem[Ste96]{Stembridge_Fully_commutative_elements_of_Coxeter_groups}
J.~R. Stembridge, \emph{On the fully commutative elements of {C}oxeter groups},
  J. Algebraic Combin. \textbf{5} (1996), no.~4, 353--385.

\bibitem[SW23]{Spacek_Wang_exceptional}
Peter Spacek and Charles Wang, \emph{Towards landau-ginzburg models for
  cominuscule spaces via the exceptional cominuscule family}, Journal of
  Algebra \textbf{630} (2023), 334--393.

\bibitem[SW24]{Spacek_Wang_OG}
Peter Spacek and Charles Wang, \emph{Canonical mirror models for maximal
  orthogonal grassmannians}, Lie Theory and its Applications in Physics, vol.
  473, Springer, Singapore, 2024.

\end{thebibliography}
\addresseshere
\newpage
\appendix
\section{Cubic and quartic terms}
\label{app:exceptional}
While there are infinitely many minuscule homogeneous spaces $\cmX=\dP_k\backslash\dG$ that have $\is\in [n]$ such that $\cis\le 2$, only two homogeneous spaces allow $\is$ with $\cis > 2$. Namely, the Cayley plane $\OP^2=\dP_6\backslash\LGE_6^\AD$ has $\cis=3$ for $\is=4$, and the Freudenthal variety $\dP_7\backslash\LGE_7^\AD$ has $\cis=3$ for $\is\in\{3,5\}$ and $\cis=4$ for $\is=4$. In these four cases, Corollary \ref{cor:leading_plucker_for_phi_i} tells us that the toric expression of $\phiGLS[\is]$ on $\opendunim$ is a single term contained in a cubic resp.~quartic Pl\"ucker monomial. While the same techniques as used in Sections \ref{sec:denominators} and \ref{sec:numerators} apply to the cancellations of the corresponding Pl\"ucker polynomials $\cD_\is$, the fact that the posets $\sP_{\is}$ consists of triples resp.~quadruples of order ideals makes the arguments and notation quite a bit more technical. Since there are only four such cases, we will instead deal with them explicitly by giving the corresponding posets $\sP_\is$ and Pl\"ucker polynomials $\cD_\is$ and $\deis(\cD_\is)$. We will then argue in Appendix \ref{app:type_dependent_LGs} that the resulting potentials agree with the potentials in \cite{Spacek_Wang_exceptional}, confirming that these are LG models isomorphic to the Lie-theoretic model of \cite{Rietsch_Mirror_Construction}.

\addtocontents{toc}{\SkipTocEntry}
\subsection*{The Cayley plane}
For $\is=4$ in type $\LGE_6$ with $k=6$, we find the sequence $(i_j)=(4,5,3)$ as calculated in the proof of Lemma \ref{lem:special_weyl_elt_action}. Hence, we find
\[
\Yboxdim{7pt}
(\idealij[1],\idealij[2],\idealij[3]) = 
\left(
\raisebox{10.25pt}{\tiny\young(65)}
,~
\raisebox{3.5pt}{\tiny\young(65431,::243)}
,~
\raisebox{-10pt}{\tiny\young(65431,::243,:::542,:::654)}
\right)
\qwhere
\minposet = \raisebox{-10pt}{\tiny\young(65431,::243,:::542,:::65431)}.
\]
Moving boxes from $\idealij$ to $\idealij[j']$ for $j<j'$, we obtain the following poset $\sP_4$ on the left; and applying $\deis[4]$ yields in turn $\sP_4^+$ on the right:

\[
\Yboxdim{3pt}
\hfill\begin{tikzpicture}[scale=.45,style=very thick,baseline=0.25em]
    \node (0) at (0,4){\tiny$
    \raisebox{2.7pt}{\young(~~)} 
    ~~
    \raisebox{0pt}{\young(~~~~~,::~~~)} 
    ~~
    \raisebox{-5.4pt}{\young(~~~~~,::~~~,:::~~~,:::~~~)}
    $};
    \node (1) at (2,2){\tiny$
    \raisebox{2.7pt}{\yng(2)} 
    ~~
    \raisebox{0pt}{\young(~~~~~,::~~)} 
    ~~
    \raisebox{-5.4pt}{\young(~~~~~,::~~~,:::~~~,:::~~~~)}
    $};
    \node (2) at (4,0){\tiny$
    \raisebox{2.7pt}{\yng(2)} 
    ~~
    \raisebox{0pt}{\young(~~~~,::~~)} 
    ~~
    \raisebox{-5.4pt}{\young(~~~~~,::~~~,:::~~~,:::~~~~~)}
    $};
    \node (3) at (2,-2){\tiny$
    \raisebox{2.7pt}{\yng(1)} 
    ~~
    \raisebox{-2.7pt}{\young(~~~~,::~~,:::~)} 
    ~~
    \raisebox{-5.4pt}{\young(~~~~~,::~~~,:::~~~,:::~~~~~)}
    $};
    \node (4) at (0,-4){\tiny$
    \raisebox{-2.2pt}{$\varnothing$}
    ~~
    \raisebox{-5.4pt}{\young(~~~~,::~~,:::~,:::~)} 
    ~~
    \raisebox{-5.4pt}{\young(~~~~~,::~~~,:::~~~,:::~~~~~)}
    $};
    \node (5) at (-2,-2){\tiny$
    \raisebox{-2.2pt}{$\varnothing$}
    ~~
    \raisebox{-5.4pt}{\young(~~~~~,::~~,:::~,:::~)} 
    ~~
    \raisebox{-5.4pt}{\young(~~~~~,::~~~,:::~~~,:::~~~~)}
    $};
    \node (6) at (-4,0){\tiny$
    \raisebox{-2.2pt}{$\varnothing$}
    ~~
    \raisebox{-5.4pt}{\young(~~~~~,::~~~,:::~,:::~)} 
    ~~
    \raisebox{-5.4pt}{\young(~~~~~,::~~~,:::~~~,:::~~~)}
    $};
    \node (7) at (-2,2){\tiny$
    \raisebox{2.7pt}{\yng(1)} 
    ~~
    \raisebox{-2.7pt}{\young(~~~~~,::~~~,:::~)} 
    ~~
    \raisebox{-5.4pt}{\young(~~~~~,::~~~,:::~~~,:::~~~)}
    $};
    \node (8) at (0,0){\tiny$
    \raisebox{2.7pt}{\yng(1)} 
    ~~
    \raisebox{-2.7pt}{\young(~~~~~,::~~,:::~)} 
    ~~
    \raisebox{-5.4pt}{\young(~~~~~,::~~~,:::~~~,:::~~~~)}
    $};
    \draw (0)--(1)--(2)--(3)--(4)--(5)--(6)--(7)--(0);
    \draw (1)--(8)--(5);
    \draw (7)--(8)--(3);
\end{tikzpicture}
\qquad
\Yboxdim{3pt}
\hfill\begin{tikzpicture}[scale=.45,style=very thick,baseline=0.25em]
    \node (0) at (0,4){\tiny$
    \raisebox{2.7pt}{\young(~~~)} 
    ~~
    \raisebox{0pt}{\young(~~~~~,::~~~)} 
    ~~
    \raisebox{-5.4pt}{\young(~~~~~,::~~~,:::~~~,:::~~~)}
    $};
    \node (1) at (2,2){\tiny$
    \raisebox{2.7pt}{\young(~~~)} 
    ~~
    \raisebox{0pt}{\young(~~~~~,::~~)} 
    ~~
    \raisebox{-5.4pt}{\young(~~~~~,::~~~,:::~~~,:::~~~~)}
    $};
    \node (2) at (4,0){\tiny$
    \raisebox{2.7pt}{\young(~~~)} 
    ~~
    \raisebox{0pt}{\young(~~~~,::~~)} 
    ~~
    \raisebox{-5.4pt}{\young(~~~~~,::~~~,:::~~~,:::~~~~~)}
    $};
    \node (6) at (-4,0){\tiny$
    \raisebox{-2.2pt}{$\varnothing$}
    ~~
    \raisebox{-5.4pt}{\young(~~~~~,::~~~,:::~~,:::~)} 
    ~~
    \raisebox{-5.4pt}{\young(~~~~~,::~~~,:::~~~,:::~~~)}
    $};
    \node (7) at (-2,2){\tiny$
    \raisebox{2.7pt}{\yng(1)} 
    ~~
    \raisebox{-2.7pt}{\young(~~~~~,::~~~,:::~~)} 
    ~~
    \raisebox{-5.4pt}{\young(~~~~~,::~~~,:::~~~,:::~~~)}
    $};
    \draw (6)--(7)--(0)--(1)--(2);
\end{tikzpicture}
\]


Taking the corresponding alternating sums yield
\ali{
\cD_4 & = \Yboxdim{3pt}
p_{\yng(2)}(p_{\raisebox{-2.7pt}{\young(~~~~~,::~~~)}}p_{\raisebox{-8.1pt}{\young(~~~~~,::~~~,:::~~~,:::~~~)}} - p_{\young(~~~~~,::~~)}p_{\young(~~~~~,::~~~,:::~~~,:::~~~~)} + p_{\young(~~~~,::~~)}p_{\young(~~~~~,::~~~,:::~~~,:::~~~~~)})
 + p_{\yng(1)}(-p_{\young(~~~~~,::~~~,:::~)}p_{\young(~~~~~,::~~~,:::~~~,:::~~~)} + p_{\young(~~~~~,::~~,:::~)}p_{\young(~~~~~,::~~~,:::~~~,:::~~~~)} - p_{\young(~~~~,::~~,:::~)}p_{\young(~~~~~,::~~~,:::~~~,:::~~~~~)})\\
&\quad \Yboxdim{3pt}
+ p_{\varnothing}(p_{\young(~~~~~,::~~~,:::~,:::~)}p_{\young(~~~~~,::~~~,:::~~~,:::~~~)} - p_{\young(~~~~~,::~~,:::~,:::~)}p_{\young(~~~~~,::~~~,:::~~~,:::~~~~)} + p_{\young(~~~~,::~~,:::~,:::~)}p_{\young(~~~~~,::~~~,:::~~~,:::~~~~~)}) \\
\deis[4](\cD_4) & = \Yboxdim{3pt}
p_{\young(~~~)}(p_{\raisebox{-2.7pt}{\young(~~~~~,::~~~)}}p_{\raisebox{-8.1pt}{\young(~~~~~,::~~~,:::~~~,:::~~~)}} - p_{\young(~~~~~,::~~)}p_{\young(~~~~~,::~~~,:::~~~,:::~~~~)} + p_{\young(~~~~,::~~)}p_{\young(~~~~~,::~~~,:::~~~,:::~~~~~)})
+ p_{\yng(1)}(-p_{\young(~~~~~,::~~~,:::~~)}p_{\young(~~~~~,::~~~,:::~~~,:::~~~)}) + p_{\varnothing}(p_{\young(~~~~~,::~~~,:::~~,:::~)}p_{\young(~~~~~,::~~~,:::~~~,:::~~~)})
}

\addtocontents{toc}{\SkipTocEntry}
\subsection*{The Freudenthal variety}
For $\is=3$ we have $\cis=3$ in type $\LGE_7$ with $k=7$, and we found in the proof of Lemma \ref{lem:special_weyl_elt_action} the sequence $(i_j)=(5,6,2)$. Hence, we have
\[
\Yboxdim{7pt}
(\idealij[1],\idealij[2],\idealij[3]) = 
\left(
\raisebox{13.5pt}{\tiny\young(76)}
,~
\raisebox{0pt}{\tiny\young(765431,:::243,::::542)}
,~
\raisebox{-13.5pt}{\tiny\young(765431,:::243,::::542,::::65431,::::76543)}
\right)
\qwhere
\minposet = \raisebox{-3em}{\tiny\young(765431,:::243,::::542,::::65431,::::76543,:::::::24,::::::::5,::::::::6,::::::::7)}
\]

\newcommand{\pt}{\raisebox{-.32pt}{\scalebox{.45}{$\blacksquare$}}}
In this case, the posets $\sP_3$ and $\deis[5](\sP_3)$ are isomorphic, so we draw only the latter and designate with $\young(\pt)$ the element added by the action of $\deis[5]$; removing it yields $\sP_3$:

\[
\Yboxdim{3pt}
\begin{tikzpicture}[scale=.6,style=very thick,baseline=0.25em]
    \node (0) at (0,4){\tiny\young(~~\pt)~~\young(~~~~~~,:::~~~,::::~~~)~~\young(~~~~~~,:::~~~,::::~~~,::::~~~~~,::::~~~~~)};
    \node (1) at (2,2){\tiny\young(~~\pt)~~\young(~~~~~~,:::~~~,::::~~)~~\young(~~~~~~,:::~~~,::::~~~,::::~~~~~,::::~~~~~,:::::::~)};
    \node (2) at (4,0){\tiny\young(~~\pt)~~\young(~~~~~~,:::~~~,::::~)~~\young(~~~~~~,:::~~~,::::~~~,::::~~~~~,::::~~~~~,:::::::~~)};
    \node (3) at (2,-2){\tiny\young(~)~~\young(~~~~~~,:::~~~,::::~,::::~)~~\young(~~~~~~,:::~~~,::::~~~,::::~~~~~,::::~~~~~,:::::::~~,::::::::\pt)};
    \node (4) at (0,-4){\tiny$\varnothing$~~\young(~~~~~~,:::~~~,::::~,::::~,::::~)~~\young(~~~~~~,:::~~~,::::~~~,::::~~~~~,::::~~~~~,:::::::~~,::::::::\pt)};
    \node (5) at (-2,-2){\tiny$\varnothing$~~\young(~~~~~~,:::~~~,::::~~,::::~\pt,::::~)~~\young(~~~~~~,:::~~~,::::~~~,::::~~~~~,::::~~~~~,:::::::~)};
    \node (6) at (-4,0){\tiny$\varnothing$~~\young(~~~~~~,:::~~~,::::~~~,::::~\pt,::::~)~~\young(~~~~~~,:::~~~,::::~~~,::::~~~~~,::::~~~~~)};
    \node (7) at (-2,2){\tiny\young(~)~~\young(~~~~~~,:::~~~,::::~~~,::::~\pt)~~\young(~~~~~~,:::~~~,::::~~~,::::~~~~~,::::~~~~~)};
    \node (8) at (0,0){\tiny\young(~)~~\young(~~~~~~,:::~~~,::::~~,::::~\pt)~~\young(~~~~~~,:::~~~,::::~~~,::::~~~~~,::::~~~~~,:::::::~)};
    \node (9) at (6,-2){\tiny\young(~~\pt)~~\young(~~~~~~,:::~~~,::::~)~~\young(~~~~~~,:::~~~,::::~~~,::::~~~~~,::::~~~~~,:::::::~~,::::::::~)};
    \node (10) at (4,-4){\tiny\young(~)~~\young(~~~~~~,:::~~~,::::\pt)\young(~~~~~~,:::~~~,::::~~~,::::~~~~~,::::~~~~~,:::::::~~,::::::::~,::::::::~)};
    \node (11) at (2,-6){\tiny$\varnothing$~~\young(~~~~~~,:::~~~,::::\pt)~~\young(~~~~~~,:::~~~,::::~~~,::::~~~~~,::::~~~~~,:::::::~~,::::::::~,::::::::~,::::::::~)};
    \draw (0)--(1)--(2)--(3)--(4)--(5)--(6)--(7)--(0);
    \draw (2)--(9)--(10)--(11)--(4);
    \draw (1)--(8)--(5);
    \draw (7)--(8)--(3)--(10);
\end{tikzpicture}
\]

In fact, here we see two elements $(\ideal_1,\ideal_2,\ideal_3)\in\sP_3$ on which the derivation $\deis[5]$ acts nontrivially on two of the order ideals instead of just one, namely the third and fourth elements from the top moving right. 
This means that the derivation applied to the corresponding Pl\"ucker monomials yields two terms each; however, one pair of these terms cancels out, so we only indicated the pair which does not cancel.

We find the corresponding Pl\"ucker polynomials
\ali{
\cD_3 &= \Yboxdim{3pt} 
p_{\young(~~)}\Bigl(
p_{\young(~~~~~~,:::~~~,::::~~~)}p_{\young(~~~~~~,:::~~~,::::~~~,::::~~~~~,::::~~~~~)} - p_{\young(~~~~~~,:::~~~,::::~~)}p_{\young(~~~~~~,:::~~~,::::~~~,::::~~~~~,::::~~~~~,:::::::~)} + p_{\young(~~~~~~,:::~~~,::::~)}p_{\young(~~~~~~,:::~~~,::::~~~,::::~~~~~,::::~~~~~,:::::::~~)} - p_{\young(~~~~~~,:::~~~)}p_{\young(~~~~~~,:::~~~,::::~~~,::::~~~~~,::::~~~~~,:::::::~~,::::::::~)}
\Bigr)\\
&\quad\Yboxdim{3pt}
-p_{\young(~)}\Bigl(
p_{\young(~~~~~~,:::~~~,::::~~~,::::~)}p_{\young(~~~~~~,:::~~~,::::~~~,::::~~~~~,::::~~~~~)} - p_{\young(~~~~~~,:::~~~,::::~~,::::~)}p_{\young(~~~~~~,:::~~~,::::~~~,::::~~~~~,::::~~~~~,:::::::~)} + p_{\young(~~~~~~,:::~~~,::::~,::::~)}p_{\young(~~~~~~,:::~~~,::::~~~,::::~~~~~,::::~~~~~,:::::::~~)} - p_{\young(~~~~~~,:::~~~)}p_{\young(~~~~~~,:::~~~,::::~~~,::::~~~~~,::::~~~~~,:::::::~~,::::::::~,::::::::~)}
\Bigr)\\
&\quad+\Yboxdim{3pt}
p_\varnothing\Bigl(
p_{\young(~~~~~~,:::~~~,::::~~~,::::~,::::~)}p_{\young(~~~~~~,:::~~~,::::~~~,::::~~~~~,::::~~~~~)} - p_{\young(~~~~~~,:::~~~,::::~~,::::~,::::~)}p_{\young(~~~~~~,:::~~~,::::~~~,::::~~~~~,::::~~~~~,:::::::~)} + p_{\young(~~~~~~,:::~~~,::::~,::::~,::::~)}p_{\young(~~~~~~,:::~~~,::::~~~,::::~~~~~,::::~~~~~,:::::::~~)} - p_{\young(~~~~~~,:::~~~)}p_{\young(~~~~~~,:::~~~,::::~~~,::::~~~~~,::::~~~~~,:::::::~~,::::::::~,::::::::~,::::::::~)}
\Bigr)\\
\deis[5](\cD_3) &= \Yboxdim{3pt}
p_{\young(~~~)}\Bigl(
p_{\young(~~~~~~,:::~~~,::::~~~)}p_{\young(~~~~~~,:::~~~,::::~~~,::::~~~~~,::::~~~~~)} - p_{\young(~~~~~~,:::~~~,::::~~)}p_{\young(~~~~~~,:::~~~,::::~~~,::::~~~~~,::::~~~~~,:::::::~)} + p_{\young(~~~~~~,:::~~~,::::~)}p_{\young(~~~~~~,:::~~~,::::~~~,::::~~~~~,::::~~~~~,:::::::~~)} - p_{\young(~~~~~~,:::~~~)}p_{\young(~~~~~~,:::~~~,::::~~~,::::~~~~~,::::~~~~~,:::::::~~,::::::::~)}
\Bigr)\\
&\quad\Yboxdim{3pt}
-p_{\young(~)}\Bigl(
p_{\young(~~~~~~,:::~~~,::::~~~,::::~~)}p_{\young(~~~~~~,:::~~~,::::~~~,::::~~~~~,::::~~~~~)} - p_{\young(~~~~~~,:::~~~,::::~~,::::~~)}p_{\young(~~~~~~,:::~~~,::::~~~,::::~~~~~,::::~~~~~,:::::::~)} + p_{\young(~~~~~~,:::~~~,::::~,::::~)}p_{\young(~~~~~~,:::~~~,::::~~~,::::~~~~~,::::~~~~~,:::::::~~,::::::::~)} - p_{\young(~~~~~~,:::~~~,::::~)}p_{\young(~~~~~~,:::~~~,::::~~~,::::~~~~~,::::~~~~~,:::::::~~,::::::::~,::::::::~)}
\Bigr)\\
&\quad\Yboxdim{3pt}
+p_\varnothing\Bigl(
p_{\young(~~~~~~,:::~~~,::::~~~,::::~~,::::~)}p_{\young(~~~~~~,:::~~~,::::~~~,::::~~~~~,::::~~~~~)} - p_{\young(~~~~~~,:::~~~,::::~~,::::~~,::::~)}p_{\young(~~~~~~,:::~~~,::::~~~,::::~~~~~,::::~~~~~,:::::::~)} + p_{\young(~~~~~~,:::~~~,::::~,::::~,::::~)}p_{\young(~~~~~~,:::~~~,::::~~~,::::~~~~~,::::~~~~~,:::::::~~,::::::::~)} - p_{\young(~~~~~~,:::~~~,::::~)}p_{\young(~~~~~~,:::~~~,::::~~~,::::~~~~~,::::~~~~~,:::::::~~,::::::::~,::::::::~,::::::::~)}
\Bigr)
}

Moving to the next case, for $\is=5$, we have $\cis=3$ and the sequence $(i_j)=(3,2,6)$ in the proof of Lemma \ref{lem:special_weyl_elt_action}, and hence
\[
\Yboxdim{7pt}
(\idealij[1],\idealij[2],\idealij[3]) = 
\left(
\raisebox{10pt}{\tiny\young(7654,:::2)}~,
\raisebox{-9pt}{\tiny\young(765431,:::243,::::54,::::65,::::76)}~,
\raisebox{-22pt}{\tiny\young(765431,:::243,::::542,::::65431,::::76543,:::::::24,::::::::5)}
\right)
\qwhere
\minposet = \raisebox{-3em}{\tiny\young(765431,:::243,::::542,::::65431,::::76543,:::::::24,::::::::5,::::::::6,::::::::7)}
\]
We again have isomorphic posets $\sP_5$ and $\deis[3](\sP_5)$ so we show the latter as well as the elements that need to be removed to obtain $\sP_5$ by $\young(\pt)$:
\[
\Yboxdim{3pt}
\begin{tikzpicture}[scale=.7,style=very thick,baseline=0.25em]
    \node (0) at (0,4){\tiny\young(~~~~\pt,:::~)~~\young(~~~~~~,:::~~~,::::~~,::::~~,::::~~)~~\young(~~~~~~,:::~~~,::::~~~,::::~~~~~,::::~~~~~,:::::::~~,::::::::~)};
    \node (1) at (2,2){\tiny\young(~~~~\pt)~~\young(~~~~~~,:::~~~,::::~~~,::::~~,::::~~)~~\young(~~~~~~,:::~~~,::::~~~,::::~~~~~,::::~~~~~,:::::::~~,::::::::~)};
    \node (2) at (4,0){\tiny\young(~~~)~~\young(~~~~~~,:::~~~,::::~~~,::::~~~\pt,::::~~)~~\young(~~~~~~,:::~~~,::::~~~,::::~~~~~,::::~~~~~,:::::::~~,::::::::~)};
    \node (3) at (2,-2){\tiny\young(~~~)~~\young(~~~~~~,:::~~~,::::~~~,::::~~~\pt,::::~)~~\young(~~~~~~,:::~~~,::::~~~,::::~~~~~,::::~~~~~,:::::::~~,::::::::~,::::::::~)};
    \node (4) at (0,-4){\tiny\young(~~~)~~\young(~~~~~~,:::~~~,::::~~~,::::~~~\pt)~~\young(~~~~~~,:::~~~,::::~~~,::::~~~~~,::::~~~~~,:::::::~~,::::::::~,::::::::~,::::::::~)};
    \node (5) at (-2,-2){\tiny\young(~~~~\pt)~~\young(~~~~~~,:::~~~,::::~~~,::::~~)~~\young(~~~~~~,:::~~~,::::~~~,::::~~~~~,::::~~~~~,:::::::~~,::::::::~,::::::::~,::::::::~)};
    \node (6) at (-4,0){\tiny\young(~~~~\pt,:::~)~~\young(~~~~~~,:::~~~,::::~~,::::~~)~~\young(~~~~~~,:::~~~,::::~~~,::::~~~~~,::::~~~~~,:::::::~~,::::::::~,::::::::~,::::::::~)};
    \node (7) at (-2,2){\tiny\young(~~~~\pt,:::~)~~\young(~~~~~~,:::~~~,::::~~,::::~~,::::~)~~\young(~~~~~~,:::~~~,::::~~~,::::~~~~~,::::~~~~~,:::::::~~,::::::::~,::::::::~)};
    \node (8) at (0,0){\tiny\young(~~~~\pt)~~\young(~~~~~~,:::~~~,::::~~~,::::~~,::::~)~~\young(~~~~~~,:::~~~,::::~~~,::::~~~~~,::::~~~~~,:::::::~~,::::::::~,::::::::~)};
    \node (9) at (6,-2){\tiny\young(~~)~~\young(~~~~~~,:::~~~,::::~~~,::::~~~\pt,::::~~~)~~\young(~~~~~~,:::~~~,::::~~~,::::~~~~~,::::~~~~~,:::::::~~,::::::::~)};
    \node (10) at (4,-4){\tiny\young(~)~~\young(~~~~~~,:::~~~,::::~~~,::::~~~\pt,::::~~~)~~\young(~~~~~~,:::~~~,::::~~~,::::~~~~~,::::~~~~~,:::::::~~,::::::::~,::::::::~)};
    \node (11) at (2,-6){\tiny$\varnothing$~~\young(~~~~~~,:::~~~,::::~~~,::::~~~\pt,::::~~~)~~\young(~~~~~~,:::~~~,::::~~~,::::~~~~~,::::~~~~~,:::::::~~,::::::::~,::::::::~,::::::::~)};
    \draw (0)--(1)--(2)--(3)--(4)--(5)--(6)--(7)--(0);
    \draw (2)--(9)--(10)--(11)--(4);
    \draw (1)--(8)--(5);
    \draw (7)--(8)--(3)--(10);
\end{tikzpicture}
\]
These yield the Pl\"ucker polynomials
\ali{
\cD_5 &= \Yboxdim{3pt}\Bigl(
p_{\young(~~~~,:::~)}p_{\young(~~~~~~,:::~~~,::::~~,::::~~,::::~~)} - p_{\young(~~~~)}p_{\young(~~~~~~,:::~~~,::::~~~,::::~~,::::~~)} + p_{\young(~~~)}p_{\young(~~~~~~,:::~~~,::::~~~,::::~~~,::::~~)} - p_{\young(~~)}p_{\young(~~~~~~,:::~~~,::::~~~,::::~~~,::::~~~)}
\Bigr)p_{\young(~~~~~~,:::~~~,::::~~~,::::~~~~~,::::~~~~~,:::::::~~,::::::::~)}\\
&\quad\Yboxdim{3pt} 
-\Bigl(p_{\young(~~~~,:::~)}p_{\young(~~~~~~,:::~~~,::::~~,::::~~,::::~)} - p_{\young(~~~~)}p_{\young(~~~~~~,:::~~~,::::~~~,::::~~,::::~)} + p_{\young(~~~)}p_{\young(~~~~~~,:::~~~,::::~~~,::::~~~,::::~)} - p_{\young(~)}p_{\young(~~~~~~,:::~~~,::::~~~,::::~~~,::::~~~)}
\Bigr)p_{\young(~~~~~~,:::~~~,::::~~~,::::~~~~~,::::~~~~~,:::::::~~,::::::::~,::::::::~)}\\
&\quad \Yboxdim{3pt} 
+\Bigl(p_{\young(~~~~,:::~)}p_{\young(~~~~~~,:::~~~,::::~~,::::~~)} - p_{\young(~~~~)}p_{\young(~~~~~~,:::~~~,::::~~~,::::~~)} + p_{\young(~~~)}p_{\young(~~~~~~,:::~~~,::::~~~,::::~~~)} - p_{\varnothing}p_{\young(~~~~~~,:::~~~,::::~~~,::::~~~,::::~~~)}
\Bigr)p_{\young(~~~~~~,:::~~~,::::~~~,::::~~~~~,::::~~~~~,:::::::~~,::::::::~,::::::::~,::::::::~)}
\\
\deis[3](\cD_5) &= \Yboxdim{3pt} 
\Bigl(p_{\young(~~~~~,:::~)}p_{\young(~~~~~~,:::~~~,::::~~,::::~~,::::~~)} - p_{\young(~~~~~)}p_{\young(~~~~~~,:::~~~,::::~~~,::::~~,::::~~)} + p_{\young(~~~)}p_{\young(~~~~~~,:::~~~,::::~~~,::::~~~~,::::~~)} - p_{\young(~~)}p_{\young(~~~~~~,:::~~~,::::~~~,::::~~~~,::::~~~)}
\Bigr)p_{\young(~~~~~~,:::~~~,::::~~~,::::~~~~~,::::~~~~~,:::::::~~,::::::::~)}\\
&\quad \Yboxdim{3pt} 
-\Bigl(p_{\young(~~~~~,:::~)}p_{\young(~~~~~~,:::~~~,::::~~,::::~~,::::~)} - p_{\young(~~~~~)}p_{\young(~~~~~~,:::~~~,::::~~~,::::~~,::::~)} + p_{\young(~~~)}p_{\young(~~~~~~,:::~~~,::::~~~,::::~~~~,::::~)} - p_{\young(~)}p_{\young(~~~~~~,:::~~~,::::~~~,::::~~~~,::::~~~)}
\Bigr)p_{\young(~~~~~~,:::~~~,::::~~~,::::~~~~~,::::~~~~~,:::::::~~,::::::::~,::::::::~)}\\
&\quad \Yboxdim{3pt} 
+\Bigl(p_{\young(~~~~~,:::~)}p_{\young(~~~~~~,:::~~~,::::~~,::::~~)} - p_{\young(~~~~~)}p_{\young(~~~~~~,:::~~~,::::~~~,::::~~)} + p_{\young(~~~)}p_{\young(~~~~~~,:::~~~,::::~~~,::::~~~~)} - p_{\varnothing}p_{\young(~~~~~~,:::~~~,::::~~~,::::~~~~,::::~~~)}
\Bigr)p_{\young(~~~~~~,:::~~~,::::~~~,::::~~~~~,::::~~~~~,:::::::~~,::::::::~,::::::::~,::::::::~)}
}

Then for the final case, $\is=4$, we have $\cis=4$, $(i_j) = (4,5,3,5)$ and hence
\[
\Yboxdim{7pt}
(\idealij) = \left(
\raisebox{12pt}{\tiny\young(765)}~,
\raisebox{7pt}{\tiny\young(765431,:::243)}~,
\raisebox{-12pt}{\tiny\young(765431,:::243,::::542,::::654,::::765)}~,
\raisebox{-18pt}{\tiny\young(765431,:::243,::::542,::::65431,::::76543,:::::::24)}
\right)
\qwhere
\minposet = \raisebox{-3em}{\tiny\young(765431,:::243,::::542,::::65431,::::76543,:::::::24,::::::::5,::::::::6,::::::::7)}
\]
Hence, we obtain the following poset for $\sP_4$:

\Yboxdim{3pt}
\begin{tikzpicture}[scale=1,style=very thick,baseline=0.25em]
\node (0) at (0,0) {\tiny\young(~~~)~~\young(~~~~~~,:::~~~)~~\young(~~~~~~,:::~~~,::::~~~,::::~~~,::::~~~)~~\young(~~~~~~,:::~~~,::::~~~,::::~~~~~,::::~~~~~,:::::::~~)};
\node (1) at (3.5,0) {\tiny\young(~~~)~~\young(~~~~~~,:::~~~)~~\young(~~~~~~,:::~~~,::::~~~,::::~~~,::::~~)~~\young(~~~~~~,:::~~~,::::~~~,::::~~~~~,::::~~~~~,:::::::~~,::::::::~)};
\node (2) at (7,0) {\tiny\young(~~~)~~\young(~~~~~~,:::~~~)~~\young(~~~~~~,:::~~~,::::~~~,::::~~~,::::~)~~\young(~~~~~~,:::~~~,::::~~~,::::~~~~~,::::~~~~~,:::::::~~,::::::::~,::::::::~)};
\node (3) at (10.5,0) {\tiny\young(~~~)~~\young(~~~~~~,:::~~~)~~\young(~~~~~~,:::~~~,::::~~~,::::~~~)~~\young(~~~~~~,:::~~~,::::~~~,::::~~~~~,::::~~~~~,:::::::~~,::::::::~,::::::::~,::::::::~)};
\node (4) at (1,-1.5) {\tiny\young(~~~)~~\young(~~~~~~,:::~~)~~\young(~~~~~~,:::~~~,::::~~~,::::~~~~,::::~~~)~~\young(~~~~~~,:::~~~,::::~~~,::::~~~~~,::::~~~~~,:::::::~~)};
\node (5) at (4.5,-1.5) {\tiny\young(~~~)~~\young(~~~~~~,:::~~)~~\young(~~~~~~,:::~~~,::::~~~,::::~~~~,::::~~)~~\young(~~~~~~,:::~~~,::::~~~,::::~~~~~,::::~~~~~,:::::::~~,::::::::~)};
\node (6) at (8,-1.5) {\tiny\young(~~~)~~\young(~~~~~~,:::~~)~~\young(~~~~~~,:::~~~,::::~~~,::::~~~~,::::~)~~\young(~~~~~~,:::~~~,::::~~~,::::~~~~~,::::~~~~~,:::::::~~,::::::::~,::::::::~)};
\node (7) at (11.5,-1.5) {\tiny\young(~~~)~~\young(~~~~~~,:::~~)~~\young(~~~~~~,:::~~~,::::~~~,::::~~~~)~~\young(~~~~~~,:::~~~,::::~~~,::::~~~~~,::::~~~~~,:::::::~~,::::::::~,::::::::~,::::::::~)};
\node (8) at (2,-3) {\tiny\young(~~~)~~\young(~~~~~,:::~~)~~\young(~~~~~~,:::~~~,::::~~~,::::~~~~~,::::~~~)~~\young(~~~~~~,:::~~~,::::~~~,::::~~~~~,::::~~~~~,:::::::~~)};
\node (9) at (5.5,-3) {\tiny\young(~~~)~~\young(~~~~~,:::~~)~~\young(~~~~~~,:::~~~,::::~~~,::::~~~~~,::::~~)~~\young(~~~~~~,:::~~~,::::~~~,::::~~~~~,::::~~~~~,:::::::~~,::::::::~)};
\node (10) at (9,-3) {\tiny\young(~~~)~~\young(~~~~~,:::~~)~~\young(~~~~~~,:::~~~,::::~~~,::::~~~~~,::::~)~~\young(~~~~~~,:::~~~,::::~~~,::::~~~~~,::::~~~~~,:::::::~~,::::::::~,::::::::~)};
\node (11) at (12.5,-3) {\tiny\young(~~~)~~\young(~~~~~,:::~~)~~\young(~~~~~~,:::~~~,::::~~~,::::~~~~~)~~\young(~~~~~~,:::~~~,::::~~~,::::~~~~~,::::~~~~~,:::::::~~,::::::::~,::::::::~,::::::::~)};
\draw 
(0)--(1)--(2)--(3)
(4)--(5)--(6)--(7)
(8)--(9)--(10)--(11)
(0)--(4)--(8)
(1)--(5)--(9)
(2)--(6)--(10)
(3)--(7)--(11);
\node (-1) at (-1.5,-3) {(1)};
\draw[dashed]
(4)--(-1);

\node (22) at ($(0)+(0,-6.00)$) {\tiny\young(~~)~~\young(~~~~~~,:::~~~,::::~)~~\young(~~~~~~,:::~~~,::::~~~,::::~~~,::::~~~)~~\young(~~~~~~,:::~~~,::::~~~,::::~~~~~,::::~~~~~,:::::::~~)};
\node (23) at ($(1)+(0,-6.00)$){\tiny\young(~~)~~\young(~~~~~~,:::~~~,::::~)~~\young(~~~~~~,:::~~~,::::~~~,::::~~~,::::~~)~~\young(~~~~~~,:::~~~,::::~~~,::::~~~~~,::::~~~~~,:::::::~~,::::::::~)};
\node (24) at ($(2)+(0,-6.00)$){\tiny\young(~~)~~\young(~~~~~~,:::~~~,::::~)~~\young(~~~~~~,:::~~~,::::~~~,::::~~~,::::~)~~\young(~~~~~~,:::~~~,::::~~~,::::~~~~~,::::~~~~~,:::::::~~,::::::::~,::::::::~)};
\node (25) at ($(3)+(0,-6.00)$){\tiny\young(~~)~~\young(~~~~~~,:::~~~,::::~)~~\young(~~~~~~,:::~~~,::::~~~,::::~~~)~~\young(~~~~~~,:::~~~,::::~~~,::::~~~~~,::::~~~~~,:::::::~~,::::::::~,::::::::~,::::::::~)};
\node (26) at ($(4)+(0,-6.00)$){\tiny\young(~~)~~\young(~~~~~~,:::~~,::::~)~~\young(~~~~~~,:::~~~,::::~~~,::::~~~~,::::~~~)~~\young(~~~~~~,:::~~~,::::~~~,::::~~~~~,::::~~~~~,:::::::~~)};
\node (27) at ($(5)+(0,-6.00)$){\tiny\young(~~)~~\young(~~~~~~,:::~~,::::~)~~\young(~~~~~~,:::~~~,::::~~~,::::~~~~,::::~~)~~\young(~~~~~~,:::~~~,::::~~~,::::~~~~~,::::~~~~~,:::::::~~,::::::::~)};
\node (28) at ($(6)+(0,-6.00)$){\tiny\young(~~)~~\young(~~~~~~,:::~~,::::~)~~\young(~~~~~~,:::~~~,::::~~~,::::~~~~,::::~)~~\young(~~~~~~,:::~~~,::::~~~,::::~~~~~,::::~~~~~,:::::::~~,::::::::~,::::::::~)};
\node (29) at ($(7)+(0,-6.00)$){\tiny\young(~~)~~\young(~~~~~~,:::~~,::::~)~~\young(~~~~~~,:::~~~,::::~~~,::::~~~~)~~\young(~~~~~~,:::~~~,::::~~~,::::~~~~~,::::~~~~~,:::::::~~,::::::::~,::::::::~,::::::::~)};
\node (30) at ($(8)+(0,-6.00)$){\tiny\young(~~)~~\young(~~~~~,:::~~,::::~)~~\young(~~~~~~,:::~~~,::::~~~,::::~~~~~,::::~~~)~~\young(~~~~~~,:::~~~,::::~~~,::::~~~~~,::::~~~~~,:::::::~~)};
\node (31) at ($(9)+(0,-6.00)$){\tiny\young(~~)~~\young(~~~~~,:::~~,::::~)~~\young(~~~~~~,:::~~~,::::~~~,::::~~~~~,::::~~)~~\young(~~~~~~,:::~~~,::::~~~,::::~~~~~,::::~~~~~,:::::::~~,::::::::~)};
\node (32) at ($(10)+(0,-6.00)$){\tiny\young(~~)~~\young(~~~~~,:::~~,::::~)~~\young(~~~~~~,:::~~~,::::~~~,::::~~~~~,::::~)~~\young(~~~~~~,:::~~~,::::~~~,::::~~~~~,::::~~~~~,:::::::~~,::::::::~,::::::::~)};
\node (33) at ($(11)+(0,-6.00)$){\tiny\young(~~)~~\young(~~~~~,:::~~,::::~)~~\young(~~~~~~,:::~~~,::::~~~,::::~~~~~)~~\young(~~~~~~,:::~~~,::::~~~,::::~~~~~,::::~~~~~,:::::::~~,::::::::~,::::::::~,::::::::~)};
\draw 
(22)--(23)--(24)--(25)
(26)--(27)--(28)--(29)
(30)--(31)--(32)--(33)
(22)--(26)--(30)
(23)--(27)--(31)
(24)--(28)--(32)
(25)--(29)--(33);
\foreach \x in {0,...,11}
\draw (\x)--({\the\numexpr\x+22\relax});
\node (-2) at (-1.5,-9) {(2)};
\draw[dashed]
(26)--(-2);

\node (44) at ($(22)+(0,-6.00)$) {\young(~)~~\young(~~~~~~,:::~~~,::::~,::::~)~~\young(~~~~~~,:::~~~,::::~~~,::::~~~,::::~~~)~~\young(~~~~~~,:::~~~,::::~~~,::::~~~~~,::::~~~~~,:::::::~~)};
\node (45) at ($(23)+(0,-6.00)$) {\young(~)~~\young(~~~~~~,:::~~~,::::~,::::~)~~\young(~~~~~~,:::~~~,::::~~~,::::~~~,::::~~)~~\young(~~~~~~,:::~~~,::::~~~,::::~~~~~,::::~~~~~,:::::::~~,::::::::~)};
\node (46) at ($(24)+(0,-6.00)$) {\young(~)~~\young(~~~~~~,:::~~~,::::~,::::~)~~\young(~~~~~~,:::~~~,::::~~~,::::~~~,::::~)~~\young(~~~~~~,:::~~~,::::~~~,::::~~~~~,::::~~~~~,:::::::~~,::::::::~,::::::::~)};
\node (47) at ($(25)+(0,-6.00)$) {\young(~)~~\young(~~~~~~,:::~~~,::::~,::::~)~~\young(~~~~~~,:::~~~,::::~~~,::::~~~)~~\young(~~~~~~,:::~~~,::::~~~,::::~~~~~,::::~~~~~,:::::::~~,::::::::~,::::::::~,::::::::~)};
\node (48) at ($(26)+(0,-6.00)$) {\young(~)~~\young(~~~~~~,:::~~,::::~,::::~)~~\young(~~~~~~,:::~~~,::::~~~,::::~~~~,::::~~~)~~\young(~~~~~~,:::~~~,::::~~~,::::~~~~~,::::~~~~~,:::::::~~)};
\node (49) at ($(27)+(0,-6.00)$) {\young(~)~~\young(~~~~~~,:::~~,::::~,::::~)~~\young(~~~~~~,:::~~~,::::~~~,::::~~~~,::::~~)~~\young(~~~~~~,:::~~~,::::~~~,::::~~~~~,::::~~~~~,:::::::~~,::::::::~)};
\node (50) at ($(28)+(0,-6.00)$) {\young(~)~~\young(~~~~~~,:::~~,::::~,::::~)~~\young(~~~~~~,:::~~~,::::~~~,::::~~~~,::::~)~~\young(~~~~~~,:::~~~,::::~~~,::::~~~~~,::::~~~~~,:::::::~~,::::::::~,::::::::~)};
\node (51) at ($(29)+(0,-6.00)$) {\young(~)~~\young(~~~~~~,:::~~,::::~,::::~)~~\young(~~~~~~,:::~~~,::::~~~,::::~~~~)~~\young(~~~~~~,:::~~~,::::~~~,::::~~~~~,::::~~~~~,:::::::~~,::::::::~,::::::::~,::::::::~)};
\node (52) at ($(30)+(0,-6.00)$) {\young(~)~~\young(~~~~~,:::~~,::::~,::::~)~~\young(~~~~~~,:::~~~,::::~~~,::::~~~~~,::::~~~)~~\young(~~~~~~,:::~~~,::::~~~,::::~~~~~,::::~~~~~,:::::::~~)};
\node (53) at ($(31)+(0,-6.00)$) {\young(~)~~\young(~~~~~,:::~~,::::~,::::~)~~\young(~~~~~~,:::~~~,::::~~~,::::~~~~~,::::~~)~~\young(~~~~~~,:::~~~,::::~~~,::::~~~~~,::::~~~~~,:::::::~~,::::::::~)};
\node (54) at ($(32)+(0,-6.00)$) {\young(~)~~\young(~~~~~,:::~~,::::~,::::~)~~\young(~~~~~~,:::~~~,::::~~~,::::~~~~~,::::~)~~\young(~~~~~~,:::~~~,::::~~~,::::~~~~~,::::~~~~~,:::::::~~,::::::::~,::::::::~)};
\node (55) at ($(33)+(0,-6.00)$) {\young(~)~~\young(~~~~~,:::~~,::::~,::::~)~~\young(~~~~~~,:::~~~,::::~~~,::::~~~~~)~~\young(~~~~~~,:::~~~,::::~~~,::::~~~~~,::::~~~~~,:::::::~~,::::::::~,::::::::~,::::::::~)};
\draw 
(44)--(45)--(46)--(47)
(48)--(49)--(50)--(51)
(52)--(53)--(54)--(55)
(44)--(48)--(52)
(45)--(49)--(53)
(46)--(50)--(54)
(47)--(51)--(55);
\foreach \x in {22,...,33}
\draw (\x)--({\the\numexpr\x+22\relax});
\node (-3) at (-1.5,-15) {(3)};
\draw[dashed]
(48)--(-3);

\node (66) at ($(44)+(0,-6.00)$) {$\varnothing$~~\young(~~~~~~,:::~~~,::::~,::::~,::::~)~~\young(~~~~~~,:::~~~,::::~~~,::::~~~,::::~~~)~~\young(~~~~~~,:::~~~,::::~~~,::::~~~~~,::::~~~~~,:::::::~~)};
\node (67) at ($(45)+(0,-6.00)$) {$\varnothing$~~\young(~~~~~~,:::~~~,::::~,::::~,::::~)~~\young(~~~~~~,:::~~~,::::~~~,::::~~~,::::~~)~~\young(~~~~~~,:::~~~,::::~~~,::::~~~~~,::::~~~~~,:::::::~~,::::::::~)};
\node (68) at ($(46)+(0,-6.00)$) {$\varnothing$~~\young(~~~~~~,:::~~~,::::~,::::~,::::~)~~\young(~~~~~~,:::~~~,::::~~~,::::~~~,::::~)~~\young(~~~~~~,:::~~~,::::~~~,::::~~~~~,::::~~~~~,:::::::~~,::::::::~,::::::::~)};
\node (69) at ($(47)+(0,-6.00)$) {$\varnothing$~~\young(~~~~~~,:::~~~,::::~,::::~,::::~)~~\young(~~~~~~,:::~~~,::::~~~,::::~~~)~~\young(~~~~~~,:::~~~,::::~~~,::::~~~~~,::::~~~~~,:::::::~~,::::::::~,::::::::~,::::::::~)};
\node (70) at ($(48)+(0,-6.00)$) {$\varnothing$~~\young(~~~~~~,:::~~,::::~,::::~,::::~)~~\young(~~~~~~,:::~~~,::::~~~,::::~~~~,::::~~~)~~\young(~~~~~~,:::~~~,::::~~~,::::~~~~~,::::~~~~~,:::::::~~)};
\node (71) at ($(49)+(0,-6.00)$) {$\varnothing$~~\young(~~~~~~,:::~~,::::~,::::~,::::~)~~\young(~~~~~~,:::~~~,::::~~~,::::~~~~,::::~~)~~\young(~~~~~~,:::~~~,::::~~~,::::~~~~~,::::~~~~~,:::::::~~,::::::::~)};
\node (72) at ($(50)+(0,-6.00)$) {$\varnothing$~~\young(~~~~~~,:::~~,::::~,::::~,::::~)~~\young(~~~~~~,:::~~~,::::~~~,::::~~~~,::::~)~~\young(~~~~~~,:::~~~,::::~~~,::::~~~~~,::::~~~~~,:::::::~~,::::::::~,::::::::~)};
\node (73) at ($(51)+(0,-6.00)$) {$\varnothing$~~\young(~~~~~~,:::~~,::::~,::::~,::::~)~~\young(~~~~~~,:::~~~,::::~~~,::::~~~~)~~\young(~~~~~~,:::~~~,::::~~~,::::~~~~~,::::~~~~~,:::::::~~,::::::::~,::::::::~,::::::::~)};
\node (74) at ($(52)+(0,-6.00)$) {$\varnothing$~~\young(~~~~~,:::~~,::::~,::::~,::::~)~~\young(~~~~~~,:::~~~,::::~~~,::::~~~~~,::::~~~)~~\young(~~~~~~,:::~~~,::::~~~,::::~~~~~,::::~~~~~,:::::::~~)};
\node (75) at ($(53)+(0,-6.00)$) {$\varnothing$~~\young(~~~~~,:::~~,::::~,::::~,::::~)~~\young(~~~~~~,:::~~~,::::~~~,::::~~~~~,::::~~)~~\young(~~~~~~,:::~~~,::::~~~,::::~~~~~,::::~~~~~,:::::::~~,::::::::~)};
\node (76) at ($(54)+(0,-6.00)$) {$\varnothing$~~\young(~~~~~,:::~~,::::~,::::~,::::~)~~\young(~~~~~~,:::~~~,::::~~~,::::~~~~~,::::~)~~\young(~~~~~~,:::~~~,::::~~~,::::~~~~~,::::~~~~~,:::::::~~,::::::::~,::::::::~)};
\node (77) at ($(55)+(0,-6.00)$) {$\varnothing$~~\young(~~~~~,:::~~,::::~,::::~,::::~)~~\young(~~~~~~,:::~~~,::::~~~,::::~~~~~)~~\young(~~~~~~,:::~~~,::::~~~,::::~~~~~,::::~~~~~,:::::::~~,::::::::~,::::::::~,::::::::~)};
\draw
(66)--(67)--(68)--(69)
(70)--(71)--(72)--(73)
(74)--(75)--(76)--(77)
(66)--(70)--(74)
(67)--(71)--(75)
(68)--(72)--(76)
(69)--(73)--(77);
\foreach \x in {44,...,55}
\draw (\x)--({\the\numexpr\x+22\relax});
\node (-4) at (-1.5,-21) {(4)};
\draw[dashed]
(70)--(-4);
\end{tikzpicture}

\begin{tikzpicture}[scale=2.25,style=very thick,baseline=0.25em]
\node (12) at (0,1) {\tiny\young(~~~)~~\young(~~~~~~,:::~)~~\young(~~~~~~,:::~~~,::::~~~,::::~~~~,::::~~~~)~~\young(~~~~~~,:::~~~,::::~~~,::::~~~~~,::::~~~~~,:::::::~~)};
\node (13) at (.5,0) {\tiny\young(~~~)~~\young(~~~~~~)~~\young(~~~~~~,:::~~~,::::~~~,::::~~~~,::::~~~~,:::::::~)~~\young(~~~~~~,:::~~~,::::~~~,::::~~~~~,::::~~~~~,:::::::~~)}; 
\node (14) at (1,1.5) {\tiny\young(~~~)~~\young(~~~~~,:::~)~~\young(~~~~~~,:::~~~,::::~~~,::::~~~~~,::::~~~~)~~\young(~~~~~~,:::~~~,::::~~~,::::~~~~~,::::~~~~~,:::::::~~)};
\node (15) at (1.5,0.5) {\tiny\young(~~~)~~\young(~~~~~)~~\young(~~~~~~,:::~~~,::::~~~,::::~~~~~,::::~~~~,:::::::~)~~\young(~~~~~~,:::~~~,::::~~~,::::~~~~~,::::~~~~~,:::::::~~)}; 
\node (16) at (2,2) {\tiny\young(~~~)~~\young(~~~~,:::~)~~\young(~~~~~~,:::~~~,::::~~~,::::~~~~~,::::~~~~~)~~\young(~~~~~~,:::~~~,::::~~~,::::~~~~~,::::~~~~~,:::::::~~)};
\node (17) at (2.5,1) {\tiny\young(~~~)~~\young(~~~~)~~\young(~~~~~~,:::~~~,::::~~~,::::~~~~~,::::~~~~~,:::::::~)~~\young(~~~~~~,:::~~~,::::~~~,::::~~~~~,::::~~~~~,:::::::~~)};
\node (18) at (3.5,1.5) {\tiny\young(~~~)~~\young(~~~)~~\young(~~~~~~,:::~~~,::::~~~,::::~~~~~,::::~~~~~,:::::::~~)~~\young(~~~~~~,:::~~~,::::~~~,::::~~~~~,::::~~~~~,:::::::~~)};
\node (19) at (4,1) {\tiny\young(~~~)~~\young(~~)~~\young(~~~~~~,:::~~~,::::~~~,::::~~~~~,::::~~~~~,:::::::~~,::::::::~)~~\young(~~~~~~,:::~~~,::::~~~,::::~~~~~,::::~~~~~,:::::::~~)};
\node (20) at (5,1.5) {\tiny\young(~~~)~~\young(~)~~\young(~~~~~~,:::~~~,::::~~~,::::~~~~~,::::~~~~~,:::::::~~,::::::::~,::::::::~)~~\young(~~~~~~,:::~~~,::::~~~,::::~~~~~,::::~~~~~,:::::::~~)};
\node (21) at (5.5,1) {\tiny\young(~~~)~~$\varnothing$~~\young(~~~~~~,:::~~~,::::~~~,::::~~~~~,::::~~~~~,:::::::~~,::::::::~,::::::::~,::::::::~)~~\young(~~~~~~,:::~~~,::::~~~,::::~~~~~,::::~~~~~,:::::::~~)};
\draw 
(12)--(14)--(16)--(17)--(18)--(19)--(20)--(21)
(12)--(13)--(15)--(17)
(14)--(15);
\node (-1) at (-.5,1.5) {(1)};
\draw[dashed] (12)--(-1);

\node(34) at ($(12)+(0,-2.5)$) {\young(~~)~~\young(~~~~~~,:::~)~~\young(~~~~~~,:::~~~,::::~~~,::::~~~~,::::~~~~)~~\young(~~~~~~,:::~~~,::::~~~,::::~~~~~,::::~~~~~,:::::::~~,::::::::~)};
\node (35) at ($(13)+(0,-2.5)$) 
{\young(~~)~~\young(~~~~~~)~~\young(~~~~~~,:::~~~,::::~~~,::::~~~~,::::~~~~,:::::::~)~~\young(~~~~~~,:::~~~,::::~~~,::::~~~~~,::::~~~~~,:::::::~~,::::::::~)};
\node (36) at ($(14)+(0,-2.5)$) {\young(~~)~~\young(~~~~~,:::~)~~\young(~~~~~~,:::~~~,::::~~~,::::~~~~~,::::~~~~)~~\young(~~~~~~,:::~~~,::::~~~,::::~~~~~,::::~~~~~,:::::::~~,::::::::~)};
\node (37) at ($(15)+(0,-2.5)$) {\young(~~)~~\young(~~~~~)~~\young(~~~~~~,:::~~~,::::~~~,::::~~~~~,::::~~~~,:::::::~)~~\young(~~~~~~,:::~~~,::::~~~,::::~~~~~,::::~~~~~,:::::::~~,::::::::~)};
\node (38) at ($(16)+(0,-2.5)$) {\young(~~)~~\young(~~~~,:::~)~~\young(~~~~~~,:::~~~,::::~~~,::::~~~~~,::::~~~~~)~~\young(~~~~~~,:::~~~,::::~~~,::::~~~~~,::::~~~~~,:::::::~~,::::::::~)};
\node (39) at ($(17)+(0,-2.5)$) {\young(~~)~~\young(~~~~)~~\young(~~~~~~,:::~~~,::::~~~,::::~~~~~,::::~~~~~,:::::::~)~~\young(~~~~~~,:::~~~,::::~~~,::::~~~~~,::::~~~~~,:::::::~~,::::::::~)};
\node (40) at ($(18)+(0,-2.5)$) {\young(~~)~~\young(~~~)~~\young(~~~~~~,:::~~~,::::~~~,::::~~~~~,::::~~~~~,:::::::~~)~~\young(~~~~~~,:::~~~,::::~~~,::::~~~~~,::::~~~~~,:::::::~~,::::::::~)};
\node (41) at ($(19)+(0,-2.5)$) {\young(~~)~~\young(~~)~~\young(~~~~~~,:::~~~,::::~~~,::::~~~~~,::::~~~~~,:::::::~~,::::::::~)~~\young(~~~~~~,:::~~~,::::~~~,::::~~~~~,::::~~~~~,:::::::~~,::::::::~)};
\node (42) at ($(20)+(0,-2.5)$) {\young(~~)~~\young(~)~~\young(~~~~~~,:::~~~,::::~~~,::::~~~~~,::::~~~~~,:::::::~~,::::::::~,::::::::~)~~\young(~~~~~~,:::~~~,::::~~~,::::~~~~~,::::~~~~~,:::::::~~,::::::::~)};
\node (43) at ($(21)+(0,-2.5)$) {\young(~~)~~$\varnothing$~~\young(~~~~~~,:::~~~,::::~~~,::::~~~~~,::::~~~~~,:::::::~~,::::::::~,::::::::~,::::::::~)~~\young(~~~~~~,:::~~~,::::~~~,::::~~~~~,::::~~~~~,:::::::~~,::::::::~)};

\draw 
(34)--(36)--(38)--(39)--(40)--(41)--(42)--(43)
(34)--(35)--(37)--(39)
(36)--(37);
\node (-2) at (-.5,-1) {(2)};
\draw[dashed] (34)--(-2);
\foreach \x in {12,...,21}
\draw (\x)--({\the\numexpr\x+22\relax});

\node (56) at ($(34)+(0,-2.5)$) {\young(~)~~\young(~~~~~~,:::~)~~\young(~~~~~~,:::~~~,::::~~~,::::~~~~,::::~~~~)~~\young(~~~~~~,:::~~~,::::~~~,::::~~~~~,::::~~~~~,:::::::~~,::::::::~,::::::::~)};
\node (57) at ($(35)+(0,-2.5)$)
{\young(~)~~\young(~~~~~~)~~\young(~~~~~~,:::~~~,::::~~~,::::~~~~,::::~~~~,:::::::~)~~\young(~~~~~~,:::~~~,::::~~~,::::~~~~~,::::~~~~~,:::::::~~,::::::::~,::::::::~)};
\node (58) at ($(36)+(0,-2.5)$) {\young(~)~~\young(~~~~~,:::~)~~\young(~~~~~~,:::~~~,::::~~~,::::~~~~~,::::~~~~)~~\young(~~~~~~,:::~~~,::::~~~,::::~~~~~,::::~~~~~,:::::::~~,::::::::~,::::::::~)};
\node (59) at ($(37)+(0,-2.5)$) {\young(~)~~\young(~~~~~)~~\young(~~~~~~,:::~~~,::::~~~,::::~~~~~,::::~~~~,:::::::~)~~\young(~~~~~~,:::~~~,::::~~~,::::~~~~~,::::~~~~~,:::::::~~,::::::::~,::::::::~)};
\node (60) at ($(38)+(0,-2.5)$) {\young(~)~~\young(~~~~,:::~)~~\young(~~~~~~,:::~~~,::::~~~,::::~~~~~,::::~~~~~)~~\young(~~~~~~,:::~~~,::::~~~,::::~~~~~,::::~~~~~,:::::::~~,::::::::~,::::::::~)};
\node (61) at ($(39)+(0,-2.5)$) {\young(~)~~\young(~~~~)~~\young(~~~~~~,:::~~~,::::~~~,::::~~~~~,::::~~~~~,:::::::~)~~\young(~~~~~~,:::~~~,::::~~~,::::~~~~~,::::~~~~~,:::::::~~,::::::::~,::::::::~)};
\node (62) at ($(40)+(0,-2.5)$) {\young(~)~~\young(~~~)~~\young(~~~~~~,:::~~~,::::~~~,::::~~~~~,::::~~~~~,:::::::~~)~~\young(~~~~~~,:::~~~,::::~~~,::::~~~~~,::::~~~~~,:::::::~~,::::::::~,::::::::~)};
\node (63) at ($(41)+(0,-2.5)$) {\young(~)~~\young(~~)~~\young(~~~~~~,:::~~~,::::~~~,::::~~~~~,::::~~~~~,:::::::~~,::::::::~)~~\young(~~~~~~,:::~~~,::::~~~,::::~~~~~,::::~~~~~,:::::::~~,::::::::~,::::::::~)};
\node (64) at ($(42)+(0,-2.5)$) {\young(~)~~\young(~)~~\young(~~~~~~,:::~~~,::::~~~,::::~~~~~,::::~~~~~,:::::::~~,::::::::~,::::::::~)~~\young(~~~~~~,:::~~~,::::~~~,::::~~~~~,::::~~~~~,:::::::~~,::::::::~,::::::::~)};
\node (65) at ($(43)+(0,-2.5)$) {\young(~)~~$\varnothing$~~\young(~~~~~~,:::~~~,::::~~~,::::~~~~~,::::~~~~~,:::::::~~,::::::::~,::::::::~,::::::::~)~~\young(~~~~~~,:::~~~,::::~~~,::::~~~~~,::::~~~~~,:::::::~~,::::::::~,::::::::~)};
\draw 
(56)--(58)--(60)--(61)--(62)--(63)--(64)--(65)
(56)--(57)--(59)--(61)
(58)--(59);
\node (-3) at (-.5,-3.5) {(3)};
\draw[dashed] (56)--(-3);
\foreach \x in {34,...,43}
\draw (\x)--({\the\numexpr\x+22\relax});

\node (78) at ($(56)+(0,-2.5)$) {$\varnothing$~~\young(~~~~~~,:::~)~~\young(~~~~~~,:::~~~,::::~~~,::::~~~~,::::~~~~)~~\young(~~~~~~,:::~~~,::::~~~,::::~~~~~,::::~~~~~,:::::::~~,::::::::~,::::::::~,::::::::~)};
\node (79) at ($(57)+(0,-2.5)$) {$\varnothing$~~\young(~~~~~~)~~\young(~~~~~~,:::~~~,::::~~~,::::~~~~,::::~~~~,:::::::~)~~\young(~~~~~~,:::~~~,::::~~~,::::~~~~~,::::~~~~~,:::::::~~,::::::::~,::::::::~,::::::::~)};

\node (80) at ($(58)+(0,-2.5)$) {$\varnothing$~~\young(~~~~~,:::~)~~\young(~~~~~~,:::~~~,::::~~~,::::~~~~~,::::~~~~)~~\young(~~~~~~,:::~~~,::::~~~,::::~~~~~,::::~~~~~,:::::::~~,::::::::~,::::::::~,::::::::~)};
\node (81) at ($(59)+(0,-2.5)$) {$\varnothing$~~\young(~~~~~)~~\young(~~~~~~,:::~~~,::::~~~,::::~~~~~,::::~~~~,:::::::~)~~\young(~~~~~~,:::~~~,::::~~~,::::~~~~~,::::~~~~~,:::::::~~,::::::::~,::::::::~,::::::::~)};
\node (82) at ($(60)+(0,-2.5)$) {$\varnothing$~~\young(~~~~,:::~)~~\young(~~~~~~,:::~~~,::::~~~,::::~~~~~,::::~~~~~)~~\young(~~~~~~,:::~~~,::::~~~,::::~~~~~,::::~~~~~,:::::::~~,::::::::~,::::::::~,::::::::~)};
\node (83) at ($(61)+(0,-2.5)$) {$\varnothing$~~\young(~~~~)~~\young(~~~~~~,:::~~~,::::~~~,::::~~~~~,::::~~~~~,:::::::~)~~\young(~~~~~~,:::~~~,::::~~~,::::~~~~~,::::~~~~~,:::::::~~,::::::::~,::::::::~,::::::::~)};
\node (84) at ($(62)+(0,-2.5)$) {$\varnothing$~~\young(~~~)~~\young(~~~~~~,:::~~~,::::~~~,::::~~~~~,::::~~~~~,:::::::~~)~~\young(~~~~~~,:::~~~,::::~~~,::::~~~~~,::::~~~~~,:::::::~~,::::::::~,::::::::~,::::::::~)};
\node (85) at ($(63)+(0,-2.5)$) {$\varnothing$~~\young(~~)~~\young(~~~~~~,:::~~~,::::~~~,::::~~~~~,::::~~~~~,:::::::~~,::::::::~)~~\young(~~~~~~,:::~~~,::::~~~,::::~~~~~,::::~~~~~,:::::::~~,::::::::~,::::::::~,::::::::~)};
\node (86) at ($(64)+(0,-2.5)$) {$\varnothing$~~\young(~)~~\young(~~~~~~,:::~~~,::::~~~,::::~~~~~,::::~~~~~,:::::::~~,::::::::~,::::::::~)~~\young(~~~~~~,:::~~~,::::~~~,::::~~~~~,::::~~~~~,:::::::~~,::::::::~,::::::::~,::::::::~)};
\node (87) at ($(65)+(0,-2.5)$) {$\varnothing~~\varnothing$~~\young(~~~~~~,:::~~~,::::~~~,::::~~~~~,::::~~~~~,:::::::~~,::::::::~,::::::::~,::::::::~)~~\young(~~~~~~,:::~~~,::::~~~,::::~~~~~,::::~~~~~,:::::::~~,::::::::~,::::::::~,::::::::~)};
\draw 
(78)--(80)--(82)--(83)--(84)--(85)--(86)--(87)
(78)--(79)--(81)--(83)
(80)--(81);
\node (-4) at (-.5,-6) {(4)};
\draw[dashed] (78)--(-4);
\foreach \x in {56,...,65}
\draw (\x)--({\the\numexpr\x+22\relax});

\end{tikzpicture}

\ali{
\cD_4 &= 
p_{\young(~~~)}
(p_{\young(~~~~~~,:::~~~)}
(p_{\young(~~~~~~,:::~~~,::::~~~,::::~~~,::::~~~)}p_{\young(~~~~~~,:::~~~,::::~~~,::::~~~~~,::::~~~~~,:::::::~~)}
-p_{\young(~~~~~~,:::~~~,::::~~~,::::~~~,::::~~)}p_{\young(~~~~~~,:::~~~,::::~~~,::::~~~~~,::::~~~~~,:::::::~~,::::::::~)}
+p_{\young(~~~~~~,:::~~~,::::~~~,::::~~~,::::~)}p_{\young(~~~~~~,:::~~~,::::~~~,::::~~~~~,::::~~~~~,:::::::~~,::::::::~,::::::::~)}
-p_{\young(~~~~~~,:::~~~,::::~~~,::::~~~)}p_{\young(~~~~~~,:::~~~,::::~~~,::::~~~~~,::::~~~~~,:::::::~~,::::::::~,::::::::~,::::::::~)})\\
&\quad+p_{\young(~~~~~~,:::~~)}
(-p_{\young(~~~~~~,:::~~~,::::~~~,::::~~~~,::::~~~)}p_{\young(~~~~~~,:::~~~,::::~~~,::::~~~~~,::::~~~~~,:::::::~~)}
+p_{\young(~~~~~~,:::~~~,::::~~~,::::~~~~,::::~~)}p_{\young(~~~~~~,:::~~~,::::~~~,::::~~~~~,::::~~~~~,:::::::~~,::::::::~)}
-p_{\young(~~~~~~,:::~~~,::::~~~,::::~~~~,::::~)}p_{\young(~~~~~~,:::~~~,::::~~~,::::~~~~~,::::~~~~~,:::::::~~,::::::::~,::::::::~)}
+p_{\young(~~~~~~,:::~~~,::::~~~,::::~~~~)}p_{\young(~~~~~~,:::~~~,::::~~~,::::~~~~~,::::~~~~~,:::::::~~,::::::::~,::::::::~,::::::::~)})\\
&\quad+p_{\young(~~~~~,:::~~)}
(p_{\young(~~~~~~,:::~~~,::::~~~,::::~~~~~,::::~~~)}p_{\young(~~~~~~,:::~~~,::::~~~,::::~~~~~,::::~~~~~,:::::::~~)}
-p_{\young(~~~~~~,:::~~~,::::~~~,::::~~~~~,::::~~)}p_{\young(~~~~~~,:::~~~,::::~~~,::::~~~~~,::::~~~~~,:::::::~~,::::::::~)}
+p_{\young(~~~~~~,:::~~~,::::~~~,::::~~~~~,::::~)}p_{\young(~~~~~~,:::~~~,::::~~~,::::~~~~~,::::~~~~~,:::::::~~,::::::::~,::::::::~)}
-p_{\young(~~~~~~,:::~~~,::::~~~,::::~~~~~)}p_{\young(~~~~~~,:::~~~,::::~~~,::::~~~~~,::::~~~~~,:::::::~~,::::::::~,::::::::~,::::::::~)})\\
&\quad+(p_{\young(~~~~~~,:::~)}p_{\young(~~~~~~,:::~~~,::::~~~,::::~~~~,::::~~~~)}
-p_{\young(~~~~~,:::~)}p_{\young(~~~~~~,:::~~~,::::~~~,::::~~~~~,::::~~~~)}
-p_{\young(~~~~~~)}p_{\young(~~~~~~,:::~~~,::::~~~,::::~~~~,::::~~~~,:::::::~)}
+p_{\young(~~~~~)}p_{\young(~~~~~~,:::~~~,::::~~~,::::~~~~~,::::~~~~,:::::::~)}
+p_{\young(~~~~,:::~)}p_{\young(~~~~~~,:::~~~,::::~~~,::::~~~~~,::::~~~~~)}\\
&\qquad-p_{\young(~~~~)}p_{\young(~~~~~~,:::~~~,::::~~~,::::~~~~~,::::~~~~~,:::::::~)}
+p_{\young(~~~)}p_{\young(~~~~~~,:::~~~,::::~~~,::::~~~~~,::::~~~~~,:::::::~~)}
-p_{\young(~~)}p_{\young(~~~~~~,:::~~~,::::~~~,::::~~~~~,::::~~~~~,:::::::~~,::::::::~)}
+p_{\young(~)}p_{\young(~~~~~~,:::~~~,::::~~~,::::~~~~~,::::~~~~~,:::::::~~,::::::::~,::::::::~)}
-p_{\varnothing}p_{\young(~~~~~~,:::~~~,::::~~~,::::~~~~~,::::~~~~~,:::::::~~,::::::::~,::::::::~,::::::::~)})
p_{\young(~~~~~~,:::~~~,::::~~~,::::~~~~~,::::~~~~~,:::::::~~)})\\
&+p_{\young(~~)}
(p_{\young(~~~~~~,:::~~~,::::~)}
(-p_{\young(~~~~~~,:::~~~,::::~~~,::::~~~,::::~~~)}p_{\young(~~~~~~,:::~~~,::::~~~,::::~~~~~,::::~~~~~,:::::::~~)}
+p_{\young(~~~~~~,:::~~~,::::~~~,::::~~~,::::~~)}p_{\young(~~~~~~,:::~~~,::::~~~,::::~~~~~,::::~~~~~,:::::::~~,::::::::~)}
-p_{\young(~~~~~~,:::~~~,::::~~~,::::~~~,::::~)}p_{\young(~~~~~~,:::~~~,::::~~~,::::~~~~~,::::~~~~~,:::::::~~,::::::::~,::::::::~)}
+p_{\young(~~~~~~,:::~~~,::::~~~,::::~~~)}p_{\young(~~~~~~,:::~~~,::::~~~,::::~~~~~,::::~~~~~,:::::::~~,::::::::~,::::::::~,::::::::~)})\\
&\quad+p_{\young(~~~~~~,:::~~,::::~)}
(p_{\young(~~~~~~,:::~~~,::::~~~,::::~~~~,::::~~~)}p_{\young(~~~~~~,:::~~~,::::~~~,::::~~~~~,::::~~~~~,:::::::~~)}
-p_{\young(~~~~~~,:::~~~,::::~~~,::::~~~~,::::~~)}p_{\young(~~~~~~,:::~~~,::::~~~,::::~~~~~,::::~~~~~,:::::::~~,::::::::~)}
+p_{\young(~~~~~~,:::~~~,::::~~~,::::~~~~,::::~)}p_{\young(~~~~~~,:::~~~,::::~~~,::::~~~~~,::::~~~~~,:::::::~~,::::::::~,::::::::~)}
-p_{\young(~~~~~~,:::~~~,::::~~~,::::~~~~)}p_{\young(~~~~~~,:::~~~,::::~~~,::::~~~~~,::::~~~~~,:::::::~~,::::::::~,::::::::~,::::::::~)})\\
&\quad+p_{\young(~~~~~,:::~~,::::~)}
(-p_{\young(~~~~~~,:::~~~,::::~~~,::::~~~~~,::::~~~)}p_{\young(~~~~~~,:::~~~,::::~~~,::::~~~~~,::::~~~~~,:::::::~~)}
+p_{\young(~~~~~~,:::~~~,::::~~~,::::~~~~~,::::~~)}p_{\young(~~~~~~,:::~~~,::::~~~,::::~~~~~,::::~~~~~,:::::::~~,::::::::~)}
-p_{\young(~~~~~~,:::~~~,::::~~~,::::~~~~~,::::~)}p_{\young(~~~~~~,:::~~~,::::~~~,::::~~~~~,::::~~~~~,:::::::~~,::::::::~,::::::::~)}
+p_{\young(~~~~~~,:::~~~,::::~~~,::::~~~~~)}p_{\young(~~~~~~,:::~~~,::::~~~,::::~~~~~,::::~~~~~,:::::::~~,::::::::~,::::::::~,::::::::~)})\\
&\quad+(-p_{\young(~~~~~~,:::~)}p_{\young(~~~~~~,:::~~~,::::~~~,::::~~~~,::::~~~~)}
+p_{\young(~~~~~,:::~)}p_{\young(~~~~~~,:::~~~,::::~~~,::::~~~~~,::::~~~~)}
+p_{\young(~~~~~~)}p_{\young(~~~~~~,:::~~~,::::~~~,::::~~~~,::::~~~~,:::::::~)}
-p_{\young(~~~~~)}p_{\young(~~~~~~,:::~~~,::::~~~,::::~~~~~,::::~~~~,:::::::~)}
-p_{\young(~~~~,:::~)}p_{\young(~~~~~~,:::~~~,::::~~~,::::~~~~~,::::~~~~~)}\\
&\qquad+p_{\young(~~~~)}p_{\young(~~~~~~,:::~~~,::::~~~,::::~~~~~,::::~~~~~,:::::::~)}
-p_{\young(~~~)}p_{\young(~~~~~~,:::~~~,::::~~~,::::~~~~~,::::~~~~~,:::::::~~)}
+p_{\young(~~)}p_{\young(~~~~~~,:::~~~,::::~~~,::::~~~~~,::::~~~~~,:::::::~~,::::::::~)}
-p_{\young(~)}p_{\young(~~~~~~,:::~~~,::::~~~,::::~~~~~,::::~~~~~,:::::::~~,::::::::~,::::::::~)}
+p_{\varnothing}p_{\young(~~~~~~,:::~~~,::::~~~,::::~~~~~,::::~~~~~,:::::::~~,::::::::~,::::::::~,::::::::~)})
p_{\young(~~~~~~,:::~~~,::::~~~,::::~~~~~,::::~~~~~,:::::::~~,::::::::~)})\\
&+p_{\young(~)}
(p_{\young(~~~~~~,:::~~~,::::~,::::~)}
(p_{\young(~~~~~~,:::~~~,::::~~~,::::~~~,::::~~~)}p_{\young(~~~~~~,:::~~~,::::~~~,::::~~~~~,::::~~~~~,:::::::~~)}
-p_{\young(~~~~~~,:::~~~,::::~~~,::::~~~,::::~~)}p_{\young(~~~~~~,:::~~~,::::~~~,::::~~~~~,::::~~~~~,:::::::~~,::::::::~)}
+p_{\young(~~~~~~,:::~~~,::::~~~,::::~~~,::::~)}p_{\young(~~~~~~,:::~~~,::::~~~,::::~~~~~,::::~~~~~,:::::::~~,::::::::~,::::::::~)}
-p_{\young(~~~~~~,:::~~~,::::~~~,::::~~~)}p_{\young(~~~~~~,:::~~~,::::~~~,::::~~~~~,::::~~~~~,:::::::~~,::::::::~,::::::::~,::::::::~)})\\
&\quad+p_{\young(~~~~~~,:::~~,::::~,::::~)}
(-p_{\young(~~~~~~,:::~~~,::::~~~,::::~~~~,::::~~~)}p_{\young(~~~~~~,:::~~~,::::~~~,::::~~~~~,::::~~~~~,:::::::~~)}
+p_{\young(~~~~~~,:::~~~,::::~~~,::::~~~~,::::~~)}p_{\young(~~~~~~,:::~~~,::::~~~,::::~~~~~,::::~~~~~,:::::::~~,::::::::~)}
-p_{\young(~~~~~~,:::~~~,::::~~~,::::~~~~,::::~)}p_{\young(~~~~~~,:::~~~,::::~~~,::::~~~~~,::::~~~~~,:::::::~~,::::::::~,::::::::~)}
+p_{\young(~~~~~~,:::~~~,::::~~~,::::~~~~)}p_{\young(~~~~~~,:::~~~,::::~~~,::::~~~~~,::::~~~~~,:::::::~~,::::::::~,::::::::~,::::::::~)})\\
&\quad+p_{\young(~~~~~,:::~~,::::~,::::~)}
(p_{\young(~~~~~~,:::~~~,::::~~~,::::~~~~~,::::~~~)}p_{\young(~~~~~~,:::~~~,::::~~~,::::~~~~~,::::~~~~~,:::::::~~)}
-p_{\young(~~~~~~,:::~~~,::::~~~,::::~~~~~,::::~~)}p_{\young(~~~~~~,:::~~~,::::~~~,::::~~~~~,::::~~~~~,:::::::~~,::::::::~)}
+p_{\young(~~~~~~,:::~~~,::::~~~,::::~~~~~,::::~)}p_{\young(~~~~~~,:::~~~,::::~~~,::::~~~~~,::::~~~~~,:::::::~~,::::::::~,::::::::~)}
-p_{\young(~~~~~~,:::~~~,::::~~~,::::~~~~~)}p_{\young(~~~~~~,:::~~~,::::~~~,::::~~~~~,::::~~~~~,:::::::~~,::::::::~,::::::::~,::::::::~)})\\
&\quad+(p_{\young(~~~~~~,:::~)}p_{\young(~~~~~~,:::~~~,::::~~~,::::~~~~,::::~~~~)}
-p_{\young(~~~~~,:::~)}p_{\young(~~~~~~,:::~~~,::::~~~,::::~~~~~,::::~~~~)}
-p_{\young(~~~~~~)}p_{\young(~~~~~~,:::~~~,::::~~~,::::~~~~,::::~~~~,:::::::~)}
+p_{\young(~~~~~)}p_{\young(~~~~~~,:::~~~,::::~~~,::::~~~~~,::::~~~~,:::::::~)}
+p_{\young(~~~~,:::~)}p_{\young(~~~~~~,:::~~~,::::~~~,::::~~~~~,::::~~~~~)}\\
&\qquad-p_{\young(~~~~)}p_{\young(~~~~~~,:::~~~,::::~~~,::::~~~~~,::::~~~~~,:::::::~)}
+p_{\young(~~~)}p_{\young(~~~~~~,:::~~~,::::~~~,::::~~~~~,::::~~~~~,:::::::~~)}
-p_{\young(~~)}p_{\young(~~~~~~,:::~~~,::::~~~,::::~~~~~,::::~~~~~,:::::::~~,::::::::~)}
+p_{\young(~)}p_{\young(~~~~~~,:::~~~,::::~~~,::::~~~~~,::::~~~~~,:::::::~~,::::::::~,::::::::~)}
-p_{\varnothing}p_{\young(~~~~~~,:::~~~,::::~~~,::::~~~~~,::::~~~~~,:::::::~~,::::::::~,::::::::~,::::::::~)})
p_{\young(~~~~~~,:::~~~,::::~~~,::::~~~~~,::::~~~~~,:::::::~~,::::::::~,::::::::~)})\\
&+p_{\varnothing}
(p_{\young(~~~~~~,:::~~~,::::~,::::~,::::~)}
(-p_{\young(~~~~~~,:::~~~,::::~~~,::::~~~,::::~~~)}p_{\young(~~~~~~,:::~~~,::::~~~,::::~~~~~,::::~~~~~,:::::::~~)}
+p_{\young(~~~~~~,:::~~~,::::~~~,::::~~~,::::~~)}p_{\young(~~~~~~,:::~~~,::::~~~,::::~~~~~,::::~~~~~,:::::::~~,::::::::~)}
-p_{\young(~~~~~~,:::~~~,::::~~~,::::~~~,::::~)}p_{\young(~~~~~~,:::~~~,::::~~~,::::~~~~~,::::~~~~~,:::::::~~,::::::::~,::::::::~)}
+p_{\young(~~~~~~,:::~~~,::::~~~,::::~~~)}p_{\young(~~~~~~,:::~~~,::::~~~,::::~~~~~,::::~~~~~,:::::::~~,::::::::~,::::::::~,::::::::~)})\\
&\quad+p_{\young(~~~~~~,:::~~,::::~,::::~,::::~)}
(p_{\young(~~~~~~,:::~~~,::::~~~,::::~~~~,::::~~~)}p_{\young(~~~~~~,:::~~~,::::~~~,::::~~~~~,::::~~~~~,:::::::~~)}
-p_{\young(~~~~~~,:::~~~,::::~~~,::::~~~~,::::~~)}p_{\young(~~~~~~,:::~~~,::::~~~,::::~~~~~,::::~~~~~,:::::::~~,::::::::~)}
+p_{\young(~~~~~~,:::~~~,::::~~~,::::~~~~,::::~)}p_{\young(~~~~~~,:::~~~,::::~~~,::::~~~~~,::::~~~~~,:::::::~~,::::::::~,::::::::~)}
-p_{\young(~~~~~~,:::~~~,::::~~~,::::~~~~)}p_{\young(~~~~~~,:::~~~,::::~~~,::::~~~~~,::::~~~~~,:::::::~~,::::::::~,::::::::~,::::::::~)})\\
&\quad+p_{\young(~~~~~,:::~~,::::~,::::~,::::~)}
(-p_{\young(~~~~~~,:::~~~,::::~~~,::::~~~~~,::::~~~)}p_{\young(~~~~~~,:::~~~,::::~~~,::::~~~~~,::::~~~~~,:::::::~~)}
+p_{\young(~~~~~~,:::~~~,::::~~~,::::~~~~~,::::~~)}p_{\young(~~~~~~,:::~~~,::::~~~,::::~~~~~,::::~~~~~,:::::::~~,::::::::~)}
-p_{\young(~~~~~~,:::~~~,::::~~~,::::~~~~~,::::~)}p_{\young(~~~~~~,:::~~~,::::~~~,::::~~~~~,::::~~~~~,:::::::~~,::::::::~,::::::::~)}
+p_{\young(~~~~~~,:::~~~,::::~~~,::::~~~~~)}p_{\young(~~~~~~,:::~~~,::::~~~,::::~~~~~,::::~~~~~,:::::::~~,::::::::~,::::::::~,::::::::~)})\\
&\quad+(-p_{\young(~~~~~~,:::~)}p_{\young(~~~~~~,:::~~~,::::~~~,::::~~~~,::::~~~~)}
+p_{\young(~~~~~,:::~)}p_{\young(~~~~~~,:::~~~,::::~~~,::::~~~~~,::::~~~~)}
+p_{\young(~~~~~~)}p_{\young(~~~~~~,:::~~~,::::~~~,::::~~~~,::::~~~~,:::::::~)}
-p_{\young(~~~~~)}p_{\young(~~~~~~,:::~~~,::::~~~,::::~~~~~,::::~~~~,:::::::~)}
-p_{\young(~~~~,:::~)}p_{\young(~~~~~~,:::~~~,::::~~~,::::~~~~~,::::~~~~~)}\\
&\qquad+p_{\young(~~~~)}p_{\young(~~~~~~,:::~~~,::::~~~,::::~~~~~,::::~~~~~,:::::::~)}
-p_{\young(~~~)}p_{\young(~~~~~~,:::~~~,::::~~~,::::~~~~~,::::~~~~~,:::::::~~)}
+p_{\young(~~)}p_{\young(~~~~~~,:::~~~,::::~~~,::::~~~~~,::::~~~~~,:::::::~~,::::::::~)}
-p_{\young(~)}p_{\young(~~~~~~,:::~~~,::::~~~,::::~~~~~,::::~~~~~,:::::::~~,::::::::~,::::::::~)}
+p_{\varnothing}p_{\young(~~~~~~,:::~~~,::::~~~,::::~~~~~,::::~~~~~,:::::::~~,::::::::~,::::::::~,::::::::~)})
p_{\young(~~~~~~,:::~~~,::::~~~,::::~~~~~,::::~~~~~,:::::::~~,::::::::~,::::::::~,::::::::~)})
}

Factoring the expression above yields one much closer to the one presented in \cite{Spacek_Wang_exceptional}.
\ali{
\cD_4&= (p_{\young(~~~)}p_{\young(~~~~~~,:::~~~)} - p_{\young(~~)}p_{\young(~~~~~~,:::~~~,::::~)} + p_{\young(~)}p_{\young(~~~~~~,:::~~~,::::~,::::~)} - p_{\varnothing}p_{\young(~~~~~~,:::~~~,::::~,::::~,::::~)})*\\
&\qquad(p_{\young(~~~~~~,:::~~~,::::~~~,::::~~~,::::~~~)}p_{\young(~~~~~~,:::~~~,::::~~~,::::~~~~~,::::~~~~~,:::::::~~)}
-p_{\young(~~~~~~,:::~~~,::::~~~,::::~~~,::::~~)}p_{\young(~~~~~~,:::~~~,::::~~~,::::~~~~~,::::~~~~~,:::::::~~,::::::::~)}
+p_{\young(~~~~~~,:::~~~,::::~~~,::::~~~,::::~)}p_{\young(~~~~~~,:::~~~,::::~~~,::::~~~~~,::::~~~~~,:::::::~~,::::::::~,::::::::~)}
-p_{\young(~~~~~~,:::~~~,::::~~~,::::~~~)}p_{\young(~~~~~~,:::~~~,::::~~~,::::~~~~~,::::~~~~~,:::::::~~,::::::::~,::::::::~,::::::::~)})\\
&\quad-(p_{\young(~~~)}p_{\young(~~~~~~,:::~~)}-p_{\young(~~)}p_{\young(~~~~~~,:::~~,::::~)}+p_{\young(~)}p_{\young(~~~~~~,:::~~,::::~,::::~)}-p_{\varnothing}p_{\young(~~~~~~,:::~~,::::~,::::~,::::~)})*\\
&\qquad(p_{\young(~~~~~~,:::~~~,::::~~~,::::~~~~,::::~~~)}p_{\young(~~~~~~,:::~~~,::::~~~,::::~~~~~,::::~~~~~,:::::::~~)}
-p_{\young(~~~~~~,:::~~~,::::~~~,::::~~~~,::::~~)}p_{\young(~~~~~~,:::~~~,::::~~~,::::~~~~~,::::~~~~~,:::::::~~,::::::::~)}
+p_{\young(~~~~~~,:::~~~,::::~~~,::::~~~~,::::~)}p_{\young(~~~~~~,:::~~~,::::~~~,::::~~~~~,::::~~~~~,:::::::~~,::::::::~,::::::::~)}
-p_{\young(~~~~~~,:::~~~,::::~~~,::::~~~~)}p_{\young(~~~~~~,:::~~~,::::~~~,::::~~~~~,::::~~~~~,:::::::~~,::::::::~,::::::::~,::::::::~)})\\
&\quad+(p_{\young(~~~)}p_{\young(~~~~~,:::~~)}-p_{\young(~~)}p_{\young(~~~~~,:::~~,::::~)}+p_{\young(~)}p_{\young(~~~~~,:::~~,::::~,::::~)}-p_{\varnothing}p_{\young(~~~~~,:::~~,::::~,::::~,::::~)})*\\
&\qquad(p_{\young(~~~~~~,:::~~~,::::~~~,::::~~~~~,::::~~~)}p_{\young(~~~~~~,:::~~~,::::~~~,::::~~~~~,::::~~~~~,:::::::~~)}
-p_{\young(~~~~~~,:::~~~,::::~~~,::::~~~~~,::::~~)}p_{\young(~~~~~~,:::~~~,::::~~~,::::~~~~~,::::~~~~~,:::::::~~,::::::::~)}
+p_{\young(~~~~~~,:::~~~,::::~~~,::::~~~~~,::::~)}p_{\young(~~~~~~,:::~~~,::::~~~,::::~~~~~,::::~~~~~,:::::::~~,::::::::~,::::::::~)}
-p_{\young(~~~~~~,:::~~~,::::~~~,::::~~~~~)}p_{\young(~~~~~~,:::~~~,::::~~~,::::~~~~~,::::~~~~~,:::::::~~,::::::::~,::::::::~,::::::::~)})\\
&\quad+(p_{\young(~~~)}p_{\young(~~~~~~,:::~~~,::::~~~,::::~~~~~,::::~~~~~,:::::::~~,::::::::~,::::::::~,::::::::~)}-p_{\young(~~)}p_{\young(~~~~~~,:::~~~,::::~~~,::::~~~~~,::::~~~~~,:::::::~~,::::::::~,::::::::~,::::::::~)}+p_{\young(~)}p_{\young(~~~~~~,:::~~~,::::~~~,::::~~~~~,::::~~~~~,:::::::~~,::::::::~,::::::::~,::::::::~)}-p_{\varnothing}p_{\young(~~~~~~,:::~~~,::::~~~,::::~~~~~,::::~~~~~,:::::::~~,::::::::~,::::::::~,::::::::~)})*\\
&\qquad\,
(p_{\young(~~~~~~,:::~)}p_{\young(~~~~~~,:::~~~,::::~~~,::::~~~~,::::~~~~)}
-p_{\young(~~~~~,:::~)}p_{\young(~~~~~~,:::~~~,::::~~~,::::~~~~~,::::~~~~)}
-p_{\young(~~~~~~)}p_{\young(~~~~~~,:::~~~,::::~~~,::::~~~~,::::~~~~,:::::::~)}
+p_{\young(~~~~~)}p_{\young(~~~~~~,:::~~~,::::~~~,::::~~~~~,::::~~~~,:::::::~)}
+p_{\young(~~~~,:::~)}p_{\young(~~~~~~,:::~~~,::::~~~,::::~~~~~,::::~~~~~)}\\
&\qquad-p_{\young(~~~~)}p_{\young(~~~~~~,:::~~~,::::~~~,::::~~~~~,::::~~~~~,:::::::~)}
+p_{\young(~~~)}p_{\young(~~~~~~,:::~~~,::::~~~,::::~~~~~,::::~~~~~,:::::::~~)}
-p_{\young(~~)}p_{\young(~~~~~~,:::~~~,::::~~~,::::~~~~~,::::~~~~~,:::::::~~,::::::::~)}
+p_{\young(~)}p_{\young(~~~~~~,:::~~~,::::~~~,::::~~~~~,::::~~~~~,:::::::~~,::::::::~,::::::::~)}
-p_{\varnothing}p_{\young(~~~~~~,:::~~~,::::~~~,::::~~~~~,::::~~~~~,:::::::~~,::::::::~,::::::::~,::::::::~)})\\
}

Furthermore, we have $i_1=4$, so we further obtain the following poset for $\sP_4^+$. Note that similarly to the case of $\is=3$, the derivation $\delta_{4}$ acts nontrivially on multiple Pl\"ucker coordinates in certain terms.  This means that the derivation applied to these Pl\"ucker monomials yields two terms each; however, one pair of these terms cancels out, so we only indicate the pair which does not cancel. \newpage

\Yboxdim{3pt}
\begin{tikzpicture}[scale=1,style=very thick,baseline=0.25em]
\node (0) at (0,0) {\tiny\young(~~~~)~~\young(~~~~~~,:::~~~)~~\young(~~~~~~,:::~~~,::::~~~,::::~~~,::::~~~)~~\young(~~~~~~,:::~~~,::::~~~,::::~~~~~,::::~~~~~,:::::::~~)};
\node (1) at (3.5,0) {\tiny\young(~~~~)~~\young(~~~~~~,:::~~~)~~\young(~~~~~~,:::~~~,::::~~~,::::~~~,::::~~)~~\young(~~~~~~,:::~~~,::::~~~,::::~~~~~,::::~~~~~,:::::::~~,::::::::~)};
\node (2) at (7,0) {\tiny\young(~~~~)~~\young(~~~~~~,:::~~~)~~\young(~~~~~~,:::~~~,::::~~~,::::~~~,::::~)~~\young(~~~~~~,:::~~~,::::~~~,::::~~~~~,::::~~~~~,:::::::~~,::::::::~,::::::::~)};
\node (3) at (10.5,0) {\tiny\young(~~~~)~~\young(~~~~~~,:::~~~)~~\young(~~~~~~,:::~~~,::::~~~,::::~~~)~~\young(~~~~~~,:::~~~,::::~~~,::::~~~~~,::::~~~~~,:::::::~~,::::::::~,::::::::~,::::::::~)};
\node (4) at (1,-1.5) {\tiny\young(~~~~)~~\young(~~~~~~,:::~~)~~\young(~~~~~~,:::~~~,::::~~~,::::~~~~,::::~~~)~~\young(~~~~~~,:::~~~,::::~~~,::::~~~~~,::::~~~~~,:::::::~~)};
\node (5) at (4.5,-1.5) {\tiny\young(~~~~)~~\young(~~~~~~,:::~~)~~\young(~~~~~~,:::~~~,::::~~~,::::~~~~,::::~~)~~\young(~~~~~~,:::~~~,::::~~~,::::~~~~~,::::~~~~~,:::::::~~,::::::::~)};
\node (6) at (8,-1.5) {\tiny\young(~~~~)~~\young(~~~~~~,:::~~)~~\young(~~~~~~,:::~~~,::::~~~,::::~~~~,::::~)~~\young(~~~~~~,:::~~~,::::~~~,::::~~~~~,::::~~~~~,:::::::~~,::::::::~,::::::::~)};
\node (7) at (11.5,-1.5) {\tiny\young(~~~~)~~\young(~~~~~~,:::~~)~~\young(~~~~~~,:::~~~,::::~~~,::::~~~~)~~\young(~~~~~~,:::~~~,::::~~~,::::~~~~~,::::~~~~~,:::::::~~,::::::::~,::::::::~,::::::::~)};
\node (8) at (2,-3) {\tiny\young(~~~~)~~\young(~~~~~,:::~~)~~\young(~~~~~~,:::~~~,::::~~~,::::~~~~~,::::~~~)~~\young(~~~~~~,:::~~~,::::~~~,::::~~~~~,::::~~~~~,:::::::~~)};
\node (9) at (5.5,-3) {\tiny\young(~~~~)~~\young(~~~~~,:::~~)~~\young(~~~~~~,:::~~~,::::~~~,::::~~~~~,::::~~)~~\young(~~~~~~,:::~~~,::::~~~,::::~~~~~,::::~~~~~,:::::::~~,::::::::~)};
\node (10) at (9,-3) {\tiny\young(~~~~)~~\young(~~~~~,:::~~)~~\young(~~~~~~,:::~~~,::::~~~,::::~~~~~,::::~)~~\young(~~~~~~,:::~~~,::::~~~,::::~~~~~,::::~~~~~,:::::::~~,::::::::~,::::::::~)};
\node (11) at (12.5,-3) {\tiny\young(~~~~)~~\young(~~~~~,:::~~)~~\young(~~~~~~,:::~~~,::::~~~,::::~~~~~)~~\young(~~~~~~,:::~~~,::::~~~,::::~~~~~,::::~~~~~,:::::::~~,::::::::~,::::::::~,::::::::~)};
\draw 
(0)--(1)--(2)--(3)
(4)--(5)--(6)--(7)
(8)--(9)--(10)--(11)
(0)--(4)--(8)
(1)--(5)--(9)
(2)--(6)--(10)
(3)--(7)--(11);
\node (-1) at (-1.5,-3) {(1)};
\draw[dashed]
(4)--(-1);

\node (22) at ($(0)+(0,-6.00)$) {\tiny\young(~~)~~\young(~~~~~~,:::~~~,::::~~)~~\young(~~~~~~,:::~~~,::::~~~,::::~~~,::::~~~)~~\young(~~~~~~,:::~~~,::::~~~,::::~~~~~,::::~~~~~,:::::::~~)};
\node (23) at ($(1)+(0,-6.00)$){\tiny\young(~~)~~\young(~~~~~~,:::~~~,::::~~)~~\young(~~~~~~,:::~~~,::::~~~,::::~~~,::::~~)~~\young(~~~~~~,:::~~~,::::~~~,::::~~~~~,::::~~~~~,:::::::~~,::::::::~)};
\node (24) at ($(2)+(0,-6.00)$){\tiny\young(~~)~~\young(~~~~~~,:::~~~,::::~~)~~\young(~~~~~~,:::~~~,::::~~~,::::~~~,::::~)~~\young(~~~~~~,:::~~~,::::~~~,::::~~~~~,::::~~~~~,:::::::~~,::::::::~,::::::::~)};
\node (25) at ($(3)+(0,-6.00)$){\tiny\young(~~)~~\young(~~~~~~,:::~~~,::::~~)~~\young(~~~~~~,:::~~~,::::~~~,::::~~~)~~\young(~~~~~~,:::~~~,::::~~~,::::~~~~~,::::~~~~~,:::::::~~,::::::::~,::::::::~,::::::::~)};
\node (26) at ($(4)+(0,-6.00)$){\tiny\young(~~)~~\young(~~~~~~,:::~~,::::~)~~\young(~~~~~~,:::~~~,::::~~~,::::~~~~,::::~~~~)~~\young(~~~~~~,:::~~~,::::~~~,::::~~~~~,::::~~~~~,:::::::~~)};
\node (30) at ($(8)+(0,-6.00)$){\tiny\young(~~)~~\young(~~~~~,:::~~,::::~)~~\young(~~~~~~,:::~~~,::::~~~,::::~~~~~,::::~~~~)~~\young(~~~~~~,:::~~~,::::~~~,::::~~~~~,::::~~~~~,:::::::~~)};
\draw 
(22)--(23)--(24)--(25)
(22)--(26)--(30);
\foreach \x in {0,...,4,8}
\draw (\x)--({\the\numexpr\x+22\relax});
\node (-2) at (-1.5,-9) {(2)};
\draw[dashed]
(26)--(-2);

\node (44) at ($(22)+(0,-6.00)$) {\young(~)~~\young(~~~~~~,:::~~~,::::~,::::~)~~\young(~~~~~~,:::~~~,::::~~~,::::~~~,::::~~~)~~\young(~~~~~~,:::~~~,::::~~~,::::~~~~~,::::~~~~~,:::::::~~)};
\node (45) at ($(23)+(0,-6.00)$) {\young(~)~~\young(~~~~~~,:::~~~,::::~,::::~)~~\young(~~~~~~,:::~~~,::::~~~,::::~~~,::::~~)~~\young(~~~~~~,:::~~~,::::~~~,::::~~~~~,::::~~~~~,:::::::~~,::::::::~)};
\node (46) at ($(24)+(0,-6.00)$) {\young(~)~~\young(~~~~~~,:::~~~,::::~,::::~)~~\young(~~~~~~,:::~~~,::::~~~,::::~~~,::::~)~~\young(~~~~~~,:::~~~,::::~~~,::::~~~~~,::::~~~~~,:::::::~~,::::::::~,::::::::~)};
\node (47) at ($(25)+(0,-6.00)$) {\young(~)~~\young(~~~~~~,:::~~~,::::~,::::~)~~\young(~~~~~~,:::~~~,::::~~~,::::~~~)~~\young(~~~~~~,:::~~~,::::~~~,::::~~~~~,::::~~~~~,:::::::~~,::::::::~,::::::::~,::::::::~)};
\node (48) at ($(26)+(0,-6.00)$) {\young(~)~~\young(~~~~~~,:::~~,::::~,::::~)~~\young(~~~~~~,:::~~~,::::~~~,::::~~~~,::::~~~)~~\young(~~~~~~,:::~~~,::::~~~,::::~~~~~,::::~~~~~,:::::::~~)};
\node (52) at ($(30)+(0,-6.00)$) {\young(~)~~\young(~~~~~,:::~~,::::~,::::~)~~\young(~~~~~~,:::~~~,::::~~~,::::~~~~~,::::~~~)~~\young(~~~~~~,:::~~~,::::~~~,::::~~~~~,::::~~~~~,:::::::~~)};
\draw 
(44)--(45)--(46)--(47)
(44)--(48)--(52);
\foreach \x in {22,...,26,30}
\draw (\x)--({\the\numexpr\x+22\relax});
\node (-3) at (-1.5,-15) {(3)};
\draw[dashed]
(48)--(-3);

\node (66) at ($(44)+(0,-6.00)$) {$\varnothing$~~\young(~~~~~~,:::~~~,::::~,::::~,::::~)~~\young(~~~~~~,:::~~~,::::~~~,::::~~~,::::~~~)~~\young(~~~~~~,:::~~~,::::~~~,::::~~~~~,::::~~~~~,:::::::~~)};
\node (67) at ($(45)+(0,-6.00)$) {$\varnothing$~~\young(~~~~~~,:::~~~,::::~,::::~,::::~)~~\young(~~~~~~,:::~~~,::::~~~,::::~~~,::::~~)~~\young(~~~~~~,:::~~~,::::~~~,::::~~~~~,::::~~~~~,:::::::~~,::::::::~)};
\node (68) at ($(46)+(0,-6.00)$) {$\varnothing$~~\young(~~~~~~,:::~~~,::::~,::::~,::::~)~~\young(~~~~~~,:::~~~,::::~~~,::::~~~,::::~)~~\young(~~~~~~,:::~~~,::::~~~,::::~~~~~,::::~~~~~,:::::::~~,::::::::~,::::::::~)};
\node (69) at ($(47)+(0,-6.00)$) {$\varnothing$~~\young(~~~~~~,:::~~~,::::~,::::~,::::~)~~\young(~~~~~~,:::~~~,::::~~~,::::~~~)~~\young(~~~~~~,:::~~~,::::~~~,::::~~~~~,::::~~~~~,:::::::~~,::::::::~,::::::::~,::::::::~)};
\node (70) at ($(48)+(0,-6.00)$) {$\varnothing$~~\young(~~~~~~,:::~~,::::~,::::~,::::~)~~\young(~~~~~~,:::~~~,::::~~~,::::~~~~,::::~~~)~~\young(~~~~~~,:::~~~,::::~~~,::::~~~~~,::::~~~~~,:::::::~~)};
\node (74) at ($(52)+(0,-6.00)$) {$\varnothing$~~\young(~~~~~,:::~~,::::~,::::~,::::~)~~\young(~~~~~~,:::~~~,::::~~~,::::~~~~~,::::~~~)~~\young(~~~~~~,:::~~~,::::~~~,::::~~~~~,::::~~~~~,:::::::~~)};
\draw
(66)--(67)--(68)--(69)
(66)--(70)--(74);
\foreach \x in {44,...,48,52}
\draw (\x)--({\the\numexpr\x+22\relax});
\node (-4) at (-1.5,-21) {(4)};
\draw[dashed]
(70)--(-4);
\end{tikzpicture}

\begin{tikzpicture}[scale=2.25,style=very thick,baseline=0.25em]
\node (12) at (0,1) {\tiny\young(~~~~)~~\young(~~~~~~,:::~)~~\young(~~~~~~,:::~~~,::::~~~,::::~~~~,::::~~~~)~~\young(~~~~~~,:::~~~,::::~~~,::::~~~~~,::::~~~~~,:::::::~~)};
\node (13) at (.5,0) {\tiny\young(~~~~)~~\young(~~~~~~)~~\young(~~~~~~,:::~~~,::::~~~,::::~~~~,::::~~~~,:::::::~)~~\young(~~~~~~,:::~~~,::::~~~,::::~~~~~,::::~~~~~,:::::::~~)}; 
\node (14) at (1,1.5) {\tiny\young(~~~~)~~\young(~~~~~,:::~)~~\young(~~~~~~,:::~~~,::::~~~,::::~~~~~,::::~~~~)~~\young(~~~~~~,:::~~~,::::~~~,::::~~~~~,::::~~~~~,:::::::~~)};
\node (15) at (1.5,0.5) {\tiny\young(~~~~)~~\young(~~~~~)~~\young(~~~~~~,:::~~~,::::~~~,::::~~~~~,::::~~~~,:::::::~)~~\young(~~~~~~,:::~~~,::::~~~,::::~~~~~,::::~~~~~,:::::::~~)}; 
\node (16) at (2,2) {\tiny\young(~~~~)~~\young(~~~~,:::~)~~\young(~~~~~~,:::~~~,::::~~~,::::~~~~~,::::~~~~~)~~\young(~~~~~~,:::~~~,::::~~~,::::~~~~~,::::~~~~~,:::::::~~)};
\node (17) at (2.5,1) {\tiny\young(~~~~)~~\young(~~~~)~~\young(~~~~~~,:::~~~,::::~~~,::::~~~~~,::::~~~~~,:::::::~)~~\young(~~~~~~,:::~~~,::::~~~,::::~~~~~,::::~~~~~,:::::::~~)};
\node (18) at (3.5,1.5) {\tiny\young(~~~~)~~\young(~~~)~~\young(~~~~~~,:::~~~,::::~~~,::::~~~~~,::::~~~~~,:::::::~~)~~\young(~~~~~~,:::~~~,::::~~~,::::~~~~~,::::~~~~~,:::::::~~)};
\node (19) at (4,1) {\tiny\young(~~~~)~~\young(~~)~~\young(~~~~~~,:::~~~,::::~~~,::::~~~~~,::::~~~~~,:::::::~~,::::::::~)~~\young(~~~~~~,:::~~~,::::~~~,::::~~~~~,::::~~~~~,:::::::~~)};
\node (20) at (5,1.5) {\tiny\young(~~~~)~~\young(~)~~\young(~~~~~~,:::~~~,::::~~~,::::~~~~~,::::~~~~~,:::::::~~,::::::::~,::::::::~)~~\young(~~~~~~,:::~~~,::::~~~,::::~~~~~,::::~~~~~,:::::::~~)};
\node (21) at (5.5,1) {\tiny\young(~~~~)~~$\varnothing$~~\young(~~~~~~,:::~~~,::::~~~,::::~~~~~,::::~~~~~,:::::::~~,::::::::~,::::::::~,::::::::~)~~\young(~~~~~~,:::~~~,::::~~~,::::~~~~~,::::~~~~~,:::::::~~)};
\draw 
(12)--(14)--(16)--(17)--(18)--(19)--(20)--(21)
(12)--(13)--(15)--(17)
(14)--(15);
\node (-1) at (-.5,1.5) {(1)};
\draw[dashed] (12)--(-1);

\node(34) at ($(12)+(0,-2)$) {\young(~~)~~\young(~~~~~~,:::~~)~~\young(~~~~~~,:::~~~,::::~~~,::::~~~~,::::~~~~)~~\young(~~~~~~,:::~~~,::::~~~,::::~~~~~,::::~~~~~,:::::::~~,::::::::~)};
\node (35) at ($(14)+(0,-2)$) {\young(~~)~~\young(~~~~~,:::~~)~~\young(~~~~~~,:::~~~,::::~~~,::::~~~~~,::::~~~~)~~\young(~~~~~~,:::~~~,::::~~~,::::~~~~~,::::~~~~~,:::::::~~,::::::::~)};

\draw 
(34)--(35);
\node (-2) at (-.5,-.5) {(2)};
\draw[dashed] (34)--(-2);
\draw
(12)--(34)
(14)--(35);

\node (56) at ($(34)+(0,-2)$) {\young(~)~~\young(~~~~~~,:::~~)~~\young(~~~~~~,:::~~~,::::~~~,::::~~~~,::::~~~~)~~\young(~~~~~~,:::~~~,::::~~~,::::~~~~~,::::~~~~~,:::::::~~,::::::::~,::::::::~)};
\node (57) at ($(35)+(0,-2)$) {\young(~)~~\young(~~~~~,:::~~)~~\young(~~~~~~,:::~~~,::::~~~,::::~~~~~,::::~~~~)~~\young(~~~~~~,:::~~~,::::~~~,::::~~~~~,::::~~~~~,:::::::~~,::::::::~,::::::::~)};
\draw 
(56)--(57);
\node (-3) at (-.5,-2.5) {(3)};
\draw[dashed] (56)--(-3);
\foreach \x in {34,35}
\draw (\x)--({\the\numexpr\x+22\relax});

\node (78) at ($(56)+(0,-2)$) {$\varnothing$~~\young(~~~~~~,:::~~)~~\young(~~~~~~,:::~~~,::::~~~,::::~~~~,::::~~~~)~~\young(~~~~~~,:::~~~,::::~~~,::::~~~~~,::::~~~~~,:::::::~~,::::::::~,::::::::~,::::::::~)};
\node (79) at ($(57)+(0,-2)$) {$\varnothing$~~\young(~~~~~,:::~~)~~\young(~~~~~~,:::~~~,::::~~~,::::~~~~~,::::~~~~)~~\young(~~~~~~,:::~~~,::::~~~,::::~~~~~,::::~~~~~,:::::::~~,::::::::~,::::::::~,::::::::~)};
\draw 
(78)--(79);
\node (-4) at (-.5,-4.5) {(4)};
\draw[dashed] (78)--(-4);
\foreach \x in {56,57}
\draw (\x)--({\the\numexpr\x+22\relax});

\end{tikzpicture}

\ali{
\deis[4](\cD_4) &= 
p_{\young(~~~~)}
(p_{\young(~~~~~~,:::~~~)}
(p_{\young(~~~~~~,:::~~~,::::~~~,::::~~~,::::~~~)}p_{\young(~~~~~~,:::~~~,::::~~~,::::~~~~~,::::~~~~~,:::::::~~)}
-p_{\young(~~~~~~,:::~~~,::::~~~,::::~~~,::::~~)}p_{\young(~~~~~~,:::~~~,::::~~~,::::~~~~~,::::~~~~~,:::::::~~,::::::::~)}
+p_{\young(~~~~~~,:::~~~,::::~~~,::::~~~,::::~)}p_{\young(~~~~~~,:::~~~,::::~~~,::::~~~~~,::::~~~~~,:::::::~~,::::::::~,::::::::~)}
-p_{\young(~~~~~~,:::~~~,::::~~~,::::~~~)}p_{\young(~~~~~~,:::~~~,::::~~~,::::~~~~~,::::~~~~~,:::::::~~,::::::::~,::::::::~,::::::::~)})\\
&\quad+p_{\young(~~~~~~,:::~~)}
(-p_{\young(~~~~~~,:::~~~,::::~~~,::::~~~~,::::~~~)}p_{\young(~~~~~~,:::~~~,::::~~~,::::~~~~~,::::~~~~~,:::::::~~)}
+p_{\young(~~~~~~,:::~~~,::::~~~,::::~~~~,::::~~)}p_{\young(~~~~~~,:::~~~,::::~~~,::::~~~~~,::::~~~~~,:::::::~~,::::::::~)}
-p_{\young(~~~~~~,:::~~~,::::~~~,::::~~~~,::::~)}p_{\young(~~~~~~,:::~~~,::::~~~,::::~~~~~,::::~~~~~,:::::::~~,::::::::~,::::::::~)}
+p_{\young(~~~~~~,:::~~~,::::~~~,::::~~~~)}p_{\young(~~~~~~,:::~~~,::::~~~,::::~~~~~,::::~~~~~,:::::::~~,::::::::~,::::::::~,::::::::~)})\\
&\quad+p_{\young(~~~~~,:::~~)}
(p_{\young(~~~~~~,:::~~~,::::~~~,::::~~~~~,::::~~~)}p_{\young(~~~~~~,:::~~~,::::~~~,::::~~~~~,::::~~~~~,:::::::~~)}
-p_{\young(~~~~~~,:::~~~,::::~~~,::::~~~~~,::::~~)}p_{\young(~~~~~~,:::~~~,::::~~~,::::~~~~~,::::~~~~~,:::::::~~,::::::::~)}
+p_{\young(~~~~~~,:::~~~,::::~~~,::::~~~~~,::::~)}p_{\young(~~~~~~,:::~~~,::::~~~,::::~~~~~,::::~~~~~,:::::::~~,::::::::~,::::::::~)}
-p_{\young(~~~~~~,:::~~~,::::~~~,::::~~~~~)}p_{\young(~~~~~~,:::~~~,::::~~~,::::~~~~~,::::~~~~~,:::::::~~,::::::::~,::::::::~,::::::::~)})\\
&\quad+(p_{\young(~~~~~~,:::~)}p_{\young(~~~~~~,:::~~~,::::~~~,::::~~~~,::::~~~~)}
-p_{\young(~~~~~,:::~)}p_{\young(~~~~~~,:::~~~,::::~~~,::::~~~~~,::::~~~~)}
-p_{\young(~~~~~~)}p_{\young(~~~~~~,:::~~~,::::~~~,::::~~~~,::::~~~~,:::::::~)}
+p_{\young(~~~~~)}p_{\young(~~~~~~,:::~~~,::::~~~,::::~~~~~,::::~~~~,:::::::~)}
+p_{\young(~~~~,:::~)}p_{\young(~~~~~~,:::~~~,::::~~~,::::~~~~~,::::~~~~~)}\\
&\qquad-p_{\young(~~~~)}p_{\young(~~~~~~,:::~~~,::::~~~,::::~~~~~,::::~~~~~,:::::::~)}
+p_{\young(~~~)}p_{\young(~~~~~~,:::~~~,::::~~~,::::~~~~~,::::~~~~~,:::::::~~)}
-p_{\young(~~)}p_{\young(~~~~~~,:::~~~,::::~~~,::::~~~~~,::::~~~~~,:::::::~~,::::::::~)}
+p_{\young(~)}p_{\young(~~~~~~,:::~~~,::::~~~,::::~~~~~,::::~~~~~,:::::::~~,::::::::~,::::::::~)}
-p_{\varnothing}p_{\young(~~~~~~,:::~~~,::::~~~,::::~~~~~,::::~~~~~,:::::::~~,::::::::~,::::::::~,::::::::~)})
p_{\young(~~~~~~,:::~~~,::::~~~,::::~~~~~,::::~~~~~,:::::::~~)})\\
&+p_{\young(~~)}
(p_{\young(~~~~~~,:::~~~,::::~~)}
(-p_{\young(~~~~~~,:::~~~,::::~~~,::::~~~,::::~~~)}p_{\young(~~~~~~,:::~~~,::::~~~,::::~~~~~,::::~~~~~,:::::::~~)}
+p_{\young(~~~~~~,:::~~~,::::~~~,::::~~~,::::~~)}p_{\young(~~~~~~,:::~~~,::::~~~,::::~~~~~,::::~~~~~,:::::::~~,::::::::~)}
-p_{\young(~~~~~~,:::~~~,::::~~~,::::~~~,::::~)}p_{\young(~~~~~~,:::~~~,::::~~~,::::~~~~~,::::~~~~~,:::::::~~,::::::::~,::::::::~)}
+p_{\young(~~~~~~,:::~~~,::::~~~,::::~~~)}p_{\young(~~~~~~,:::~~~,::::~~~,::::~~~~~,::::~~~~~,:::::::~~,::::::::~,::::::::~,::::::::~)})\\
&\quad+\Bigl[p_{\young(~~~~~~,:::~~,::::~)}
(p_{\young(~~~~~~,:::~~~,::::~~~,::::~~~~,::::~~~~)}p_{\young(~~~~~~,:::~~~,::::~~~,::::~~~~~,::::~~~~~,:::::::~~)})\Bigr] +\Bigl[p_{\young(~~~~~,:::~~,::::~)}(-p_{\young(~~~~~~,:::~~~,::::~~~,::::~~~~~,::::~~~~)}p_{\young(~~~~~~,:::~~~,::::~~~,::::~~~~~,::::~~~~~,:::::::~~)})\Bigr]\\
&\quad+(-p_{\young(~~~~~~,:::~~)}p_{\young(~~~~~~,:::~~~,::::~~~,::::~~~~,::::~~~~)}
+p_{\young(~~~~~,:::~~)}p_{\young(~~~~~~,:::~~~,::::~~~,::::~~~~~,::::~~~~)}
)
p_{\young(~~~~~~,:::~~~,::::~~~,::::~~~~~,::::~~~~~,:::::::~~,::::::::~)})\\
&+p_{\young(~)}
(p_{\young(~~~~~~,:::~~~,::::~~,::::~)}
(p_{\young(~~~~~~,:::~~~,::::~~~,::::~~~,::::~~~)}p_{\young(~~~~~~,:::~~~,::::~~~,::::~~~~~,::::~~~~~,:::::::~~)}
-p_{\young(~~~~~~,:::~~~,::::~~~,::::~~~,::::~~)}p_{\young(~~~~~~,:::~~~,::::~~~,::::~~~~~,::::~~~~~,:::::::~~,::::::::~)}
+p_{\young(~~~~~~,:::~~~,::::~~~,::::~~~,::::~)}p_{\young(~~~~~~,:::~~~,::::~~~,::::~~~~~,::::~~~~~,:::::::~~,::::::::~,::::::::~)}
-p_{\young(~~~~~~,:::~~~,::::~~~,::::~~~)}p_{\young(~~~~~~,:::~~~,::::~~~,::::~~~~~,::::~~~~~,:::::::~~,::::::::~,::::::::~,::::::::~)})\\
&\quad+\Bigl[p_{\young(~~~~~~,:::~~,::::~,::::~)}
(-p_{\young(~~~~~~,:::~~~,::::~~~,::::~~~~,::::~~~~)}p_{\young(~~~~~~,:::~~~,::::~~~,::::~~~~~,::::~~~~~,:::::::~~)})\Bigr]+\Bigl[p_{\young(~~~~~,:::~~,::::~,::::~)}
(p_{\young(~~~~~~,:::~~~,::::~~~,::::~~~~~,::::~~~~)}p_{\young(~~~~~~,:::~~~,::::~~~,::::~~~~~,::::~~~~~,:::::::~~)})\Bigr]\\
&\quad+(p_{\young(~~~~~~,:::~~)}p_{\young(~~~~~~,:::~~~,::::~~~,::::~~~~,::::~~~~)}
-p_{\young(~~~~~,:::~~)}p_{\young(~~~~~~,:::~~~,::::~~~,::::~~~~~,::::~~~~)})
p_{\young(~~~~~~,:::~~~,::::~~~,::::~~~~~,::::~~~~~,:::::::~~,::::::::~,::::::::~)})\\
&+p_{\varnothing}
(p_{\young(~~~~~~,:::~~~,::::~~,::::~,::::~)}
(-p_{\young(~~~~~~,:::~~~,::::~~~,::::~~~,::::~~~)}p_{\young(~~~~~~,:::~~~,::::~~~,::::~~~~~,::::~~~~~,:::::::~~)}
+p_{\young(~~~~~~,:::~~~,::::~~~,::::~~~,::::~~)}p_{\young(~~~~~~,:::~~~,::::~~~,::::~~~~~,::::~~~~~,:::::::~~,::::::::~)}
-p_{\young(~~~~~~,:::~~~,::::~~~,::::~~~,::::~)}p_{\young(~~~~~~,:::~~~,::::~~~,::::~~~~~,::::~~~~~,:::::::~~,::::::::~,::::::::~,::::::::~)}
+p_{\young(~~~~~~,:::~~~,::::~~~,::::~~~)}p_{\young(~~~~~~,:::~~~,::::~~~,::::~~~~~,::::~~~~~,:::::::~~,::::::::~,::::::::~)})\\
&\quad+\Bigl[p_{\young(~~~~~~,:::~~,::::~,::::~,::::~)}
(p_{\young(~~~~~~,:::~~~,::::~~~,::::~~~~,::::~~~~)}p_{\young(~~~~~~,:::~~~,::::~~~,::::~~~~~,::::~~~~~,:::::::~~,::::::::~,::::::::~,::::::::~)})\Bigr]+\Bigl[p_{\young(~~~~~,:::~~,::::~,::::~,::::~)}
(-p_{\young(~~~~~~,:::~~~,::::~~~,::::~~~~~,::::~~~~)}p_{\young(~~~~~~,:::~~~,::::~~~,::::~~~~~,::::~~~~~,:::::::~~,::::::::~,::::::::~,::::::::~)})\Bigr]\\
&\quad+(-p_{\young(~~~~~~,:::~~)}p_{\young(~~~~~~,:::~~~,::::~~~,::::~~~~,::::~~~~)}
+p_{\young(~~~~~,:::~~)}p_{\young(~~~~~~,:::~~~,::::~~~,::::~~~~~,::::~~~~)}
)
p_{\young(~~~~~~,:::~~~,::::~~~,::::~~~~~,::::~~~~~,:::::::~~,::::::::~,::::::::~,::::::::~)})
}
\section{Relation to previous type-dependent LG models}\label{app:type_dependent_LGs}
We claimed that our models naturally generalize the previous models given in \cite{Marsh_Rietsch_Grassmannians, Pech_Rietsch_Odd_Quadrics, Pech_Rietsch_Williams_Quadrics, Pech_Rietsch_Lagrangian_Grassmannians, Spacek_Wang_exceptional, Spacek_Wang_OG} and we now describe how to recover each of these type-dependent models from our formalism, using the Dynkin diagram combinatorics as calculated in the proof of Lemma \ref{lem:special_weyl_elt_action}; for representative examples of the minuscule posets $\minposet$, see Example \ref{ex:list_of_minposets}.

\addtocontents{toc}{\SkipTocEntry}
\subsection*{Grassmannians}
In \cite{Marsh_Rietsch_Grassmannians}, an LG model is defined for $\Gr(n-k,n)=\SL_n/\P_{n-k}=\LGA_n^{\SC}/\P_{n-k}$ using the standard Pl\"ucker embedding of $\cmX$ into $\PS\bigl(\bigwedge^k(\C^n)^*\bigr)$ with the resulting Pl\"ucker coordinates labeled by Young diagrams inside the $(n-k)\times k$-rectangle; it is straightforward to see that this labeling agrees with our labeling by order ideals after forgetting the simple reflection labels. The LG model in \cite{Marsh_Rietsch_Grassmannians} is defined using the Young diagrams $\mu_i$ and $\hat\mu_i$, defined as follows: 
\[
\mu_i = \begin{cases}
    \text{the first $i$ rows,} & \text{for $i\in\{1,\ldots,n-k\}$,} \\
    \text{the first $n-i$ columns,} & \text{for $i\in\{n-k,\ldots,n\}$.}
\end{cases}
\]
(Note that $\mu_n=\varnothing$ and that both descriptions yield the full $(n-k)\times k$-rectangle for $\mu_{n-k}$.) On the other hand, $\hat\mu_i$ is the unique diagram that can be obtained by adding a box to $\mu_i$ for $i\neq n-k$, and $\hat\mu_{n-k}$ is the $(n-k-1)\times(k-1)$-subrectangle. With this, the superpotential is defined as
\[
\pot_{\text{MR}} = \sum_{i\neq n-k} \frac{\p_{\hat\mu_i}}{\p_{\mu_i}} + q\frac{\p_{\hat\mu_{n-k}}}{\p_{\mu_{n-k}}}.
\]
In our formalism, we find that $\minposet$ is the $(n-k)\times k$-rectangle with the bottom-left element labeled $s_1$, the top-left element labeled $s_{n-k}$ and the top-right element labeled $s_n$ (labels increasing over diagonals from left to right). In the proof of Lemma \ref{lem:special_weyl_elt_action}, we see that Grassmannians always have $\cis=1$ for any $\is\in[n]$. (Note that we considered $\Gr(k,n)$ instead of $\Gr(n-k,n)$.) Moreover, we have that $i_1=n-k-\is$ for $\is\in\{1,\ldots,n-k-1\}$ and $i_1=n-(\is-n+k)=2n-k-\is$ for $\is\in\{n-k+1,\ldots,n\}$, so we find that $\idealij[1]$ consists of of the first $i_1$ rows in the first case and the first $i_1$ columns in the second case. Hence we find $\idealij[1]=\mu_{i_1}$ for all $\is\neq n-k$ (forgetting the simple reflection labeling), and therefore we conclude $\deis(\idealij[1])=\hat\mu_{i_1}$ as well. For $\idealPrime$, we note that $\idealPPrime$ is the union of the first column and first row---being the maximal order ideal containing exactly one occurrence of an element labeled $s_{n-k}$---so its complement $\idealPrime$ is the $(n-k-1)\times(k-1)$-rectangle, i.e.~$\idealPrime=\hat\mu_{n-k}$. Together with $\minposet=\mu_{n-k}$, we conclude that the potential $\pot_\can$ is identically equal to $\pot_{\text{MR}}$.

\addtocontents{toc}{\SkipTocEntry}
\subsection*{Quadrics}
In \cite{Pech_Rietsch_Williams_Quadrics}, LG models for odd and even quadrics are given using the (generalized) Pl\"ucker embeddings of $\cmX$. The models for odd and even quadrics are fairly similar, but since $\cmX$ is isomorphic to a projective plane for odd quadrics whereas for even quadrics it is isomorphic to another even quadric, the labeling of Pl\"ucker coordinates in each case is slightly different.

For the odd quadric $Q_{2m-1}=\SO_{2m+1}/\P_1=\LGB_m^\SC/\P_1$, the coordinates on $\cmX=\CP^{2m-1}=\dP_1\backslash\LGC_m^\AD$ are labeled $\p_i$ for $i\in\{0,\ldots,2m-1\}$, and it is straightforward to see that $\p_i$ coincides with the Pl\"ucker coordinate $\p_{[i]}$ labeled by the order ideal $[i]$ consisting of $i$ elements, where the minuscule poset $\minposet=[2m-1]$ is the row of $2m-1$ boxes labeled $s_1,\ldots,s_{m-1},s_m,s_{m-1},\ldots,s_1$. The potential is given there as
\[
\pot_{\text{PRW},\text{odd}} = \frac{\p_1}{\p_0} + \sum_{j=1}^{m-1}\frac{\p_{j+1}\p_{2m-1-j}}{\cD_{\text{PRW},j}} + q\frac{\p_1}{\p_{2m-1}},
\qwhere
\cD_{\text{PRW},j} = \sum_{i=0}^j (-1)^i \p_{j-i}\p_{2m-1-j+i}.
\]
In our formalism, we see that $\cis=2$ except for the minuscule vertex $\is=1$; hence all terms are ratios of quadratic homogeneous polynomials in the Pl\"ucker coordinate, except for the quantum term and the term $\p_{\yng(1)}/\p_\varnothing = \p_1/\p_0$. Since $\idealPPrime$ is the maximal order ideal containing a single occurrence of the label $s_1$, we see that it has a single box as complement, so that the quantum terms agree as well: $q\p_{\yng(1)}/\p_{\minposet} = q\p_1/\p_{2m-1}$.

For the remaining terms ($\is\in\{2,\ldots,m\}$), we start by noting that $i_1=\is$ and $i_2=\is-1$. Hence, we find that $\idealij[1]=[\is-1]$ while $\idealij[2]=[2m-1-(\is-1)]=[2m-\is]$. Inspecting the labels, we see that from $(\idealij[1],\idealij[2])=([\is-1],[2m-\is])$ we can move a single element at a time until we reach $(\varnothing,\minposet)=([0],[2m-1])$; hence, we see
\[
\cD_{\is} = \sum_{j=0}^{\is-1} (-1)^j\p_{[\is-1-j]}\p_{[2m-\is+j]} = \cD_{\text{PRW},\is-1}.
\]
Next, it is clear that $\deis([j])=0$ for all $j\notin\{\is-1,2m-\is-1\}$, the only nonzero action of $\deis$ on $\cD_\is$ is on the first factor of the first term, i.e.~
\[
\deis(\cD_\is) = \deis(\p_{\idealij[1]})\p_{\idealij[2]} = \p_{[\is]}\p_{[2m-\is]},
\]
and hence we find that $\pot_\can=\pot_{\text{PRW},\text{odd}}$ for odd quadrics.

For the even quadric $Q_{2m-2}=\Spin_{2m}/\P_1=\LGD_m^\SC/\P_1$, the Pl\"ucker coordinates $(\p_0:\ldots:\p_{m-1}:\p_{m-1}':\p_m:\ldots:\p_{2m-2})$ on $\cmX=\dP_1\backslash\PSO_{2m}=\dP_1\backslash\LGD_m^\AD$ are defined using the same method, so we only need to identify the minimal coset representatives as defined in \cite[Section 3.1]{Pech_Rietsch_Williams_Quadrics} with the corresponding order ideals. Recalling Example \ref{ex:list_of_minposets}, $\minposet$ consist of $2m-2$ elements: it starts with a chain of $m-2$ elements labeled $s_1,\ldots,s_{m-2}$; the last element is covered by two elements labeled $s_{m-1}$ and $s_{m}$ respectively; both of these are covered by an element labeled $s_{m-2}$ followed by a chain of $m-3$ elements labeled $s_{m-3},\ldots,s_1$. With this in mind, define for $i\in\{0,\ldots,m-2\}$ the order ideal $[i]$ as the chain of the first $i$ elements, define $[m-1]$ to be the chain $s_1<\ldots<s_{m-2}<s_{m-1}$ and $[m-1]'$ as the chain $s_1<\ldots<s_{m-2}<s_m$, while we define $[m]=[m-1]\cup[m-1]'$, and finally for $i\in\{m+1,\ldots,2m-2\}$ we define $[i]$ to be the order ideal $s_1<\ldots<s_{m-2}<s_{m-1},s_m<s_{m-1}<\ldots<s_{2m-1-i}$ consisting of $i$ elements. This convention will let $\p_i=\p_{[i]}$ as well as $\p_{m-1}'=\p_{[m-1]'}$.

Now, comparing the potentials, the one in \cite{Pech_Rietsch_Williams_Quadrics} is given as
\[
\pot_{\text{PRW},\text{even}} = \frac{\p_1}{\p_0}+\frac{\p_m}{\p_{m-1}}+\frac{\p_m}{\p_{m-1}'} + \sum_{j=1}^{m-3}\frac{\p_{j+1}\p_{2m-2-j}}{\cD_{\text{PRW},j}} + q\frac{\p_1}{\p_{2m-2}},
\]
where $\cD_{\text{PRW},j}=\sum_{i=0}^j (-1)^i \p_{j-i}\p_{2m-2-j+i}$ is the ($2m-2$)-dimensional analogue. In our formalism, we find $\cis=1$ for $\is\in\{1,m-1,m\}$ and $\cis=2$ otherwise. We directly see $\p_{[1]}/\p_{[0]}=\p_1/\p_0$ and the quantum term is analogous to the case of the odd quadric. Recalling the proof of Lemma \ref{lem:special_weyl_elt_action}, we see that $\is\in\{m-1,m\}$ gives $i_1\in\{m-1,m\}$ (depending on $m-1$ even resp.~odd) so we find that $\idealij[1]$ is either $[m-1]$ or $[m-1]'$, both of which have $\deis([m-1])=\deis([m-1]')=[m]$ (note that $i_1$ depends on $\is$, so the equality contains two distinct actions of $\deis$). Hence, we conclude $\pot_{m-1}+\pot_m = \p_{[m]}/\p_{[m-1]} + \p_{[m]}/\p_{[m-1]'} = \p_m/\p_{m-1} + \p_m/\p_{m-1}'$.

For $\is\notin\{1,m-1,m\}$, we find $i_1=\is$ and $i_2=\is-1$, so that $\idealij[1]=[\is-1]$ and $\idealij[2]=[2m-2-(\is-1)]=[2m-\is-1]$. Concluding that $\cD_\is=\cD_{\text{PRW},\is-1}$ and $\deis(\cD_\is)=\p_{[\is]}\p_{[2m-\is-1]}$ is entirely analogous to the case of the odd quadric. With this, we arrive at $\pot_\can=\pot_{\text{PRW},\text{even}}$ for even quadrics as well.

\addtocontents{toc}{\SkipTocEntry}
\subsection*{Lagrangian Grassmannians}
The models presented in \cite{Pech_Rietsch_Lagrangian_Grassmannians} for the Lagrangian Grassmannians $\LG(m,2m)=\Sp_{2m}/\P_m = \LGC_m^{\SC}/\P_m$ use the same embedding of $\cmX=\dP_m\backslash\PSO_{2m+1}=\dP_m\backslash\LGB_m^\AD$ in $\PS\bigl(V(\dfwt[m])^*\bigr)$ as is used for the models $(\mX_\can,\pot_\can)$. (Note that the adjoint and simply-connected groups are reversed compared to that reference.) Hence, it suffices to identify the labelings of the corresponding Pl\"ucker coordinates and show that the superpotentials agree.

In \cite{Pech_Rietsch_Lagrangian_Grassmannians}, the Pl\"ucker coordinates $\p_\la$ are labeled by strict partitions $(\la)=(\la_1,\ldots,\la_j)$ with $1\le\la_{i+1}<\la_{i}\le m$; here the empty partition corresponds to the lowest weight vector and the maximal partition $(m,m-1,\ldots,1)$ corresponds to the highest weight vector. On the other hand, in the order ideal formalism, $\minposet$ is the ``decreasing staircase'' consisting of $m$ columns with $m,m-1,\ldots,1$ elements respectively, aligned along the bottom elements; the elements are labeled $1$ to $m$ along diagonals starting from the bottom-left element to the main diagonal. (Cf.~Example \ref{ex:list_of_minposets}.) For comparison, we will denote by $[\la]=[\la_1,\la_2,\ldots,\la_j]$ the order ideal consisting of $j$ columns with sizes $\la_i$ satisfying $1\le \la_{i+1} < \la_i\le m$ (ignoring empty columns). Thus we identify the Pl\"ucker coordinate $p_{(\la)}$ of \cite{Pech_Rietsch_Lagrangian_Grassmannians} with our Pl\"ucker coordinate $p_{[\la]}$. The reason for this translation between Young diagrams is because we view our order ideals as arising from the corresponding minuscule poset which we orient to place the smallest element in the upper left and the largest element in the lower right. 

The potential in \cite{Pech_Rietsch_Lagrangian_Grassmannians} is defined using a set of specific partitions (note that the notation used here slightly deviates from the source): First, they define $(\rho_l)=(l,l-1,\ldots,1)$ as the length $l$ staircase partition, and take $(\rho_l^+)=(l+1,l-1,l-2,\ldots,1)$ to be the unique strict partition obtained from $(\rho_l)$ for $l<m$ by adding a single box. Next, for $J\subset[l]$, they define $(\rho_l)^J$ and $(\rho_l^+)^J$ to be the partitions obtained by deleting the parts $(\rho_l)_j$ from $(\rho_l)$ and $(\rho_l^+)_j$ from $(\rho_l^+)$ for each $j\in J$. Next, they define the partition $(\mu_l)=(m,m-1,\ldots,m+1-l)$ to be the maximal partition with $l$ parts, and set $(\mu_l^J)=(m,m-1,\ldots,m+1-l,l+1-j_1,\ldots,l+1-j_i)$ and $(\mu_l^J)^+=(m,m-1,\ldots,m+1-l,l+1-j_1+\de_{j_1,1},\ldots,l+1-j_i)$, where $J=\{j_1<\ldots<j_i\}$ and $\de_{j_1,1}=1$ for $j_1=1$ and $0$ otherwise. Whenever these constructions do \emph{not} result in a strict partition $(\mu)$, they set $\p_{(\mu)}=0$ for notational ease. With this, they define
\[
\pot_{\text{PR}} = \frac{\p_{(\rho_0^+)}}{\p_{(\rho_0)}}+\sum_{l=1}^{m-1}\frac{\cD_{\text{PR},l}^+}{\cD_{\text{PR},l}} + q\frac{\p_{(\rho_{m-1})}}{\p_{(\rho_m)}}, \qwhere \cD_{\text{PR},l} = \sum_{J\subset[l]} (-1)^{s(J)}\p_{(\rho_l)^J\p_{(\mu_l^J)}}
\]
with $s(J)=\sum_{j\in J}j$ and with $\cD_{\text{PR},l}^+$ obtained from $\cD_{\text{PR},l}$ by replacing $(\rho_l)^J$ by $(\rho_l^+)^J$ and $(\mu_l^J)$ by $(\mu_l^J)^+$.

Returning to the order ideal formalism, we directly see that $(\rho_0)=\varnothing$ and $(\rho_1)=\yng(1)$, so we find that the first term agrees with the potential presented here. Moreoever, $\idealPrime$ is the complement of a single column, i.e.~the staircase of length $m-1$, so that $q\p_{\idealPrime}/\p_{\minposet}=q\p_{(\rho_{m-1})}/\p_{(\rho_m)}$ for the quantum term as well. 

For the remaining terms ($\is\neq m$), we find from the proof of Lemma \ref{lem:special_weyl_elt_action} that $\cis=2$, and that $i_1=m-\is$ and $i_2=m$. Hence, we find that $\idealij[1]=[\rho_{\is}]$ (the staircase order ideal of length $\is$) and $\idealij[2]=[\mu_{\is}]$ (the order ideal consisting of the first $\is$ columns of $\minposet$). This immediately tells us that the initial term of $\cD_\is$ agrees with the $J=\varnothing$ term of $\cD_{\text{PR},\is}$; for the remaining terms, we note that written as Young diagrams, the pair of partitions $\bigl((\rho_l)^J,(\mu_l)^J\bigr)$ can be thought of as \emph{moving} $s(J)$ boxes in the rows indexed by $J$ from $(\rho_l)$ to $(\mu_l)$ whenever such a move is valid, so it follows that the identification $(\la)\mapsto[\la]$ maps the set $\{(\rho_{\is})^J,(\mu_{\is}^J)~|~J\subset[\is]\}$ to $\sP_{\is}$, and hence we find $\cD_{\is}=\cD_{\text{PR},\is}$ for $\is\in[m-1]$. For the numerators, we note that $(\rho_\is^+)$ is obtained by adding a single box, and that this box in $[\rho_\is^+]$ has label $m-\is=i_1$, so that $\deis([\rho_\is])=[\rho_\is^+]$ and we conclude that the initial terms of $\deis(\cD_\is)$ and $\cD_{\text{PR},\is}$ agree; for the remaining terms, we find an analogous interpretation of moving boxes for the numerators as for the denominators, and we conclude that the potentials $\pot_\can=\pot_{\text{PR}}$ agree.

\addtocontents{toc}{\SkipTocEntry}
\subsection*{Orthogonal Grassmannians}
The models presented in \cite{Spacek_Wang_OG} were obtained by applying the model $(\mX_\can,\pot_\can)$ presented here to the specific case of $\OG(n,2n)=\Spin_{2n}/\P_n=\LGD_n^\SC/\P_n$, and hence require no further verifications.

\addtocontents{toc}{\SkipTocEntry}
\subsection*{The exceptional family}
The models presented in \cite{Spacek_Wang_exceptional} formed the direct inspiration for the models $(\mX_\can,\pot_\can)$ presented here, so relating the two formalisms is straightforward. Firstly, comparing Definition \ref{df:Plucker_coords_posets} here with Definition 2.12 of \cite{Spacek_Wang_exceptional}, we see that the latter is a special case of the former. Hence, we only need to translate the labeling used in both papers, but this is straightforward as well: given an order ideal $\ideal\subideal\minposet$, consider the corresponding minimal coset representative $\Pi(\ideal)\in\cosets$ (as defined in Remark \ref{rem:order_ideals_to_min_coset_reps}) and follow the action of its simple reflections through the Hasse diagrams as given in equations (2.25) and (2.27) of \cite[Section 2.4]{Spacek_Wang_exceptional} to find the corresponding weight vector $v_i^{(j)}$; then $\p_\ideal = \p_i^{(j)}$. Establishing that the models agree thus reduces to showing that the potentials are equal.

For the Cayley plane $\OP^2=\LGE_6^\SC/\P_6$, we have written out the values of $\cis$ and the sequences $(i_j)$ in equation \eqref{eq:table_of_ij_for_Cayley} of the proof of Lemma \ref{lem:special_weyl_elt_action}. Hence, we find the following order ideals $\idealij$:
\[
\begin{array}{c||c|c|c|c|c}
    \is  & 1 & 2 & 3 & 5 & 4 \\\hline
    \cis & 1 & 2 & 2 & 2 & 3 \\\hline
    \raisebox{.4em}{$(\idealij)$}
    \vphantom{\yng(1,1,1,1,1)}& 
    \young(~~~~,::~~,:::~,:::~) & 
    \raisebox{7.9pt}{\yng(1)}~\raisebox{2.5pt}{\young(~~~~~,::~~~,:::~~~)} & 
    \raisebox{5.4pt}{\young(~~~,::~)}~\young(~~~~~,::~~~,:::~~,:::~~) &
    \raisebox{8pt}{\yng(5)}~\young(~~~~~,::~~~,:::~~~,:::~~~~) &
    \raisebox{8pt}{\yng(2)}~\raisebox{5.4pt}{\young(~~~~~,::~~~)}~\young(~~~~~,::~~~,:::~~~,:::~~~)
\end{array}
\qwhere
\minposet = \Yboxdim{7pt}
\raisebox{-10pt}{\scriptsize\young(65431,::243,:::542,:::65431)}.
\]
The posets $\sP_\is$ are now obtained by moving boxes; instead of showing the posets, we directly translate the resulting $\cD_\is$ and $\deis(\cD_\is)$ into the $\p_i^{(j)}$ notation as used in \cite{Spacek_Wang_exceptional}:
\ali{
\pot_0 &= \frac{\p_1}{\p_0},\qquad 
\pot_1 = \frac{\p_9'}{\p_8}, \qquad
\pot_6 = q\frac{\p_5''}{\p_{16}},\qquad
\pot_2 = \frac{\p_2\p_{11}''-\p_0\p_{13}}{\p_1\p_{11}''-\p_0\p_{12}}, \\
\pot_3 &= \frac{\p_5'\p_{12}'-\p_4''\p_{13}+\p_2\p_{15}}{\p_4'\p_{12}' - \p_3\p_{13} + \p_2\p_{14} - \p_1\p_{15}+\p_0\p_{16}}, \qquad
\pot_5 = \frac{\p_6''\p_{15}-\p_5'\p_{16}}{\p_5\p_{15}-\p_4\p_{16}}, \\
\pot_4 &= 
\frac
{p_{3}(p_{8}''p_{14} - p_{7}''p_{15} + p_{6}'p_{16}) + p_{1}(-p_{10}''p_{14}) + p_{0}(p_{11}'p_{14})}
{p_2(p_{8}''p_{14} - p_{7}''p_{15} + p_{6}'p_{16}) + p_{1}(-p_{9}''p_{14} + p_{8}'p_{15} - p_{7}'p_{16})  + p_{0}(p_{10}'p_{14} - p_{9}'p_{15} + p_{8}p_{16})}.
}
Comparing these expressions with the potential as given in equations (3.1), (3.2) and (3.3) in \cite[Section 3.1]{Spacek_Wang_exceptional}, we find that the above expressions for $\pot_\is$ already agree for $\is\in\{0,1,2,5,6\}$. For the remaining terms, we require a number of Pl\"ucker relations as given in Remark 3.14 of \cite{Spacek_Wang_exceptional}, but the simplification is straightforward.

The story for the Freudenthal variety $\LGE_7^\SC/\P_7$ is analogous, but somewhat more elaborate. For the order ideals $\idealij$ we find the following using equation \eqref{eq:table_of_ij_for_Freudenthal} from the proof of Lemma \ref{lem:special_weyl_elt_action}
\[
\begin{array}{c||c|c|c|c|c|c}
    \is  & 1 & 2 & 6 & \hspace{.66em}3\hspace{.66em} & \hspace{.66em}5\hspace{.66em} & \hspace{.66em}4\hspace{.66em} \\\hline
    \cis & 2 & 2 & 2 & 3 & 3 & 4 \\\hline
    \raisebox{.8em}{$(\idealij)$} & 
    \raisebox{18.6pt}{\yng(1)}~ \raisebox{10.5pt}{\young(~~~~~~,:::~~~,::::~~~,::::~~~~~)} &
    \raisebox{18.6pt}{\yng(6)}~ \raisebox{5.2pt}{\young(~~~~~~,:::~~~,::::~~~,::::~~~~,::::~~~~,:::::::~)} &
    \raisebox{8pt}{\young(~~~~~,:::~~,::::~,::::~,::::~)}~ \young(~~~~~~,:::~~~,::::~~~,::::~~~~~,::::~~~~~,:::::::~~,::::::::~,::::::::~)
    \vphantom{\young(~,~,~,~,~,~,~,~,~)}&
    \multicolumn{3}{c}{\raisebox{.8em}{see Appendix \ref{app:exceptional}}}
\end{array}
\qwhere
\minposet = 
\Yboxdim{7pt}
\raisebox{-28pt}{\scriptsize\young(765431,:::243,::::542,::::65431,::::76543,:::::::24,::::::::5,::::::::6,::::::::7)}
\]
Hence, we obtain the following terms of the potential, written in the $p_i^{(j)}$ notation of \cite{Spacek_Wang_exceptional}:
\ali{
\pot_1 &= \frac{\p_2\p_{17}-\p_0\p_{19}'}{\p_1\p_{17}-\p_0\p_{18}},
\qquad
\pot_6 = \frac{\p_{11}\p_{26} - \p_{10}'\p_{27}}{\p_{10}\p_{26} - \p_9\p_{27}},
\qquad
\pot_7 = q\frac{\p_{10}}{\p_{17}}, \\
\pot_2 &= \frac{\p_7''\p_{21}'' - \p_6''\p_{22}'' + \p_5'\p_{23}}{\p_6''\p_{21}'' - \p_5''\p_{22}'' + \p_4\p_{23} - \p_3\p_{24} + \p_2\p_{25} - \p_1\p_{26} + \p_0\p_{27}},
}
and the remaining terms can be obtained from Appendix \ref{app:exceptional}:
\ali{
\pot_3 &= \textstyle
\frac{
p_{3}(p_{12}''p_{22}' - p_{11}''p_{23} + p_{10}''p_{24} - p_{9}''p_{25})
-p_{1}(p_{14}p_{22}' - p_{13}'p_{23} + p_{11}'p_{25} -   p_{10}''p_{26})
+p_0(p_{15}'p_{22}' - p_{14}''p_{23} + p_{12}p_{25} - p_{10}''p_{27})
}{
p_{2}
(p_{12}''p_{22}' - p_{11}''p_{23} + p_{10}''p_{24} - p_{9}''p_{25})
-p_{1}
(p_{13}''p_{22}' - p_{12}'p_{23} + p_{11}'p_{24} - p_{9}''p_{26})
+p_0
(p_{14}'p_{22}' - p_{13}p_{23} + p_{12}p_{24} - p_{9}''p_{27})
},
\\
\pot_5 &= \textstyle
\frac{
(p_{6}'p_{15}'' - p_{5}''p_{16}'' + p_{3}p_{18}' - p_{2}p_{19}''
)p_{25}
+(-p_{6}'p_{14}'' + p_{5}''p_{15}' - p_{3}p_{17}' + p_{1}p_{19}''
)p_{26}
+(p_{6}'p_{13}' - p_{5}''p_{14} + p_{3}p_{16} - p_{0}p_{19}''
)p_{27}
}{
(p_{5}'p_{15}'' - p_{4}p_{16}'' + p_{3}p_{17}'' - p_{2}p_{18}''
)p_{25}
-(p_{5}'p_{14}'' - p_{4}p_{15}' + p_{3}p_{16}' - p_{1}p_{18}''
)p_{26}
+(p_{5}'p_{13}' - p_{4}p_{14} + p_{3}p_{15} - p_{0}p_{18}'')p_{27}
}.
}
We suppress the translated expression for $\pot_4$ due to its length. Comparing these expressions with the potential as given in equation (3.4), (3.5) and (3.6) in \cite[Section 3.2]{Spacek_Wang_exceptional}, we see that the $\pot_\is$ already agree for $\is\in\{1,2,3,6,7\}$ (up to adding minus signs in the numerator a denominator of $\is=2$ and reorganizing the terms for $\is=3$). The remaining terms agree after simplifying using the Pl\"ucker relations as given in Remark 3.18 of \cite{Spacek_Wang_exceptional}.
\end{document}